\numberwithin{equation}{section}
\newtheorem{thm}{Theorem}[section]
\newtheorem{lemma}[thm]{Lemma}
\newtheorem{cor}[thm]{Corollary}
\newtheorem{prop}[thm]{Proposition}
\newtheorem{con}[thm]{Conjecture}
\newtheorem*{Thm}{Theorem}
\newtheorem*{Cor}{Corollary}
\theoremstyle{remark}
\newtheorem{remark}[thm]{Remark}
\theoremstyle{definition}
\newtheorem{Def}[thm]{Definition}
\newcommand{\HH}{\mathcal{H}}
\newcommand{\FF}{\mathbb{F}}
\newcommand{\ZZ}{\mathbb{Z}}
\newcommand{\NN}{\mathbb{N}}
\newcommand{\p}{\mathfrak{p}}
\newcommand{\m}{\mathfrak{m}}
\DeclareMathOperator{\Hom}{Hom}
\DeclareMathOperator{\rank}{rank}
\DeclareMathOperator{\ind}{ind}
\DeclareMathOperator{\stab}{stab}
\DeclareMathOperator{\reg}{reg}
\DeclareMathOperator{\im}{image}
\DeclareMathOperator{\coker}{coker}
\DeclareMathOperator{\ass}{Ass}
\DeclareMathOperator{\ann}{ann}
\DeclareMathOperator{\Fix}{Fix}
\DeclareMathOperator{\depth}{depth}
\renewcommand{\to}[1][]{\xrightarrow{#1}}
\newcommand{\from}[1][]{\xleftarrow{#1}}
\newcommand{\gr}[1][]{\mathrm{gr}_{#1}}
\newcommand{\iso}{\xrightarrow{\sim}}
\newcommand{\colim}{\varinjlim}
\renewcommand{\lim}{\varprojlim}
\newcommand{\hookto}{\hookrightarrow}
\title[On the Duflot filtration]{On the Duflot filtration for  equivariant cohomology rings and applications to group cohomology}
\author{James C. Cameron}
\email{jcameron@math.ucla.edu}
\begin{document}
\begin{abstract}We study the Duflot filtration on the Borel equivariant cohomology of smooth manifolds with a smooth $p$-torus action. We axiomatize the filtration and prove analog of several structural results about equivariant cohomology rings in this setting. We apply this abstract theory to study the $\FF_p$ cohomology rings of classifying spaces of compact Lie groups, and show how to recover geometric results about the cohomology of $BG$ using equivariant cohomology. This includes some results about detection on subgroups and restrictions on associated primes that were previously only known for finite groups.

We are particularly interested in the local cohomology modules of equivariant cohomology rings, and we construct a tractable chain complex computing local cohomology. As an application, we study the local cohomology of the group cohomology of the $p$-Sylow subgroups of $S_{p^n}$ and give vanishing and nonvanishing results for these local cohomology modules that are sharper than those given by the current theory.
\end{abstract}
\maketitle

\tableofcontents

\section{Introduction}

Ever since the work of Quillen \cite{quillen}, the Borel equivariant cohomology of spaces with a $G$-action has been used to study group cohomology rings. By choosing the $G$-space appropriately, knowledge about equivariant cohomology can descend to group cohomology.

In fact, in order to study group cohomology rings it is enough to study the equivariant cohomology of smooth manifolds with a smooth $p$-torus action (by $p$-torus we mean a group of the form $(\ZZ/p)^n$). Duflot \cite{duflotfiltration} defined a filtration on the equivariant cohomology of a smooth $p$-toral manifold with the key property that the $i^{th}$ subquotient is free over the cohomology of a $p$-torus of rank $i$. 

This filtration was used by Duflot \cite{duflotassociatedprimes} to show that all associated primes of group cohomology rings come from restricting to $p$-tori, by Henn, Lannes, and Schwartz \cite{hlsunstablemodulesandmodpcohomology} to study group cohomology rings as unstable algebras over the Steenrod algebra, and by Symonds \cite{symondsregularity} to prove the regularity conjecture for group cohomology rings, which is a statement about the local cohomolgy modules of group cohomology rings. In each case, the result in question is reduced to the case of $p$-tori, where it can be established with direct computation.

The Duflot filtration puts a lot of extra structure on the $S$-equivariant cohomology of a smooth manifold, and the first part of this paper is devoted to studying this extra structure in an abstract setting. We give an axiomitization of the Duflot filtration which we call a \emph{free rank filtration}, and study depth, dimension, and associated primes in this abstract setting. Much of the geometric structure of a graded $\FF_p$-algebra is determined by its local cohomology modules, and we construct a tractable chain complex we call the \emph{Duflot chain complex} that computes local cohomology in the presence of a free rank filtration. Geometric information about the ring such as Krull dimension and restrictions on depth can be read directly from the Duflot chain complex.

We also axiomatize a refinement of the Duflot filtration to a filtration by a poset with a ranking function which refines the filtration by the natural numbers. The  Duflot filtration in equivariant cohomology has such a refinement, and we use this refinement to give a new construction of the Duflot filtration and to give results about restrictions on associated primes and detection on submanifolds. 

This refinement also lets us construct a spectral sequence that allows some computations of the local cohomology modules of the cohomology of iterated wreath products of $\ZZ/p$ with itself, i.e. the $p$-Sylow subgroups of $S_{p^n}$.
\subsection{Results}
All cohomology is with $\FF_p$ coefficients.

Inspired by Symonds' proof of the regularity theorem, we show that the Duflot filtration of $H_S^*M$ for a smooth $S$-manifold $M$, with $S$ a $p$-torus, gives rise to a cochain complex, $DM$, of graded modules we call the Duflot complex of $M$. The cohomology of $DM$ is the local cohomology of $H_S^* M$, and $DM$ enjoys the following properties, which we state here in the special case that $G$ is a finite group and $M$ is the manifold $G \backslash U(V)$ for $V$ a faithful representation of $G$, with $S<U(V)$ the maximal $p$-torus of diagonal matrices of order $p$ acting on the right.

We denote local cohomology with respect to the (graded) maximal ideal by $\HH^*$.

\begin{Thm}[\ref{duflotcomplex},  \ref{depthanddimensionforHSM},  \ref{duflotandquilleningroupcohomology}]
A finite dimensional representation $G \hookto U(V)$ can be chosen so that the Duflot complex of $G \backslash U(V)$ satisfies the following:

\begin{enumerate}
\item $H^* D(G \backslash U(V)) = \HH^* H_S^*(G \backslash U(V))= \HH^*(H_G^*) \otimes H^* U(V)/S$.
\item $D(G \backslash U(V))^i=0$ for $i$ less than the $p$-rank of $Z(G)$ and $i$ greater than the $p$-rank of $G$.
\item The  Krull dimension of the $i^{th}$ term of the dual chain complex is $i$.
\item $D(G \backslash U(V))^{i,j}$ is zero above the line $i=-j+ \dim G \backslash U(V)$.
\end{enumerate}
\end{Thm}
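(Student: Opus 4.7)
The approach is to specialize the three abstract results \ref{duflotcomplex}, \ref{depthanddimensionforHSM}, and \ref{duflotandquilleningroupcohomology} to the explicit $S$-manifold $M := G\backslash U(V)$. The first step is to choose $V$ faithful and large enough that $G$ acts freely on $U(V)$ (so $M$ is a compact smooth $S$-manifold, with $S$ the diagonal $p$-torus of $U(V)$ acting by right multiplication) and so that the free rank filtration of $H_S^* M$ has minimum and maximum ranks equal to $p\text{-rank}(Z(G))$ and $p\text{-rank}(G)$ respectively. The upper bound is straightforward: a maximal $p$-torus of $G$ can be conjugated into $S$ inside $U(V)$, producing a stabilizer of the maximal possible rank $p\text{-rank}(G)$. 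The lower bound encodes Duflot's depth theorem geometrically, and is where the bulk of the representation-theoretic setup lives: one prescribes how the central $p$-torus of $G$ acts on $V$ (e.g., by distinct characters on distinct isotypic summands) so that every rank-$i$ piece of the free rank filtration with $i<p\text{-rank}(Z(G))$ is automatically trivial.

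With $V$ so chosen, part (1) combines two inputs. First, \ref{duflotcomplex} supplies the equality $H^* DM = \HH^* H_S^* M$. Second, I would establish the $H_G^*$-module isomorphism $H_S^*(G\backslash U(V)) \cong H_G^* \otimes H^*(U(V)/S)$ by using freeness of the $S$-action to rewrite $ES\times_S (G\backslash U(V)) \simeq EG\times_G (U(V)/S)$, and then applying Leray--Hirsch to the Serre spectral sequence of $U(V)/S \to EG\times_G (U(V)/S) \to BG$; the monodromy is trivial because $G$ sits in the connected Lie group $U(V)$, and the requisite cohomology lifts come from the homogeneous-space structure of $U(V)/S$. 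Finite-dimensionality of $H^*(U(V)/S)$ then makes local cohomology with respect to the graded maximal ideal of $H_G^*$ commute with this tensor factor, yielding the second equality. This latter identification is essentially the content of \ref{duflotandquilleningroupcohomology}.

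Parts (2)--(4) are direct specializations of \ref{depthanddimensionforHSM}. Part (2) asserts that $DM^i$ vanishes outside the indexing interval of the free rank filtration, which by our choice of $V$ is $[p\text{-rank}(Z(G)), p\text{-rank}(G)]$. Part (3) reads off the defining property of the free rank filtration (after reversing arrows to view $DM$ as a chain complex): the rank-$i$ term is free over the cohomology of a rank-$i$ $p$-torus, which is a polynomial ring (tensored with a finite-dimensional exterior factor when $p$ is odd) of Krull dimension exactly $i$. Part (4) is a Poincar\'e/Thom-duality bound, reflecting that each contribution to $DM^{i,j}$ comes from cohomology of a closed submanifold of $M$ shifted by the normal-bundle degree, so that the total topological degree is capped by $\dim M - i$.

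The main obstacle is the choice of $V$ and the associated geometric analysis of the free rank filtration on $M$: in particular, verifying that the filtration is supported on ranks at least $p\text{-rank}(Z(G))$ corresponds to Duflot's depth theorem in this equivariant setting, and is the technical heart of the translation between the abstract framework and group-theoretic invariants. The Leray--Hirsch step of part (1) and parts (2)--(4) are then essentially formal consequences of the abstract theory.
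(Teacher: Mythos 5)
Your proposal is correct and follows essentially the paper's own route: choose $V$ as a sum of representations so that the maximal central $p$-torus of $G$ maps into $Z(U(V))\subset S$ (hence lies in every isotropy group, while every isotropy group is isomorphic to a $p$-torus of $G$), and then read off (1)--(4) from the abstract Duflot complex together with the decomposition $H_G^*(U(V)/S)\cong H_G^*\otimes H^*(U(V)/S)$. Your Leray--Hirsch step is just a re-derivation of Quillen's faithful flatness lemma (\ref{faithfullyflat}), which the paper cites rather than reproves, and the remaining minor imprecisions (e.g.\ ``distinct characters'' is unnecessary, and the strata $Y_i^i$ are open in closed fixed submanifolds rather than closed) do not affect the argument.
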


From this theorem and basic commutative algebra we can conclude Quillen's theorem on the dimension of $H_G^*$, Duflot's lower bound for the depth of $H_G^*$, and Symonds' regularity theorem.


The Duflot filtration puts a lot of structure on $H_S^*M$, and to clarify what structure is forced by the Duflot filtration we found it useful to systematize the Duflot filtration. We define  a ``free rank filtration'', which is simply a filtration with subquotients of the form appearing in the Duflot filtration, and we show that any module with a free rank filtration has a Duflot chain complex and satisfies analogs of the dimension, depth, and regularity theorems, as well as Duflot's theorem on associated primes. The  current application for this theory is to simplify the study of $S$-equivariant cohomology, but it would be very interesting to have more examples of algebras with free rank filtrations.

The Duflot filtration is a filtration by the natural numbers, but we show how it can be refined to be a filtration by a certain poset. This poset is the poset of connected components of $M^A$, as $A$ ranges over all subtori of $S$, and it is equipped with a map to $\NN^{op}$ by the rank of $A$. Precise definitions are given in sections \ref{stratifiedfiltrations} and \ref{propertiesofHSM}. 

Studying this refinement of the Duflot filtration yields some restrictions on associated primes and a detection result for cohomology.
It also lets us study some maps between $S$-manifolds in terms of the maps on their associated posets, which ultimately leads to new local cohomology computations.

Our detection result is the following:
\begin{Thm}[\ref{detectiononcentralizers}]
Let $G$ be a compact Lie group, and let $d$ be the depth of $H_G^*$. 
Then $H_G^* \to \prod_{E<G,\rank E=d} H^*_{C_G E}$ is injective.
\end{Thm}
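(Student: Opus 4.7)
The plan is to use the free rank filtration on $H_S^*(G\backslash U(V))$ for a suitable faithful $G$-representation $V$, with $S < U(V)$ the diagonal maximal $p$-torus, together with the Duflot chain complex description of local cohomology. First, I would choose $V$ so that every elementary abelian $p$-subgroup of $G$ embeds, up to $U(V)$-conjugacy, into $S$. Setting $M = G\backslash U(V)$, the connected components of the fixed-point sets $M^A$ for subtori $A \le S$ correspond to $G$-conjugacy classes of elementary abelian subgroups $E \le G$ that are $U(V)$-conjugate to $A$, and the $S$-equivariant cohomology of such a component recovers $H^*_{C_G E}$ up to a tensor factor from $H^*(U(V)/S)$. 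Via the canonical map $H_G^* = H_{U(V)}^* M \to H_S^* M$, the detection map corresponds to the composite restriction from $H_G^*$ to the rank-$d$ fixed-point components of $M$, so it is enough to prove injectivity of this composite restriction.

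Let $F^{\ge i} H_S^*M$ denote the Duflot filtration piece of elements that vanish on the fixed-point sets of all subtori of rank $< i$. The kernel of the rank-$d$ restriction is then the preimage in $H_G^*$ of $F^{\ge d+1} H_S^*M$. By the free-rank-filtration property, the subquotient $F^{\ge j}/F^{\ge j+1}$ is free over $H^*(BA)$ for rank-$j$ subtori $A$, so every associated prime of the submodule $F^{\ge d+1}$ has dimension $\ge d+1$, and by the analog of Duflot's associated-primes theorem established in the axiomatized free-rank-filtration setting these associated primes correspond to elementary abelian subgroups of rank $\ge d+1$.

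I would then derive a contradiction from the depth hypothesis. By Theorem~\ref{duflotcomplex} and Theorem~\ref{depthanddimensionforHSM}, the Duflot complex $DM$ computes the local cohomology of $H_G^*$ (up to the factor $H^*(U(V)/S)$), and $\depth H_G^* = d$ produces a nontrivial cocycle of $DM$ in degree $d$. Tracing through the chain-complex construction, this cocycle corresponds to a nonzero class supported on a rank-$d$ fixed-point component, realising an associated prime of (the relevant piece of) $H_S^*M$ of dimension exactly $d$. Any nonzero element of $H_G^*$ whose image lay in $F^{\ge d+1}$ would force every associated prime encountered along it to have dimension $\ge d+1$, incompatible with such a rank-$d$ witness; hence the kernel is trivial.

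The main obstacle I anticipate is this last step: converting the depth hypothesis into the existence of a rank-$d$ associated prime whose nonvanishing survives pullback to $H_G^*$, especially in the presence of nilpotent elements in $H_G^*$ (which can occur for Lie groups at $p=2$). This requires the poset refinement of the Duflot filtration (Sections~\ref{stratifiedfiltrations} and \ref{propertiesofHSM}) to match the rank-$d$ stratum of the filtration with the rank-$d$ elementary abelian subgroups and their centralizers, and care in handling the distinction between the restriction map landing in $H^*_E$ (which loses nilpotents) and landing in $H^*_{C_G E}$ (which does not, and is why centralizers rather than elementary abelians appear in the statement).
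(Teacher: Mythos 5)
Your reduction to the $S$-manifold $M=G\backslash U(V)$ and the use of faithful flatness (\ref{faithfullyflat}) to descend injectivity from $H_G^*U(V)/S$ to $H_G^*$ match the paper, but the core of your argument has a genuine gap. You propose to derive the contradiction by ``converting the depth hypothesis into the existence of a rank-$d$ associated prime'': from $\depth H_G^*=d$ you want a nonzero class in $\HH^d$ to ``realise an associated prime of dimension exactly $d$.'' That implication is precisely Carlson's conjecture, which is open in general and is proved in this paper only when the Duflot bound is sharp (\ref{carlsonsconjecturewhenduflotboundissharp}); nonvanishing of $\HH^d$ does not produce a dimension-$d$ associated prime. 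Moreover, even granting such a witness, your final step does not close: the existence of kernel elements whose ``associated primes encountered along the way'' have dimension $\ge d+1$ is in no way incompatible with the ring also possessing a dimension-$d$ associated prime, so no contradiction is reached. There is also a smaller but real error earlier: the kernel of restriction to the rank-$d$ components is not the preimage of $F_{d+1}$. The Duflot filtration piece $F_{d+1}$ consists of classes vanishing on the stratum of isotropy rank $\le d$, whereas a class killed by all restrictions to $Y\in\Fix(M)_d$ vanishes on the locus of isotropy rank $\ge d$; these are different conditions, so the associated-prime bound you invoke for $F_{d+1}$ does not apply to the kernel you need to control.

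The paper's proof (Proposition \ref{detection}, applied via \ref{detectionforHSM}) runs differently and avoids Carlson's conjecture entirely. If $x\ne 0$ restricts to zero on every $T\in P_d$, then for every pushforward class $i_*(t)$ with $r(T)=d$ one has $x\,i_*(t)=i_*(i^*(x)\,t)=0$, because $i_*$ is a map of $R$-modules; hence $x$ is annihilated by the entire ideal $F_d$. Consequently $F_d$ consists of zerodivisors, so by prime avoidance $F_d\subset\mathfrak{p}$ for some associated prime $\mathfrak{p}$, and then $\dim R/\mathfrak{p}\le\dim R/F_d<d$ by \ref{depthanddimoftruncations}, contradicting the standard inequality $\depth R\le\dim R/\mathfrak{p}$ for every associated prime. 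The essential ingredient your proposal is missing is this annihilation step, which uses the pushforward--restriction (Frobenius reciprocity) structure of the topological stratification rather than any statement about which dimensions of associated primes actually occur; if you incorporate it, the Duflot complex and the rank-$d$ witness you were trying to construct become unnecessary.
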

This result is due to Carlson \cite{carlsondepthconjecture} for finite groups. 

Our restriction on associated primes is the following. For finite groups  Okuyama \cite{okuyama2010remark} showed that one and three are equivalent, and the equivalence of two and three is a special case of a result of Kuhn \cite{kuhnprimitives} given below. 

\begin{Thm}[\ref{restrictionsonassociatedprimes}]
Let $G$ be a compact Lie group and $E<G$ a $p$-torus. The following are equivalent:
\begin{enumerate}
\item $E$ represents an associated prime in $H_G^*$.
\item $E$ represents an associated prime in $H^*_{C_G E}$.
\item The  depth of  $H^*_{C_G E}$ is $\rank E$.
\end{enumerate}
\end{Thm}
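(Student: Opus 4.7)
The plan is to split the equivalence into (2)$\Leftrightarrow$(3) and (1)$\Leftrightarrow$(2), reducing the first to commutative algebra applied to $H^*_{C_G E}$ and handling the second via the refined Duflot filtration developed earlier in the paper.

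For (2)$\Leftrightarrow$(3), since $E$ is a central $p$-torus of $C_G E$, Duflot's lower bound for depth yields $\depth H^*_{C_G E} \geq \rank E$. For any associated prime $\p$ of $H^*_{C_G E}$ we have $\depth H^*_{C_G E} \leq \dim H^*_{C_G E}/\p$, and by Duflot's theorem on associated primes together with Quillen's dimension theorem, every associated prime has the form $\p_{E'}$ for a $p$-torus $E' \leq C_G E$ with $\dim H^*_{C_G E}/\p_{E'} = \rank E'$. The implication (2)$\Rightarrow$(3) is then immediate. For (3)$\Rightarrow$(2), the depth equality forces the existence of an associated $\p_{E'}$ with $\rank E' = \rank E$; centrality of $E$ in $C_G E$ and the fact that $E$ is the (unique up to conjugacy) central $p$-torus of $C_G E$ of its rank identifies $E'$ with $E$ up to conjugacy, realizing (2).

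For (1)$\Leftrightarrow$(2), take $M = G \backslash U(V)$ with $V$ a faithful representation of $G$, so $H_S^* M = H_G^*$. The refined Duflot complex of $M$ decomposes into pieces labeled by the poset of components of fixed-point submanifolds $M^A$ for $p$-tori $A \leq S$, and each such component contributes a summand whose $S$-equivariant cohomology (after descending through the $G$-quotient) is related to the centralizer cohomology $H^*_{C_G E'}$ for a representative $p$-torus $E' \leq G$. Associated primes of $H_G^*$ represented by $E$ are precisely those witnessed by nontrivial behavior at the pieces labeled by $E$; on the centralizer side, associated primes represented by $E$ arise from the bottom (central) stratum of the Duflot filtration of $H^*_{C_G E}$. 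The restriction map $H_G^* \to H^*_{C_G E}$ is realized geometrically by passing to $E$-fixed submanifolds, and under this realization the $E$-labeled stratum of $DM$ matches the central stratum of the Duflot complex for $H^*_{C_G E}$, giving a bijective correspondence between associated primes witnessed by $E$ on either side.

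The main obstacle will be the (1)$\Rightarrow$(2) direction. Starting with $x \in H_G^*$ whose annihilator is $\p_E$, one wants $\res(x) \in H^*_{C_G E}$ to have annihilator exactly $\p_E$; the inclusion $\p_E \subseteq \ann(\res(x))$ is automatic, but the reverse inclusion (and nonvanishing of $\res(x)$) requires showing that the map on the $E$-labeled stratum of the refined Duflot filtration is injective on classes with annihilator $\p_E$. This is precisely the type of stratum-level control that the poset refinement of the Duflot filtration was developed to provide, so the work lies in setting up the geometric identification of strata carefully rather than in any deep new argument.
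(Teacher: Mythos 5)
The (3)$\Rightarrow$(2) step is a genuine gap. You assert that ``the depth equality forces the existence of an associated $\p_{E'}$ with $\rank E' = \rank E$,'' but the only general commutative-algebra fact available is the inequality $\depth \le \dim R/\p$ for every associated prime $\p$; there need not exist an associated prime whose dimension \emph{equals} the depth (e.g.\ a non-Cohen--Macaulay graded domain such as $\FF_p[x^4,x^3y,xy^3,y^4]$ has depth $1$ but its only associated prime has dimension $2$). In the present context the existence of such a prime is precisely the hard content: it is the special case of Carlson's conjecture when the Duflot bound is sharp, which the paper \emph{deduces} from this theorem (Theorem \ref{carlsonsconjecturewhenduflotboundissharp}), so invoking it here is circular. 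The paper's mechanism for (3)$\Rightarrow$(2) is Theorem \ref{depthofcentralizers}: in the Duflot-module structure the stratum attached to $E$ splits as $T \cong P_V \otimes T'$ (topologically $H_S^*Y \cong H_E^* \otimes H^*_{S/E}Y$ for $Y$ a component of the $E$-fixed points), and the K\"unneth theorem for local cohomology converts $\depth T = \rank E$ into $\depth T' = 0$, producing an explicit element whose annihilator is exactly $\ker(T \to P_V)$. You have no substitute for this splitting argument. Moreover, even granting an associated prime of dimension $\rank E$, your identification of the representing torus with $E$ (``unique up to conjugacy central $p$-torus of its rank'') is unjustified: a priori such a prime could come from a non-central rank-$\rank E$ torus of $C_GE$; that a representing torus must be maximal central in its own centralizer is a consequence of the theorem, not an input.

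Two further points. First, $H_S^*(G\backslash U(V))$ is not $H_G^*$; it is $H_G^*(U(V)/S)$, which is only faithfully flat over $H_G^*$ (Quillen, Proposition \ref{faithfullyflat}), and this flatness is exactly what lets one transfer associated primes between $H_G^*$ and the equivariant ring (and likewise between $H^*_{C_GE}$ and $H_S^*Y$, since $Y \cong C_{G^u}E\backslash C_{U(V)}E$); your sketch never invokes it. Second, your plan for (1)$\Rightarrow$(2) via showing $\ann(\res(x)) = \p_E$ is only a plan: the paper's argument instead uses minimality of the stratified filtration, comparing $L$ with the pushforward image $\Sigma^{d_T}T$ and observing that the cokernel carries a free rank filtration in which $\phi_E$ does not occur, so an associated prime represented by $E$ must already live on the $E$-stratum; without some version of that step (or another way to rule out the cokernel) the ``stratum-level control'' you appeal to is not established.
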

This theorem is useful because it allows one to rule out, without cohomology computations, some $p$-tori from representing associated primes. For example if the Duflot bound for depth for $H^*_{C_GE}$ is bigger than $\rank E$, then $E$ can't represent an associated prime in $H_G^*$. Our proof uses the Duflot filtration and goes through equivariant cohomology, and for particular groups where there is a good understanding of the relevant $S$-manifold, our methods could presumably give more restrictions on associated primes. 

Carlson conjectured that for finite groups there is always an associated prime of dimension equal to the depth of $H_G^*$. Green \cites{greencarlsonsconjecture} and Kuhn \cites{kuhnprimitives,kuhnnilpotence} have shown that for $p$-groups and compact Lie groups respectively, this is true when the Duflot bound for depth is sharp. We give in \ref{carlsonsconjecturewhenduflotboundissharp} a different proof that also applies to compact Lie groups.
\begin{Thm}[\ref{carlsonsconjecturewhenduflotboundissharp}, Kuhn \cites{kuhnprimitives,kuhnnilpotence}]
For $G$ a compact Lie group,  $\depth H_G^*$ is equal to the $p$-rank of the center of $G$ if and only if the maximal central $p$-torus of $G$ represents an associated prime.
\end{Thm}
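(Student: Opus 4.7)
The plan is to deduce this theorem as an immediate consequence of Theorem \ref{restrictionsonassociatedprimes}. Let $Z$ denote the maximal central $p$-torus of $G$ and let $c = \rank Z$, which by construction equals the $p$-rank of $Z(G)$. Since $Z$ is contained in $Z(G)$, every element of $G$ commutes with every element of $Z$, and hence $C_G(Z) = G$.

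Now apply the equivalence of conditions (1) and (3) in Theorem \ref{restrictionsonassociatedprimes} to the $p$-torus $E = Z$. The condition that $Z$ represents an associated prime in $H_G^*$ is equivalent to $\depth H^*_{C_G Z} = \rank Z$. The identity $C_G Z = G$ together with $\rank Z = c$ reduces this to $\depth H_G^* = c$, which is exactly the condition on the other side of the biconditional. Both directions of the theorem follow at once.

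The substantive work in this argument is entirely contained in Theorem \ref{restrictionsonassociatedprimes}, whose proof uses the Duflot filtration machinery and the analysis of free rank filtrations developed earlier in the paper. The present theorem is in effect the special case of that result in which the centralizer condition becomes automatic. Correspondingly, the main obstacle is not in this short deduction but in establishing the general restrictions on associated primes; once that is in hand, the specialization to a central $p$-torus requires only the elementary observation $C_G(Z) = G$.
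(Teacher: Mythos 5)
Your proposal is correct and is essentially the paper's own argument: the paper likewise deduces Theorem \ref{carlsonsconjecturewhenduflotboundissharp} by applying Theorem \ref{restrictionsonassociatedprimes} with $A$ the maximal central $p$-torus, for which $C_G A = G$. Your version just makes explicit that the equivalence of (1) and (3) gives both directions of the biconditional at once.
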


We also study the relationship between the posets  mentioned above associated to  manifolds $M, N$ with $M \to N$ an equivariant principal $K$-bundle for $K$ a finite group. In favorable situations there is an induced map between the posets that is an analog of a principal $K$-bundle in the category of posets. For certain group extensions (see section \ref{itrivialbundles} for definitions) this leads to a spectral sequence computing local cohomology,
which leads to computations involving the top local cohomology modules of the group cohomology of the Sylow $p$-subgroups of $S_{p^n}$, which we denote by $W(n)$. 


We compute the local cohomology of $W(n)$ in terms of the Tate cohomology of $W(n-1)$.
\begin{Thm}[\ref{computationoflocalcohomologyofH_SU(V)/W(n)}]\label{computationforHSWNinintro}
For $W(n) \hookrightarrow U(V)$  a faithful representation, let $k$ be the dimension of the manifold $U(V)$, and let $d$ be $-p^{n-1}+k$. For $ 0< i < p-3$, we have that $$\HH^{p^{n-1}-(p-3)+i} H_S^*( W(n) \backslash U(V))$$ is isomorphic as an $H^*_{W(n)}$-module to $$\widehat{H}^{i-(p-2)}(W(n-1), \Sigma^d(H_S^* E(n) \backslash U(V))^*). $$

For the top local cohomology, we have that $\HH^{p^{n-1}}( H_S^* W(n) \backslash U(V))$ is isomorphic as an $\FF_p$-vector space to $$\widehat{H}^{-1}(W(n-1), \Sigma^d(H_S^* E(n) \backslash U(V))^*) \oplus N(\Sigma^d(H_S^* E(n) \backslash U(V))^*).$$ Here $N$ is the norm map, and $\widehat{H}$ is Tate cohomology. 
\end{Thm}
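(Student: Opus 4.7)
The plan is to apply the local cohomology spectral sequence for i-trivial principal bundles constructed in section \ref{itrivialbundles} to the bundle
\[
E(n) \backslash U(V) \to W(n) \backslash U(V).
\]
Here $E(n) = E(n-1)^p$ is the elementary abelian $p$-subgroup of $W(n)$ sitting in the base of the wreath product $W(n) \cong W(n-1) \wr \ZZ/p$; by induction on $n$ the quotient satisfies $W(n)/E(n) \cong W(n-1)$, so this is an $S$-equivariant principal $W(n-1)$-bundle. The first task is to verify the i-triviality hypothesis: describe the stratum posets of centralizer components of subtori of $S$ on both $E(n) \backslash U(V)$ and $W(n) \backslash U(V)$, and check that the induced map between these posets has the form required in section \ref{itrivialbundles}. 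The explicit wreath-product description of centralizers should make this combinatorially tractable.

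Next, compute the $E_2$ page. Because $E(n)$ is itself a $p$-torus acting freely on the closed orientable manifold $U(V)$, the ring $H_S^*(E(n) \backslash U(V))$ is Cohen--Macaulay by the free rank filtration theory developed earlier, and in fact Gorenstein with dualizing shift $d = -p^{n-1} + k$. Its local cohomology is therefore concentrated in the single top degree $p^{n-1}$, where it equals $\Sigma^d (H_S^*(E(n) \backslash U(V)))^*$. The $E_2$ page is then concentrated in a single row and the spectral sequence collapses at $E_2$.

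Tracking the built-in total-degree shift of the spectral sequence between the Tate cohomology entries and the local cohomology of the base should yield the stated isomorphism in the range $0 < i < p-3$. For the top module $\HH^{p^{n-1}}$, which corresponds to Tate index $-1$, I expect an additional $\FF_p$-vector space summand coming from the image of the norm: since $\widehat{H}^{-1}(W(n-1), M) = \ker N$ but the edge of convergence captures all of $M^{W(n-1)}$, the remaining piece must be $N(\Sigma^d(H_S^*(E(n) \backslash U(V)))^*)$. The splitting is only as $\FF_p$-vector spaces, consistent with the statement.

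The hardest part will be verifying i-triviality for this particular bundle, which requires a careful analysis of how centralizers of subtori of $S$ interact with the wreath-product structure. A secondary obstacle is correctly bookkeeping the degree shifts at the top edge, so that the norm-map correction appears cleanly; the range restriction $0 < i < p-3$ presumably reflects further edge effects near Tate degree $-(p-2)$ where the identification breaks down and extra terms would intervene.
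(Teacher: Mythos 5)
Your overall strategy (use the $i$-trivial extension $1 \to E(n) \to W(n) \to W(n-1) \to 1$ and the resulting spectral sequence, then convert the answer via Cohen--Macaulayness of $H_S^*(E(n)\backslash U(V))$ and duality) is the same as the paper's, but there is a genuine gap at the heart of your computation of the $E_2$ page. The extension is only $p^{n-1}-p+3$-trivial, not $j$-trivial for any smaller $j$ (centralizers of low-rank $p$-tori of $W(n)$ are not contained in $E(n)$), so the spectral sequence of \ref{spectralsequenceforitrivialextension} does not compute $\HH^*(H_S^*(W(n)\backslash U(V)))$ from $\HH^*(H_S^*(E(n)\backslash U(V)))$; it computes $\HH^*\bigl(F_{p^{n-1}-p+3}H_S^*(W(n)\backslash U(V))\bigr)$ with coefficients $\HH^*\bigl(F_{p^{n-1}-p+3}H_S^*(E(n)\backslash U(V))\bigr)$. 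While the full ring $H_S^*(E(n)\backslash U(V))$ is indeed Cohen--Macaulay with local cohomology in the single degree $p^{n-1}$, the truncation $F_{p^{n-1}-p+3}$ is a proper ideal whose local cohomology can be (and is) nonzero in \emph{two} degrees, $p^{n-1}$ and $p^{n-1}-p+3$. So the $E_2$ page has two rows, not one, and it does not collapse: there is a nontrivial differential $d_{p-2}$ between the rows. Your one-row collapse would produce ordinary group cohomology $H^*(W(n-1),\HH^{p^{n-1}})$ in the answer and, worse, would force nonvanishing local cohomology of the abutment in total degrees above the Krull dimension $p^{n-1}$, a contradiction; the actual resolution is precisely that $d_{p-2}$ is an isomorphism in that range.

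The missing idea, which is what makes Tate cohomology (rather than ordinary cohomology) appear in the statement, is the two-step argument of the paper: first, vanishing of the abutment above degree $p^{n-1}$ forces $d_{p-2}\colon E_2^{i,p^{n-1}} \to E_2^{i+p-2,\,p^{n-1}-(p-3)}$ to be an isomorphism for $i>0$ and a surjection for $i=0$, expressing $\HH^{p^{n-1}-(p-3)+i}$ of the truncation (which agrees with that of the full ring in this range by \ref{whytruncationsaregood}) in terms of $H^*(W(n-1),\HH^{p^{n-1}-(p-3)}(F_{p^{n-1}-(p-3)}H_S^*(E(n)\backslash U(V))))$; second, because the terms of the Duflot complex for $E(n)\backslash U(V)$ in this range are free $W(n-1)$-modules, the hyper-Tate spectral sequence converges to zero and yields a dimension-shifting isomorphism $\widehat{H}^i(W(n-1),\HH^{p^{n-1}}) \cong \widehat{H}^{i+p-2}(W(n-1),\HH^{p^{n-1}-(p-3)}(F_{p^{n-1}-(p-3)}\cdots))$, which is what converts the answer into Tate cohomology of the top local cohomology of the full ring and, after the Greenlees--Dwyer--Iyengar duality identification $\HH^{p^{n-1}} \cong \Sigma^{d}(H_S^*(E(n)\backslash U(V)))^*$, gives the stated form. (For that identification Cohen--Macaulayness plus the local cohomology spectral sequence suffices; you do not need a Gorenstein claim.) Your heuristic for the norm-map summand at the top is in the right spirit, but it only becomes an argument once the kernel of $d_{p-2}$ on $E_2^{0,p^{n-1}}$ is identified with the kernel of $H^0 \to \widehat{H}^0$, i.e.\ with the image of the norm; and the restriction $0<i<p-3$ is not an ``edge effect'' but exactly the band where the two-row analysis pins down the answer.
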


Local cohomology is known to vanish in degrees below the depth and above the dimension, and it is nonvanishing at degrees equal to the  depth and dimension, and at any degree equal to the dimension of an associated prime. 

The regularity theorem concerns the vanishing of $\HH^{i,j}H_G^*$ for $j>-i$, but  little  is known about the vanishing and nonvanishing of the entire $\HH^i(H_G^*)$ beyond what is known for general graded rings. In particular, it is not known if there is any finite group $G$ with $\depth H_G^*=d$ and $\dim H_G^*=r$ and some $i$ with $d<i<r$ with $\HH^i H_G^*=0$. In other words, in the range where local cohomology can be either zero or nonzero, it is not known if there is a group where local cohomology is ever nonzero.
Perhaps no such group exists. We can compute enough of the local cohomology of $H^*_{W(n)}$ to show that that top $p-2$ local cohomology modules for this ring are nonzero.
\begin{Thm}[\ref{abandofnonvanishinglocalcohomology}]
For $0 \le i <p-3$, $\HH^{p^{n-1}-i}(H^*_{W(n)}) \not= 0$.
\end{Thm}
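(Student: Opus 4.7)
The plan is to combine the tensor product decomposition from the first theorem of the introduction with the explicit formulas of Theorem \ref{computationforHSWNinintro}, thereby reducing the problem to nonvanishing of group (co)homology of $W(n-1)$.

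First, the identification $\HH^j H_S^*(G \backslash U(V)) \cong \HH^j H_G^* \otimes_{\FF_p} H^* U(V)/S$ shows that the nonvanishing of $\HH^j H^*_{W(n)}$ is equivalent to the nonvanishing of $\HH^j H_S^*(W(n)\backslash U(V))$, since $H^* U(V)/S$ is a nonzero finite-dimensional graded $\FF_p$-vector space. It therefore suffices to show $\HH^{p^{n-1}-i} H_S^*(W(n)\backslash U(V)) \neq 0$ for $0 \le i < p-3$.

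The case $i = 0$ is automatic: $p^{n-1}$ is the Krull dimension of $H^*_{W(n)}$ by Quillen, so the top local cohomology is nonzero by standard commutative algebra. For $0 < i < p-3$, setting $i' = (p-3) - i$ so that $i'$ ranges over $\{1, \dots, p-4\}$, Theorem \ref{computationforHSWNinintro} identifies $\HH^{p^{n-1}-i'} H_S^*(W(n)\backslash U(V))$ with $\widehat{H}^{-i'-1}(W(n-1), M)$, where $M := \Sigma^d(H_S^* E(n) \backslash U(V))^*$. Since $i' \ge 1$, this Tate cohomology group coincides with the ordinary group homology $H_{i'}(W(n-1), M)$.

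The main obstacle is then to verify $H_{i'}(W(n-1), M) \neq 0$ for each $1 \le i' \le p-4$. Because $W(n-1)$ is a nontrivial $p$-group (for $n \ge 2$), one has $H_k(W(n-1), \FF_p) \neq 0$ for every $k \ge 0$, so a natural strategy is to exhibit a $W(n-1)$-trivial quotient or summand of $M$, for instance one arising from the unit class in $H_S^0(E(n)\backslash U(V))$ together with the residual action of $N_{W(n)}(E(n))/E(n) \cong W(n-1)$, and then to propagate its homology contribution through a short exact sequence. The technical difficulty lies in confirming that this trivial contribution is not cancelled by the rest of $M$ across the required range of degrees; I expect this to follow from bounds on the internal grading of $M$ combined with a careful analysis of the Weyl-group action on $H_S^*(E(n)\backslash U(V))$.
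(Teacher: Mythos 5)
Your reduction steps coincide with the paper's: tensoring with the nonzero finite-dimensional graded vector space $H^*U(V)/S$ (i.e.\ \ref{faithfullyflat}) reduces the claim to $\HH^{p^{n-1}-i}H_S^*(W(n)\backslash U(V))\neq 0$; the case $i=0$ is the Krull dimension statement; and \ref{computationoflocalcohomologyofH_SU(V)/W(n)} converts the remaining cases into Tate cohomology of $W(n-1)$ with coefficients in $M=\Sigma^{-p^{n-1}+d}(H_S^*E(n)\backslash U(V))^*$. (Minor slip: for your $i'$ the group $\widehat{H}^{i'-(p-2)}$ is $H_{(p-3)-i'}$, not $H_{i'}$; this does not affect which degrees are needed.) But the last and essential step --- that these Tate groups are actually nonzero --- is left as an expectation, so as written the argument has a genuine gap: you propose to locate a trivial summand or quotient of $M$ and ``propagate its homology contribution through a short exact sequence,'' and you flag as unresolved the worry that this contribution could be cancelled by the rest of $M$.

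The paper closes exactly this gap, and the closure needs neither a short exact sequence nor any further analysis of the $W(n-1)$-action beyond the fact that it fixes the unit class. Everything in sight carries an internal grading and $W(n-1)$ acts degree-preservingly, so Tate cohomology with coefficients in the graded module $M$ decomposes as a direct sum over internal degrees; cancellation between different internal degrees is impossible. In the top internal degree $-p^{n-1}+d$ the module $M$ is the dual of $H_S^0(E(n)\backslash U(V))\cong\FF_p$, a one-dimensional trivial module, so
$$\HH^{p^{n-1}-(p-3)+i,\,-p^{n-1}+d}\bigl(H_S^*(W(n)\backslash U(V))\bigr)\cong \widehat{H}^{i-(p-2)}(W(n-1);\FF_p),$$
and Tate cohomology of the nontrivial $p$-group $W(n-1)$ with $\FF_p$ coefficients is nonzero in every degree, positive and negative. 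This is precisely the ``bound on the internal grading'' you anticipated; once it is stated as the bigraded identification above, the nonvanishing is immediate and your outline becomes a complete proof along the same lines as the paper's.
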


This shows that there are arbitrarily long intervals where local cohomology is nonzero and where there is no a priori reason (in light of the fact noted above about associated primes) for local cohomology to be nonzero (the embedded primes for $H^*_{W(n)}$ are all of rank less than $p^{n-1}-(p-3)$).
\begin{Cor}[\ref{mysteriousgrap}] For each $p \ge 5$, for each $n$ there exists a $p$-group $G$ and an $i$ so that $\HH^{i+j}(H_G^*)\not=0$ for all $0<j<n$ , and so that $i+j$ is not the dimension of an associated prime.
\end{Cor}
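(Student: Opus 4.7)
The plan is to apply Theorem~\ref{abandofnonvanishinglocalcohomology} to a product of iterated wreath products. For fixed $p \ge 5$ and $n$, take $G = W(N)^{k}$ with $N$ and $k$ chosen in terms of $p$ and $n$. Since $H_G^* \cong (H_{W(N)}^*)^{\otimes k}$ as finitely generated graded $\FF_p$-algebras, the K\"unneth formula for local cohomology over the field $\FF_p$ gives
\[
\HH^*(H_G^*) \;\cong\; \bigotimes_{j=1}^{k} \HH^*(H_{W(N)}^*).
\]
Combined with the nonvanishing $\HH^{p^{N-1}-s}(H_{W(N)}^*) \neq 0$ for $0 \le s \le p-4$ from Theorem~\ref{abandofnonvanishinglocalcohomology}, this yields $\HH^{kp^{N-1}-T}(H_G^*) \neq 0$ for every $T \in \{0, 1, \ldots, k(p-4)\}$, i.e.\ a block of $k(p-4)+1$ consecutive nonvanishing local cohomology degrees just below the top dimension $kp^{N-1}$.

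I would next control the associated prime dimensions of $H_G^*$ using Duflot's theorem: every associated prime has the form $\p_E$ for a centric $p$-torus $E \le G$, with $\dim H_G^*/\p_E = \rank E$. In particular, the product $p$-tori $E_1 \times \cdots \times E_k$ with each $E_j$ representing an associated prime of $H_{W(N)}^*$ contribute dimensions $\sum_j \rank E_j$, where each summand is either $p^{N-1}$ (a minimal prime of $H_{W(N)}^*$) or strictly less than $p^{N-1}-(p-3)$ (an embedded prime, by the bound recorded in the introduction). Consequently, the largest associated prime dimension of $H_G^*$ strictly below $kp^{N-1}$ is at most $kp^{N-1}-(p-2)$. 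Choosing $i$ so that $i+1, \ldots, i+n-1$ lie in the nonvanishing LC block and fall strictly between $kp^{N-1}$ and the next associated prime dimension would then complete the proof.

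The principal obstacle is to ensure that this gap has length at least $n-1$ for arbitrary $n$. The naive analysis, using only the rank bound on embedded primes of $W(N)$, delivers only $p-3$ consecutive mysterious degrees just below $kp^{N-1}$, which is enough for $n \le p-2$ but not for larger $n$. For general $n$, one needs finer information about the embedded primes of $W(N)$: namely, that their ranks are substantially smaller than $p^{N-1}-(p-2)$ for $N$ sufficiently large. This finer structure should be extracted from the recursive local cohomology computation of Theorem~\ref{computationforHSWNinintro}, which expresses the top local cohomology of $W(N)$ in terms of Tate cohomology of $W(N-1)$; an inductive analysis along this recursion is needed to exhibit embedded primes of $W(N)$ of sufficiently low rank, enlarging the gap in $H_{W(N)^{k}}^*$ to accommodate an interval of length $n-1$ that is free of associated prime dimensions while remaining inside the nonvanishing K\"unneth block.
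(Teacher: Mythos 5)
Your Künneth/product reduction is sound as far as it goes: $\HH^*(H^*_{W(N)^k})\cong\bigotimes^k\HH^*(H^*_{W(N)})$ does give the block of $k(p-4)+1$ consecutive nonvanishing degrees ending at $kp^{N-1}$, and, after supplying the small missing argument (via \ref{restrictionsonassociatedprimes} together with Duflot's depth bound applied to centralizers) that every associated prime of a product of groups is a product of associated primes of the factors, the associated prime dimensions of $H^*_{W(N)^k}$ are exactly the sums you describe. The fatal problem is the step you defer to an ``inductive analysis'': the hoped-for refinement that the non-top associated primes of $H^*_{W(N)}$ have rank substantially below $p^{N-1}-(p-2)$ for large $N$ is false, so no analysis of the recursion in \ref{computationforHSWNinintro} can supply it. Indeed, the paper's own Theorem \ref{restrictionsonassociatedprimes} produces, for every $N\ge 2$, an associated prime of $H^*_{W(N)}$ of dimension exactly $p^{N-1}-(p-2)$: in $W(2)$ the rank-two torus $D_2$ generated by the center and the wreathing element is its own centralizer, so $\depth H^*_{C_{W(2)}D_2}=2=\rank D_2$; inductively $D_N:=E(N-1)^{p-1}\times D_{N-1}\le W(N-1)^p\le W(N)$ has rank $p^{N-1}-(p-2)$, this rank exceeds the triviality bound of \ref{trivialityforwreathproducts}, so $C_{W(N)}D_N=E(N-1)^{p-1}\times C_{W(N-1)}D_{N-1}$, whose cohomology has depth $(p-1)p^{N-2}+\rank D_{N-1}=\rank D_N$. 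Consequently $kp^{N-1}-(p-2)$ is always an associated prime dimension of $H^*_{W(N)^k}$, the run of ``mysterious'' degrees adjacent to the top has length exactly $p-3$ no matter how large $N$ and $k$ are, and your proposal proves the statement only for $n\le p-2$.

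For comparison, the paper uses no products at all: it takes $G=W(n)$ and $i=p^{n-1}-(p-3)$, so that \ref{abandofnonvanishinglocalcohomology} gives the nonvanishing, while the $(p^{n-1}-p+3)$-triviality of $E(n)$ in $W(n)$ shows, via \ref{restrictionsonassociatedprimes}, that no associated prime has dimension strictly between $p^{n-1}-(p-3)$ and $p^{n-1}$ (this is the parenthetical remark about embedded primes in the introduction). So in the paper's argument the window of mysterious nonvanishing degrees has length $p-4$, governed by $p$ alone, and the quantifier ``for each $n$'' is carried by the choice of the group $W(n)$ --- the window sits just below degree $p^{n-1}$ and so occurs in arbitrarily large groups and degrees --- not by the window's length. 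You have read the statement literally as demanding a window of length $n-1$ for fixed $p$; under that reading neither your proposal (for the reason above) nor the deduction the paper actually makes from \ref{abandofnonvanishinglocalcohomology} establishes the claim, and a proof would require a genuinely new source of long gaps in the set of associated prime dimensions, which the wreath-product families, with or without the product trick, cannot provide.
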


Additionally, the strong form of Benson's regularity conjecture is still open. This conjecture states that $\HH^{i,-i}(H_G^*) =0$ for $i\not= \dim H_G^*$. It is known by the work of Symonds that this can only fail for $i$ equal to the rank of a maximal $p$-torus in $G$, so it is already known that this conjecture is true for $W(n)$ in the range which we compute in \ref{computationoflocalcohomologyofH_SU(V)/W(n)}. Nevertheless, it is interesting to observe that for the $i$ in our range the largest $j$ for which $\HH^{i,j}H^*_{W(n)} \not=0$ is  smaller than predicted by the strong form of the regularity conjecture.

In fact, we have the following:
\begin{Cor}[\ref{themysterioustriangleofzeroes}]
For $0 <  i < p-3$, $\HH^{p^{n-1}-i,j}H^*_{W(n)}=0$ for $j>-p^{n-1}$. 
\end{Cor}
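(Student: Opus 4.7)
The plan is to derive the Corollary from Theorem~\ref{computationoflocalcohomologyofH_SU(V)/W(n)} together with the tensor decomposition $\HH^{*,*} H_S^*(G\backslash U(V)) \cong \HH^{*,*}(H_G^*) \otimes H^*(U(V)/S)$ from the initial theorem of the introduction (i.e.\ Theorems~\ref{duflotcomplex}, \ref{depthanddimensionforHSM}, \ref{duflotandquilleningroupcohomology}). The key observation is that the Tate cohomology coefficient module appearing in Theorem~\ref{computationoflocalcohomologyofH_SU(V)/W(n)} is concentrated in very low internal degrees, which forces vanishing of $\HH H_S^*(W(n)\backslash U(V))$ above a certain line, and via the tensor decomposition and Poincar\'e duality this descends to a vanishing result for $\HH H_{W(n)}^*$.

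First I would unpack the statement: for $0<i<p-3$, relabeling $i \mapsto p-3-i$ in Theorem~\ref{computationoflocalcohomologyofH_SU(V)/W(n)} gives a bigraded isomorphism
\[
\HH^{p^{n-1}-i}H_S^*(W(n)\backslash U(V)) \;\cong\; \widehat{H}^{-i-1}\bigl(W(n-1),\;\Sigma^d (H_S^*(E(n)\backslash U(V)))^*\bigr),
\]
where $d=-p^{n-1}+k$. Because $H_S^*(E(n)\backslash U(V))$ is non-negatively graded, its $\FF_p$-dual is concentrated in non-positive internal degrees, and the shift $\Sigma^d$ places it in internal degrees $\le d$. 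Since Tate cohomology with coefficients in a graded module is computed componentwise in the internal direction, this yields
\[
\HH^{p^{n-1}-i,j}H_S^*(W(n)\backslash U(V)) = 0 \quad \text{for } j > d = -p^{n-1}+k.
\]

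Next I would transfer this vanishing from $H_S^*(W(n)\backslash U(V))$ to $H^*_{W(n)}$. The manifold $U(V)/S$ is closed and orientable of dimension $k$ (since $U(V)$ is a closed connected orientable Lie group and $S$ acts freely by orientation-preserving right translations), so $H^k(U(V)/S;\FF_p)\cong\FF_p$. The tensor decomposition gives the bigraded identity
\[
\HH^{p^{n-1}-i,j}H_S^*(W(n)\backslash U(V)) \;\cong\; \bigoplus_{a+b=j}\HH^{p^{n-1}-i,a}(H^*_{W(n)})\otimes H^b(U(V)/S).
\]
Suppose $\HH^{p^{n-1}-i,j_0}(H^*_{W(n)})\ne 0$ for some $j_0>-p^{n-1}$. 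Tensoring with the nonzero top class of $U(V)/S$ contributes a nonzero summand to $\HH^{p^{n-1}-i,j_0+k}H_S^*(W(n)\backslash U(V))$, with $j_0+k > -p^{n-1}+k$, contradicting the vanishing established above. This yields $\HH^{p^{n-1}-i,j}H^*_{W(n)}=0$ for all $j>-p^{n-1}$.

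The main obstacle is purely bookkeeping: checking that the shift convention for $\Sigma^d$ and the internal grading of Tate cohomology on graded coefficients line up so that the internal-degree bound $t\le d$ on the coefficients really transfers to the same bound on $\widehat{H}^{*,t}$, and confirming that the tensor product decomposition in the initial theorem respects the internal bigrading on the nose. Once these conventions are aligned, the argument reduces to a short contradiction using Poincar\'e duality and a K\"unneth-style summand count.
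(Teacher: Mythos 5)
Your proposal is correct and follows essentially the same route as the paper: it invokes Theorem \ref{computationoflocalcohomologyofH_SU(V)/W(n)} together with the fact that $\Sigma^{-p^{n-1}+\dim U(V)}(H_S^*E(n)\backslash U(V))^*$ is concentrated in internal degrees $\le -p^{n-1}+\dim U(V)$ to bound the internal grading of $\HH^{p^{n-1}-i}H_S^*(W(n)\backslash U(V))$, and then uses the faithfully flat tensor decomposition (Proposition \ref{faithfullyflat}) with the top class of the closed orientable manifold factor to transfer the bound to $\HH^{p^{n-1}-i,j}H^*_{W(n)}$. The only cosmetic difference is that you phrase the descent step as a contradiction via a K\"unneth summand, whereas the paper states directly that the top nonzero internal degree of $\HH^i H_S^*(W(n)\backslash U(V))$ exceeds that of $\HH^i H^*_{W(n)}$ by $\dim U(V)$.
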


In other words, the strong regularity conjecture, which is confirmed for these groups in this range, tells us that $\HH^{i,j}H^*_{W(n)}$ should be $0$ for $j>-(i+1)$, but in fact we can show in this range the bound can be improved to $j>-p^{n-1}$, independent of $i$.

\subsection{Outline}

In section \ref{sectiononabstractstuff} we study an abstract version of the Duflot filtration in equivariant cohomology. The most basic definitions and consequences are in \ref{moduleswithafreerankfiltration}. The refinement of the filtration is in \ref{stratifiedfiltrations}, and consequences of the refinement are in \ref{moreresultsforstratifiedfiltrations}. We study group actions on the refining poset and maps between filtered modules in \ref{KDuflot}, which leads to a spectral sequence computing local cohomology.

We apply the abstract results of section \ref{sectiononabstractstuff} to study equivariant cohomology rings in section \ref{chapteronequivariantcohomology}.

In section \ref{sectiononthecohomologyofBG} we apply the results earlier in the paper to the cohomology rings of the classifying spaces of compact Lie groups. We use the results of \ref{KDuflot} to study the local cohomology of the $p$-Sylow subgroups of $S_{p^n}$ in \ref{wreathproducts}.

\subsection{Acknowledgments}
This is the principal part of my thesis from the University of Washington, and I thank my advisor Steve Mitchell for introducing me to the subject and for sharing innumerable insights; this paper is dedicated to him.

I wish to thank my advisors John Palmieri and Julia Pevtsova for their support and for providing many helpful comments on drafts of this document.

\subsection{Notation and conventions}\label{notation}
Throughout, we fix a prime $p$.

We will denote by $P_W$ the polynomial algebra $S(W^*)$, where $W$ is a fixed $i$-dimensional $\FF_p$-vector space. The grading on $P_W$ is inherited from that on $W$, which is concentrated in degree $-1$ when $p=2$ and in degree $-2$ otherwise. So, after choosing a basis for $W$, $P_W=\FF_p[y_1,\dots ,y_i]$, where $|y_i|$ is either 1 or 2. For $V \subset W$, we denote by $P_V$ the symmetric algebra $S(V^*)$, graded in the same manner as $P_W$. We have a $P_W$ module structure on $P_V$ induced by the inclusion $V \to W$.

All rings are graded commutative and all modules are graded, and $\Sigma^d$ denotes the suspension functor. All homological algebra is to be done in the graded sense. 

We use $\HH^*_{\m}$ to denote local cohomology with respect to $\m$. All our local cohomology modules will be taken with respect to the maximal ideal of positive degree elements in an obvious algebra, so we will often omit $\m$. For a treatment of local cohomology that includes the graded case, see \cite{brodmannandsharp}. 

A $p$-torus is a group isomorphic to $(\ZZ/p)^r$ for some $r$, and if a $p$-torus $E$ is isomorphic to $(\ZZ/p)^r$ we say that the rank of $E$ is $r$.

For $X$ a $G$-space, we write $H_G^*X$ for $H^*(EG \times _G X;\FF_p)$, and write $H_G^*$ for $H_G^* pt$, which is $H^*(BG; \FF_p)$.

For $V$ a complex unitary representation of $G$, we will use $U(V)$ to denote the group of unitary isomorphims of $V$, which is of course equipped with a map $G \to U(V)$. If $V$ is equipped with a direct sum decomposition, i.e. $V \cong V_1 \oplus V_2$ as $G$-vector spaces, we will require that $U(-)$ respects this direction sum decomposition, so $U(V_1 \oplus V_2)=U(V_1) \times U(V_2)$. 

This is so we can refer to a map $G \hookto U(V)$, where we more properly mean a map $G \hookto \prod_i U(n_i)$. Generally any faithful representation of $G$ suffices for our theory, but there are a few points where we want to specify a representation $V$ of $G$ so that the center of $G$ maps to the center of $U(V)$, which is only possible in general if $U(V)$ is allowed to denote a product of unitary groups.

\section{An abstract treatment of modules with free rank filtrations}\label{sectiononabstractstuff}
In this section we give an abstract description of the structure that is present on the $S$-equivariant cohomology of a smooth manifold. We then show that whenever this structure is present there are analogs of Quillen's theorem on dimension, Duflot's theorem on depth, Carlson's detection theorem, and Symonds' regularity theorem. Then in \ref{KDuflot} we describe how the structures studied earlier in the section behave with respect to certain classes of maps, which leads to a spectral sequence for local cohomology modules.

\subsection{Modules with free rank filtrations}\label{moduleswithafreerankfiltration}
\subsubsection{Motivation from topology}
Our initial goal is to develop a framework for studying the algebraic consequences of the Duflot filtration. To motivate the definitions used in this framework, we first recall some of the structure apparent in the Duflot filtration. We will return to this in much more detail in \ref{propertiesofHSM}.

Let $M$ be a smooth $S$-manifold, where $S$ is a $p$-torus. Then Duflot in \cite{duflotfiltration} defines a filtration on $H_S^*M$ so that $F_i/F_{i+1}$ is a sum of modules of the form $\Sigma^d (H^*_A \otimes H^*N)$, where $N$ is a manifold and $A$ is a rank $i$ $p$-torus. This $H^*_A \otimes H^*N$ arises as the $S$-equivariant cohomology of a submanifold $L$ of $M$ on which $A$ acts trivially and $S/A$ acts freely, so it is an $H_S^*M$-module via the restriction map $H_S^*M \to H_S^*L \cong H_A^* \otimes H^*N$.

Because $A$ acts trivally on $L$, there are $S$-equivariant maps:
$\begin{tikzcd}
& pt \\
M \urar & L \lar \uar \\
& S/A \ular \uar
\end{tikzcd}$.
Taking $S$-equivariant cohomology gives:
$\begin{tikzcd}
& H_S^* \dar \dlar  \\
H_S^*M \rar \drar  & H_S^* L \dar \\
& H^*_A
\end{tikzcd}$.

When $p=2$, the top and bottom objects of this diagram are already polynomial rings, and when $p$ is odd there is an inclusion of a polynomial ring $P_S$ on $\rank S$ many variables  of degree two into $H_S^*$, and a map from $H_A^*$ onto a polynomial ring $P_A$ on $\rank A$ variables in degree $2$. For $p$  odd, we can write $H_S^*L$ as  a polynomial ring on $\rank A$ variables tensored with the tensor product of $H^*N$ and an exterior algebra on $\rank A$ variables, so as $P_A \otimes B$, where  $B$ is bounded as a graded ring.  So, for both $p$ odd and for $p$ even we have a diagram:

$\begin{tikzcd}
& P_S \dar \dlar  \\
H_S^*M \rar \drar  &P_A \otimes  B \dar \\
& P_A
\end{tikzcd}$.

The map $P_S \to P_A$ is induced by the $\FF_p$-linear inclusion $A \to S$.  


Suspensions of modules of the  form of $H_S^*L$ we will call ``$r$-free'' where $r$ is the rank of $A$. We will now define this in a purely algebraic setting.
\subsubsection{Free rank filtrations}

Fix an $\FF_p$-vector space $W$.

We are interested in Noetherian $\FF_p$-algebras $R$ that are also finite algebras over $P_W$, i.e. finitely generated  $\FF_p$-algebras $R$ with a (graded) $\FF_p$-algebra map $P_W \to R$ making $R$ into a finitely generated $P_W$-module. Recall from \ref{notation} that $P_W$ is polynomial on $\rank W$ variables which are in degree one when $p$ is $2$, and in degree two when $p$ is not $2$.

Throughout this section, $R$ will be a fixed finite $P_W$-algebra.

\begin{Def}An $R$-module $M$ is called \emph{$j$-free} if it is a suspension of an $R$-algebra of the form $ P_V \otimes N$, where $V$ is a $j$-dimensional subspace of $W$ and $N$ is a bounded connected $\FF_p$-algebra.

Moreover, in the commuting diagram of algebra maps:
\begin{tikzcd}
& P_W \dar \arrow{dl} \\
R \rar \drar & P_V \otimes N \dar \\ 
& P_V
\end{tikzcd}
we require that the map $P_W \to P_V$ is induced by the inclusion $V \to W$ (the map $P_V \otimes N \to P_V$ is the obvious one, induced by the identity $P_V \to P_V$ and the augmentation $N \to \FF_p \to P_V$).
\end{Def}

\begin{Def}
A descending $R$-module filtration $0=F_{i+1} \subset F_i \subset F_{i-1} \subset \dots \subset F_0=L$ of an $R$-module $L$ is called a \emph{free rank filtration} if $F_{j}/F_{j+1}$ is a finite sum of  $j$-free modules.
\end{Def} 
We will write $F_{j}/F_{j+1}$ as $\bigoplus_{V\subset W, \dim V=j} \bigoplus_{k=0}^l \Sigma^{d_{V,k}}(P_{V,k} \otimes N_{V,k})$, and we denote $\Sigma^{d_{V,k}}P_{V,k}\otimes N_{V,k}$ as $M_{V,k}$. We will denote the map $R \to P_{V,k}$ as $\phi_{V,k}$, but whenever it is possible to do so without confusion we will omit the $k$ in the subscript.  The direct sum decomposition of $F_j/F_{j+1}$ is part of the data of a free rank filtration, and the same $j$-dimensional subspace of $W$ can occur more than once in this direct sum decomposition.


\begin{Def}In the sequel we will want to add the condition that the kernels of the  $\phi_{V}: R \to P_{V}$ are distinct; we will call a free rank filtration with this extra property \emph{minimal}.
\end{Def}

\begin{prop}\label{depthanddimoftruncations}
If an $R$-module $L$ has  a free rank filtration, then the Krull dimension of  $L/F_s $ is less than $s$. If $R$ is in addition connected, and $F_{s+1} \not= F_s$, then $\depth F_s \ge s$.
\end{prop}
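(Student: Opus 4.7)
The plan is to isolate a preliminary computation: for any $j$-free module $M=\Sigma^{d}(P_V\otimes N)$, both the Krull dimension and, when $R$ is connected, the depth of $M$ as an $R$-module are equal to $j$. Granting this, the two claims follow by organizing the short exact sequences $0\to F_{j+1}\to F_j\to F_j/F_{j+1}\to 0$ appropriately.

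To handle the preliminary computation I would pass through the finite map $P_W\to R$. Because $R$ is finite over $P_W$, the quotient $R/\mathfrak{m}_W R$ is a finite-dimensional graded $\FF_p$-algebra, so (when $R$ is connected) $\mathfrak{m}_W R$ has radical equal to the graded maximal ideal $\mathfrak{m}_R$; hence Krull dimension and depth for any finitely generated $R$-module can be computed through the $P_W$-action. Since $N$ is bounded and connected it is finite-dimensional over $\FF_p$, so $P_V\otimes N$ is free over $P_V$ of rank $\dim_{\FF_p}N$, which gives Krull dimension $\dim P_V=j$. For the depth, choosing a basis $y_1,\dots,y_j$ of $V$ presents $P_V$ as $\FF_p[y_1,\dots,y_j]$, and freeness of $P_V\otimes N$ over $P_V$ makes $y_1,\dots,y_j$ a regular sequence, so $\depth\ge j$. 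The quotient by this sequence is isomorphic to $N$, and under the $P_W$-action the remaining generators of $\mathfrak{m}_W$ act through the augmentation $N\to\FF_p$, annihilating the top-degree piece of $N$; hence the sequence cannot be extended and $\depth=j$.

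For the dimension claim, $L/F_s$ inherits a finite filtration whose subquotients are $F_j/F_{j+1}$ for $0\le j<s$, each a direct sum of $j$-free modules of Krull dimension $j$. Since the Krull dimension of a module carrying a finite filtration is the maximum of the Krull dimensions of the subquotients, $\dim L/F_s\le s-1<s$.

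For the depth claim, I would argue by downward induction on $j$ from $j=i+1$ down to $j=s$, proving $\depth F_j\ge j$. The base case $F_{i+1}=0$ has depth $+\infty$. For the inductive step, applying the standard depth lemma to $0\to F_{j+1}\to F_j\to F_j/F_{j+1}\to 0$ gives
\[
\depth F_j\ge \min(\depth F_{j+1},\,\depth F_j/F_{j+1});
\]
by induction the first term is at least $j+1$, and by the preliminary computation (together with the fact that the depth of a direct sum is the minimum of the depths of the summands) the second is at least $j$, so $\depth F_j\ge j$. The hypothesis $F_{s+1}\neq F_s$ just ensures we are not in the vacuous case $F_s=0$. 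The only nontrivial step is the preliminary depth computation for $j$-free modules; the inductive bookkeeping is then formal.
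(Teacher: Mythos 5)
Your proposal is correct and follows essentially the same route as the paper, which disposes of both claims by invoking the induced filtrations on $L/F_s$ and $F_s$ together with the behavior of Krull dimension and depth in short exact sequences; your preliminary computation of the dimension and depth of a $j$-free module (via the finite map $P_W\to R$ and the regular sequence coming from $P_V$) simply makes explicit what the paper leaves implicit.
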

\begin{proof}
The proofs of both statements follows from the induced filtrations on $L/F_s$ and $F_s$, and the behavior of depth and dimension on short exact sequences.


The assumption that $R$ is connected is just because it is traditional to only define depth in the context of modules over local rings, and if $R$ is connected it is local in the graded sense.

\end{proof}
\begin{cor}
If $R$ is connected and $L$ has a free rank filtration, then the depth of $L$ is greater than or equal to the smallest $k$ such that $F_{k+1} \not= F_k$.
\end{cor}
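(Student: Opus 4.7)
The plan is to deduce this corollary essentially immediately from the second assertion of Proposition \ref{depthanddimoftruncations}, once we correctly interpret the indexing. Let $k$ denote the smallest integer with $F_{k+1} \neq F_k$. By minimality, $F_{j+1} = F_j$ for every $j < k$, so the filtration is constant on the initial segment $F_0 \supseteq F_1 \supseteq \cdots \supseteq F_k$, giving the identification $L = F_0 = F_k$.

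Next I would invoke Proposition \ref{depthanddimoftruncations} with $s = k$. Since $R$ is connected and $F_{k+1} \neq F_k$, that proposition yields $\depth F_k \geq k$. Combining this with the identification $L = F_k$ from the previous step gives $\depth L \geq k$, which is exactly the stated bound.

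There is no substantive obstacle here; the only thing to watch is the direction of the filtration (descending, with $F_0 = L$) and making sure the hypothesis $F_{k+1} \neq F_k$ in the proposition is available for the chosen $s$. The content of the result is really packaged in the proposition, and this corollary simply records the most useful consequence: depth is bounded below by the first index at which the filtration genuinely drops.
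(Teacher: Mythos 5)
Your argument is correct and is exactly the paper's proof: identify $L=F_0=F_k$ from the minimality of $k$, then apply the second part of Proposition \ref{depthanddimoftruncations} with $s=k$ to get $\depth L=\depth F_k\ge k$. No further comment needed.
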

\begin{proof}
This follows from Lemma \ref{depthanddimoftruncations}, because for such a $k$, $F_k=F_0=R$.
\end{proof}

\begin{Def}\label{algebraicdefinitionoftoral}
 A prime $\p$ of an algebra $R$ with a free rank filtration is called \emph{toral} if it is  the kernel of one of the  maps $\phi:R \to P_{V}$ appearing in the free rank filtration.
\end{Def}

\begin{prop}\label{algebraictoraltheorem}
If $L$ has a free rank filtration, then every element of $\ass_R L$ is toral.
\end{prop}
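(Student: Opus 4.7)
The plan is to reduce the statement to a study of the associated primes of a single $j$-free module, and then to exploit the fact that the ``bounded'' factor $N$ has nilpotent augmentation ideal.

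First I would use the standard inclusion $\ass_R L \subset \ass_R L' \cup \ass_R L''$ for a short exact sequence $0 \to L' \to L \to L'' \to 0$. Applying this inductively to the free rank filtration $0 = F_{i+1} \subset F_i \subset \cdots \subset F_0 = L$ shows that $\ass_R L \subset \bigcup_j \ass_R(F_j/F_{j+1})$. Since each $F_j/F_{j+1}$ is a finite direct sum of $j$-free modules $M_{V,k} = \Sigma^{d_{V,k}}(P_{V,k} \otimes N_{V,k})$, and associated primes of a direct sum are the union of associated primes of summands, everything reduces to showing that for a single $j$-free module $M = \Sigma^d(P_V \otimes N)$, every element of $\ass_R M$ equals $\ker \phi_V$.

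For this, the key observation is that since $N$ is bounded and connected, its augmentation ideal $I = \ker(N \to \FF_p)$ is nilpotent, say $I^m = 0$. The descending chain
\[
P_V \otimes N \supset P_V \otimes I \supset P_V \otimes I^2 \supset \cdots \supset P_V \otimes I^m = 0
\]
consists of $(P_V \otimes N)$-submodules, and hence of $R$-submodules via the structure map $R \to P_V \otimes N$. Applying the same short exact sequence argument as above, it suffices to show that every associated prime of a subquotient $P_V \otimes (I^k / I^{k+1})$ is $\ker \phi_V$. Since $I^k/I^{k+1}$ is a finite-dimensional graded $\FF_p$-vector space, this subquotient is, as an $R$-module, just a finite direct sum of suspensions of $P_V$, where the $R$-action is through $\phi_V : R \to P_V$.

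Finally, I would compute directly that $\ass_R(\Sigma^e P_V) = \{\ker \phi_V\}$: any nonzero $m \in P_V$ has $\ann_{P_V}(m) = 0$ (since $P_V$ is a domain), so $\ann_R(m) = \phi_V^{-1}(0) = \ker \phi_V$, which is prime and independent of $m$. Putting all the reductions together yields $\ass_R L \subset \{\ker \phi_{V,k}\}_{V,k}$, so every associated prime of $L$ is toral. There is no real obstacle here once one spots the filtration by $P_V \otimes I^k$; the rest is routine bookkeeping with short exact sequences and the fact that $P_V$ is a polynomial ring.
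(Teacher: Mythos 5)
Your proof is correct, and it diverges from the paper's in how it handles the bounded factor $N$. The initial reduction is identical: both you and the paper use the behavior of $\ass$ in short exact sequences to pass to the subquotients $F_j/F_{j+1}$ and then to a single $j$-free summand $M_V = \Sigma^d(P_V\otimes N)$. At that point the paper quotes two facts: that the map $\ass_{M_V} M_V \to \ass_R M_V$ is surjective (contraction of associated primes along $R \to M_V$), and that a domain tensored with a finite-dimensional connected algebra has a unique associated prime over itself, namely the kernel of the projection $M_V \to P_V$; contracting this prime to $R$ gives $\ker\phi_V$. You instead refine the filtration further, using that the augmentation ideal $I \subset N$ is nilpotent (boundedness plus connectedness), so that the subquotients $P_V\otimes(I^k/I^{k+1})$ are direct sums of suspensions of $P_V$ with $R$ acting through $\phi_V$, and then you compute annihilators of elements directly in the domain $P_V$. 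Your route is more self-contained: it avoids both the change-of-rings statement for associated primes and the assertion about the unique associated prime of $P_V\otimes N$ (in effect, your augmentation-ideal filtration \emph{proves} that assertion), at the cost of one more layer of bookkeeping with short exact sequences. A small remark: the finite-dimensionality of $I^k/I^{k+1}$ that you invoke is not actually needed --- for any $\FF_p$-vector space $W$, every nonzero element of $P_V\otimes W$ has annihilator exactly $\ker\phi_V$ by the same linear-independence argument --- so your argument works verbatim under the stated hypothesis that $N$ is merely bounded, which is in fact slightly more general than the finite-dimensionality the paper's own proof implicitly uses.
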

\begin{proof}
We have that $\ass_R L \subset \bigcup_{V,k} \ass_R M_{V,k}$, so it suffices to prove that each element of $\ass_{R} M_{V}$ is toral. The map $ \ass_{M_V} M_{V} \to \ass_R M_{V}$ is surjective.  

But $M_V$ is a domain tensored with a finite dimensional (as an $\FF_p$-vector space) algebra, and any such algebra has a unique associated prime.
So the only associated prime of $M_{V}$ as a module over itself is the kernel of the map $M_V \to P_V$, and we have our result.
\end{proof}

\begin{prop}\label{duflotcomplex}
If $R$ is a connected $P_W$ algebra and $L$ has a free rank filtration, then there is a cochain complex $DL$ where $$DL^j=\bigoplus_{V\subset W, \dim V=j} \left(\bigoplus_{k=0}^l \Sigma^{-\sigma_j}( \Sigma^{d_{V,k}}(P_{V,k}^* \otimes N_{V,k})) \right),$$ where $\sigma_j=j$ when $p=2$ and $\sigma_j=2_j$ when $p$ is odd, and $(-)^*$ denotes the graded linear dual, and this cochain complex has the property that $H^i( DL)= \HH^i(L)$, the $i^{th}$ local cohomology of $L$ as an $R$ module with respect to the maximal homogeneous ideal $\m$ of positive degree elements.

We call this chain complex the Duflot  complex of $L$, and this  complex is functorial for module maps preserving the filtrations.
\end{prop}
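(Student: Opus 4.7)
The plan is to identify $DL$ as the $q=0$ row of the spectral sequence associated to the filtration $F_\bullet$, applied to the local cohomology functor $\HH^*_{\m_R}$. This spectral sequence will collapse at $E_2$ because the local cohomology of each subquotient $F_j/F_{j+1}$ turns out to be concentrated in a single cohomological degree.

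The first step is the local cohomology computation for a single $j$-free module $M_{V,k} = \Sigma^{d_{V,k}}(P_{V,k} \otimes N_{V,k})$. Its $R$-module structure factors through $\phi_{V,k} \colon R \to P_V$, and the compatibility triangle built into the definition of $j$-free forces the composition $P_W \to R \to P_V$ to coincide with the canonical surjection $P_W \to P_V$. Since $R$ is finite over $P_W$ and the composite above is surjective onto $P_V$, we get $\phi_{V,k}(\m_R) P_V = \m_{P_V}$. By the standard invariance of local cohomology along a ring map when the two ideals have the same radical on the module, $\HH^*_{\m_R}(M_{V,k}) = \HH^*_{\m_{P_V}}(M_{V,k})$. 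Now $P_V \otimes N_{V,k}$ is a free $P_V$-module of rank $\dim_{\FF_p} N_{V,k}$, so its local cohomology is $\HH^*_{\m_{P_V}}(P_V) \otimes N_{V,k}$, which is concentrated in cohomological degree $j = \dim V$ and equals $\Sigma^{-\sigma_j}(P_V^* \otimes N_{V,k})$. Incorporating the suspension $\Sigma^{d_{V,k}}$ and summing over $V$ and $k$ yields $\HH^i(F_j/F_{j+1}) = DL^j$ when $i = j$ and zero otherwise.

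Next, I would invoke the spectral sequence of the filtration: from the exact couple with $D_1^{p,q} = \HH^{p+q}(F_p)$ and $E_1^{p,q} = \HH^{p+q}(F_p/F_{p+1})$, one obtains a spectral sequence $E_1^{p,q} \Rightarrow \HH^{p+q}(L)$, with convergence automatic since the filtration is finite. The concentration result forces $E_1^{p,q} = 0$ for $q \neq 0$, so the spectral sequence degenerates at $E_2$ and $E_2^{p,0} = \HH^p(L)$. Setting $DL^p = E_1^{p,0}$ and $d^p = d_1^{p,0}$---concretely, the composition of the connecting homomorphism $\HH^p(F_p/F_{p+1}) \to \HH^{p+1}(F_{p+1})$ with the quotient-induced map $\HH^{p+1}(F_{p+1}) \to \HH^{p+1}(F_{p+1}/F_{p+2})$---then produces the desired cochain complex with $H^p(DL) = \HH^p(L)$. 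Functoriality for filtration-preserving module maps follows from the naturality of the connecting homomorphisms and of the spectral sequence construction.

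The main obstacle is the base-change step: verifying that $\phi_{V,k}(\m_R) P_V = \m_{P_V}$ so that local cohomology over $R$ agrees with local cohomology over $P_V$. This relies crucially on the compatibility triangle built into the definition of a $j$-free module, without which the computation of $\HH^*(M_{V,k})$ as an $R$-module could not be reduced to local cohomology over the polynomial ring $P_V$. Once that comparison is secured, the collapse of the spectral sequence and the extraction of the cochain complex are formal.
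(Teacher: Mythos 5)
Your proposal follows essentially the same route as the paper: compute the local cohomology of each $j$-free subquotient (concentrated in cohomological degree $j$, reduced to the free $P_V$-module $P_V\otimes N$ via the independence theorem), run the spectral sequence of the finite filtration, and read off the collapsed $E_1$ row with its $d_1$ differentials as $DL$, with functoriality from naturality of the exact couple. One small slip in your justification: the $R$-action on $M_{V,k}$ factors through $R\to P_{V,k}\otimes N_{V,k}$, not through $\phi_{V,k}\colon R\to P_{V,k}$; the comparison $\HH^*_{\m_R}(M_{V,k})\cong \HH^*_{\m_{P_{V}}}(M_{V,k})$ is nevertheless correct, by applying the independence theorem to the ring maps $R\to P_{V,k}\otimes N_{V,k}$ and $P_{V,k}\to P_{V,k}\otimes N_{V,k}$, since the extended ideals all have the same radical because $N_{V,k}$ is bounded and connected --- which is exactly the reduction the paper makes.
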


\begin{proof}
We have a diagram: 

$\begin{tikzcd}
& L \dar \\
F_{i-1}/F_i \rar & L/F_i \dar \\
& \vdots \dar \\
F_1/F_2 \rar & L/F_2 \dar \\
F_0/F_1 \rar & L/F_1 \dar \\
0 \rar & L/F_0=0
\end{tikzcd}
$

Here, a horizonal arrow followed by a vertical arrow is a short exact sequence. All maps are maps of $R$-modules, so we can apply the local cohomology functor to get an exact couple:

$\begin{tikzcd}
& \HH^*L \dar \\
\HH^*F_{i-1}/F_i \rar &\HH^* L/F_i \dar \\
& \vdots \ular[dotted] \dar \\
\HH^* F_1/F_2 \rar & \HH^* L/F_2 \dar \\
\HH^* F_0/F_1 \rar & \HH^* L/F_1 \ular[dotted] \dar \\
0 \rar & \HH^* L/F_0=0 \ular[dotted]
\end{tikzcd}
$

Here the dotted maps have cohomological degree $1$, and are the boundary maps in the local cohomology long exact sequence. This exact couple gives a spectral sequence with $E_1^{t,q}= \HH^{t+q} F_t/F_{t+1} $ converging to  $\HH^*L$. 

However, $F_t/F_{t+1}=\bigoplus_{V\subset W, \dim V=t} \left(\bigoplus_{k=0}^l  \Sigma^{d_{V,k}}(P_{V,k} \otimes N_{V,k}) \right)$, therefore $\HH^{t+q}F_t/F_{t+1}$ is a sum of modules of the form $\HH ^{t+q}(\Sigma^{d_{V}} P_{V} \otimes N_{V})$ where $\dim V=t$, which we now compute.

The independence theorem for local cohomology, see  4.2.1 of \cite{brodmannandsharp}, tells us that we can compute the local cohomology of $P_{V} \otimes N_{V}$ either over $R$ or over $P_{V}$.

By the K\"unneth theorem for local cohomology and the fact that $N_V$ is bounded in degree, $\HH^*(P_{V} \otimes N_{V})= \HH^*P_{V} \otimes N_{V}$.  
Therefore, since $\HH^*P_{V}$ is
$\Sigma^{-\sigma_{t}}(P_V^*)$ in degree $t$ and zero otherwise,
the spectral sequence collapses to the bottom row at the $E_1$ page and we have our result.

Functoriality follows from a map of filtered modules inducing a map on the exact couples giving the spectral sequence.

\end{proof}
\begin{Def}
For $R$ a local Noetherian graded ring and $L$ a finitely generated module over $R$, let $a_iL= \sup_j \{ \HH^{i,j} L \not=0\}$. Then recall that the \emph{Castelnuovo-Mumford regularity} of $L$, or $\reg L$, is $\sup_{i}\{ a_i L +i \}$.
\end{Def}

\begin{cor}\label{regularitytheorem}
Let $t(N)$ denote the top nonzero degree of a bounded module $N$. If $L$ satisfies the hypotheses of the previous propoistion, then $\reg L \le \max \{ t(N_{V,k})+d_{V,k} \}$ when $p=2$, and $\reg L \le \max \{ t(N_V)+d_{V,k}-\rank(V)\}$ when $p \not=2$.
\end{cor}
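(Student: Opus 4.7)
The plan is to invoke Proposition \ref{duflotcomplex} directly. Since $\HH^i(L) = H^i(DL)$ is a subquotient of $DL^i$, we have $a_i L \le t(DL^i)$, where $t$ denotes the top nonzero graded degree. So the problem reduces to computing, or rather bounding from above, the top nonzero degree of each summand in the explicit description of $DL^j$ given in Proposition \ref{duflotcomplex}.

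First I would record the top degree of the graded dual of a polynomial algebra. Since $P_V$ is concentrated in degrees $\ge 0$ with a copy of $\FF_p$ in degree $0$, the graded dual $P_V^*$ is concentrated in degrees $\le 0$ with top nonzero degree $0$. Consequently the top nonzero degree of $P_{V,k}^* \otimes N_{V,k}$ equals $t(N_{V,k})$, and so the top nonzero degree of the $(V,k)$-summand $\Sigma^{-\sigma_j}(\Sigma^{d_{V,k}}(P_{V,k}^* \otimes N_{V,k}))$ of $DL^j$ is $-\sigma_j + d_{V,k} + t(N_{V,k})$.

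Combining these gives $a_j L \le \max_{V,k}\bigl(-\sigma_j + d_{V,k} + t(N_{V,k})\bigr)$, where the maximum ranges over $V \subset W$ with $\dim V = j$ and over $k$. Adding $j$ to both sides and substituting the value of $\sigma_j$ produces the two cases of the statement: when $p=2$ we have $\sigma_j = j$, so $-\sigma_j + j = 0$ and $a_j L + j \le \max_{V,k}(d_{V,k} + t(N_{V,k}))$; when $p$ is odd we have $\sigma_j = 2j = 2\rank V$, so $-\sigma_j + j = -\rank V$ and $a_j L + j \le \max_{V,k}(d_{V,k} + t(N_{V,k}) - \rank V)$. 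Taking the supremum over $j$ in each case delivers the claimed bound on $\reg L$.

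There is no real obstacle here beyond careful bookkeeping of graded shifts; the whole argument is a routine degree count once Proposition \ref{duflotcomplex} is in hand. The only substantive input is the identification of the top degree of $P_V^*$, which is immediate from the fact that the generators of $P_V$ live in positive degree and hence $P_V$ has a one-dimensional bottom in degree zero.
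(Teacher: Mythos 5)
Your proposal is correct and follows essentially the same route as the paper: both arguments read the bound off the Duflot complex of Proposition \ref{duflotcomplex}, observing that $DL^{i,j}$ vanishes for $j > -\sigma_i + \max\{t(N_{V,k})+d_{V,k}\}$ and hence so does $\HH^{i,j}(L)$, after which the regularity bound is immediate bookkeeping. The paper simply states this vanishing without spelling out the degree count of $P_V^*$, which you do carefully and correctly.
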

\begin{proof}
This comes from examining the Duflot chain complex. Under the hypotheses of the theorem $DL^{i,j}$ is zero for $j> -\sigma_i+ \max  \{ t(N_{V,k}) +d_{V,k}\}$, giving the result.
\end{proof}

Finally, we give the dual formulation of \ref{duflotcomplex}. For a connected $P_W$ algebra $R$, let $\mathfrak{d}$ denote the Matlis duality functor. In our setting this coincides with $\Hom_{\FF_p}(-,\FF_p)$, see \cite{brodmannandsharp} exercise 14.4.2.

\begin{prop}\label{matlisdualcomplex}
For $R$ a connected $P_W$ algebra and $L$ a module with  a free rank filtration, taking the Matlis dual of $DL$ gives a chain complex with $$\mathfrak{d}(DL^j)= \bigoplus_{V\subset W, \dim V=j} \bigoplus_{k=0}^l \Sigma^{\sigma_j}(\Sigma^{-d_{V,k}}P_{V,k} \otimes N_{V,k}^*),$$ and the homology of this chain complex is the Matlis dual of $\HH^*L$.
\end{prop}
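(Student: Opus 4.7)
The plan is to reduce the proposition to a formal manipulation with Matlis duality $\mathfrak{d} = \Hom_{\FF_p}(-,\FF_p)$ applied to the Duflot complex constructed in Proposition~\ref{duflotcomplex}. The first step is to check that every graded $\FF_p$-vector space appearing as a term of $DL$ has finite-dimensional graded pieces, so that $\mathfrak{d}$ is an involution on these modules. This holds because $P_{V,k}$ has finite-dimensional graded components, $P_{V,k}^*$ is by construction its graded dual, and $N_{V,k}$ is bounded (hence finite-dimensional), so the tensor products $P_{V,k} \otimes N_{V,k}$ and $P_{V,k}^* \otimes N_{V,k}$ remain locally finite after any finite suspension.

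Next I would compute the termwise Matlis dual. Since $\mathfrak{d}$ converts $\Sigma^d$ to $\Sigma^{-d}$, commutes with finite direct sums, and satisfies $\mathfrak{d}(P_{V,k}^* \otimes N_{V,k}) = \mathfrak{d}(P_{V,k}^*) \otimes \mathfrak{d}(N_{V,k}) = P_{V,k} \otimes N_{V,k}^*$ (using the K\"unneth identification together with the fact that $\mathfrak{d}$ is self-inverse on locally finite graded vector spaces), one reads off the claimed formula
\[
\mathfrak{d}(DL^j) \;=\; \bigoplus_{V \subset W,\, \dim V = j} \bigoplus_{k=0}^{l} \Sigma^{\sigma_j}\bigl(\Sigma^{-d_{V,k}} P_{V,k} \otimes N_{V,k}^*\bigr).
\]

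Finally, I would invoke exactness: Matlis duality is an exact contravariant functor on the category of graded $\FF_p$-vector spaces with finite-dimensional graded components, so it sends cochain complexes to chain complexes and interchanges cohomology with homology. Applying this to the computation $H^i(DL) = \HH^i L$ from Proposition~\ref{duflotcomplex} immediately yields
\[
H_i\bigl(\mathfrak{d}(DL)\bigr) \;=\; \mathfrak{d}\bigl(H^i(DL)\bigr) \;=\; \mathfrak{d}(\HH^i L),
\]
which is the assertion.

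The only potential subtlety is making sure the finiteness hypotheses needed to treat $\mathfrak{d}$ as both exact and involutive apply uniformly throughout the complex and are compatible with the suspensions and tensor decompositions; once that bookkeeping is verified, the proof is essentially formal, with no further input from the structure of $R$ or $L$ beyond what Proposition~\ref{duflotcomplex} already provides.
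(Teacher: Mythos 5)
Your argument is correct and is exactly the intended one: the paper states Proposition~\ref{matlisdualcomplex} without a separate proof, treating it as the immediate dualization of Proposition~\ref{duflotcomplex}, which is precisely your termwise computation of $\mathfrak{d}(DL^j)$ together with exactness of $\mathfrak{d}=\Hom_{\FF_p}(-,\FF_p)$. Your attention to local finiteness (so that $\mathfrak{d}$ is involutive on the terms, e.g.\ $\mathfrak{d}(P_{V,k}^*)=P_{V,k}$) and to the sign change on suspensions is the only bookkeeping needed, and you have it right.
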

In other words, the $i^{th}$ term of the dual chain complex is a sum of shifts of polynomial rings tensored with bounded modules.

\subsection{Modules with a stratified free rank filtration}\label{stratifiedfiltrations}
\subsubsection{Preliminary definitions and motivation from topology}
In the application we are focused on, the filtration on $H_S^*M$ will be related to the fixed points of subgroups of $S$. In fact, these submanifolds refine the filtration and clarify how we can   understand $H_S^*M$ by studying the different $H_S^*Y$, for $Y$ a component of the fixed points of a subgroup of $S$. In this section we  give an algebraic description of how a free rank filtration can be refined to a filtration by a poset.

First, we describe some of the algebraic structure present when  considering embedded manifolds along with their pushforward and restriction maps.

\begin{Def}
We say that a $P_W$-algebra $T$ is \emph{embedded} in $R$ with codimension $d_T$ if there is a map of $P_W$ algebras $R \to T$ and a map of $R$-modules $\Sigma^{d_T} T \to R$, so that the composition $\Sigma^{d_T} \to R \to T$ is a map of $T$ modules. We call the map $i_*:\Sigma^{d_T} T \to R$ the \emph{pushforward}, and the map $i^*:R \to T$ \emph{restriction}. We define the \emph{Euler class} of $T$, or $e_T$, to be $i^*i_*1$. 
\end{Def}

Note that if $T$ is embedded in $R$, the composition of pushforward followed by restriction is multiplication by $e_T$.

The terminology comes from topology, if $X \to Y$ is an embedding of smooth manifolds where $X$ has codimension $d$, then $H^*X $ is embedded in $H^*Y$ with codimension $d$. This is also true equivariantly. 

We are only interested in embedded $P_W$-algebra in the case that the Euler class is not a zero divisor; we give a name to this class of algebras.

\begin{Def}
If $T$ is embedded in $R$  and $e_T$ is a nonzero divisor, then we say that $T$ is \emph{fixed} in $R$.
\end{Def}
The terminology comes from equivariant topology, if $M$ is a smooth $S$-manifold, for $S$ a $p$-torus, and $Y$ is an orbit of a connected component of the fixed point set of a sub-torus of $S$, then $H_S^*Y$ is fixed in $H_S^*M$.

Note that if $T$ is fixed in $R$, then the pushforward $\Sigma^{d_T} T \to R$ is injective.

\subsubsection{Filtrations by posets}
The free rank filtrations that occur in topology have refinements to filtrations by posets. Here we discuss some features of the posets we are interested in.

Let $P$ be a poset weakly  coranked by the natural numbers, i.e. equipped with  a map of posets $r: P \to \NN^{op}$. 

Given such a poset, there are a few related posets: $P_{\ge j}$ is the subposet of elements of rank greater than or equal to $j$, and $P_{<Y}$ for $Y \in P$ is the subposet of elements less than $Y$, and similarly $P_{\le Y}$, $P_{> j}$, etc. We'll use $P_j$ for the set of elements of rank $j$.

A filtration of a module $L$ by such a poset $P$ is a submodule $F(X)$ for each $X \in P$, and where if $X \le Y$, $F(X)\subset F(X)$, and so that $\sum_{X \in P} F(X)=L$. 


We will continually be referring to the lowest subquotient of $F(X)$ in the filtration on $F(X)$ by $P_{\le X}$, so if $r(X)=j$, let $$\gr[j] F(X)= F(X)/ \sum_{Y \in P_{<X}} F(Y).$$

Given a module filtered by $P$, there is an associated filtration by $\NN$, where $F_j=\sum_{X \in P_{\ge j }} F(X)$. 
\begin{Def} The filtration of $M$ by $P$ is called \emph{good} if for all $j$ the map $\oplus_{Y\in P_j}\gr[j] F(Y) \to F_j/F_{j+1}$ is an isomorphism.
\end{Def}
All the filtrations we are interested in are good, so the associated graded can be computed one element of the poset at a time. The point is that a filtration by a weakly coranked poset gives a filtration by $\NN$, so there are two associated gradeds: one indexed by $\NN$ and one indexed by the poset. When the filtration is good, these associated gradeds agree. 

 There is a potential for confusion between the filtration by $P$ and the induced filtration by $\NN$, so we will use letters like $X,Y$ for elements of $P$ and $i,j,l$ for natural numbers. 


Note that if the filtration of $L$ by $P$ is good, then for each $X \in P$, $F(X)$ has a good filtration by $P_{\le X}$. The fact that there is a filtration is automatic; to see that it is good consider the following commutative  diagram.

\begin{tikzcd}
\bigoplus_{Y \in P_j} \gr[j]F(Y)  \dar & \bigoplus_{Z \in (P_{\le X})_j} \gr[j]F(Z) \lar \dar \\
F_j/F_{j+1}L & F_j/F_{j+1} F(X) \lar
\end{tikzcd}

 The right hand arrow is always surjective (the map that is required to be an isomorphism in a good filtration is always surjective), so because the left hand arrow is injective, the righthand arrow must be injective also, and therefore an isomorphism.

\subsubsection{Stratified rank filtrations}
\begin{Def}An $R$-module $L$ has a \emph{free rank filtration stratified by a weakly coranked finite poset $P$} if there is a good filtration of $L$ by $P$ such that for all $j$  each $\gr[j]F(Y)$ is $j$-free.
\end{Def}

Note that the induced filtration by $\NN$ of such a stratified free rank filtration is a free rank filtration, where the decomposition of $F_j/F_{j+1}$ into a sum of $j$-free modules is induced by $F_j/F_{j+1}=\oplus_{Y \in P_j}\gr[j]F(Y)$. Also note that for each $Y \in P$, $F(Y)$ has a stratified free rank filtration by $P_{\le Y}$. The condition that our poset is finite ensures that our associated filtration by $\NN$ is finite.

\begin{Def}\label{thedefinitionoftopologicalstratifications}
If $L$ is an ideal of $R$, a free rank filtration stratified by a finite poset $P$ is called a \emph{topological} filtration stratified by $P$ if the following are satisfied:
\begin{enumerate}
\item For each $Y \in P$, $F(Y)=i_*(\Sigma^{d_T} T)$ for an embedded, fixed $T$.  We use the same symbol for $T \in P$ and the  embedded $T$ corresponding to $F(T)$.
\item For $T \in P$, the filtration  on $L$ induces on $T$  a free rank filtration (of $T$-modules) stratified by $P_{\le T}$ via the filtration on $\Sigma^{d_T}T \subset L$ obtained by restricting the functor from $P$ to $P_{\le T}$, and the structure maps $T \to P_V$ for the various $j$-free modules occurring as subquotients give us a commuting diagram:

\begin{tikzcd}
R \dar \drar & \\
T \rar & P_V
\end{tikzcd}.

\item For all $U<T$ in $P$ with corresponding embedded $U$ and $T$ in $R$, $U$ is also embedded in $T$, and the  composition of the two pushforwards $\Sigma^{d_{T,R}}( \Sigma^{d_{U,T} } U \to T) \to R$ is the pushforward of $\Sigma^{d_{U,R}} U \to R$.
\end{enumerate}
\end{Def}

In the third point, the notation $d_{U,T}$ refers to the codimension of $U$ in $T$.  Condition three implies that the codimension of $U$ in $T$ plus the codimension of $T$ in $R$ is the codimension of $U$ in $R$. Note that if $L \subset R$ has a free rank filtration that is topologically stratified by $P$, then each embedded $T$ appearing in the stratification for $L$ has a free rank filtration topologically stratified by $P_{\le T}$.

Here we record some useful properties of such filtrations. 
\begin{prop}
If $L$ has a topological filtration stratified by $P$, then the following hold for all $T \in P$. We use the same symbol $F$ for the functor appearing in the filtration of $L$ by $P$ and for the functor appearing in the filtration of the embedded $T$ by $P_{\le T}$.
\begin{enumerate}
\item For $j=r(T)$ we have that $F_{j} T=F_0 T=T$, so $F_{l}/F_{l+1}T=0$ for $l <j$.

\item Each $F_l/F_{l+1}\Sigma^{d_T} T \to F_l/F_{l+1} L$ is the inclusion of direct summands.
\item For each $M_{V}$ appearing in the decomposition of $F_j /F_{j+1} L$ as $j$-free modules (so $\dim V=j$), there is a unique $T \in P_j$  such that $\gr[j]F(T)$ maps isomorphically onto $M_{V}$.
\end{enumerate}
\end{prop}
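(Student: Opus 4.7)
The proofs of all three parts reduce to unwinding the definitions of \emph{good} filtration, \emph{stratified} free rank filtration, and \emph{topologically stratified} filtration. The essential observation is that the coranking $r : P \to \NN^{op}$ is order-reversing in $\NN$: $U \le T$ in $P$ forces $r(U) \ge r(T)$ in $\NN$.

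For (1), write $j = r(T)$. Any $U \in P_{\le T}$ satisfies $r(U) \ge j$, so $(P_{\le T})_{\ge l} = P_{\le T}$ for every $l \le j$. The induced $\NN$-filtration on the embedded $T$ therefore gives
\[
F_l T \;=\; \sum_{U \in (P_{\le T})_{\ge l}} F(U) \;=\; \sum_{U \in P_{\le T}} F(U) \;=\; F(T),
\]
with the last equality using that $F(T)$ already contains every $F(U)$ with $U \le T$. Hence $F_l T = F_0 T = T$ for $l \le j$, and in particular $F_l/F_{l+1} T = 0$ for $l < j$.

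For (2), the filtration on $\Sigma^{d_T} T \subset L$ is obtained by restricting the functor $F$ to $P_{\le T}$. The key point is that for $Y \le T$, any $Z < Y$ automatically satisfies $Z \le T$, so the posets $(P_{\le T})_{<Y}$ and $P_{<Y}$ coincide, and $\gr[l] F(Y)$ is computed identically in $L$ and in $\Sigma^{d_T} T$. Goodness of both filtrations then yields
\[
F_l L / F_{l+1} L \;=\; \bigoplus_{Y \in P_l} \gr[l] F(Y), \qquad F_l \Sigma^{d_T} T / F_{l+1} \Sigma^{d_T} T \;=\; \bigoplus_{Y \in (P_{\le T})_l} \gr[l] F(Y).
\]
The natural map between these is the inclusion of the summands indexed by $(P_{\le T})_l \subseteq P_l$, which is a direct summand inclusion.

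Part (3) is then almost immediate: goodness and the stratified hypothesis give $F_j/F_{j+1} L = \bigoplus_{T \in P_j} \gr[j] F(T)$ as a decomposition into $j$-free modules, and matching this against the free rank decomposition $\bigoplus_{V,k} M_{V,k}$ records a bijection $T \leftrightarrow (V,k)$ with $\gr[j] F(T) \iso M_{V,k}$. The only genuine obstacle is bookkeeping: one must keep straight the two uses of the symbol $F$ (on $L$ versus on the embedded $T$), the suspension $\Sigma^{d_T}$, and the direction-reversing nature of $r$. Once this is in place, each of (1)--(3) reduces to a single-line verification using goodness plus the definition of the restricted filtration in Definition~\ref{thedefinitionoftopologicalstratifications}.
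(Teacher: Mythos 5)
Your argument is correct and follows essentially the same route as the paper: all three parts are definition-unwinding, with (1) coming from the fact that everything in $P_{\le T}$ has rank at least $r(T)$ (the paper phrases this as $T$ itself occurring in the sum defining $F_lT$), (2) from goodness of both filtrations together with the identification of the subquotients $\gr[l]F(U)$ for $U\le T$ inside $F_l/F_{l+1}L$, and (3) from goodness plus the fact that the $j$-free decomposition of $F_j/F_{j+1}L$ is by definition indexed by $P_j$. The only cosmetic difference is that the paper justifies (2) by citing the compatibility of pushforwards in Definition~\ref{thedefinitionoftopologicalstratifications}, whereas you work directly with the submodules $F(U)\subset L$; these amount to the same thing.
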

\begin{proof}
\begin{enumerate}
\item Recall that $F_l T=\sum_{X \in (P_{\le T})_{\ge l}} F(X)$. So if $l \le  j$, then $T$ occurs in this sum and $F_lT=T$.
\item For $T \in P$, we consider $F_l/F_{l+1} T$, which because the original filtration is good, is $\oplus_{ U \in (P_{\le y})_l }\gr[l] F(U)$. But each such $U$ is also embedded in $R$, so on each component of $F_l/F_{l+1}T$ our map is part of the composition $\Sigma^{d_{Y}}(F_l/F_{l+1} \Sigma^{d_{U,Y}} U \to F_l/F_{l+1} Y) \to F_l/F_{l+1} L$, giving our result.
\item This follows from the fact that the filtration is good.
\end{enumerate}
\end{proof}

Note that the stratification condition implies that the coproduct of all the $\Sigma^{d_T} T$ surjects onto $L$, because this map is surjective on associated gradeds. Actually, in many examples $L=R$, and $R$ itself will be one of the $T$s appearing in the topological stratification, corresponding to the maximal element of $P$.

However even in this situation when $R$ is one of the $T$s, we can restrict to those  elements of the poset that have rank equal to $l$, then by the definition of the associated filtration on $\NN$ the coproduct of all of these (with the appropriate shifts according to the codimension) surjects onto $F_{l}(R)$. 
In fact, we can look at the subposet $P_{\ge l}$, and we can compute $F_l$ as a colimit over this poset.

\begin{lemma}\label{welldefinedlemma}
Suppose $L$ has a topological filtration stratified by $P$ and that $Y,Z$ are embedded rings appearing in the stratification. Then, if $x$ is in the image of the pushforward from $Y$ and the pushforward from $Z$,  there are $W_t$s embedded in $R$, $Y$ and $Z$ as part of each stratification, and $w_t \in W_t$ so that $x= \Sigma_t (i_{W_t})_* w_t$, and so that for each $t$ there is a commuting diagram:

$
\begin{tikzcd} 
\Sigma^{d_{W_t}} W_t \rar \drar & \Sigma^{d_Y} Y\rar & L \\
& \Sigma^{d_Z} Z  \urar &
\end{tikzcd}$ .
\end{lemma}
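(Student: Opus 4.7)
My plan is to reduce the lemma to the $R$-submodule inclusion
\[ F(Y) \cap F(Z) \subset \sum_{W \in P_{\le Y} \cap P_{\le Z}} F(W), \]
and then to establish this inclusion by induction using the goodness of the filtration. The reduction is immediate: by Definition \ref{thedefinitionoftopologicalstratifications}(1) the image of the pushforward from $Y$ is $F(Y)$ and similarly for $Z$, so the hypothesis that $x$ lies in both images simply says $x \in F(Y) \cap F(Z)$. Given the inclusion, we write $x=\sum_t x_t$ with $x_t \in F(W_t)$ and $W_t \in P_{\le Y} \cap P_{\le Z}$; injectivity of the pushforward $(i_{W_t})_*$ (which holds since $e_{W_t}$ is a non-zero-divisor, as $W_t$ is a fixed embedding) produces unique $w_t \in W_t$ with $(i_{W_t})_* w_t = x_t$. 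The two commuting diagrams demanded in the conclusion then follow from condition (3) of Definition \ref{thedefinitionoftopologicalstratifications}, applied to the embeddings $W_t \le Y$ and $W_t \le Z$.

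For the inclusion, I would induct on $|P_{\le Y} \cup P_{\le Z}|$. The comparable case is immediate: if $Y \le Z$ then $Y \in P_{\le Y} \cap P_{\le Z}$ and $F(Y) \subset \sum_W F(W)$ (and symmetrically). So assume $Y, Z$ are incomparable and, without loss of generality, $r(Y) \le r(Z) = j$. The key use of goodness proceeds as follows. By goodness of the filtration of $L$ by $P$, we have $F_j L / F_{j+1} L \cong \bigoplus_{U \in P_j} \gr[j] F(U)$. Goodness of the filtration of $F(Z)$ by $P_{\le Z}$ shows that the image of $x$ in this quotient lies in the $\gr[j] F(Z)$-summand; goodness of the filtration of $F(Y)$ by $P_{\le Y}$ shows that the same image lies in the span of the $\gr[j] F(U)$ for $U \in (P_{\le Y})_j$. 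Since $Z \notin P_{\le Y}$ by incomparability, the only way both can hold is that this image vanishes, so $x \in F(Y) \cap F(Z) \cap F_{j+1} L$.

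The main technical obstacle is closing the induction from here. Using the good filtrations, we can decompose $x$ as $\sum_U f_U$ with $f_U \in F(U)$, $U \le Y$, $r(U) > j$, and also as $\sum_V g_V$ with $g_V \in F(V)$, $V \le Z$, $r(V) > j$, but individual $f_U$ need not lie in $F(Z)$, so a termwise application of the induction hypothesis fails. My plan to resolve this is to strengthen the inductive statement to the auxiliary claim that for all down-closed subposets $A, B \subset P$,
\[ \Bigl(\sum_{U \in A} F(U)\Bigr) \cap \Bigl(\sum_{V \in B} F(V)\Bigr) \subset \sum_{W \in A \cap B} F(W), \]
and prove it by induction on $|A \cup B|$ using the same goodness argument at the top rank appearing in $A \cup B$. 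Applying the strengthened claim to the strictly smaller down-closed sets $A = (P_{\le Y})_{\ge j+1}$ and $B = (P_{\le Z})_{\ge j+1}$ closes the induction and yields the desired inclusion.
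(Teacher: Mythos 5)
Your proposal is correct, but it takes a genuinely different route from the paper's. The paper argues by downward induction on the filtration degree of $x$: it fixes preimages $y\in\Sigma^{d_Y}Y$ and $z\in\Sigma^{d_Z}Z$, decomposes the images of $x,y,z$ in the associated gradeds, uses the uniqueness (coming from goodness) of the stratum supporting each graded component together with condition (2) of Definition \ref{thedefinitionoftopologicalstratifications} to find common $W$'s, subtracts, and descends. You instead isolate a purely module-theoretic statement — for down-closed subposets $A,B\subset P$ one has $\bigl(\sum_{U\in A}F(U)\bigr)\cap\bigl(\sum_{V\in B}F(V)\bigr)\subset\sum_{W\in A\cap B}F(W)$ — prove it by an induction driven by the same engine (the direct-sum decomposition of $F_j/F_{j+1}$ from goodness, plus subtract-and-descend), and then read the lemma off from conditions (1) and (3), using that the pushforward from a fixed stratum is injective. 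Both arguments ultimately rest on goodness; yours buys a cleaner and more reusable statement (it also yields, for instance, $F(Y)\cap F_kL=\sum_{U\le Y,\,r(U)\ge k}F(U)$) and avoids element-chasing in $Y$ and $Z$, while the paper's version records along the way the slightly stronger elementwise fact that $y$ and $z$ themselves decompose as sums of pushforwards of the $w_t$ into $Y$ and $Z$ — which is what the subsequent colimit proposition actually invokes — though this also follows from your conclusion by injectivity of the pushforwards from $Y$ and $Z$. Two points to tighten when writing it up: the intermediate assertion that $x\in F(Y)\cap F(Z)\cap F_{j+1}L$ can be rewritten with terms $F(U)$, $U\le Y$, $r(U)>j$, is itself an instance of your auxiliary claim (take $B=P_{\ge j+1}$, which is down-closed), so the cleanest organization is to prove the auxiliary claim first by its own induction and then apply it directly to $A=P_{\le Y}$, $B=P_{\le Z}$, making the warm-up case unnecessary; and, exactly as the paper does implicitly, you use that $X<U$ in $P$ forces $r(X)>r(U)$, so that $\sum_{X<U}F(X)\subset F_{r(U)+1}$ — this holds in the topological examples and should be flagged as a standing assumption.
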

\textbf{Note:} The subscript on $W_t$ is just an indexing, it has nothing to do with the various filtrations.
\begin{proof}
The proof is by downward induction on the filtration degree. First, consider the highest degree filtration that is potentially nonzero, $F_iL$, so suppose that $x \in F_iL$ is in the image of the pushforward from some $Y$ and from some $Z$, so there exist $y \in \Sigma^{d_Y}Y,z \in \Sigma^{d_Z}Z$ with $(i_Y)_*(y)=(i_Z)_*(z)=x$. Then $x \in F_{i}/F_{i+1}L=F_iL$ is equal to $x_1 +x_2 + \dots x_k$, where $x_j \in \gr[i]F(W_j)$,. Similarly, $y=y_1+\dots +y_k$, $z=z_1+\dots+z_k$, and each $y_j,z_j$ maps to $x_j$.

Then, because $Y$ and $Z$ are stratified, for each $j$ there is a $w_j \in \Sigma^{d_{W_j}}W_j$ so that $W_j$ is embedded in $Y$ and $Z$ and so that under the pushforward from $W_j$ to $Y$, $w_j \mapsto y_j$, and under the pushforward from $W_j$ to $Z$, $w_j \mapsto z_j$. That there is some $W_j$ with this property for $Y$ and $Z$ separately follows because $Y$ and $Z$ are stratified, but there is a common $W_j$ with this property because there is a unique $W_j$ appearing in the stratification for $R$ mapping to the summand supporting $x_j$. This also explains why the following diagram commutes:

$\begin{tikzcd}\Sigma^{d_{W_{j}}} W_j \rar \drar & \Sigma^{d_Y} Y \rar & L\\
& \Sigma^{d_Z}Z \urar &
\end{tikzcd}$.

Now, suppose the result is true for $F_l L$ with $l>j$, and suppose that $(i_Y)_*(y)=(i_Z)_*(z)=x$, where $x \in F_jL -F_{j+1}L$. Then $\overline{x} \in F_{j}L/ F_{j+1}L$ is equal to $x_1 + \dots + x_k$, where $x_l \in  \gr[j]Y_{V_{j,l}}$. By similar logic as in the preceding paragraphs, the images of $y,z$ in $F_jY/F_{j+1}Y$, $F_jZ/F_{j+1}Z$ can be written as a sum $\overline{y}=y_1+\dots+y_k$, $\overline{z}=z_1 + \dots + z_k$, where $y_l,z_l \mapsto x_l$. As $X,Y,Z$ are compatibly stratified, for each $l$ there is some  $w_l \in \Sigma^{d_{W_l}} W_l$ where $W_l$ is embedded in $Y,Z,R$ so that the pushforward of $w_l$ hits $y_l,z_l, x_l$ and so that the following diagram commutes: 

$\begin{tikzcd}\Sigma^{d_{W_{l}}} W_l \rar \drar & \Sigma^{d_Y} Y \rar & L\\
& \Sigma^{d_Z}Z \urar &
\end{tikzcd}$.

 But then $(i_Y)_*(y -\sum_l (i_{W_l})_* w_l)=(i_Z)_*(y- \sum_l (i_{W_l})_* w_l)=x- \sum_l (i_{W_l})_* w_l$. Additionally, $x- \sum_l (i_{W_l})_*w_l \in F_{j+1}L$, where our result is already assumed to be true, so we are done by induction.
\end{proof}

\begin{prop}
If $L$ has a  topological filtration stratified by $P$, then $\varinjlim_{T \in P_{\ge j}} F(T)=F_j L$.
\end{prop}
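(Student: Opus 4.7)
The plan is to establish the isomorphism $\phi_j : C_j := \colim_{T \in P_{\ge j}} F(T) \to F_j L$, where the map sends a representative tuple $(t_T)$ to its sum $\sum_T t_T$ in $L$. Surjectivity is immediate from the definition $F_j L = \sum_{T \in P_{\ge j}} F(T)$, since each $F(T)$ lies in the image. Injectivity is the substantive content, and I would prove it by downward induction on $l$, showing simultaneously that the analogous map $\phi_l : C_l := \colim_{T \in P_{\ge l}} F(T) \to F_l L$ is an isomorphism for every $l \ge j$. The base case is any $l$ exceeding the maximum rank appearing in $P$, where both sides are zero.

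For the inductive step, the natural transition $C_{l+1} \to C_l$ induced by the inclusion of subposets $P_{\ge l+1} \hookrightarrow P_{\ge l}$ fits into a commutative square with $\phi_{l+1}$, $\phi_l$, and the inclusion $F_{l+1} L \hookrightarrow F_l L$. Assuming $\phi_{l+1}$ is an isomorphism, I would take a tuple $(t_T)_{T \in P_{\ge l}}$ with $\sum_T t_T = 0$ in $L$ and argue that $\sum_T [t_T] = 0$ in $C_l$. Passing to the associated graded $F_l L / F_{l+1} L$, which by goodness splits as $\bigoplus_{T \in P_l} \gr[l] F(T)$, and noting that $F(T) \subset F_{l+1} L$ when $r(T) \ge l+1$, the relation forces the rank-$l$ class of each $t_T$ with $T \in P_l$ to vanish in $\gr[l] F(T)$. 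Hence $t_T \in F_{l+1}(T) = \sum_{Y \le T,\; r(Y) \ge l+1} F(Y)$ for each $T \in P_l$.

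Choosing a decomposition $t_T = \sum_Y t_{T,Y}$ with $t_{T,Y} \in F(Y)$ and $Y \le T$ in $P_{\ge l+1}$, the colimit relations in $C_l$ coming from the inclusions $F(Y) \hookrightarrow F(T)$ identify $[t_T \in F(T)]$ with $\sum_Y [t_{T,Y} \in F(Y)]$. Regrouping these contributions together with the unchanged $t_T$ for $T \in P_{\ge l+1}$ yields a tuple $(s_Y)_{Y \in P_{\ge l+1}}$, where each $s_Y \in F(Y)$, satisfying $\sum_Y s_Y = \sum_T t_T = 0$ in $L$ and whose image under $C_{l+1} \to C_l$ coincides with the original class $\sum_T [t_T]$. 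By the inductive hypothesis, $(s_Y)$ represents the zero class in $C_{l+1}$, so its image in $C_l$ is zero, closing the induction.

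The main obstacle is the decomposition step: one must verify that the independent, non-canonical choices $t_T = \sum_Y t_{T,Y}$ made for different $T \in P_l$ nevertheless assemble into a single well-defined class in $C_{l+1}$, and that the colimit relations available in $C_l$ are sufficient to transport the original class through this decomposition. This is precisely where goodness of the filtration and the compatibility of stratifications encoded in Lemma \ref{welldefinedlemma} do their work, ensuring that elements of overlapping $F(Y)$'s interact coherently across the stratification so that the regrouped tuple $(s_Y)$ does not depend on the auxiliary choices made.
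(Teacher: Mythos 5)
Your proof is correct, but it takes a genuinely different route from the paper. The paper proves the statement by verifying that $F_jL$ satisfies the universal property of the colimit: surjectivity of $\bigoplus_{T \in P_{\ge j}}F(T) \to F_jL$ gives uniqueness of the factorization, and well-definedness is delegated to Lemma \ref{welldefinedlemma}, which produces, for an element lying in the images of two pushforwards, a compatible decomposition through common lower strata (that lemma is itself proved by a downward induction on filtration degree). You instead build the canonical comparison map $\phi_j$ and prove injectivity directly by downward induction, using goodness to split the relation in $F_lL/F_{l+1}L$ into the summands $\gr[l]F(T)$, pushing the rank-$l$ entries into strictly higher strata, and invoking injectivity of $\phi_{l+1}$. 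This is a clean, self-contained argument that never needs Lemma \ref{welldefinedlemma}: contrary to what you say in your final paragraph, no independence-of-choices is required, because you only need \emph{some} tuple $(s_Y)_{Y \in P_{\ge l+1}}$ whose image in $C_l$ is the original class and whose total sum vanishes in $L$; the inductive injectivity of $\phi_{l+1}$ then kills it regardless of which decomposition $t_T=\sum_Y t_{T,Y}$ was chosen, so the worry about coherence across overlapping $F(Y)$'s is moot. What goodness does buy you (and this you use correctly) is the identification of the associated graded with $\bigoplus_{T\in P_l}\gr[l]F(T)$, which forces each rank-$l$ component $t_T$ into $\sum_{Y<T}F(Y)$; here, like the paper, you implicitly use that every $Y<T$ with $r(T)=l$ has rank at least $l+1$, which holds in the topological examples and is tacitly built into the definition of a good filtration. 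In exchange, the paper's route via Lemma \ref{welldefinedlemma} isolates a reusable compatibility statement about the stratification, while yours is shorter and avoids the injectivity of the individual maps $F(T)\to F_jL$ altogether.
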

\begin{proof}
We show that $F_jL$ satisfies the required universal property. First, there are compatible maps $F(T) \to F_jL$ for $T  \in P_{\ge j}$. Now, suppose that there are compatible maps $F(T) \to[j_T] C$, where $C$ is some other $R$-module. We need to show that the dashed arrow in this diagram can be uniquely filled:

$\begin{tikzcd} F(T) \rar{i_T} \drar{j_T} & F_jL  \dar[dashed] \\
& C
\end{tikzcd}$. 

Because each map $F(T) \to F_jL$ is injective and because $\bigoplus_{T \in P_{\ge j}} F(T) \to F_jL$ is surjective by the definition of our filtration by $\NN$ on $L$, there is at most one map from $F_jL \to C$ making the diagram commute. We need to show that this map is well defined.

In other words, if $(i_T)_*(t)=(i_U)_*(u)$, we need to show that $j_T(t)=j_U(u)$. But this follows from Lemma \ref{welldefinedlemma}, because we will have a commuting diagram 
\begin{tikzcd}
& F(W_j) \arrow{dl}{i_1} \arrow{d} \arrow[bend left]{dd} \arrow{dr}{i_2} & \\
F(T) \arrow{r} \arrow{dr}{j_T} & L & F(U) \arrow{l} \arrow{dl}{j_U} \\
& C &
\end{tikzcd}
 and $w_j \in F(W_j)$ so that $t=\sum i_1(w_j)$, $u=\sum i_2(u_j)$. Then we see that $j_T(t)=j_U(u)= \sum j_{W_j} (w_j)$, which completes the proof.
\end{proof}

There is one final strong condition we will add to a free rank filtration topologically stratified by $P$.

\begin{Def}
An $R$-module with a  topological filtration stratified by $P$ is called \emph{Duflot} if for all $T \in P$, the corresponding embedded $T$ is isomorphic as $P_W$ algebras to $P_V \otimes T'$ where $T'$ is a $P_W$ algebra, and where $\dim V= r(T)$,  and $T/ \sum_{ X \in P_{<T}} X$ is $r(T)$-free and a suspension of a module of the form $P_V \otimes N$, and $T \to T/ \Sigma_{ X \in P_{<T}} X\cong \Sigma^d(P_V \otimes N)$ is induced by a  map $T' \to \Sigma^dN$. 


\end{Def}

We will refer to ``Duflot algebras'' and ``Duflot modules'' in the sequel: $R$ is a \emph{Duflot algebra} when $R$ itself has the structure of a Duflot module.

\subsection{Results on associated primes and detection for Duflot modules}\label{moreresultsforstratifiedfiltrations}

\begin{thm}\label{detectionofassociatedprimes}
If $L$ has a minimal topological filtration stratified by $P$, then $\phi_{V,k}$ represents an associated prime in $\ass_R L $ if and only if $\phi_{V,k}$ represents an associated prime in the embedded $T$ corresponding to $\phi_V$, as a module over itself.
\end{thm}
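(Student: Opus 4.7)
The plan is to handle the two implications separately: the backward direction is a short pushforward computation, while the forward direction requires analyzing $L/F(T)$ via the short exact sequence $0 \to F(T) \to L \to L/F(T) \to 0$.

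For the backward direction, I would start with $t \in T$ realizing $\ann_T(t) = \ker(T \to P_V)$ and push it forward to $L$. The projection formula $r \cdot i_*(t) = i_*(i^*(r) \cdot t)$, together with the injectivity of $i_*$ (which holds because $T$ is fixed, so $e_T$ is a nonzero divisor), gives $\ann_R(i_*(t)) = (i^*)^{-1}(\ann_T(t))$. By property (2) of topological filtrations the composition $R \to T \to P_V$ agrees with $\phi_{V,k}$, so this preimage is exactly $\ker \phi_{V,k}$, exhibiting it as an element of $\ass_R L$.

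For the forward direction, the heart of the argument is to rule out $\ker \phi_{V,k}$ as an associated prime of $L/F(T)$. The induced filtrations fit into an exact sequence $0 \to \gr[i] F(T) \to \gr[i] L \to \gr[i] (L/F(T)) \to 0$, where $\gr[i] L = \bigoplus_{Y \in P_i} \gr[i] F(Y)$. Property (3) forces $F(U) \subset F(T)$ for all $U \le T$, and the induced good filtration on $T$ coming from property (2) identifies $\gr[i] F(T)$ with the sub-direct-sum of $\gr[i] L$ indexed by $(P_{\le T})_i$. Hence $\gr[i] (L/F(T)) = \bigoplus_{Y \in P_i,\, Y \not\le T} \gr[i] F(Y)$, so every associated prime of $L/F(T)$ lies in $\{\ker \phi_{V(Y), k(Y)} : Y \not\le T\}$. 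Since $T \in P_{\le T}$ and $L$'s stratification is minimal, $\ker \phi_{V,k}$ is excluded, and the inclusion $\ass_R L \subset \ass_R F(T) \cup \ass_R (L/F(T))$ forces $\ker \phi_{V,k} \in \ass_R F(T)$.

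To finish, I would use that $F(T) \cong \Sigma^{d_T} T$ as $R$-modules and the standard identity $\ass_R T = \{\mathfrak{q} \cap R : \mathfrak{q} \in \ass_T T\}$ to produce $\mathfrak{q} \in \ass_T T$ contracting to $\ker \phi_{V,k}$. Proposition~\ref{algebraictoraltheorem} applied to $T$'s own stratification makes $\mathfrak{q}$ toral, so $\mathfrak{q} = \ker(T \to U \to P_{V(U)})$ for some $U \in P_{\le T}$; pulling back via $R \to T$ and invoking property (3) identifies $\mathfrak{q} \cap R$ with $\ker \phi_{V(U), k(U)}$, and then the minimality of $L$'s stratification forces $U = T$ and $\mathfrak{q} = \ker(T \to P_V)$. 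The main obstacle is the associated-graded computation for $L/F(T)$, which requires carefully unpacking how $T$'s own topological stratification embeds into that of $L$ through properties (2) and (3).
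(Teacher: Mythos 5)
Your proposal is correct and takes essentially the same route as the paper: the forward direction uses the same short exact sequence $0 \to \Sigma^{d_T}T \to L \to \coker \to 0$, the toral description of the associated primes of the cokernel's induced free rank filtration, and minimality to exclude $\ker\phi_{V,k}$, while the converse rests on injectivity of the pushforward (guaranteed by $e_T$ being a nonzero divisor). The only difference is that you make explicit, via the projection formula and the toral-prime argument for $T$'s own stratification by $P_{\le T}$, the passage between $\ass_R F(T)$ and $\ass_T T$, which the paper leaves implicit.
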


\begin{proof}
Let us unpack what this means.  Let $\phi_{V}: R \to P_V$ be one of the structure maps appearing the stratified free rank filtration for $L$. Associated to this $\phi_V$, there is a $\rank V$-free $M_V$ appearing as a direct summand of $F_{\rank V}/F_{\rank V+1} L$. By the stratification hypothesis there is a unique $T\in P$ whose highest filtered subquotient hits   $M_{V}$. So, we have a triangle $\begin{tikzcd} R \rar{\phi_V} \dar & P_V \\
T \urar{\phi_V}\end{tikzcd}$. The vertical map is the restriction map, and the maps going to the right are the two different structure maps, both of which we denote by $\phi_V$.

First, note that if $\phi_V$ represents an associated prime in $\ass_{T}T$, then considering the surjective map $\ass_{T} T \to \ass_R T$, $\ker \phi_V: T \to P_V$ is also an associated prime of $\ass_R T$. But then it is also an associated prime of $\ass_R \Sigma^{d_T} T$, and because there is an injective map $\Sigma^{d_{T}} T \to L$, it is an associated prime of $\ass_R L$.

Conversely, we  consider the short exact sequence $\Sigma^{d_{T}} T\to L \to \coker$. Any associated prime of $L$ as an $R$ module must be in $\ass_R \Sigma^{d_{T}} T$ or $\ass _R \coker$. We see that $\coker$ has a free rank filtration induced from those on $T$ and $L$, and moreover $\phi_V$ doesn't appear as one of the structure maps in the free rank filtration for $\coker$, as the filtered subquotients for $\coker$ are just those for $L$ modulo those for $\Sigma^{d_{T}} T$, and our filtration is minimal. So, if $\phi_V$ represents an associated prime in $R$, it must represent one in $T$ as well.

\end{proof}
In the above proof we crucially use the minimality of the free rank filtration, which tells us that we can check every potential associated prime on a unique $T$. However, we don't use the full strength of our embedding hypothesis: we just use that the pushforward is injective, not that the composition of pushforward and restriction is a map of $T$-modules.

Also note that under the hypotheses of the above theorem, by Lemma \ref{depthanddimoftruncations} the depth of $T$ is greater than or equal to $\dim V=i$, the lowest nonzero filtration degree. But $\ker \phi_{V}: T \to P_{V}$ is $i$ dimensional, so if $\ker \phi_{V}$ is associated in $T$, it is the smallest dimensional associated prime in $T$ and $\depth T=i$, as the dimension of an associated prime cannot be less than the depth.

Under the additional assumption that the filtration is Duflot, we can conclude the converse of this statement.

\begin{thm}\label{depthofcentralizers}
If $L$ is a Duflot module, then a  $\phi_V$ occurring in the filtration represents an associated prime in $\ass_R L$ if and only if the embedded $T$ corresponding to $\phi_V$ has depth equal to $\dim V$.
\end{thm}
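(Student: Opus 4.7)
The forward direction is essentially the content of the remark immediately preceding the statement: by Theorem \ref{detectionofassociatedprimes}, $\phi_V$ representing an associated prime of $\ass_R L$ means that $\ker \phi_V$ is an associated prime of $T$ as a module over itself. Since $\phi_V: T \to P_V$ is surjective, $\dim \ker \phi_V = \dim V$, and the standard fact that depth is bounded above by the dimension of any associated prime gives $\depth T \le \dim V$; combined with the lower bound $\depth T \ge \dim V$ from Lemma \ref{depthanddimoftruncations} (applied to the induced free rank filtration on $T$, whose lowest nonzero filtration degree is $r(T) = \dim V$), this forces equality.

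For the converse, set $d = \dim V$ and assume $\depth T = d$. The Duflot hypothesis supplies a $P_W$-algebra isomorphism $T \cong P_V \otimes T'$, making $T$ free as a $P_V$-module on any $\FF_p$-basis of $T'$. Hence the polynomial generators $y_1, \ldots, y_d$ of $P_V$ form a $T$-regular sequence with quotient $T/(y_1, \ldots, y_d) T = T'$, and the standard depth-and-regular-sequence formula yields
\[
\depth T \;=\; d + \depth T'.
\]
Our assumption therefore gives $\depth T' = 0$, so $\m_{T'}$ is an associated prime of $T'$ over itself, witnessed by some nonzero $t \in T'$ with $\m_{T'} \cdot t = 0$.

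The remaining step is to exhibit $\ann_T(1 \otimes t) = \ker \phi_V$, from which Theorem \ref{detectionofassociatedprimes} promotes the conclusion back to $\ass_R L$. Under the Duflot iso, $\ker \phi_V$ corresponds to $P_V \otimes \m_{T'}$; this follows by unwinding the definition of Duflot module, where the quotient map $T \to T/\sum X \cong \Sigma^d(P_V \otimes N)$ has the form $\mathrm{id}_{P_V} \otimes f$ for $f: T' \to \Sigma^d N$, and postcomposition with the $r(T)$-free augmentation $\mathrm{id}_{P_V} \otimes \epsilon_N$ has kernel $P_V \otimes \m_{T'}$ (using that $T'$ is connected and $f$ maps the augmentation ideal of $T'$ into that of $\Sigma^d N$). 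Choosing a homogeneous $\FF_p$-basis $\{t'_\alpha\}$ of $T'$ with $t'_0 = 1$ and $t'_\alpha \in \m_{T'}$ for $\alpha \ne 0$, any $x = \sum_\alpha p_\alpha \otimes t'_\alpha \in T$ satisfies
\[
x \cdot (1 \otimes t) \;=\; \sum_\alpha p_\alpha \otimes (t'_\alpha t) \;=\; p_0 \otimes t,
\]
since $\m_{T'} \cdot t = 0$ kills the terms with $\alpha \ne 0$. This vanishes precisely when $p_0 = 0$, i.e.\ when $x \in P_V \otimes \m_{T'} = \ker \phi_V$, as required. The main piece of bookkeeping is the identification $\ker \phi_V = P_V \otimes \m_{T'}$ under the Duflot iso; once that is in hand the remaining argument is a short calculation in the free $P_V$-module $P_V \otimes T'$.
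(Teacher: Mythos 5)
Your proof is correct and takes essentially the same route as the paper: both directions pass through Theorem \ref{detectionofassociatedprimes} to reduce to $T$ as a module over itself, and the converse uses $T \cong P_V \otimes T'$ to conclude $\depth T' = 0$ and produce a socle element whose annihilator is $\ker \phi_V$. The only cosmetic differences are that you derive $\depth T = \dim V + \depth T'$ from the regular sequence of $P_V$-generators instead of the K\"unneth theorem for local cohomology, and you spell out the identification $\ann_T(1 \otimes t) = \ker \phi_V$ that the paper merely asserts.
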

\begin{proof}
We saw above that $\phi_V$ represents an associated prime in $\ass_R L$ if and only if $\phi_V$ represents an associated prime in $\ass_T T$, and that if $\phi_V$ represented an associated prime in $T$, then the depth of $T$ was $\dim V$.

For the converse, if $\depth T=\dim V$, then as $T=P_V \otimes T'$, by the K\"unneth theorem for local cohomology the depth of $T'$ as an module over itself must be zero, so there is some element $x \in T'$ annihilated by all of $T$, therefor $\ann_{T} 1 \otimes x=\ker(T \to P_V)$ is in $\ass_T T$.
\end{proof}

\begin{prop}\label{detection}
Suppose $R$  is Duflot algebra (so it has the required filtration as a module over itself). If $\depth R=d$, then $R \to[\prod_{res_T}] \prod_{T \in P_d } T$ is injective.
\end{prop}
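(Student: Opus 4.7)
The plan is to argue by contradiction via associated primes. Set $I = \ker\bigl(R \to \prod_{T \in P_d} T\bigr)$, an ideal of $R$, and suppose $I \neq 0$. Since $I$ is a nonzero submodule of $R$ over the Noetherian ring $R$, $I$ has associated primes, and $\ass_R I \subseteq \ass_R R$. By Proposition \ref{algebraictoraltheorem} every $\mathfrak{p} \in \ass_R R$ is toral, of the form $\mathfrak{p} = \ker \phi_V$ for a structure map in the filtration with corresponding embedded $T$ of rank $r(T) = \dim V$, and the standard bound $\depth R \leq \dim R/\mathfrak{p}$ (combined with $R/\ker\phi_V \cong P_V$, a polynomial ring on $\dim V$ generators) forces $\dim V \geq d$. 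Choose such a $\mathfrak{p} \in \ass_R I$ and a witness $y \in I$ with $\ann_R y = \mathfrak{p}$.

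The next step is to show that $i_T^*(y) = 0$ in $T$. If $\dim V = d$, so $T \in P_d$, this is immediate from the definition of $I$. If $\dim V > d$, one exploits the refinement of the filtration by the poset: for the posets arising in practice (and more generally whenever every element of rank exceeding $d$ lies beneath some element of rank exactly $d$), there is a $T^\flat \in P_d$ with $T \leq T^\flat$ in $P$. Then $T$ is embedded in $T^\flat$, the factorization of restrictions $R \to T^\flat \to T$ is built into the topological stratification, and $y \mapsto 0$ in $T^\flat$ forces $y \mapsto 0$ in $T$.

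The contradiction now comes from the projection formula. Put $\tau = i_{T*}(1) \in R$. Using $i_T^*(y) = 0$,
\[
\tau \cdot y \;=\; i_{T*}(1) \cdot y \;=\; i_{T*}\bigl(1 \cdot i_T^*(y)\bigr) \;=\; 0,
\]
so $\tau \in \ann_R y = \ker \phi_V$, and hence $\phi_V(\tau) = 0$ in $P_V$. But $\phi_V : R \to P_V$ factors as $R \to T \to P_V$, and $i_T^*(\tau) = i_T^* i_{T*}(1) = e_T$, so $\phi_V(\tau) = \phi_V(e_T)$, forcing $\phi_V(e_T) = 0$. Under the Duflot decomposition $T \cong P_V \otimes T'$ the map $\phi_V : T \to P_V$ is the augmentation on the $T'$-factor, so $\ker\phi_V = P_V \otimes (T')_{>0}$; since the top subquotient $T/\sum_{X<T}X$ is a suspension of $P_V \otimes N$ for bounded $N$, the augmentation ideal of $T'$ is nilpotent, and thus $\ker\phi_V$ is a nilpotent ideal of $T$. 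Consequently $e_T$ is nilpotent in $T$, contradicting the fact that $T$ is fixed in $R$ (so $e_T$ is a non-zerodivisor).

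The main obstacle in this plan is the middle step: handling the case $\dim V > d$, where one must pass from $T$ down to some $T^\flat \in P_d$. In the topological applications (where $P$ is the poset of components of fixed-point sets) this is automatic, since one can choose a rank-$d$ subtorus inside the fixing subtorus of $T$ and take the component of its fixed set containing $T$. In a completely abstract setting one likely needs to include a mild ``downward saturation'' hypothesis on the coranked poset $P$, ensuring that every element of rank greater than $d$ dominates an element of rank exactly $d$; with that in place the argument above carries through verbatim.
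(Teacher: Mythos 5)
Your opening move (the projection formula $i_{T*}(1)\cdot y=i_{T*}(i_T^*y)=0$) is exactly the engine of the paper's proof, but the way you try to extract a contradiction from it breaks down. The fatal step is the claim that $\ker(T\to P_V)$ is nilpotent: in a Duflot algebra the decomposition $T\cong P_V\otimes T'$ only guarantees that the \emph{top subquotient} $T/\sum_{X<T}X\cong\Sigma^d(P_V\otimes N)$ has a bounded factor $N$; the algebra $T'$ itself is in general unbounded (in the motivating topological case $T=H_S^*Y\cong H_A^*\otimes H^*_{S/A}Y$, and $H^*_{S/A}Y$ is unbounded unless $S/A$ happens to act freely on all of $Y$, not just on $Y^{r}_{r}$). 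So the augmentation ideal of $T'$ is not nilpotent, $e_T\in\ker(T\to P_V)$ is not by itself absurd, and your contradiction evaporates. What you would actually need is $\phi_V(e_T)\neq 0$; that is true in the topological examples (it is essentially how \ref{gysinisinjective} is proved, via the Euler class of a representation with no trivial summands), but it is not one of the abstract axioms — ``fixed'' only says $e_T$ is a non-zerodivisor in $T$ — and the proposition is stated at the abstract level. A second, smaller problem is your reduction of the rank~$>d$ case: besides the saturation hypothesis on $P$ that you flag yourself, you also need the restriction maps to factor as $R\to T^\flat\to T$, and the axioms of a topological stratification only provide compatibility of \emph{pushforwards}, not of restrictions, so this is not ``built into'' the definition as you assert.

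The paper's proof avoids both issues by changing what the contradiction is measured against. From the projection formula one gets that a kernel element $x$ is annihilated by the image of every $\Sigma^{d_T}T$ with $r(T)=d$, hence by all of $F_d$; then $F_d$ consists of zero-divisors on $R$, so by prime avoidance $F_d\subseteq\p$ for some associated prime $\p$, and since $\dim R/F_d<d$ by \ref{depthanddimoftruncations} while $\depth R\le\dim R/\p$ for any $\p\in\ass R$, this contradicts $\depth R=d$. Note that this route never needs to identify which toral prime annihilates $x$, never needs $\phi_V(e_T)\neq 0$, and never needs to restrict $x$ to elements of rank $>d$. If you want to salvage your write-up, keep your projection-formula computation (applied to all $T\in P_d$, not just the one attached to a chosen associated prime) and replace the Euler-class/nilpotence endgame with this dimension-versus-depth comparison.
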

We say that $R$ is \emph{detected} on such $T$
\begin{proof}
Suppose for a contradiction that $x$ is a nonzero element in the kernel of $R \to[\prod_{res_T}] \prod_{T \in P_d} T$, so $x$ restricts to zero on each $T$. We claim that $x$ is annihilated by $F_d$. To see this, it is enough to show that $x$ is annihilated by the image of each $\Sigma^{d_T} T$ with $r(T)=d$. So, suppose $i_*(t) \in \im (\Sigma^{d_T} T \to R)$, and consider $x i_*(t)$. But, because $i_*$ is a map of modules, this is $i_*i^*x t$, and therefore $0$ since $i^*(x)=0$.

Therefore $x$ is annihilated by $F_d$, so $F_d$ consists entirely of zero divisors, and is consequently contained in the union of all the associated primes of $R$, and therefore in one of the associated primes of $R$ by prime avoidance, so we have $F_d \subset \p$, where $\p$ is associated. But by \ref{depthanddimoftruncations} $\dim R/F_d < d$. Therefore $\dim R / \p <d$, and the depth of $R$ is less than $d$, a contradiction.

Here we are using the fact that the depth of a ring gives a lower bound for the dimension of an associated prime.
\end{proof} 

\subsection{K-Duflot modules and maps of Duflot modules}\label{KDuflot}
Sometimes, given an extension $1 \to H \to G \to K \to 1$ and a representation $G \hookto U(n)$, the associated bundle $K \to H \backslash U(n) \to G \backslash U(n)$ gives a close relationship between the poset controlling $H_S^* H \backslash U(n)$ and $H_S^* G \backslash U(n)$, perhaps after restricting to some higher level of the filtrations. Here we describe the algebraic structure this puts on the associated Duflot filtrations.

The main point of this section is Theorem \ref{spectralsequenceforKduflotmodules}. In order to make sense of this theorem we have to study how the map $F_i H_S^* G \backslash U(n) \to F_i H_S^* H \backslash U(n)$ mentioned above is controlled by a map between the stratifying posets and a natural transformation between the functors appearing in the filtration, and we need to similarly understand how the $K$-module structure on $H_S^* G \backslash U(n)$ is determined by an action of $K$ on the stratifying poset that is compatible with the filtering functor.

Before we  do this, we  note that most of the pleasant algebraic properties of a Duflot algebra $R$ are shared by each $F_i R$. 

\begin{lemma}\label{whytruncationsaregood}
If $R$ is a $P_W$-algebra that is Duflot as a module over itself with filtration stratified by $P$, then for all $i$:
\begin{enumerate}
\item $F_i R$ is a Duflot module, with filtering poset $P_{\ge i}$.
\item $\colim_{X \in P_{\ge i} } F(X)=F_i R$
\item The map $F_i R \to R$ induces an isomorphism on Duflot complexes in degree greater than or equal to $i$, and consequently an isomorphism on local cohomology in degree greater than $i$ and a surjection in degree $i$.
\end{enumerate}
\end{lemma}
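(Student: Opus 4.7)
The plan is to observe that all three statements follow formally from restricting the filtering functor $F \colon P \to \{\text{submodules of } R\}$ along the inclusion $P_{\ge i} \hookrightarrow P$. Note first that because $r \colon P \to \NN^{op}$ is order-reversing on $\NN$-ranks, any $T \in P_{\ge i}$ satisfies $P_{\le T} \subset P_{\ge i}$. Consequently the restricted functor is closed under passing to sub-posets of the form $P_{\le T}$, and all data needed for a topological stratification of $F_i R$ is already present.

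Claim (2) is exactly the preceding proposition applied with $L = R$ and $j = i$. For Claim (1), I would verify the defining axioms of a Duflot module directly for $F_i R$ equipped with the restricted filtration $\{F(X) : X \in P_{\ge i}\}$. The induced $\NN$-filtration has $(F_i R)_j = F_j R$ for $j \ge i$ and $(F_i R)_j = F_i R$ for $j < i$, so its only nonzero subquotients are the $\gr[j] F(Y)$ for $Y \in (P_{\ge i})_j$, each $j$-free with the same structure map $\phi_V$ as in $R$. Goodness is immediate because $\bigoplus_{Y \in (P_{\ge i})_j} \gr[j] F(Y) = \bigoplus_{Y \in P_j} \gr[j] F(Y)$ for $j \ge i$. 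The three topological conditions of Definition \ref{thedefinitionoftopologicalstratifications} (embedded-fixed structure, induced sub-stratifications, compatibility of pushforwards) refer only to elements of $P_{\le T}$ for $T \in P_{\ge i}$, all of which remain in $P_{\ge i}$, and so carry over unchanged; the Duflot condition, being a statement about each embedded $T$ individually, is similarly inherited.

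For Claim (3), compare Duflot complexes termwise. By Proposition \ref{duflotcomplex}, $D(M)^j$ depends only on the $j$-free summands of $F_j M/F_{j+1} M$, so the inclusion $F_i R \to R$ induces a chain map $D(F_i R) \to D(R)$ that is zero in degrees $j < i$ and the identity in degrees $j \ge i$. Taking cohomology: for $j > i$ all three positions $j-1, j, j+1$ match on both sides, yielding $\HH^j F_i R \iso \HH^j R$; in degree $j = i$ the source has vanishing incoming differential while sharing the outgoing differential of $R$, so $\HH^i(F_i R) = \ker(D(R)^i \to D(R)^{i+1})$, and the induced map to $\HH^i R$ is the quotient by $\im(D(R)^{i-1} \to D(R)^i)$, hence surjective. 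The only potential pitfall is checking that $\gr[j] F(Y)$ for $Y \in P_{\ge i}$ coincides with the corresponding summand appearing in the filtration of $R$; once this observation is in place, the three claims are essentially tautological.
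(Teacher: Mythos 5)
Your proposal is correct, and it matches the paper's (implicit) reasoning: the paper states this lemma without proof, treating it as an immediate consequence of the preceding colimit proposition and of the fact that the stratifying data restricts to $P_{\ge i}$, which is exactly what you spell out via the key observation that $P_{\le T} \subset P_{\ge i}$ for $T \in P_{\ge i}$. Your degreewise comparison of Duflot complexes for part (3) is also the intended argument, relying on the functoriality asserted in Proposition \ref{duflotcomplex}.
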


In this section, we will be focusing on these $F_iR$.

Recall that we write a Duflot module as $(L,P,F)$, where $L$ is the module, $P$ is the filtering poset, and $F$ is the functor $P \to P_W$-mod realizing the filtration.

\textbf{Note}: The functor $F$ actually takes values in $R$-mod. However, we will want to consider maps between Duflot modules that come from different Duflot algebras, so we forget to $P_W$-mod.

\subsubsection{Morphisms of Duflot modules}
\begin{Def}
A map $P \to[\pi] Q$ of posets is a covering map if over every chain $C \subset Q$, there is a commutative diagram of posets
$\begin{tikzcd}
\pi^{-1}C \dar \rar{\sim} & C \times n \dlar \\
C
\end{tikzcd}$.
\end{Def}
By $n$ we mean the poset with $n$ objects and only identity arrows, in other words $C \times n$ is just $n$ copies of $C$.

\begin{Def}
A morphism of Duflot modules $(L,P,F) \to[\pi] (N,Q,G)$ consists of the following data:
\begin{enumerate}
\item A covering map of posets $\pi:P \to Q$
\item A natural transformation $G\pi \to F$.
\end{enumerate}
\end{Def}

\begin{lemma}\label{lemmaboutamorphisminducingmorphims}
A morphism $\pi: (L,P,F) \to (N,Q,G)$ induces a $P_W$-module map $\pi^*:N \to L$ which is uniquely characterized by: for all $Y \in Q$, the diagram:
$\begin{tikzcd}
G(Y) \rar \dar & N \dar \\
\prod_{X \in \pi^{-1}Y} F(X) \rar & L
\end{tikzcd}$ commutes.
\end{lemma}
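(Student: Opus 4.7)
The strategy is to express $N$ as a colimit over $Q$ and invoke its universal property. Since $N$ carries a topological filtration, the colimit description of topological filtrations (the proposition immediately following Lemma \ref{welldefinedlemma}, or equivalently Lemma \ref{whytruncationsaregood}(2) at $i=0$) gives $N = \colim_{Y \in Q} G(Y)$. Producing $\pi^*: N \to L$ is therefore equivalent to producing a compatible cocone $\{\phi_Y: G(Y) \to L\}_{Y \in Q}$ of $P_W$-linear maps, and the uniqueness assertion is automatic because the required commutativity forces $\pi^*|_{G(Y)} = \phi_Y$ for each $Y$.

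For each $Y \in Q$ I would take $\phi_Y$ to be the composite appearing along the bottom and right of the square in the statement: $G(Y) \to \prod_{X \in \pi^{-1}(Y)} F(X) \to L$, where the first map assembles the natural transformation components $\eta_X: G(\pi X) = G(Y) \to F(X)$ as $X$ ranges over the finite preimage, and the second is the sum of the inclusions $F(X) \hookrightarrow L$ coming from the topological filtration. Both maps are $P_W$-linear by construction, so $\phi_Y$ is.

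The main obstacle is verifying naturality of the family $\{\phi_Y\}$, i.e.\ that $\phi_{Y'} \circ G(Y \le Y') = \phi_Y$ for every relation $Y \le Y'$ in $Q$. This is where the covering-map hypothesis enters crucially. Applying the covering condition to the two-element chain $\{Y \le Y'\}$ decomposes $\pi^{-1}\{Y \le Y'\}$ as a disjoint union of chains, each mapping isomorphically onto $\{Y \le Y'\}$, and in particular produces a bijection $\pi^{-1}(Y) \iso \pi^{-1}(Y')$ under which paired elements $X, X'$ satisfy $X \le X'$ in $P$. Naturality of $\eta$ on each such relation, combined with functoriality of $F$ (the inclusions $F(X) \hookrightarrow F(X') \hookrightarrow L$ compose correctly to $F(X) \hookrightarrow L$), gives the required equality term by term in the product.

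Once compatibility holds, the universal property of $N = \colim_{Y \in Q} G(Y)$ produces the unique $P_W$-linear $\pi^*: N \to L$ restricting to $\phi_Y$ on each $G(Y)$, and this restriction property is precisely the commuting square in the statement. Apart from the bijection argument extracted from the covering condition, the proof is formal.
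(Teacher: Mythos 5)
Your proposal is correct and follows essentially the same route as the paper: write $N=\colim_{Y\in Q}G(Y)$, define $G(Y)\to\prod_{X\in\pi^{-1}Y}F(X)\to L$ from the natural transformation and the structure maps, check compatibility over a relation $Y\le Y'$ using the covering condition, and get existence and uniqueness from the universal property of the colimit. Your explicit fiber bijection $\pi^{-1}(Y)\iso\pi^{-1}(Y')$ is exactly the ``dotted arrow'' the paper obtains from local triviality, just spelled out in more detail.
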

\begin{proof}
We show how this information defines a map, and then uniqueness follows from the universal property of  colimits. 

Recall that $L= \colim_{X \in P} F(X)$ and that $N= \colim_{Y\in Q}G(Y)$. We define compatible maps $G(Y) \to L$ by $G(Y) \to \prod_{X \in \pi^{-1}Y} F(X) \to L$, where the left hand arrow is the product of the maps $G(\pi(X)) \to F(X)$ appearing in the natural transformation, and the right hand arrow is the sum of the inclusions.

To see that this gives a map from $N$ we must show that if we have a map $Y \to Y'$ in $Q$, then the diagram:
\begin{tikzcd}
G(Y) \dar \rar & G(Y') \dar \\
\prod_{X \in \pi^{-1} Y} F(X)  \dar \rar[dotted] & \prod_{X' \in \pi^{-1}Y'} F(X') \dlar  \\
N
\end{tikzcd} commutes, and this follows  because local triviality  ensures the existence of the dotted arrow.
\end{proof}

\begin{prop}
If $\pi:(L,P,F) \to (N,Q,G)$ be a morphism of Duflot modules, then the map on associated gradeds is a follows. Recall that $F_j/F_{j+1}L= \oplus_{X \in P_j} \gr[j] F(X)$, and that $F_j/F_{j+1}N=\oplus_{W \in Q_j} \gr[j] G(W)$. The map $\gr[j] G(W) \to \gr[j] F(X)$ is the one induced by the map $G(W) \to F(X)$ if $\pi(X)=W$ (the map appearing in the natural transformation), and zero if $W \not= \pi(X)$
\end{prop}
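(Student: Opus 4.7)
The plan is to apply Lemma \ref{lemmaboutamorphisminducingmorphims} to produce the induced $P_W$-module map $\pi^*: N \to L$, to verify that $\pi^*$ is filtration-preserving, and then to read off the resulting map on $j$th subquotients.

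First, I would invoke the defining commutative square of Lemma \ref{lemmaboutamorphisminducingmorphims}: for each $W \in Q$, the composite $G(W) \to N \to L$ factors as $G(W) \to \prod_{X \in \pi^{-1}(W)} F(X) \to L$, where the left arrow is the product of the natural transformation maps $\eta_X: G(\pi(X)) \to F(X)$. Using that $\pi$ is a covering map of weakly coranked posets (which forces $\pi^{-1}(Q_j) \subset P_j$), for $W \in Q_j$ every $X \in \pi^{-1}(W)$ satisfies $F(X) \subset F_j L$. Since $F_j N$ is spanned by the images of $G(W)$ for $W \in Q_{\ge j}$, this gives $\pi^*(F_j N) \subset F_j L$.

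Next I would pass to the associated gradeds. Goodness gives the direct sum decompositions
\[
F_j N / F_{j+1} N = \bigoplus_{W \in Q_j} \gr[j] G(W), \qquad F_j L / F_{j+1} L = \bigoplus_{X \in P_j} \gr[j] F(X).
\]
Fix $W \in Q_j$ and a class in $\gr[j] G(W)$ represented by $g \in G(W)$. The defining square sends $g$ to $(\eta_X(g))_{X \in \pi^{-1}(W)}$, and naturality of $\eta$ applied to any $X' < X$ with $\pi(X') < W$ ensures that $\eta_X$ sends $\sum_{Y < W} G(Y)$ into $\sum_{X' < X} F(X')$, so $\eta_X$ descends to a well-defined map $\gr[j] G(W) \to \gr[j] F(X)$ whenever $X \in \pi^{-1}(W)$. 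For any $X \in P_j$ with $\pi(X) \not= W$, the class of $g$ contributes zero to the $\gr[j] F(X)$-summand because $X \notin \pi^{-1}(W)$.

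The main obstacle, as I see it, is the bookkeeping around rank-preservation of the covering map $\pi$ and around the goodness of the filtration: together, these are what force $\pi^{-1}(W) \subset P_j$ when $W \in Q_j$, and they are what guarantee that the only $\gr[j] F(X)$-summands receiving a contribution from $\gr[j] G(W)$ are those with $\pi(X) = W$. Once these compatibilities are pinned down, the identification of the $(W,X)$-component on gradeds is essentially formal from naturality of $\eta$ and the universal property used to construct $\pi^*$.
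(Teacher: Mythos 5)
Your proof is correct and follows essentially the same route as the paper: both use the factorization of $G(W) \to N \to L$ through $\prod_{X \in \pi^{-1}(W)} F(X)$ from Lemma \ref{lemmaboutamorphisminducingmorphims} to identify the components over $\pi^{-1}(W)$, and both use that the image lands in $\sum_{X \in \pi^{-1}(W)} F(X)$ (together with goodness) to see that the summands with $\pi(X) \neq W$ receive zero. Your added bookkeeping about filtration preservation and the descent of $\eta_X$ to graded pieces just makes explicit what the paper leaves implicit.
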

\begin{proof}
To determine $\gr[j] G(W) \to L$, we must study the composition:
$G(W) \to  N \to  L$.
This map factors as:
\begin{tikzcd}
\prod_{X \in \pi^{-1}W} F(X) \rar & L \\
G(W) \rar \uar & N \uar
\end{tikzcd}, so the map is as claimed when $\pi(X)=W$. 

For the other case, because the map factors as in the above diagram, if $x \in \im(G(W)) \cap F(Y)$ with $\pi_*(Y) \not=W$, then $x$ is in filtration degree greater than $j$, so we are done.
\end{proof}
\subsubsection{$K$-Duflot modules}
\begin{Def}\label{defofKduflot}
For $K$ a finite group, a \emph{$K$-Duflot module} is a Duflot  module $L$ with a left action of $K$ on $P$ by poset maps along with for all $k \in K$ natural isomorphisms $Fk \Rightarrow F$ so that for all $X \in P$, the induced map $Fk(X) \to F(X)$ is an isomorphism satisfying the following axioms:
\begin{enumerate}
\item $F\circ id \Rightarrow F$ is the identify
\item The induced natural isomorphisms $Fhk \Rightarrow Fk$ satisfy associativity, in the sense that for all $l,h,k \in K$,
\begin{tikzcd}
Fk(hl)   \dar[Rightarrow] \rar[Rightarrow] & Fhl  \dar[Rightarrow] \\
F(kh)l \rar[Rightarrow] & Fl
\end{tikzcd} commutes.
\end{enumerate}
\end{Def}
Here the left vertical arrow is the identity, the functors $k(hl)$ and $(kh)l$ are equal as functors on $P$.

\textbf{Note:} In addition to the natural isomorphism $Fk  \Rightarrow F$, we have a natural isomorphism $F \Rightarrow Fk$, obtained from $k^{-1}$, which we will use.

To put this more explicitly, we have an action of $K$ on $P$, and for a morphism $X \to Y$ in $P$, we have a diagram:
\begin{tikzcd}
F(X) \dar  & F(kX) \dar \lar{k^*} \\
F(Y)  & F(kY) \lar{k^*}
\end{tikzcd}

\begin{lemma}
A $K$-Duflot module $L$ has a $P_W$-linear right $K$ action, uniquely characterized by: for all $k \in K$, the diagram:
\begin{tikzcd}
F(X) \dar{k^*} \rar & L \dar{k^*} \\
F(k^{-1}X) \rar & L
\end{tikzcd} commutes.
\end{lemma}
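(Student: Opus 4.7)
The plan is to realize each element $k \in K$ as a self-morphism of the Duflot module $(L, P, F)$ in the sense of the preceding definition, and then invoke Lemma \ref{lemmaboutamorphisminducingmorphims}. Since $K$ acts on $P$ by poset automorphisms, each $k \in K$ gives a bijection $k: P \to P$, which is automatically a covering map. Combined with the natural isomorphism $\eta_k: Fk \Rightarrow F$ from the $K$-Duflot structure, this is precisely the data of a morphism of Duflot modules $(L, P, F) \to (L, P, F)$, and I define $k^*: L \to L$ as the induced $P_W$-module map. Since $\pi^{-1}(X) = \{k^{-1}X\}$ for $\pi = k$ and the $k^{-1}X$-component of $\eta_k$ is the isomorphism $F(X) \to F(k^{-1}X)$ appearing in the displayed square, Lemma \ref{lemmaboutamorphisminducingmorphims} immediately yields both the existence of $k^*$ and the claimed commutativity.

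Next I verify that $k \mapsto k^*$ is a right $K$-action. For $k = e$, axiom (1) of Definition \ref{defofKduflot} makes $\eta_e$ the identity, so $e^*$ is the identity on $L$. For the multiplicative law, composition of Duflot-module morphisms induces composition of module maps contravariantly, so I must show that the composite of the self-morphisms attached to $k$ and $h$ (applied in that order) agrees with the self-morphism attached to $hk$. That composite has covering map $h \circ k = hk: P \to P$, and its natural transformation $F(hk) \Rightarrow F$ is obtained by whiskering $\eta_h$ with $k$ and then applying $\eta_k$; at $X \in P$, this is $\eta_k^X \circ \eta_h^{kX} \colon F(hkX) \to F(X)$. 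Specializing axiom (2) of Definition \ref{defofKduflot} to $l = e$ (with the roles of $h$ and $k$ interchanged to match conventions) gives exactly the pointwise identity $\eta_k^X \circ \eta_h^{kX} = \eta_{hk}^X$, so the composite natural transformation equals $\eta_{hk}$. Hence the composite Duflot-morphism agrees with the morphism attached to $hk$, and contravariance of the assignment in Lemma \ref{lemmaboutamorphisminducingmorphims} delivers $(hk)^* = k^* \circ h^*$, the right-action law.

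Finally, $P_W$-linearity of each $k^*$ is built into the conclusion of Lemma \ref{lemmaboutamorphisminducingmorphims}, and uniqueness subject to the displayed square follows from $L = \colim_{X \in P} F(X)$, since any map out of $L$ is determined by its restrictions to the $F(X)$ and the square pins these restrictions down. The only real bookkeeping point is the direction check: the associativity axiom is arranged so that composing the self-morphism for $k$ with the one for $h$ produces $\eta_{hk}$ rather than $\eta_{kh}$, so contravariance of Lemma \ref{lemmaboutamorphisminducingmorphims} yields a right action rather than a left action. Once this is kept straight, the argument assembles formally from the two preceding lemmas.
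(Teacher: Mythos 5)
Your proof is correct and takes essentially the same approach as the paper: the paper defines $k^*$ directly via the colimit presentation $L=\colim_{X\in P}F(X)$ by the compatible maps $F(X)\to F(k^{-1}X)\to L$, which is exactly what your invocation of Lemma \ref{lemmaboutamorphisminducingmorphims} (itself proved by that colimit argument, here with singleton fibers) unwinds to. Your explicit check of the action axioms via axiom (2) of Definition \ref{defofKduflot} specialized to $l=e$ is just a spelled-out version of the paper's remark that the group action follows from the conditions on the natural isomorphisms.
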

\begin{proof}
We show how this data defines an action on $L$, and then uniqueness follows from the universal property of  colimits.

For $k \in K$, we define a map $k^*:L \to L$ as follows. For all $X \in P$, we need a map $F(X) \to L$ commuting with the maps $F(X) \to F(Y)$ defining our colimit. For $X \in P$, we define $F(X) \to L$ by the map $F(X) \to[k^*] F(k^{-1}X) \to L$, where the second map is the natural map $F(k^{-1}X) \to L$. That this defines a system of compatible maps follows from the commutativty of the diagram:
\begin{tikzcd}
F(X) \dar{k*} \rar & F(Y) \dar{k^{*}}\\
F(k^{-1}X) \rar & F(k^{-1}Y)
\end{tikzcd}. That it defines a group action follows from the conditions on the natural transformations.
\end{proof}

\begin{prop}
The action of $K$ on $L$ induces an action of $K$ on the associated graded: on each summand the map $k^*:\gr[j] F(kX) \to \gr[j] F(X)$ is the isomorphism induced by the isomorphism $F(kX)\to F(X)$, and the image of $\gr[j] F(kX)$ in the other summands is zero.
\end{prop}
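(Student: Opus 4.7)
The plan is to show that $k^*$ preserves each $F_j L$, so that it descends to a well-defined map on $F_j/F_{j+1}L$, and then to identify the induced map on each summand of the decomposition $F_j/F_{j+1}L = \bigoplus_{X \in P_j}\gr[j]F(X)$ by tracing through the defining property of $k^*$ from the previous lemma. The argument closely parallels the computation for morphisms of Duflot modules in the preceding proposition.

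First, the $K$-action on $P$ must preserve the ranking $r : P \to \NN^{op}$, since the natural isomorphism $F(kX) \to F(X)$ forces the $P_W$-module structures of $F(kX)$ and $F(X)$ to agree, and in the Duflot setup the rank of $X$ can be recovered intrinsically from $F(X)$ (as the smallest $\ell$ for which the induced filtration satisfies $F_{\ell}F(X)=F(X)$). Consequently $k^{-1}$ carries $P_{\ge j}$ into itself. By the defining diagram of $k^*$ from the previous lemma, the composition $F(X) \to L \to[k^*] L$ factors through $F(k^{-1}X) \to L$, which lies in $F_j L$ whenever $X \in P_{\ge j}$. Thus $k^*$ preserves the filtration and descends to $F_j/F_{j+1}L$.

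Next, fix $X \in P_j$ and consider the composition
\begin{equation*}
F(kX) \to L \to[k^*] L.
\end{equation*}
Applying the defining property of $k^*$ with $kX$ in place of $X$, this factors as
\begin{equation*}
F(kX) \to[k^*] F(X) \to L,
\end{equation*}
where the first arrow is the isomorphism supplied by the natural transformation $Fk \Rightarrow F$ (using the associativity axiom of Definition \ref{defofKduflot} to identify the map $F(kX) = F(k \cdot k^{-1} \cdot kX) \to F(k^{-1} \cdot kX) = F(X)$ coming from the defining diagram with the map induced by $Fk \Rightarrow F$ at $X$). Projecting to $F_j/F_{j+1}L$, the map $F(X) \to L \to F_j/F_{j+1}L$ surjects onto the summand $\gr[j]F(X)$ and is zero on the other summands, exactly as in the analogous step of the previous proposition. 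Therefore on the summand $\gr[j] F(kX)$, the induced $k^*$ is precisely the isomorphism $\gr[j]F(kX) \to \gr[j]F(X)$ coming from $F(kX) \to F(X)$, and is zero into every other summand. The fact that this defines a genuine $K$-action on the associated graded follows formally from the identity and associativity axioms of Definition \ref{defofKduflot} combined with the $K$-action already established on $L$.

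There is no significant technical obstacle here: the entire argument is a routine consequence of the universal property of the colimit presentation of $L$ combined with the defining compatibility of $k^*$, in direct analogy with the computation for morphisms done just above. The one subtlety worth flagging is the rank-preservation of the $K$-action on $P$, which is implicit in the definition rather than stated as an axiom, but which is forced by the existence of the natural isomorphisms $Fk \Rightarrow F$.
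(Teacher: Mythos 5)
Your argument is correct and follows essentially the same route as the paper: the defining square for $k^*$ shows that the composite $F(kX)\to L \to[k^*] L$ factors as the structure isomorphism $F(kX)\to F(X)$ followed by the inclusion into $L$, so on the associated graded one obtains exactly the stated isomorphism onto the summand $\gr[j]F(X)$, with any intersection with other summands lying in higher filtration degree. Your aside on rank-preservation concerns a hypothesis the paper leaves implicit in the $K$-action on the coranked poset; note that your intrinsic characterization should read the \emph{largest} (not smallest) $\ell$ with $F_\ell F(X)=F(X)$, but nothing in the main argument depends on that remark.
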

\begin{proof}
This follows because the action by $k$  on $F(kX)$ fits into the diagram
\begin{tikzcd}
F(X) \rar & L  \\
F(kX) \uar{k^*} \rar & L \uar{k*}
\end{tikzcd}. This shows that the map is as claimed on these summands, and if $k^*(F(kX))$ intersects any other $F(Y)$ it is in higher filtration degree, so is in $\sum_{Z<Y} F(Z)$.
\end{proof}

\begin{Def}\label{thedefinitionofanequivariantmorphism}
For $K$-Duflot modules $L, N$, a morphism $\pi:(L,P,F) \to (N,Q,G)$ is \emph{equivariant} if:
\begin{enumerate}
\item $\pi_*: P \to Q$ is equivariant
\item For all $k \in K$, there is a commuting diagram
\begin{tikzcd}
G \pi \rar[Rightarrow] & F  \\
G k\pi \uar[Rightarrow] \rar[Rightarrow] & Fk \uar[Rightarrow]
\end{tikzcd}. In this diagram, the vertical natural transformation and the upper horizontal one are stipulated in the definitions of a $K$-Duflot module and a morphism, and the bottom one is induced from these.
\end{enumerate}
\end{Def}

\begin{lemma}\label{propertiesofequivariantmorphism}
The unique map of \ref{lemmaboutamorphisminducingmorphims} induced by an equivariant morphism is an equivariant map $\pi^*:N \to M$.
\end{lemma}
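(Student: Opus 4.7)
The plan is to verify $\pi^* \circ k^* = k^* \circ \pi^*$ as maps $N \to L$ using the colimit presentation $N = \colim_{Y \in Q} G(Y)$: by the universal property, it suffices to check that both compositions agree after precomposition with each canonical map $G(Y) \to N$. The three ingredients I need are the characterizations, from the preceding lemmas, of $\pi^*$, of $k^*$ on $N$, and of $k^*$ on $L$.

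First I would unwind the left-hand side. The description of $k^*$ on $N$ sends the structure map $G(Y) \to N$ to the composite $G(Y) \to G(k^{-1}Y) \to N$, where the first arrow is the component at $k^{-1}Y$ of $Gk \Rightarrow G$; then applying $\pi^*$ via Lemma \ref{lemmaboutamorphisminducingmorphims} produces, on each $X \in \pi^{-1}(k^{-1}Y)$, the composite $G(Y) \to G(k^{-1}Y) \to F(X) \to L$, with the middle arrow the component of $G\pi \Rightarrow F$ at $X$. Similarly, the right-hand side first applies $\pi^*$ to give, on each $X_0 \in \pi^{-1}Y$, a map $G(Y) \to F(X_0) \to L$, and then applies $k^*$ on $L$, which inserts the component at $k^{-1}X_0$ of $Fk \Rightarrow F$ and yields $G(Y) \to F(X_0) \to F(k^{-1}X_0) \to L$.

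Because $\pi$ is $K$-equivariant, $X \mapsto kX$ is a bijection $\pi^{-1}(k^{-1}Y) \to \pi^{-1}Y$, so after reindexing the right-hand side by $X$ (with $X_0 = kX$) both sides become sums, over the same index set, of composites $G(Y) \to F(X) \to L$. Equality then reduces, for each such $X$, to commutativity of the square with vertices $G(Y)$, $G(k^{-1}Y)$, $F(kX)$, $F(X)$ whose edges are the $X$-components of $Gk\pi \Rightarrow G\pi$, $G\pi \Rightarrow F$, $Gk\pi \Rightarrow Fk$, and $Fk \Rightarrow F$.

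But that square is precisely the $X$-component of the commuting diagram of natural transformations built into Definition \ref{thedefinitionofanequivariantmorphism}(2), so it commutes by hypothesis. The proof is therefore a diagram chase; the only real obstacle is the bookkeeping needed to match the directions of the natural isomorphisms $Fk \Rightarrow F$ and $Gk \Rightarrow G$ (evaluated at $k^{-1}X$ and $k^{-1}Y$ respectively) with the way the right $K$-actions on $L$ and $N$ were constructed from the left $K$-actions on $P$ and $Q$, together with the identification $k\pi = \pi k$ on $P$ that makes the bottom edge of the square a component of $G\pi \Rightarrow F$ re-indexed by $kX$.
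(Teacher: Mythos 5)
Your proof is correct and takes essentially the same approach as the paper: the paper's (very terse) proof consists of the commutative square comparing $G(Y) \to \prod_{X \in \pi^{-1}Y} F(X)$ with $G(k^{-1}Y) \to \prod_{Z \in \pi^{-1}(k^{-1}Y)} F(Z)$ via the identification $\pi^{-1}(k^{-1}Y) \cong \pi^{-1}(Y)$, which is exactly the componentwise square you extract from Definition \ref{thedefinitionofanequivariantmorphism}(2) together with the equivariance of $\pi$ on fibers. You are simply spelling out the diagram chase and colimit argument that the paper leaves implicit.
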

\begin{proof}
This follows from the commutativity of diagrams of the form:

\begin{tikzcd}
G(Y) \rar \dar{k^*} & \prod_{Y \in \pi^{-1} Y} F(X) \dar{k^*} \\
G(k^{-1}Y) \rar & \prod_{Z \in \pi^{-1} k^{-1}Y}F(Z)
\end{tikzcd}, because the lower right hand object is also $\prod_{X \in \pi^{-1}Y} F(k^{-1}X)$.
\end{proof}

\begin{Def}
A covering map of posets $P \to Q$ is a \emph{principal $K$-bundle} if $K=\mathrm{Aut}(P \to Q)$ acts transitively on all fibers, or equivalently if $P \to Q$ fits into a triangle
\begin{tikzcd}
P \dar \drar & \\
P/K \rar & Q
\end{tikzcd} where the bottom map is an isomorphism.
\end{Def}
By $\mathrm{Aut}(P \to Q)$ we mean the group of poset isomorphisms $P \to P$ fitting into triangles: 
$\begin{tikzcd} P \rar \dar \dar & P \dlar \\
Q
\end{tikzcd}$.

\begin{Def}
If $\pi:(L,P,F) \to (N,Q,G)$ is a morphism where all the maps $Q(\pi(X)) \to F(X)$ are isomorphims, $\pi:P \to Q$ is a principal $K$-bundle, $L$ is $K$-Duflot with respect to the $K$ action on $P$, $N$ is $K$-equivariant with trivial action, and $\pi$ is an equivariant map with respect to these actions, then we say that $\pi:(L,P,F) \to (N,Q,G)$ is a $K$-bundle.
\end{Def}

\begin{lemma}
If $\pi:(L,P,F) \to (N,Q,G)$ is a $K$-bundle, then $\pi^*:N \to L^K$ is an injection.
\end{lemma}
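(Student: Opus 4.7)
The plan splits into two steps: first showing that the image of $\pi^*$ lands in the fixed subspace $L^K$, and then establishing injectivity via the associated graded.

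For the first step, I will invoke Lemma \ref{propertiesofequivariantmorphism}, which tells us that the map $\pi^*: N \to L$ induced by an equivariant morphism is itself $K$-equivariant. Since the $K$-action on $N$ is trivial by the definition of a $K$-bundle, for every $n \in N$ and $k \in K$ we have $k^* \pi^*(n) = \pi^*(k^* n) = \pi^*(n)$, so $\pi^*(n) \in L^K$.

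For injectivity, the plan is to pass to the induced $\NN$-filtrations on $N$ and $L$ and argue on associated gradeds. Since $P$ and $Q$ are finite, both filtrations are finite, so it suffices to show that at each filtration level $j$ the induced map
\[
\bigoplus_{Y \in Q_j} \gr[j] G(Y) \;\longrightarrow\; \bigoplus_{X \in P_j} \gr[j] F(X)
\]
is injective. By the proposition describing the associated graded of a morphism, the restriction of this map to a summand $\gr[j] G(Y)$ lands in $\bigoplus_{X \in \pi^{-1}Y} \gr[j] F(X)$, with the component into $\gr[j] F(X)$ given by the natural transformation $G(\pi(X)) \to F(X)$. The hypothesis that $\pi:(L,P,F) \to (N,Q,G)$ is a $K$-bundle forces each of these component maps to be an isomorphism, so each summand-level map is a diagonal embedding; since $\pi$ is a principal $K$-bundle these fibers are non-empty, and since the fibers over distinct $Y$'s are disjoint subsets of $P_j$, the total map is a direct sum of injections and hence injective.

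The main thing to be careful about is the compatibility of $\pi$ with the rank/filtration data, i.e.\ that $\pi$ carries $P_j$ to $Q_j$ and that $\pi^*(F_j N) \subseteq F_j L$, so that the associated graded argument is well-posed. This should follow directly from $\pi$ being a morphism of posets coranked by $\NN^{op}$ together with the characterization of $\pi^*$ in Lemma \ref{lemmaboutamorphisminducingmorphims}, which sends $G(Y) \subset N$ into $\sum_{X \in \pi^{-1}Y} F(X) \subset L$. Once these compatibilities are in hand, the finiteness of the filtration and injectivity on each associated graded yield injectivity of $\pi^*$ itself, completing the proof.
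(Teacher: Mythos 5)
Your proposal is correct and follows essentially the same route as the paper: equivariance of $\pi^*$ (via Lemma \ref{propertiesofequivariantmorphism}) plus the trivial $K$-action on $N$ puts the image in $L^K$, and injectivity is checked on associated gradeds, where the $K$-bundle hypothesis that each $G(\pi(X)) \to F(X)$ is an isomorphism makes each $\gr[j]G(Y) \to \bigoplus_{X \in \pi^{-1}Y}\gr[j]F(X)$ a (diagonal) injection. Your extra remarks on rank compatibility and the finiteness/separatedness of the filtration only make explicit what the paper leaves implicit.
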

\begin{proof}
First, because the map is equivariant and because the action on $N$ is trivial, $\pi^*$ must map into the invariants. Then to check injectivity it is enough to check  on associated gradeds, where the result is clear because the maps $Q(\pi(X)) \to F(X)$ are all isomorphisms. 
\end{proof}

\begin{prop}
If $\pi:(L,P,F) \to (N,Q,G)$ is a $K$-bundle, then $DN^* \to (DL^*)^K$ is an isomorphism, and each $DL^i$ is a sum of free $K$-modules.
\end{prop}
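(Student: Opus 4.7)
The plan is to decompose both complexes summand by summand using the stratifications. By goodness of the filtrations, $DL^j = \bigoplus_{X \in P_j} B_X$ and $DN^j = \bigoplus_{W \in Q_j} A_W$, where $B_X := \HH^j(\gr[j]F(X))$ and $A_W := \HH^j(\gr[j]G(W))$. The proposition identifying the associated-graded map induced by a morphism of Duflot modules, applied to $D\pi^*$, together with the hypothesis that each natural-transformation component $\eta_X : G(\pi X) \to F(X)$ is an isomorphism, shows that $D\pi^*$ restricted to $A_W$ is the assembly of the isomorphisms $A_W \iso B_X$ for $X \in \pi^{-1}(W)$. The corresponding proposition for the $K$-action shows that $K$ permutes the summands $B_X$ via the isomorphisms $k^* : B_X \iso B_{k^{-1}X}$.

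Next I would use that $\pi$ is a principal $K$-bundle to conclude that each fiber $\pi^{-1}(W) \subset P_j$ is a single free $K$-orbit, so $\bigoplus_{X \in \pi^{-1}(W)} B_X$ is a free $\FF_p[K]$-module, isomorphic to $\FF_p[K] \otimes_{\FF_p} B_{X_0}$ for any basepoint $X_0$. Summing over $W \in Q_j$ presents $DL^j$ as a sum of free $K$-modules, which is the second claim of the proposition.

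To finish the first claim, I would identify $A_W$ with $\bigl(\bigoplus_{X \in \pi^{-1}(W)} B_X\bigr)^K$. Unpacking the commuting square in the definition of an equivariant morphism, and using that $K$ acts trivially on $N$ and $Q$, shows that the components satisfy $\eta_X = k^* \circ \eta_{kX}$ for all $k \in K$. This is exactly the statement that the diagonal map $\eta := \prod_X \eta_X : G(W) \to \bigoplus_{X \in \pi^{-1}(W)} F(X)$ lands in $K$-invariants; post-composing with projection to any single component recovers the isomorphism $\eta_{X_0}$, so $\eta$ realises an isomorphism of $G(W)$ onto the invariants. Since $\HH^j$ is functorial for $K$-equivariant $P_W$-module maps and commutes with finite sums, applying it and summing over $W \in Q_j$ yields $DN^j \iso (DL^j)^K$.

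The main obstacle is the equivariance bookkeeping in the last step: extracting the identity $\eta_X = k^* \circ \eta_{kX}$ from the pasted $2$-cell square of natural transformations in the definition of equivariant morphism, and verifying that the resulting diagonal map onto invariants survives after applying $\HH^j$ to the $j$-free pieces. Beyond that the argument is a standard computation with free modules over the group ring $\FF_p[K]$.
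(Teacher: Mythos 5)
Your proposal is correct and follows essentially the same route as the paper: the freeness of $DL^j$ comes from the free $K$-action on the fibers of $P_j \to Q_j$, and the isomorphism $DN^* \to (DL^*)^K$ comes from recognizing the map on each summand as the diagonal $\gr[j]G(W) \to \bigoplus_{X \in \pi^{-1}W}\gr[j]F(X) \cong \ind_1^K \gr[j]F(X)$, whose image is exactly the invariants. Your write-up just makes the equivariance bookkeeping explicit where the paper leaves it implicit.
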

\begin{proof}
That each $DL^i$ is a sum of free modules comes from the computation of the action of $K$ on the subquotients of the filtration, because the action on the poset $P$ is free, and because the summands of $DL^i$ are indexed on $P_i$. 

For the map $DN^* \to (DL^*)^K$, this follows because on associated gradeds, $F_j/F_{j+1}N \to F_j/F_{j+1}L^K$ is an isomorphism. This is so because over one summand of $F_j/F_{j+1}N$, the map is the diagonal map $\gr[j]G(X) \to \sum_{Y \in \pi^{-1}X}( \gr[j]F(Y)) \cong \ind_1^K \gr[j]F(X)$.

\end{proof}
\begin{thm}\label{spectralsequenceforKduflotmodules}
If $\pi:(L,P,F) \to (N,Q,G)$ is a $K$-bundle, then there is a spectral sequence $H^p(K,\HH^q(L)) \Rightarrow \HH^{p+q}N$.
\end{thm}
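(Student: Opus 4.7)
The plan is to apply the hyperhomology spectral sequence for the invariants functor $(-)^K$ to the Duflot complex $DL^{\bullet}$. The preceding proposition packages exactly the ingredients needed: $DL^{\bullet}$ is a bounded cochain complex of $\FF_p[K]$-modules with cohomology $\HH^* L$; each $DL^i$ is a sum of free $\FF_p[K]$-modules; and $(DL^{\bullet})^K$ is naturally isomorphic to $DN^{\bullet}$, whose cohomology is $\HH^* N$.

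First I would choose a Cartan--Eilenberg injective resolution $I^{\bullet,\bullet}$ of $DL^{\bullet}$ in the category of $\FF_p[K]$-modules, apply the invariants functor termwise, and consider the two spectral sequences associated to the first-quadrant double complex $(I^{\bullet,\bullet})^K$, both of which converge to the cohomology of its total complex.

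Next, filtering so that the vertical (resolution) differentials are taken first yields $E_1^{p,q} = H^q(K, DL^p)$. Each $DL^p$ is a sum of copies of $\FF_p[K] = \ind_1^K \FF_p$, which is acyclic for $(-)^K$ by Shapiro's lemma, so this $E_1$ page is concentrated in the row $q = 0$, where it equals $(DL^p)^K = DN^p$. Consequently this spectral sequence collapses at $E_2$ to $H^p(DN^{\bullet}) = \HH^p N$, identifying the abutment of the hyperhomology.

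The other filtration (horizontal differentials first) gives $E_2^{p,q} = H^p(K, H^q(DL^{\bullet})) = H^p(K, \HH^q L)$, which converges to $\HH^{p+q} N$ by the previous paragraph. The only bookkeeping worth flagging, and the main point where one must be careful, is that the $K$-module structure on $\HH^* L$ appearing on this $E_2$ page must be identified with the one induced by the action of $K$ on $L$ from Definition \ref{defofKduflot}; this is automatic from the naturality of the Duflot complex established in \ref{duflotcomplex} applied to each endomorphism $k \in K$, which implies the induced $K$-action on $H^*(DL^{\bullet}) = \HH^* L$ is the expected one.
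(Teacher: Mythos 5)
Your argument is correct and is essentially the paper's proof: both take the two hypercohomology spectral sequences of the $K$-complex $DL^{\bullet}$, use that each $DL^i$ is a sum of free $\FF_p[K]$-modules together with $(DL^{\bullet})^K \cong DN^{\bullet}$ to collapse one of them to $\HH^*N$, and read off $E_2^{p,q}=H^p(K,\HH^q L)$ from the other. Your extra remark about identifying the $K$-action on $\HH^*L$ via the functoriality of the Duflot complex is a reasonable point of care but not a departure from the paper's route.
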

\begin{proof}
This is the one of the two hypercohomology spectral sequences associated to the complex of $K$ modules $DL^*$. One spectral sequence has the $E_2$ term listed in the statement of the theorem, and the other spectral sequence has $E_2$-term $H^{p+q}( H^p(K,DL^*))$. But $H^p(K,DL^*)$ is zero above degree $1$ and is $DN^*$ in degree $0$, so the hypercohomology is $\HH^{*} N$ as claimed.
\end{proof}

\section{Connections with equivariant cohomology}\label{chapteronequivariantcohomology}
In this chapter we show that the algebraic structures studied in the previous chapter are present in the $S$-equivariant cohomology rings of smooth manifolds, and we apply the theorems of the previous chapter to deduce structural results in $S$-equivariant cohomology. Recall that all cohomology is with $\FF_p$ coefficients.
\subsection{$H_S^*M$ has a Duflot free rank filtration}\label{propertiesofHSM}

Now we show that this theory isn't vacuous: $H_S^*M$ has a Duflot algebra structure, where $S$ is a $p$-torus and $M$ is a smooth $S$-manifold. For applications to group cohomology, the most interesting $S$-manifold is  $M=G \backslash U(V)$,  where we have a faithful representation $G \to U(V)$ and $S$ is the maximal diagonal $p$-torus of $U(V)$ acting on the right of $G \backslash U(V)$. This manifold has two useful geometric properties:
\begin{enumerate}
\item For each subgroup $A \subset S$, each component $Y$ of $G\backslash U(V)^A$ is $S$-invariant. 
\item For each subgroup $A \subset S$ and component $Y$ of $G \backslash U(V)^A$, the open submanifold of $Y$ consisting of those points that have isotropy exactly equal to $A$ is connected.
\end{enumerate}
\begin{Def}We call $S$-manifolds satisfying these two properties \emph{$S$-connected}.
\end{Def}
 There is a straightforward modification of the definition of a free rank filtration that applies to smooth $S$-manifolds that aren't $S$-connected, but it is more complicated to state so we don't do so here.

In this section, fix a $p$-torus $S$ and a smooth $S$-connected manifold $M$.

\begin{lemma}
For $A<S$ and $Y$ a component of $M^A$, we have that $Y$ is also $S$-connected.
\end{lemma}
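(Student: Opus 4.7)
My plan is to verify both axioms of $S$-connectedness for $Y$ at an arbitrary subgroup $B < S$ by reducing each to the corresponding axiom for $M$ at the subgroup $C := \langle A, B \rangle$ (which makes sense because $S$ is abelian). Note first that $Y$ is a smooth $S$-invariant submanifold of $M$ by axiom (1), so it is a legitimate smooth $S$-manifold.

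The crux is the identification $Y^B = Y \cap M^C$: any point of $Y$ already has isotropy containing $A$, so it is fixed by $B$ if and only if it is fixed by $C$. I would then establish that the components of $Y^B$ are precisely those components of $M^C$ which happen to lie inside $Y$. Concretely, if $Z$ is a component of $Y^B$ and $Z'$ denotes the component of $M^C$ containing $Z$, then $Z' \subset M^C \subset M^A$ is connected and meets $Y$, so by maximality of the $M^A$-component $Y$ we get $Z' \subset Y$, hence $Z' \subset Y \cap M^C = Y^B$, and maximality of $Z$ forces $Z = Z'$.

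Granting this identification, axiom (1) for $Y$ is immediate: $Z$ is a component of $M^C$, hence $S$-invariant by axiom (1) applied to $M$. For axiom (2), observe that the isotropy of any $z \in Z$ with respect to the $S$-action on $Y$ equals its isotropy in $M$, which contains both $A$ and $B$, hence contains $C$. There are two cases. If $A \not\subset B$, then $B \subsetneq C$, so no point of $Z$ can have isotropy exactly $B$ and the condition is vacuous. If $A \subset B$, then $B = C$, and the set of points of $Z$ with isotropy exactly $B$ coincides with the set of points of the $M^C$-component $Z$ having isotropy exactly $C$, which is connected by axiom (2) applied to $M$.

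The main obstacle is securing the identification of components of $Y^B$ with those components of $M^C$ contained in $Y$; this relies on using both the fact that $Y$ is a component of $M^A$ (to force $Z' \subset Y$) and that $S$ is abelian (so that $\langle A, B\rangle$ is again a subgroup of $S$). Once this is in place, the two axioms for $Y$ transfer from the hypotheses on $M$ with no further work.
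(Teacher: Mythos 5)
Your proof is correct and is essentially the paper's argument: the paper's entire proof is the one-line observation that a component of $Y^B$ is again a component of a fixed-point set of $M$, which is exactly the identification you establish, after which both axioms transfer from $M$. If anything, your version is slightly more precise than the paper's wording, since a component of $Y^B$ is in general a component of $M^{\langle A,B\rangle}$ rather than of $M^B$ (the two agree only when $A \subset B$), and your case analysis on whether $A \subset B$ handles this cleanly.
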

\begin{proof}
This follows from the fact that a component of $Y^B$ is also a component of $M^B$. 
\end{proof}

\begin{Def}
Let $M_i^j=\{ x \in M: i \le \rank S_x \le j\}$.
\end{Def}

\begin{prop}[Duflot \cite{duflotfiltration} Theorem 1]\label{theoremofduflotgivingfiltration}
Define a filtration of $H_S^*M$-modules on $H_S^*M$ by $F_j= \ker H_S^*M \to H_S^*M_0^{j-1}$. Then: $$F_j/F_{j+1}= \bigoplus_{ \{ V \subset S: \rank V=j\} }\left( \bigoplus_{[Y] \in \pi_0(M^V)} \Sigma^{d_Y} H_S^*Y_j^j\right),$$ where $d_Y$ is the codimension of $Y$ in $M$. 

An alternative way to write this is as $F_j/F_{j+1}= \bigoplus_{[M'] \in \pi_0 M_j^j} \Sigma^{d_Y} H_S^*M'$, where $Y$ is the component of the fixed points of a rank $j$ $p$-torus such that $Y$ contains $M'$.
\end{prop}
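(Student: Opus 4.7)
The plan is to combine the long exact sequence in $S$-equivariant cohomology with supports, an equivariant Thom isomorphism applied to each stratum, and Duflot's original non-vanishing theorem for normal bundle Euler classes. By upper semi-continuity of isotropy rank, each $M_0^j$ is open in $M$, so $M_j^j = M_0^j \setminus M_0^{j-1}$ is closed in $M_0^j$; by $S$-connectedness it decomposes as a disjoint union, over rank-$j$ subtori $V \subset S$ and components $[Y] \in \pi_0(M^V)$, of the smooth $S$-invariant submanifolds $Y_j^j := Y \cap M_0^j$, each of codimension $d_Y$. The long exact sequence with support for the open inclusion $M_0^{j-1} \hookrightarrow M_0^j$ reads
\[
\cdots \to H^*_{S,M_j^j}(M_0^j) \to H_S^* M_0^j \to H_S^* M_0^{j-1} \to \cdots,
\]
and the support term splits as $\bigoplus_{V,[Y]} H^*_{S,Y_j^j}(M_0^j)$, each summand identified by the equivariant Thom isomorphism with $\Sigma^{d_Y} H_S^* Y_j^j$.

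The main obstacle is to show the forget-supports map $H^*_{S,Y_j^j}(M_0^j) \to H_S^* M_0^j$ is injective; this will collapse the long exact sequence into short exact sequences in every degree. Under the Thom isomorphism, the composition with the restriction $H_S^* M_0^j \to H_S^* Y_j^j$ is multiplication by the $S$-equivariant Euler class of the normal bundle $N_Y$ pulled back to $Y_j^j$. Because $V$ fixes $Y$ pointwise but has no nonzero fixed vector in any fiber of $N_Y$, this Euler class factors, after restriction to $H^*_V$, as a product of the nonzero $V$-weight classes of $N_Y$, each a nonzero linear form in the polynomial subring $P_V \subset H^*_V$; it is therefore a non-zero divisor in $H^*_V$. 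Since $V$ acts trivially on $Y_j^j$ while $S/V$ acts freely on it, the splitting $ES \times_S Y_j^j \simeq BV \times (Y_j^j/(S/V))$ yields $H_S^* Y_j^j \cong H^*_V \otimes H^*(Y_j^j/(S/V))$ as a free module over $H^*_V$, so multiplication by the Euler class remains a non-zero divisor on all of $H_S^* Y_j^j$.

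With injectivity established, each restriction $H_S^* M_0^{j+1} \to H_S^* M_0^j$ is surjective; iterating downward from the top rank (where $M = M_0^\mathrm{top}$) shows that $H_S^* M \to H_S^* M_0^j$ is surjective for every $j$. The inclusion $F_j/F_{j+1} \hookrightarrow \ker(H_S^* M_0^j \to H_S^* M_0^{j-1})$ built into the definition of the filtration is therefore an isomorphism, and combining with the Thom identification of the kernel gives
\[
F_j/F_{j+1} \cong \bigoplus_{\rank V = j} \bigoplus_{[Y] \in \pi_0(M^V)} \Sigma^{d_Y} H_S^* Y_j^j.
\]
The alternative formulation in terms of components of $M_j^j$ is immediate, since $S$-connectedness identifies each $Y_j^j$ with a connected component of $M_j^j$ and $d_Y$ depends only on the ambient component $Y \supset M'$.
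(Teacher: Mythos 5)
Your argument is correct and is essentially the proof the paper intends: the paper does not reprove this proposition but cites Duflot's original argument (and Totaro's treatment), which is exactly your route — stratify by isotropy rank, use the long exact sequence with supports for $M_0^{j-1}\subset M_0^j$, identify the supported term via the equivariant Thom isomorphism, and collapse the sequence by showing the equivariant Euler class of the normal bundle is a non-zero divisor on $H_S^*Y_j^j\cong H_V^*\otimes H^*(Y_j^j/(S/V))$ via its leading term in $H_V^*$. The only point you use silently is that for $p$ odd the Thom isomorphism with $\FF_p$ coefficients requires the normal bundle to be $S$-equivariantly orientable; this follows from your own observation that $V$ acts on the fibers with no trivial summands (so the bundle is complex, and the $p$-group $S$ preserves an orientation), which is the content of Lemma \ref{bundleisorientable} in the paper.
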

\begin{proof}
Duflot's original proof shows this, although it is not stated there in this generality. Duflot states the result for $p\not=2$ and only gives a filtration as vector spaces, however her proof works verbatim to give the result here. A  treatment is found in chapter four of \cite{totaro}.
\end{proof}

\begin{thm}\label{duflotsfreerankfiltration}
We have that $H_S^*M$ has a free rank filtration.
\end{thm}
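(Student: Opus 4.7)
The plan is to take Duflot's filtration from Proposition \ref{theoremofduflotgivingfiltration} as given, with $W$ the underlying $\FF_p$-vector space of $S$, and verify that it satisfies the definition of a free rank filtration from \ref{moduleswithafreerankfiltration}. The filtration is already a descending $H_S^*M$-module filtration with $F_0 = H_S^*M$ (since $M_0^{-1}$ is empty) and $F_{\rank S+1} = 0$ (since every point has isotropy rank at most $\rank S$); and Duflot's theorem already exhibits each subquotient as a finite direct sum indexed by pairs $(V,[Y])$ with $\rank V = j$. It therefore suffices to show that each summand $\Sigma^{d_Y} H_S^* Y_j^j$ is a $j$-free $H_S^*M$-module.

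For this identification, the essential input is that by $S$-connectedness $Y_j^j$ is a connected $S$-stable submanifold on which $V$ acts trivially and on which $S/V$ acts freely. Choosing a splitting $S \cong V \times S/V$ and writing $ES \simeq EV \times E(S/V)$, a direct calculation yields
\[
ES \times_S Y_j^j \;\simeq\; BV \times \bigl( Y_j^j/(S/V) \bigr).
\]
Because $H^*(BV;\FF_p)$ has finite type, the K\"unneth formula gives $H_S^* Y_j^j \cong H_V^* \otimes H^*(Y_j^j/(S/V))$. The quotient $Y_j^j/(S/V)$ is a finite-dimensional smooth manifold and so has bounded cohomology; and the canonical decomposition of $H_V^*$ is $P_V$ when $p=2$, and $P_V$ tensored with an exterior algebra on $\rank V$ degree-one generators when $p$ is odd. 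Absorbing the exterior factor into the bounded part, one obtains an isomorphism $H_S^* Y_j^j \cong P_V \otimes N$ for a bounded connected $\FF_p$-algebra $N$, as required.

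The structure maps are identified as follows. The map $H_S^*M \to P_V \otimes N$ is the restriction to $Y_j^j$ followed by the K\"unneth identification; the map $P_V \otimes N \to P_V$ is the identity on $P_V$ tensored with the augmentation of $N$, which under the identification corresponds to $H_V^* \to P_V$ by killing nilpotents (trivially in characteristic two); and the map $P_W \to H_S^*M$ is the canonical inclusion of the polynomial part of $H_S^*$ followed by restriction. Commutativity of the required triangle of structure maps then reduces to the naturality of restriction along $V \hookrightarrow S$, i.e.\ the fact that $H^*(BS) \to H^*(BV)$ sends $P_W = P_S$ into $P_V$ via the linear dual of the inclusion $V \hookrightarrow S$.

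The main technical care, and the only nontrivial point in odd characteristic, lies in cleanly separating the polynomial and exterior parts of $H_V^*$ so that the $P_W$-algebra structure is seen to live entirely in the $P_V$-factor and the remaining data is absorbed into $N$; once this bookkeeping is done, everything else is a matter of unpacking the K\"unneth identification and applying naturality, so no further geometric input beyond Proposition \ref{theoremofduflotgivingfiltration} is needed.
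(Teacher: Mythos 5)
Your proposal is correct and follows essentially the same route as the paper: take Duflot's filtration from Proposition \ref{theoremofduflotgivingfiltration}, use the trivial $V$-action and free $S/V$-action on $Y_j^j$ (guaranteed connected by $S$-connectedness) to identify $H_S^*Y_j^j \cong H_V^* \otimes H^*(Y_j^j/(S/V))$, absorb the exterior factor of $H_V^*$ into the bounded connected part when $p$ is odd, and verify the structure-map triangle via the compatibility of restriction along $V \hookrightarrow S$ (the paper phrases this through the diagram of $S$-spaces involving the orbit $S/V$, which is the same content as your naturality argument). Your added detail on the splitting $ES \simeq EV \times E(S/V)$ and the K\"unneth identification just makes explicit what the paper asserts in one line.
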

\begin{proof}
We use the notation as in Proposition \ref{theoremofduflotgivingfiltration}.

This is essentially just a computation of each $H_S^*M'$. First, the points of $M'$ have isotropy of rank exactly equal to $j$, so there is a unique rank $j$ $p$-torus $V$ with $M'^V=M'$. So, $V$ acts trivially on $M'$, so $H_S^*M'=H_V^* \otimes H_{S/V}^*M'$. But $S/V$ acts freely on $M'$, so this is $H_V^* \otimes H^* M'/(S/V)$. But $H_V^*=S(V^*)$ when $p=2$ and is $S( \beta(V^*) ) \otimes \Lambda (V^*)$ when $p$ is odd.

Finally, we see that we have the diagram of $S$-spaces:
\begin{tikzcd}
& pt \\
M\urar & Y_j^j \lar \uar \\
& S/V \ular \uar
\end{tikzcd}.

Applying $H_S^*$ gives:
\begin{tikzcd}
& P_S \dar \\
& H_S^* \dar  \dlar \\
H_S^*M \rar \drar & H_S^*Y_j^j \dar \\
& H_S^* S/V \dar \\
& P_V
\end{tikzcd}.

In this diagram, when $p=2$ the top and bottom vertical arrows are just the identity.

This shows that each $\Sigma^{d_Y} H_S^*Y_j^j$ is $j$-free, so we are done.
\end{proof}

To show how to refine this free rank filtration to get a Duflot free rank filtration, we first observe that all the associated primes of $H_S^*M$ are toral in the sense of definition \ref{algebraicdefinitionoftoral}, and also toral in the sense of being pulled back from the map $H_S^*M \to H_S^*S/V/ \sqrt{0}$ induced by a map $S/V \to M$.
{}
\begin{lemma}\label{HSMtoral}
Every associated prime of $H_S^*M$ is toral in the sense of \ref{algebraicdefinitionoftoral}, and toral primes are exactly the primes that come from restricting to a $p$-torus.
\end{lemma}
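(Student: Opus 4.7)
The plan is to split the lemma into two separate assertions: that every associated prime of $H_S^*M$ is algebraically toral in the sense of Definition \ref{algebraicdefinitionoftoral}, and that the algebraically toral primes coincide with the primes obtained by geometric restriction to a $p$-torus. I would reduce the first to the abstract machinery already in place, and for the second I would unpack the topological content of the structure maps $\phi_V$ that appear in Duflot's filtration.

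The first assertion I expect to be immediate. Theorem \ref{duflotsfreerankfiltration} furnishes a free rank filtration on $H_S^*M$ over the relevant polynomial subring of $H_S^*$, and Proposition \ref{algebraictoraltheorem} says that any associated prime of a module with a free rank filtration is the kernel of one of the structure maps $\phi_V : H_S^*M \to P_V$ of that filtration, which is exactly the algebraic torality condition.

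For the second assertion, I would re-examine the commutative diagram of $S$-spaces
\[
\begin{tikzcd}
M & Y \lar & S/V \lar
\end{tikzcd}
\]
that appeared in the proof of Theorem \ref{duflotsfreerankfiltration}, where $Y$ is a connected component of $M^V$ whose generic points have isotropy exactly $V$. Applying $H_S^*(-)$ and then reducing $H_S^*(S/V) = H_V^*$ modulo its nil-radical identifies $\phi_V$ with the composition
\[
H_S^*M \longrightarrow H_V^* \twoheadrightarrow H_V^*/\sqrt{0} = P_V,
\]
which is exactly the prescription for restricting to the $p$-torus $V$. Conversely, for any restriction along a $p$-torus $V' \leq S$ that actually fixes a point of $M$, the orbit through that point is $S/V'$, and the corresponding prime (after killing nilpotents) is the kernel of some $\phi_{V'}$ appearing in the filtration.

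The main subtlety will be pinning down the convention for ``restricting to a $p$-torus.'' The cleanest interpretation is as the kernel of $H_S^*M \to H_V^*$ modulo its nil-radical for a $p$-torus $V \leq S$ that is the stabilizer of some point of $M$; with this convention the set of such $V$ is exactly the set indexing the $\phi_V$'s in the Duflot filtration, so the two notions of ``toral'' agree as sets of primes. In the odd-prime case I would check explicitly that modding $H_V^* = P_V \otimes \Lambda(V^*)$ by its nil-radical produces $P_V$ (so that the reduced target of restriction is literally the $P_V$ of the algebraic framework), and then the two directions of the equivalence reduce to the bookkeeping indicated above.
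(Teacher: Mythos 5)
Your proposal is correct and takes essentially the same route as the paper: the first claim is \ref{algebraictoraltheorem} applied via the free rank filtration of \ref{duflotsfreerankfiltration}, and the second is the paper's own observation that in the diagram from that proof the structure map $H_S^*M \to P_V$ is induced by the orbit map $S/V \to M$ (with the nilradical reduction $H_V^* \twoheadrightarrow P_V$ at odd primes). Your reading of ``restricting to a $p$-torus'' as restriction along orbits $S/V \to M$ with $V$ the exact isotropy of a point is the intended interpretation, so no further argument is needed.
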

\begin{proof}
That the associated primes are toral in the sense of \ref{algebraicdefinitionoftoral} follows immediately from \ref{algebraictoraltheorem} once we know that $H_S^*M$ has a free rank filtration.

To see that this definition of toral coincides with restriction to a $p$-torus, in the diagram above that shows that the Duflot filtration is a free rank filtration, observe that the map $H_S^*M \to P_V$ is induced by $S/V \to M$.
\end{proof}

\begin{lemma}\label{bundleisorientable}
For $Y$ a component of $M^A$ and for $p\not= 2$, the normal bundle of $Y$ in $M$ is $S$-equivariantly orientable.
\end{lemma}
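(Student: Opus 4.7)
The plan is to reduce $S$-equivariant orientability of $\nu$ to ordinary orientability as a real vector bundle, and then to deduce the latter from the structure of the $A$-action. For the reduction: an $S$-equivariant orientation is an $S$-invariant section of the orientation $\ZZ/2$-bundle $\mathrm{or}(\nu) \to Y$. Since $|S|$ is a power of the odd prime $p$ and $\mathrm{Aut}(\ZZ/2)=1$, the $S$-action on each fiber of $\mathrm{or}(\nu)$ must be trivial, so every section is automatically $S$-invariant. Thus it suffices to produce an orientation on $\nu$ as an ordinary real vector bundle.

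To analyze $\nu$, I would use the fiberwise $A$-action: since $A \subset S$ fixes $Y$ pointwise, $A$ preserves each fiber of $\nu$. Each fiber $\nu_y$ has no nonzero $A$-fixed vector, since an $A$-fixed direction would exponentiate, under an $A$-invariant metric, to a curve in $M^A$ transverse to $Y$, contradicting that $Y$ is a full component of $M^A$. Since $Y$ is connected and the $A$-action is smooth, the isomorphism type of $\nu_y$ as a real $A$-representation is locally constant, hence independent of $y$; let $W$ denote this common representation. Since $A$ is abelian of odd order, every nontrivial irreducible complex character $\chi$ of $A$ satisfies $\chi \neq \overline{\chi}$, so every nontrivial irreducible real $A$-representation is two-dimensional and admits an $A$-equivariant complex structure. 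Consequently $W$ admits a complex structure $J$ for which $A$ acts $\CC$-linearly.

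The final step is to show that the structure group of $\nu$, as an $A$-equivariant bundle, is contained in the orientation-preserving subgroup of $GL(W)$. Fix a nonidentity element $a\in A$; on each real isotypic summand $W_{[\chi]} \subset W$, the element $a$ acts as a rotation by some nonzero angle $\theta_\chi$, so $J|_{W_{[\chi]}} = (\sin \theta_\chi)^{-1}(a|_{W_{[\chi]}} - \cos \theta_\chi \cdot I)$ is a polynomial in $a|_{W_{[\chi]}}$. Any $A$-equivariant $\RR$-linear automorphism $\phi$ of $W$ preserves the isotypic decomposition and commutes with $a$ on each summand, hence commutes with $J$; so $\phi$ is $\CC$-linear and $\det_{\RR} \phi = |\det_{\CC} \phi|^2 > 0$. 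Because $\nu$ admits $A$-equivariant local trivializations (the representation type is constant along $Y$), its transition functions lie in $\mathrm{Aut}_A(W) \subset GL^+(W)$, and $\nu$ is orientable.

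The point requiring care is that the complex structure $J$ is not globally canonical, since one may replace $J$ by $-J$ on any single isotypic component. However, this ambiguity does not affect the argument, because each such choice of $J$ is centralized by $\mathrm{Ait}_A(W)$ and produces the same sign of real determinant; any local choice of $J$ therefore suffices to certify that the transition functions preserve orientation.
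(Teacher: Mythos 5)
Your proof follows essentially the same route as the paper's: the absence of trivial $A$-summands in the normal bundle together with $p$ odd yields a complex structure and hence orientability, and the odd order of $S$ rules out orientation reversal; your analysis of the transition functions is a more detailed version of the paper's one-line remark that the bundle is a complex vector bundle.

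Two steps need repair as written, though both are local. First, in your reduction the phrase ``the $S$-action on each fiber of $\mathrm{or}(\nu)$ must be trivial'' is not meaningful: only $A$ acts trivially on $Y$, while $S/A$ can move points, so an element $s\in S$ carries the fiber over $y$ to the fiber over $sy$ and does not act on individual fibers at all. The correct argument (the one the paper gives) is that once $\nu$ is orientable and $Y$ is connected, the orientation double cover has exactly two components, so $S$ acts on the two-element set of global orientations; a $p$-group with $p$ odd has no nontrivial homomorphism to $\ZZ/2$, hence fixes both orientations, which is precisely $S$-equivariant orientability. Your conclusion ``every section is automatically $S$-invariant'' is true, but for this reason, not the fiberwise one. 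Second, your formula $J|_{W_{[\chi]}}=(\sin\theta_\chi)^{-1}\bigl(a|_{W_{[\chi]}}-\cos\theta_\chi\cdot I\bigr)$ uses a single nonidentity $a\in A$; if $a\in\ker\chi$ then $a$ acts trivially on $W_{[\chi]}$ and $\sin\theta_\chi=0$, and since the kernels of the characters occurring in $W$ can cover all of $A$ (already for $A=(\ZZ/p)^2$), no single $a$ need work. You must instead choose, for each nontrivial $\chi$ occurring, some $a_\chi\notin\ker\chi$ and define $J$ componentwise. With that change the rest of your argument goes through: any $A$-equivariant automorphism preserves the isotypic pieces, commutes with each $a_\chi$ there and hence with $J$, so it is $\CC$-linear with positive real determinant (equivalently, $\mathrm{Aut}_A(W)$ is a product of groups $GL_m(\CC)$ and is connected), and $\nu$ is orientable.
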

\begin{proof}
First we show that the normal bundle is orientable. Denote the total space of the normal bundle by $N$, and consider the bundle $N \to Y$ as an $A$-vector bundle, i.e. restrict the action to $A$. Now $A$ acts trivially on $Y$, and because $Y$ is a component of $M^A$, no trivial representations of $A$ appear in the isotypical decomposition of $N \to Y$. Therefore, since every irreducible real representation of $A$ has a complex structure, $N \to Y$ is in fact a complex vector bundle, and therefore orientable.

Now, in order to show that the normal bundle is $S$-equivariantly orientable, we need to show that the $S$-action preserves a given orientation. For this, consider the bundle of orientations $O \to Y$ of the bundle $N \to Y$; by previous discussion this is a trivial $\ZZ/2$-bundle. Now the action of $S$ on $O \to Y$ gives a map $S \to \ZZ/2$ which is trivial if and only if $S$ preserves the orientation. But $S$ is a $p$-group, so the result follows.
\end{proof}

\begin{prop}\label{gysinisinjective}
Let $Y$ be a component of $M^A$. Then $H_S^*Y$ is fixed in $H_S^*M$.
\end{prop}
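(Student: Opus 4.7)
The plan is to establish two things separately: (a) the existence of pushforward and restriction maps $i_*:\Sigma^{d_Y}H_S^*Y \to H_S^*M$, $i^*:H_S^*M \to H_S^*Y$ making $H_S^*Y$ \emph{embedded} in $H_S^*M$ in the sense of the earlier definition, and (b) the resulting Euler class $e_Y=i^*i_*(1)$ is a non-zero-divisor in $H_S^*Y$, which then promotes ``embedded'' to ``fixed.'' Part (a) is essentially formal equivariant topology; part (b) is the real content.

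For (a): since $Y$ is a closed $S$-invariant submanifold of $M$, the $S$-equivariant tubular neighborhood theorem identifies an $S$-invariant open neighborhood with the total space of the normal bundle $N \to Y$. Compose the Pontryagin--Thom collapse $M_+ \to \mathrm{Th}(N)$ with the $S$-equivariant Thom isomorphism $\Sigma^{d_Y}H_S^*Y \cong \widetilde{H}_S^*(\mathrm{Th}(N))$ to define $i_*$; the hypothesis that $N$ is $S$-equivariantly orientable (Lemma \ref{bundleisorientable}) is exactly what the Thom isomorphism needs when $p$ is odd, and for $p=2$ no orientation is required. The restriction $i^*$ is just pullback along $Y \hookrightarrow M$, and the standard projection formula gives both $i_*(i^*x \cdot y)= x \cdot i_*(y)$ and $i^*i_*(1)=e(N)$, which supply the compatibilities required of an embedded subalgebra.

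For (b): since $S$ is a $p$-torus we may (non-canonically) split $S \cong A \times A'$. Because $A$ acts trivially on $Y$, a Künneth argument gives
\[
H_S^*Y \cong H_A^* \otimes_{\FF_p} H_{A'}^*Y,
\]
so $H_S^*Y$ is free as a module over the polynomial subring $P_A \subset H_A^*$. As an $A$-equivariant bundle over $Y$, the normal bundle decomposes isotypically as $N = \bigoplus_\chi N_\chi$, with $\chi$ ranging over the \emph{nontrivial} characters of $A$ (because $Y$ is a component of $M^A$). Applying the equivariant splitting principle to each $N_\chi$ gives $e(N_\chi) = \chi^{\dim N_\chi} + (\text{terms of strictly lower }P_A\text{-degree})$, so
\[
e_Y \;=\; \prod_\chi e(N_\chi) \;=\; \prod_\chi \chi^{\dim N_\chi} \;+\; (\text{lower }P_A\text{-degree terms}),
\]
and the leading $P_A$-term is a nonzero element of the polynomial ring $P_A$.

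Filter $H_S^*Y$ by $P_A$-degree. The associated graded is still of the form $P_A \otimes_{\FF_p} W$ for a graded $\FF_p$-vector space $W$, and the leading term of $e_Y$ in this associated graded is $\prod_\chi \chi^{\dim N_\chi} \otimes 1$. Since $P_A$ is a polynomial ring (hence a domain) and $P_A \otimes W$ is free over $P_A$, multiplication by a nonzero element of $P_A$ is injective on the associated graded. The standard lemma that a non-zero-divisor on the associated graded lifts to a non-zero-divisor on the filtered module then yields that $e_Y$ is a non-zero-divisor in $H_S^*Y$, completing the proof that $H_S^*Y$ is fixed in $H_S^*M$. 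The expected main obstacle is isolating the leading $P_A$-term of $e_Y$ correctly via the equivariant splitting principle; everything else is either standard equivariant topology or routine commutative algebra.
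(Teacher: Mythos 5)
Your proof is correct, but the key step is genuinely different from the paper's. Both arguments reduce the statement to showing that the $S$-equivariant Euler class of the normal bundle is a non-zero-divisor in $H_S^*Y$, and both invoke Lemma \ref{bundleisorientable} to get the pushforward; the divergence is in how the non-zero-divisor claim is proved. The paper argues by contradiction through associated primes: a zero divisor lies in some associated prime of $H_S^*Y$, these are toral by the Duflot/free-rank-filtration machinery already established for the $S$-connected manifold $Y$ (Theorem \ref{duflotsfreerankfiltration} and Lemma \ref{HSMtoral}), hence the Euler class would have to restrict to zero (mod nilpotents) along some $S/B \to Y$ and therefore along $S/A$, contradicting the fact that the normal bundle restricted to an orbit is an $A$-representation with no trivial summands. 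You instead avoid associated primes entirely: you split $S \cong A \times A'$, use $H_S^*Y \cong H_A^* \otimes H_{A'}^*Y$ to see $H_S^*Y$ is free over the polynomial subring coming from $A$, decompose the normal bundle into nontrivial isotypical pieces, extract the leading term of the Euler class (which for $p$ odd should be written $(\beta\chi)^{\dim_{\CC} N_\chi}$, with the equivariant Chern classes of the "twisting" bundle $\Hom_A(L_\chi, N_\chi)$ supplying the lower-order terms in $H_{A'}^*Y$), and conclude injectivity of multiplication by a leading-term/filtration argument over a polynomial domain. Your route is more self-contained and elementary, needing only the Künneth decomposition and the splitting principle rather than the toral-associated-primes theorem; the paper's route is shorter given the machinery it has already built and isolates the topological input in a single restriction to an orbit $S/A$. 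The only points to polish in your write-up are notational: for odd $p$ the classes $\chi$ should be replaced by their Bocksteins (the degree-two generators of the polynomial part of $H_A^*$), $\dim N_\chi$ should be the complex rank, and the phrase "terms of strictly lower $P_A$-degree" is justified precisely by writing $N_\chi \cong L_\chi \otimes \Hom_A(L_\chi,N_\chi)$ with $A$ acting trivially on the second factor.
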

\begin{proof}
To see that $H_S^*Y$ is fixed in $H_S^*M$, we need to show that the composition of pushfoward and restriction $\Sigma^{d_Y} H_S^*Y \to H_S^*M \to H_S^*Y$ is injective, and to show that this composition is injective we just need to show that the equivariant Euler class of the normal bundle of $Y$ is a non-zerodivisor. We are using \ref{bundleisorientable} to guarantee the existence of a pushforward. So, suppose that the Euler class $e$ is a zero divisor. Then it is contained in one of the associated primes of $H_S^*Y$. These associated primes are all toral, so $e$ must restrict to $0$ under a map $S/B \to Y$. But $Y$ is fixed by $A$, so the map $S/B \to Y$ fits in the diagram $S/A \to S/B \to Y$, so $e$ must restrict to zero on $S/A$.

However,  when the normal bundle is restricted to $S/A$, we have an $A$ representation with no trivial summands, so its Euler class is nonzero.
 \end{proof}
 
 \begin{prop}\label{compatibility}
For $Y$ a component of $M^A$, the map $i_*: \Sigma^{d_Y} H_S^*Y \to H_S^*M$ induces a map from the Duflot filtration of $Y$ to the Duflot filtration of $M$, which induces an inclusion from a suspension of the Duflot complex of $Y$ to the Duflot complex of $M$.
\end{prop}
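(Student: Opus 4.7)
The plan is to first verify that $i_*$ preserves the Duflot filtrations with the overall shift $d_Y$, then identify the induced map on associated gradeds as a split inclusion of direct summands, and finally transfer this to the Duflot complexes via functoriality of the spectral sequence used in the proof of Proposition \ref{duflotcomplex}.

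For the first step, recall $F_j^M = \ker(H_S^*M \to H_S^*M_0^{j-1})$ and similarly for $Y$, with $Y_0^{j-1} = Y \cap M_0^{j-1}$. I want to show $i_*(y)$ restricts to zero on $M_0^{j-1}$ whenever $y$ restricts to zero on $Y_0^{j-1}$. This follows from base change for the equivariant Gysin pushforward under the open inclusion $M_0^{j-1} \hookto M$: the restriction $i_*(y)|_{M_0^{j-1}}$ equals the Gysin pushforward of $Y_0^{j-1} \hookto M_0^{j-1}$ applied to $y|_{Y_0^{j-1}}$, which vanishes when $y \in F_j^Y$. Hence $i_*(\Sigma^{d_Y} F_j^Y) \subset F_j^M$ for every $j$.

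For the second step, I use Proposition \ref{theoremofduflotgivingfiltration} to decompose $F_j^Y/F_{j+1}^Y$ as a sum indexed by components $M''$ of $Y_j^j$, with summand $\Sigma^{d_{Y''}^Y} H_S^*M''$, where $Y''$ is the component of $Y^V$ containing $M''$ for $V$ the isotropy of $M''$ (of rank $j$). Since $Y$ is a component of $M^A$, every such $V$ contains $A$, so $M^V \subset M^A$, and by connectedness the component of $M^V$ through $M''$ lies in $Y$; hence $Y''$ is a component of $M^V$ and $M''$ is itself a component of $M_j^j$. Codimensions add along $M'' \subset Y'' \subset Y \subset M$, yielding $d_Y + d_{Y''}^Y = d_{Y''}^M$, so after the overall shift by $d_Y$ the $M''$-summand on the $Y$-side matches the $M''$-summand on the $M$-side. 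Applying base change again to $Y_j^j \hookto M_j^j$ (a disjoint inclusion of components) shows that the induced map on each common summand is the identity and that summands indexed by components of $M_j^j$ not contained in $Y$ are not hit.

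For the third step, a filtration-preserving $P_W$-module map induces a morphism of the exact couples constructed in the proof of Proposition \ref{duflotcomplex}, hence a morphism of $E_1$ pages, which are by construction the Duflot complexes. Functoriality of local cohomology applied to the identity map on each common summand gives the identity on the corresponding Duflot complex summand, so $\Sigma^{d_Y} DY \hookto DM$ is term-wise a split inclusion of direct summands. The main obstacle is the careful justification of base change for equivariant Gysin maps under open inclusions and keeping the codimension shifts consistent across the identifications, both of which depend essentially on $S$-connectedness to guarantee that components of $Y^V$ identify cleanly with components of $M^V$.
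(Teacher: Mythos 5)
Your proposal is correct and follows essentially the same route as the paper: both arguments rest on base change/functoriality of the equivariant Gysin maps for the pullback squares relating $Y_i^i$, $Y_0^i$, $Y_0^{i-1}$ to the corresponding strata of $M$, the observation that $Y_i^i$ is a union of connected components of $M_i^i$ with codimensions adding along $Y_i^i\subset Y\subset M$, and additivity of local cohomology to conclude the termwise inclusion of summands of Duflot complexes. The only difference is organizational --- you first verify filtration-preservation and then invoke the functoriality clause of Proposition \ref{duflotcomplex}, whereas the paper writes down the commutative ladder of pushforwards and restrictions and applies local cohomology directly --- which does not change the substance.
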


\begin{proof}
To see that we have the induced map on Duflot filtrations, we use the naturality and functoriality of the pushforward map. We have the following diagram, where each square is a pullback square:

\begin{tikzcd}
Y^i_i \rar{i_1} \dar{j_1}& Y_0^i \dar{j_2} & \dar \lar Y_0^{i-1}  \dar \\
M_i^i \rar{i_2} & M_0^i  & \lar M_0^{i-1}
\end{tikzcd}
 This induces the following commutative diagram, which is one piece of the Duflot filtration. In the following diagram, the two left horizontal arrows and the three slanted arrows are pushforwards, and the other maps are restrictions. We omit the suspensions for clarity.
 
$$\begin{tikzcd}
& H_S^*M_i^i  \rar & H_S^*M_0^i  \dar\\
H_S^*Y_i^i \arrow{ur} \rar & H_S^*Y_0^i \dar \arrow{ur} & H_S^*M_0^{i-1}  \\
& H_S^*Y_0^{i-1} \arrow{ur} &
\end{tikzcd}$$

Note that $H_S^*Y_i^i \to H_S^* M_i^i$ is the inclusion of a summand, since the map $Y_i^i \to M_i^i$ is the inclusion of a component. After applying local cohomology we get the following ladder:

\begin{tikzcd}
\cdots \rar \dar & \HH^i(H_S^*Y_i^i) \rar{d^i_Y} \dar & \HH^{i+1}H_S^*(Y_{i+1}^{i+1} ) \dar  \rar & \cdots  \dar\\
\cdots  \rar & \HH^i(H_S^*M_i^i) \rar{d^i_M} & \HH^{i+1}H_S^*(M_{i+1}^{i+1} ) \rar & \cdots 
\end{tikzcd}

Here the vertical arrows are inclusions of summands, since local cohomology is an additive functor.

\end{proof}

\begin{Def}\label{definitionoftheposetfixm}
Let $\Fix(M)$ be the weakly coranked poset whose elements are components $Y$ of $M^V$, where $V$ is some subtorus of $S$, and where $Y_{\rank V}^{\rank V}$ is non empty, and where the morphisms are given by inclusions. The rank of $Y$ is the rank of the largest $p$-torus that fixes it.
\end{Def}

There is a functor $F:\Fix(M) \to H_S^*M\text{-mod}$ sending $Y$  to $\Sigma^{d_Y} H_S^*Y$. We now show that this refines the filtration from \ref{duflotsfreerankfiltration}.
\begin{lemma}
The filtration on $H_S^*M$ by $\Fix(M)$ refines the filtration we have already defined, i.e. $\sum_{ Y \in \Fix(M)_{\ge s} } F(Y)=F_s H_S^*M$.
\end{lemma}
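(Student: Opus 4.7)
The plan is to prove the two inclusions separately, with the inclusion $\supseteq$ following directly from the support properties of the pushforward and the inclusion $\subseteq$ proved by downward induction on $s$.

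For the inclusion $\sum_{Y \in \Fix(M)_{\ge s}} F(Y) \subseteq F_s H_S^*M$, I would argue as follows. If $Y \in \Fix(M)$ has rank $r \ge s$, then $Y$ is a component of $M^V$ for some rank-$r$ subtorus $V$, so every point of $Y$ has isotropy rank at least $r \ge s$. In particular $Y$ is disjoint from the open subset $M_0^{s-1}$. The pushforward $i_*\colon \Sigma^{d_Y} H_S^*Y \to H_S^*M$ factors through $H_S^*(M, M \smallsetminus Y)$ via the Thom isomorphism and excision, hence dies under restriction to $M \smallsetminus Y$, and in particular under restriction to $M_0^{s-1}$. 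So $\im F(Y) \subseteq F_sH_S^*M$.

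For the reverse inclusion, I would induct downward on $s$, with trivial base case when $s > \rank S$ since then $F_s H_S^*M = 0$. Suppose the statement holds for $s+1$, and let $x \in F_s H_S^*M$. By Proposition \ref{theoremofduflotgivingfiltration}, the image $\bar{x} \in F_s/F_{s+1}$ decomposes as $\bar{x} = \sum_{[M'] \in \pi_0(M_s^s)} \bar{x}_{M'}$ with $\bar{x}_{M'} \in \Sigma^{d_Y} H_S^*M'$. Using $S$-connectedness, isotropy is constant on each component of $M_s^s$, so each $M'$ has a well-defined rank-$s$ isotropy $V$ and is contained in a unique component $Y$ of $M^V$; by the $S$-connectedness of $Y$, in fact $M' = Y_s^s$, and this yields a bijection between $\pi_0(M_s^s)$ and $\Fix(M)_s$. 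By Proposition \ref{compatibility}, the pushforward $F(Y) \to H_S^*M$ induces a map of Duflot filtrations; since $F(Y)$ itself has no filtration below degree $s$, the top graded piece is $\gr[s] F(Y) \cong \Sigma^{d_Y} H_S^*Y_s^s = \Sigma^{d_Y}H_S^*M'$, and it is carried isomorphically onto the corresponding summand of $F_s/F_{s+1}$. Thus each $\bar{x}_{M'}$ lifts to an element $y_{M'} \in \im F(Y) \subseteq F_sH_S^*M$. Then $x - \sum y_{M'} \in F_{s+1}H_S^*M$, to which the inductive hypothesis applies, completing the proof.

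The bulk of the argument is formal once the correct bookkeeping is set up. The main point that requires care is the bijection between $\pi_0(M_s^s)$ and $\Fix(M)_s$ together with the claim that the pushforward $F(Y) \to H_S^*M$ hits exactly the summand indexed by $M'$ on associated gradeds; both rest on the $S$-connectedness hypothesis and on the compatibility statement in Proposition \ref{compatibility} that the pushforward of fixed-point submanifolds respects the Duflot filtration summand by summand.
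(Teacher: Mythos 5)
Your proof is correct and follows essentially the same route as the paper: the containment $\sum_{Y}F(Y)\subseteq F_s$ via the fact that pushforwards from fixed-point components land in the kernel of restriction to $M_0^{s-1}$, and the reverse containment by downward induction on $s$, lifting the summands of $F_s/F_{s+1}$ (as computed in Proposition \ref{theoremofduflotgivingfiltration}) through the pushforwards using the compatibility statement of Proposition \ref{compatibility}. Your justification of the first inclusion by factoring the pushforward through relative cohomology, and your base case $s>\rank S$, are only cosmetic variations on the paper's argument.
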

\begin{proof}
Let $J_s=\sum_{Y \in \Fix(M)_{\ge s} }F(Y)$. First, we observe that $J_s \subset F_s$: this is because for each $Y$ $i_*:H_S^*Y \to H_S^*M$ respects the Duflot filtration, and because $s$ is the lowest possibly nonzero term in the Duflot complex for $Y$, where $Y$ is a connected component in the fixed point set for a rank $s$ $p$-torus. 

The other inclusion is by downward induction on $s$. First, when $s= \dim H_S^*M$, then $F_s H_S^*M=F_s/F_{s+1}H_S^*M$, which by definition is $J_s$.

So, suppose $F_{i+1}=J_{i+1}$, and consider $x \in F_{i}$. By our description of $F_{i}/F_{i+1}$ from the first part of Proposition \ref{theoremofduflotgivingfiltration}, there is some $y \in J_{i+1}$ so that $x$ and $y$ are equal modulo $F_{i+1}$, so we are done by induction.
\end{proof}

\begin{lemma}\label{thefiltrationisgood}
The filtration on $H_S^*M$ induced by $\Fix(M)$ is good.
\end{lemma}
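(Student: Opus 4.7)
The plan is to verify that the natural map $\bigoplus_{Y \in \Fix(M)_j}\gr[j]F(Y) \to F_j/F_{j+1}H_S^*M$ is an isomorphism by identifying both sides summand-for-summand with the decomposition from Proposition \ref{theoremofduflotgivingfiltration}.

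First I would compute $\gr[j]F(Y)$ for each $Y \in \Fix(M)_j$. Because $Y$ is a component of $M^V$ for a rank-$j$ torus $V$, every point of $Y$ has isotropy of rank at least $j$, so applying Proposition \ref{theoremofduflotgivingfiltration} to the smooth $S$-manifold $Y$ itself gives $F_j H_S^*Y = H_S^*Y$ and
$$F_j/F_{j+1}H_S^*Y \;=\; \bigoplus_{M' \in \pi_0 Y_j^j} \Sigma^{d_Y} H_S^*M'.$$
Since $Y$ inherits $S$-connectedness from $M$, the set $\pi_0 Y_j^j$ has exactly one element, so this subquotient is $\Sigma^{d_Y} H_S^*(Y_j^j)$. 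I would then check that this coincides with $\gr[j]F(Y) = F(Y)/\sum_{Z < Y}F(Z)$ as computed from the $\Fix(M)_{\le Y}$-filtration: every $Z < Y$ in $\Fix(M)$ lies inside $Y$ and has rank greater than $j$, and conversely every component of $Y^W$ for a torus $W \supsetneq V$ is also a component of $M^W$, hence appears in $\Fix(M)_{<Y}$.

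Next I would build the bookkeeping on the right-hand side. The map $\Fix(M)_j \to \pi_0 M_j^j$ sending $Y$ to its unique component $Y_j^j$ is a bijection, with inverse sending $M' \in \pi_0 M_j^j$ to the component of $M^V$ containing $M'$, where $V$ is the common rank-$j$ isotropy of points in $M'$. Under this bijection the decomposition $F_j/F_{j+1}H_S^*M = \bigoplus_{M' \in \pi_0 M_j^j}\Sigma^{d_Y}H_S^*M'$ matches the source summand-for-summand. That the natural map actually realizes the identity on each summand follows from Proposition \ref{compatibility}: the pushforward $i_*:\Sigma^{d_Y}H_S^*Y \to H_S^*M$ respects Duflot filtrations and includes the $j$th graded piece of $Y$'s Duflot filtration as the summand of $F_j/F_{j+1}H_S^*M$ indexed by $Y_j^j$. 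The main obstacle is the identification in the first step, since one must match the quotient by $\sum_{Z<Y}F(Z)$ taken inside $\Fix(M)$ with the intrinsic Duflot quotient $F_j/F_{j+1}H_S^*Y$; this hinges precisely on the observation that higher-rank fixed components inside $Y$ come from tori containing $V$ and so are already visible in $\Fix(M)_{<Y}$.
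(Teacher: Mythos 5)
Your argument is correct and is essentially the paper's own proof with the implicit steps spelled out: the paper likewise indexes Duflot's decomposition of $F_j/F_{j+1}H_S^*M$ by $\Fix(M)_j$ and invokes Proposition \ref{compatibility} to see that each $\gr[j]F(Y)$ maps in as the summand corresponding to $Y_j^j$, and your identification of $\gr[j]F(Y)$ with $\Sigma^{d_Y}H_S^*Y_j^j$ via $Y$'s own Duflot filtration (using that $Y$ is again $S$-connected and that $\Fix(M)_{<Y}$ matches the higher-rank part of $\Fix(Y)$) is exactly the content hiding behind that citation. One cosmetic slip: applying Proposition \ref{theoremofduflotgivingfiltration} to $Y$ itself yields the subquotient $H_S^*Y_j^j$ with no suspension (the relevant codimension inside $Y$ is zero); the shift $\Sigma^{d_Y}$ enters only through $F(Y)=\Sigma^{d_Y}H_S^*Y$, which is how you in fact use it.
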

\begin{proof}
We need to show that the map $\bigoplus_{Y \in \Fix(M)_j } \gr[j] F(Y) \to F_j/F_{j+1}H_S^*M$ is an isomorphism. This follows from \ref{compatibility}: $F_j/F_{j+1}$ has a direct sum decomposition indexed on $\Fix(M)_j$, and the map $F(Y) \to H_S^*M$ for $Y \in \Fix(M)_j$ induces a map $\gr[j] F(Y) \to F_j/F_{j+1}H_S^*M$ which is the inclusion of the direct summand corresponding to $Y$.
\end{proof}

\begin{prop}\label{duflotfiltrationonFiHSM}
The poset $\Fix(M)_{\ge i}$ puts a Duflot module structure on $F_iH_S^*M$.
\end{prop}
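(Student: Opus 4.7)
The plan is to verify each clause of the definition of a Duflot module for $F_iH_S^*M$ with filtering poset $\Fix(M)_{\ge i}$, by assembling the preceding propositions.

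First, I would observe that the filtration of $H_S^*M$ by $\Fix(M)$ was already shown to be a good filtration refining the free rank filtration of Proposition \ref{duflotsfreerankfiltration} (Lemma \ref{thefiltrationisgood}), so restricting the functor $F : \Fix(M) \to H_S^*M\text{-mod}$ to the down-closed subposet $\Fix(M)_{\ge i}$ gives a good filtration on $F_iH_S^*M = \sum_{Y \in \Fix(M)_{\ge i}} F(Y)$ whose associated graded in degree $j \ge i$ is $\bigoplus_{Y \in \Fix(M)_j} \gr[j]F(Y)$, and whose $j$th subquotient is a sum of $j$-free modules by the explicit description in Proposition \ref{theoremofduflotgivingfiltration}. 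The poset $\Fix(M)_{\ge i}$ inherits its corank from $\Fix(M)$, and finiteness is automatic.

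Second, I would check the three topological-stratification axioms of Definition \ref{thedefinitionoftopologicalstratifications}. Axiom (1) is Proposition \ref{gysinisinjective}: for each $Y$, $H_S^*Y$ is embedded and fixed in $H_S^*M$ via the equivariant Gysin pushforward $i_* : \Sigma^{d_Y}H_S^*Y \to H_S^*M$, whose Euler class is a nonzerodivisor. Axiom (2) requires that the filtration induced on each $\Sigma^{d_Y}H_S^*Y$ by restricting $F$ to $\Fix(M)_{\le Y}$ coincide with the intrinsic Duflot filtration of $H_S^*Y$; this follows from Proposition \ref{compatibility} together with the canonical identification $\Fix(M)_{\le Y} \cong \Fix(Y)$ coming from the fact that the components of $Y^B$ are precisely the components of $M^B$ contained in $Y$. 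The required commuting triangle of structure maps $R \to T \to P_V$ is visible from the geometric diagram $M \leftarrow Y \leftarrow Y_{r(Y)}^{r(Y)} \to BS/V$ appearing in the proof of \ref{duflotsfreerankfiltration}. Axiom (3) is naturality of the Gysin pushforward under composition of closed $S$-equivariant embeddings of submanifolds $Y_U \hookrightarrow Y_T \hookrightarrow M$, with the codimensions adding.

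Third, I would verify the Duflot condition. Fix $Y \in \Fix(M)_{\ge i}$, a component of $M^V$ with $\rank V = r(Y) = j$. Since $V$ acts trivially on $Y$ the Künneth formula gives $H_S^*Y \cong H_V^* \otimes H_{S/V}^*Y$. Writing $H_V^* = P_V \otimes E$ with $E = \Lambda(V^*)$ for $p$ odd and $E = \FF_p$ for $p = 2$, this yields $H_S^*Y = P_V \otimes T'$ where $T' = E \otimes H_{S/V}^*Y$, and the natural map $P_S \to P_V \otimes T'$ restricts on the $P_V$-factor to the identity and on $P_{S/V}$ into $T'$, which is exactly the map induced by the inclusion $V \hookrightarrow S$ after projecting to $P_V$. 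The top subquotient $H_S^*Y/\sum_{X \in \Fix(Y)_{<Y}} F(X) = \gr[j]F(Y)$ equals $H_S^*Y_j^j$, and by the free action of $S/V$ on $Y_j^j$ this equals $P_V \otimes N$ with $N = E \otimes H^*(Y_j^j/(S/V))$; the quotient map $H_S^*Y \to H_S^*Y_j^j$ is $\mathrm{id}_{P_V \otimes E}$ tensored with the restriction $H_{S/V}^*Y \to H^*(Y_j^j/(S/V))$, hence induced by a map $T' \to N$ as required.

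The main obstacle is really just the verification of axiom (2) of the topological stratification, i.e.\ matching the ambient induced filtration on each $H_S^*Y$ with its intrinsic Duflot filtration; this amounts to the identification $\Fix(M)_{\le Y} \cong \Fix(Y)$ together with the codimension-additivity and pushforward compatibility furnished by Proposition \ref{compatibility}. Everything else is a direct assembly of the topological input and the previously established lemmas.
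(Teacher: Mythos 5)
Your verification of the stratified, topological, and Duflot conditions follows essentially the same route as the paper: \ref{thefiltrationisgood} for the good stratified free rank filtration, \ref{gysinisinjective} and \ref{compatibility} for the topological axioms, and the K\"unneth computation $H_S^*Y \cong H_A^*\otimes H^*_{S/A}Y$ for the Duflot condition; the extra detail you give for axiom (2), via the identification $\Fix(M)_{\le Y}\cong \Fix(Y)$ (a component of $Y^B$ is a component of $M^B$), is exactly what the paper leaves implicit, and your treatment of axiom (3) by naturality of the Gysin pushforward is also how the paper intends it.

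The one thing you omit is where the paper's proof spends its actual effort: \emph{minimality}, i.e.\ that the structure maps attached to distinct elements of $\Fix(M)_{\ge i}$ have pairwise distinct kernels. The literal definition of a Duflot module does not list minimality, but the paper verifies it here because the downstream uses of this proposition require it: \ref{detectionofassociatedprimes} is stated for \emph{minimal} topological filtrations and is invoked in the proof of \ref{depthofcentralizers}, hence in \ref{assoicatedprimesforHSM}; and the proof of \ref{minimalprimesofHSM} explicitly appeals to the distinct-kernel observation ``noted in the proof of'' this proposition. So you should add the check. It is short: for distinct subtori $A\neq A'$ the composites $H_S^*\to H_A^*$ and $H_S^*\to H_{A'}^*$ already have distinct kernels, so the corresponding restrictions from $H_S^*M$ do as well; and if $Y\neq Y'$ are distinct components of the same $M^A$, then the composite $\Sigma^{d_Y}H_S^*Y \to H_S^*M \to H_S^*Y'$ is zero because $Y$ and $Y'$ are disjoint, whereas the composite $\Sigma^{d_Y}H_S^*Y\to H_S^*M\to H_S^*Y$ is injective (it is multiplication by the Euler class, a nonzerodivisor by \ref{gysinisinjective}), so the two restriction maps out of $H_S^*M$ have different kernels. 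With that paragraph added, your proof matches the paper's.
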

\begin{proof}
We have seen in \ref{thefiltrationisgood} that the filtration by $\Fix(M)$ is a stratified free rank filtration. 

To see that it is topological, we use \ref{gysinisinjective} and \ref{compatibility}.

To see that it is minimal, we need to check that for each $Y \in \Fix(M)$ each $H_S^*M \to H_S^*Y$ has a distinct kernel. For $Y \in M^A$, we have the diagram:
\begin{tikzcd}
H_S^* \dar \drar \arrow{drr} &&  \\
H_S^*M \rar & H_S^*Y \rar & H_S^*S/A
\end{tikzcd}
The maps $H_S^* \to H_S^*S/A=H^*_A$ have distinct kernel for distinct $A$, so we only need to show that if $Y,Y'$ are different components of $M^A$, then the restrictions to $Y$ and to $Y'$ have different kernels. However, this is immediate because $Y$ and $Y'$ are disjoints, so the compostion $\Sigma^{d_Y} H_S^*Y \to H_S^* M \to H_S^*Y'$ is zero, while $\Sigma^{d_Y} H_S^* Y \to H_S^*M \to H_S^*Y$ is nonzero.

Finally, to see that it is Duflot, we use the computation that if $Y$ is a component of $M^A$, then $H_S^*Y \cong H_A^* \otimes H^*_{S/A}Y$.

\end{proof}

\subsection{Properties of $H_S^*M$}\label{algebraicpropertiesofHSM}
Here we collect various algebraic properties of $H_S^*M$ that follow from $H_S^*M$ having a Duflot algebra structure. 

As in the previous section, here $M$ is a smooth $S$-connected manifold.
\begin{cor}[Duflot \cite{duflotdepth} Theorem 1, Quillen \cite{quillen} Theorem 7.7]\label{depthanddimensionforHSM}
The depth of $H_S^*M$ is no less than the largest rank of a subtorus of $S$ that acts trivially on $M$, and the dimension is equal to the largest rank of a subtorus of $S$ that acts with fixed points on $M$.
\end{cor}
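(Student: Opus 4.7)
The plan is to deduce both statements directly from the Duflot free rank filtration on $H_S^*M$ constructed in Proposition \ref{duflotfiltrationonFiHSM}, together with the general dimension and depth bounds for modules with a free rank filtration from Section \ref{moduleswithafreerankfiltration}.

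For the depth bound, let $A \subset S$ act trivially on $M$ with $\rank A = r$. Then every isotropy subgroup $S_x$ contains $A$ and hence has rank at least $r$, so $M_0^{r-1}$ is empty. By the definition $F_j = \ker(H_S^*M \to H_S^*M_0^{j-1})$ from Proposition \ref{theoremofduflotgivingfiltration}, this forces $F_0 = F_1 = \cdots = F_r = H_S^*M$. The corollary immediately following Proposition \ref{depthanddimoftruncations}, which bounds the depth below by the smallest $k$ with $F_{k+1} \neq F_k$, then yields $\depth H_S^*M \geq r$.

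For the dimension, I would first note that each $j$-free summand $\Sigma^{d_Y} P_V \otimes N$ of $F_j/F_{j+1}$ has Krull dimension exactly $j = \dim V$, since the bounded factor $N$ contributes nothing. Combining this with the short exact sequences $F_{j+1} \hookrightarrow F_j \twoheadrightarrow F_j/F_{j+1}$ (and using that a finite filtration sees its dimension in the subquotients) identifies $\dim H_S^*M$ with the largest $j$ for which $F_j/F_{j+1} \neq 0$. By the description of the subquotients in Proposition \ref{theoremofduflotgivingfiltration}, this is precisely the largest rank appearing in $\Fix(M)$.

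The remaining and essentially only nontrivial step is to identify this maximum with the largest rank of a subtorus of $S$ with nonempty fixed point set, and this is where the $S$-connectedness hypothesis enters. Given a subtorus $V$ with $M^V \neq \emptyset$, I would pick $x \in M^V$ maximizing $\rank S_x$; the component $Y$ of $M^{S_x}$ containing $x$ then has a point (namely $x$) of isotropy rank exactly $\rank S_x$ by maximality, so $Y_{\rank S_x}^{\rank S_x}$ is nonempty and $Y \in \Fix(M)$ at rank $\rank S_x \geq \rank V$. The converse inclusion is immediate from the definition of $\Fix(M)$. I expect this last bookkeeping to be the main (minor) obstacle; everything else is a direct invocation of the abstract machinery already in place.
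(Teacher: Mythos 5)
Your proof is correct, and half of it coincides with the paper's. The depth half is exactly the paper's argument: triviality of the $A$-action makes $M_0^{j-1}$ empty for $j\le\rank A$, so $F_{\rank A}H_S^*M=H_S^*M$ and the corollary to Proposition \ref{depthanddimoftruncations} gives the bound. For the dimension, both you and the paper get the upper bound $\dim H_S^*M\le r$ from \ref{depthanddimoftruncations}, but you obtain the lower bound by a different and more elementary route: you note that each $j$-free subquotient has Krull dimension exactly $j$ (the image of $R$ surjects onto $P_V$ because the composite $P_W\to R\to P_V$ is the standard surjection built into the definition of $j$-free, and the bounded factor $N$ generates a nilpotent ideal, so contributes nothing), and then use that a finitely filtered module has dimension equal to the maximum of the dimensions of its subquotients. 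The paper instead deduces $\dim\ge r$ from the nonvanishing of $\HH^r(H_S^*M)$, read off the Matlis dual of the Duflot complex (Proposition \ref{matlisdualcomplex}): the top term of the dual complex has dimension $r$ and maps to a term of dimension $r-1$, hence has nonzero kernel. Your route avoids local cohomology entirely and stays at the level of supports; the paper's route costs the Duflot-complex machinery it has already built but has the side benefit of exhibiting $\HH^r(H_S^*M)\ne 0$ directly from the complex, in the spirit of the later local-cohomology computations. Your final bookkeeping identifying the top rank in $\Fix(M)$ with the largest rank of a subtorus having fixed points is fine; in fact the maximization over $x\in M^V$ is unnecessary, since any $x\in M^V$ has $S_x\supseteq V$ and lies in the stratum of $M^{S_x}$ with isotropy exactly $S_x$.
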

\begin{proof}
This follows from \ref{depthanddimoftruncations}. If we let $d$ be the largest rank of a subtorus of $S$ that acts trivially on $M$ and $r$ the largest rank of a subtorus that acts with fixed points on $M$, then $F_d H_S^*M=H_S^*M$, and $F_{r} H_S^*M$ is the smallest nonzero level of the filtration. Note that \ref{depthanddimoftruncations} only gives that the dimension of $H_S^*M$ is less than or equal to $r$, but by studying the dual of the Duflot chain complex (see Propostion \ref{matlisdualcomplex}) for $H_S^*M$ we conclude that $\HH^r{ H_S^*M}$ is nonzero.
\end{proof}

\begin{cor}[Duflot \cite{duflotassociatedprimes} Theorem 3.1]
Each associated prime of $H_S^*M$ is toral.
\end{cor}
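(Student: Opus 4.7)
The plan is to deduce this immediately from the abstract theory already established. By Theorem \ref{duflotsfreerankfiltration}, $H_S^*M$ carries a free rank filtration in the sense of Section \ref{moduleswithafreerankfiltration}, with structure maps $\phi_{V,k}\colon H_S^*M \to P_V$ arising from the inclusions $S/V \hookrightarrow Y_j^j \hookrightarrow M$ appearing in Duflot's filtration. So the hypotheses of Proposition \ref{algebraictoraltheorem} are satisfied, and every element of $\ass_{H_S^*M} H_S^*M$ is toral in the algebraic sense of Definition \ref{algebraicdefinitionoftoral}, i.e.\ of the form $\ker \phi_{V,k}$ for one of these structure maps.

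It then remains to identify the algebraic notion of toral with the geometric notion (a prime pulled back from the restriction map to some $H_S^*(S/A) = H_A^*$). This is exactly the content of Lemma \ref{HSMtoral}: in the diagram witnessing that each $\Sigma^{d_Y} H_S^* Y_j^j$ is $j$-free, the structure map $\phi_V\colon H_S^*M \to P_V$ factors through $H_S^* M \to H_S^*(S/V)$, so its kernel is a prime coming from restriction to the $p$-torus $V \subset S$. Combining these two facts gives the corollary.

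The only thing to verify carefully is that the identification in Lemma \ref{HSMtoral} is genuinely what Duflot means by ``toral'': one should note that a prime of the form $\ker(H_S^*M \to H_A^*/\sqrt{0})$ for an arbitrary $p$-torus $A$ acting on $M$ with a fixed point is in fact always of the form $\ker \phi_V$ for some $V$ and some component $Y$, because the map $S/A \to M$ factors through the component $Y$ of $M^A$ containing the image. So no generality is lost by restricting to the structure maps indexed by the filtration. The main obstacle, therefore, is not in this corollary itself---which is a one-line consequence of results already in hand---but in the setup: in verifying the free rank filtration axioms for $H_S^*M$, which has already been done in Theorem \ref{duflotsfreerankfiltration}.
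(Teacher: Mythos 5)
Your proposal is correct and is essentially the paper's own argument: the paper proves the corollary by citing Lemma \ref{HSMtoral}, whose proof is exactly your combination of Theorem \ref{duflotsfreerankfiltration} with Proposition \ref{algebraictoraltheorem}, followed by the observation that each structure map $\phi_V\colon H_S^*M \to P_V$ is induced by $S/V \to M$, so algebraically toral primes are precisely those obtained by restriction to a $p$-torus. Your extra remark about factoring $S/A \to M$ through a component of $M^A$ is a fine elaboration of the identification the paper makes in that same lemma.
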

\begin{proof}
This is just Lemma \ref{HSMtoral}.
\end{proof}
\textbf{Note}: The proof we have given here is essentially the same as Duflot's proof.

\begin{cor}
There is a cochain complex $DM$ with $H^i(DM)=\HH^i (H_S^*M)$, and $DM^i=\oplus_{Y \in \Fix(M)_i}\Sigma^{d_Y} \HH^i( H_S^*Y_i^i)$.
\end{cor}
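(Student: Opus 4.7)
The plan is to apply Proposition \ref{duflotcomplex} directly to $H_S^*M$ using the free rank filtration already established. By Proposition \ref{duflotfiltrationonFiHSM} the filtration of $H_S^*M$ by $\Fix(M)$ is a (minimal, topological, Duflot) stratified free rank filtration, which in particular gives $H_S^*M$ the structure of a module with a free rank filtration in the sense required by Proposition \ref{duflotcomplex}. Applying that proposition produces a cochain complex $DM$ whose cohomology is $\HH^*(H_S^*M)$, and whose $i$-th term is the direct sum, over the $i$-free summands of $F_i/F_{i+1}H_S^*M$, of the (concentrated) local cohomology of each summand.

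The identification of the terms $DM^i$ is then a matter of rewriting. By Proposition \ref{theoremofduflotgivingfiltration} we have
\[
F_i/F_{i+1}H_S^*M \;=\; \bigoplus_{Y \in \Fix(M)_i} \Sigma^{d_Y} H_S^*Y_i^i,
\]
and for each $Y \in \Fix(M)_i$, fixed by a rank $i$ subtorus $V<S$, the ring $H_S^*Y_i^i$ decomposes as $H_V^* \otimes H^*(Y_i^i/(S/V))$ (this is exactly the computation appearing in the proof of Theorem \ref{duflotsfreerankfiltration}, where it was used to verify that each summand is $i$-free). By the independence and K\"unneth arguments used inside the proof of Proposition \ref{duflotcomplex}, the local cohomology $\HH^*(\Sigma^{d_Y} H_S^*Y_i^i)$ is concentrated in cohomological degree $i$, and equals precisely the summand that Proposition \ref{duflotcomplex} places in $DL^i$. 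Thus
\[
DM^i \;=\; \bigoplus_{Y \in \Fix(M)_i} \Sigma^{d_Y} \HH^i(H_S^*Y_i^i).
\]

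There is essentially no obstacle: both ingredients (the existence of the Duflot complex for any module with a free rank filtration, and the fact that $H_S^*M$ admits such a filtration indexed by $\Fix(M)$) are already in place. The only small point that needs care is checking that the concentrated local-cohomology summand produced abstractly in Proposition \ref{duflotcomplex} from the $i$-free module $\Sigma^{d_Y}(H_V^* \otimes H^*(Y_i^i/(S/V)))$ matches the expression $\Sigma^{d_Y}\HH^i(H_S^*Y_i^i)$ in the statement; but this is a direct application of the K\"unneth formula for local cohomology together with the computation $\HH^*(H_V^*) = \Sigma^{-\sigma_i}(P_V^*)$ concentrated in degree $i$, exactly as in the proof of Proposition \ref{duflotcomplex}. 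Functoriality of the Duflot complex, if needed, is inherited from the functoriality statement in Proposition \ref{duflotcomplex}.
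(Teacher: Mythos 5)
Your proposal is correct and takes essentially the same route as the paper, which simply cites Proposition \ref{duflotcomplex} applied to the free rank filtration of $H_S^*M$ established in \ref{duflotsfreerankfiltration} (stratified by $\Fix(M)$); your extra verification that each abstract summand $\Sigma^{-\sigma_i}\Sigma^{d_Y}(P_V^*\otimes N)$ is exactly $\Sigma^{d_Y}\HH^i(H_S^*Y_i^i)$ is just an unwinding of the K\"unneth computation already inside the proof of \ref{duflotcomplex}.
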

\begin{proof}
This follows from \ref{duflotcomplex}.
\end{proof}

\begin{remark}
This result should be compared with the compuation of the Atiyah-Bredon complex in \cite{alldaypuppefranzsyzgies} Theorem 1.2. There, Allday, Franz, and Puppe study a chain complex occuring in the rational torus equivariant cohomology of a manifold, the complex is analagous to the dual of the Duflot chain complex. In unpublished work the authors extend some of their results to the setting of $p$-torus equivariant cohomology.
\end{remark}

\begin{cor}[Symonds \cite{symondsregularity}  Proposition 4.1]\label{regularityforHSM}
The regularity of $H_S^*M$ is less than or equal to the dimension of $M$.
\end{cor}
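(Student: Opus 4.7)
The plan is to invoke the abstract regularity bound of Corollary \ref{regularitytheorem} applied to the Duflot free rank filtration on $H_S^*M$ constructed in Theorem \ref{duflotsfreerankfiltration}. This reduces the problem to identifying the top nonzero degrees $t(N_{V,k})$ and the codimensions $d_{V,k}$ of the $j$-free summands of $F_j/F_{j+1}$, and then checking that the resulting bound equals $\dim M$.

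Recall from the proof of Theorem \ref{duflotsfreerankfiltration} that each $j$-free summand of $F_j/F_{j+1}$ has the form $\Sigma^{d_Y} H_S^* M'$, where $M' = Y_j^j$ is the open submanifold of a component $Y$ of $M^V$ consisting of points with isotropy exactly equal to the rank-$j$ subtorus $V$, and $d_Y$ is the codimension of $Y$ in $M$. Since $V$ acts trivially on $M'$ and $S/V$ acts freely, the K\"unneth theorem gives $H_S^*M' \cong H_V^* \otimes H^*(M'/(S/V))$. Extracting the polynomial factor $P_V$ as in the proof of \ref{duflotsfreerankfiltration}, the bounded factor $N_{V,k}$ is
\[
N_{V,k} = \begin{cases} H^*(M'/(S/V)) & \text{if } p = 2, \\ \Lambda(V^*) \otimes H^*(M'/(S/V)) & \text{if } p \text{ is odd.} \end{cases}
\]
Since $S/V$ is a finite group, $M'/(S/V)$ is a manifold of dimension $\dim M'$, so $t(H^*(M'/(S/V))) \le \dim M'$. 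For $p$ odd, the extra tensor factor $\Lambda(V^*)$ contributes top degree $\rank V$, so $t(N_{V,k}) \le \rank V + \dim M'$.

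Next I would plug these into the two cases of Corollary \ref{regularitytheorem}. For $p = 2$,
\[
\reg H_S^*M \le \max\{\, t(N_{V,k}) + d_{V,k} \,\} \le \max\{\, \dim M' + d_Y \,\},
\]
while for $p$ odd,
\[
\reg H_S^*M \le \max\{\, t(N_{V,k}) + d_{V,k} - \rank V \,\} \le \max\{\, \rank V + \dim M' + d_Y - \rank V \,\},
\]
so the exterior contribution cancels and in both cases the bound reduces to $\max\{\, \dim M' + d_Y \,\}$. Since $M'$ is open in $Y$ we have $\dim M' = \dim Y$, and by definition of codimension $\dim Y + d_Y = \dim M$, so each term in the maximum equals $\dim M$. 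Hence $\reg H_S^*M \le \dim M$. There is no real obstacle here: the serious content lies in Theorem \ref{regularitytheorem} and in the identification of the bounded factors $N_{V,k}$ carried out in \ref{duflotsfreerankfiltration}; the present corollary is just a dimension count together with the observation that for $p$ odd the exterior algebra contribution to $t(N_{V,k})$ is exactly compensated by the $-\rank V$ appearing in the odd-prime version of the regularity bound.
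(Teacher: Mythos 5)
Your proposal is correct and follows the paper's own argument essentially verbatim: both invoke the abstract bound of \ref{regularitytheorem} for the Duflot free rank filtration, identify the bounded factors of the $j$-free summands $\Sigma^{d_Y}H_V^*\otimes H^*(Y_j^j/S)$, and conclude via the dimension count $d_Y+\dim Y_j^j=\dim M$, with the exterior algebra contribution at odd primes cancelling against the $-\rank V$ term. Nothing further is needed.
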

\begin{proof}
This follows from \ref{regularitytheorem}. Each $j$-free summand appearing in the free rank filtration has the form $\Sigma^{d_Y} H_V \otimes H^*(Y_j^j/S)$, and $d_Y + \dim Y_j^j/S= \dim M$, so the result is as claimed. 

The distinction between $p=2$ and $p \not=2$ appearing in \ref{regularitytheorem} does not change the result because of the exterior terms in $H_V^*$ cancel out the extra $j$ term.
\end{proof}

\begin{cor}[Symonds \cite{symondsregularity}]\label{strongregularityforHSM}
If $a_i(H_S^*M)=-i+\dim M$, then there is a maximal element of $\Fix(M)$ of rank $i$.
\end{cor}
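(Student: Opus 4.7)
The plan is to exploit the Duflot complex $DM$ of Proposition~\ref{duflotcomplex}, which satisfies $H^i(DM)=\HH^i(H_S^*M)$, together with the top-degree analysis used in the proof of Corollary~\ref{regularityforHSM}. Each summand $\Sigma^{d_Y}\HH^i(H_S^*Y_i^i)$ of $DM^i$ (indexed by $Y\in\Fix(M)_i$) has top internal degree at most $d_Y-i+\dim Y=\dim M-i$, with equality forcing $H^{\dim Y}(Y_i^i/S)\neq 0$, and hence $Y_i^i=Y$, because $Y_i^i$ is an open submanifold of the closed connected manifold $Y$. Since $(DM^{i+1})_{\dim M-i}=0$, the differential $d^i$ vanishes in internal degree $\dim M-i$, and so $H^i(DM)_{\dim M-i}=(DM^i)_{\dim M-i}/\im(d^{i-1})_{\dim M-i}$. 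The hypothesis $a_i(H_S^*M)=\dim M-i$ therefore produces some $Y\in\Fix(M)_i$ with $Y_i^i=Y$ whose top-degree class is not in the image of $d^{i-1}$.

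I would then show that any such $Y$ is maximal in $\Fix(M)$. Suppose for contradiction that $Y\subsetneq Y'$ for some $Y'\in\Fix(M)$ with pointwise stabilizers $V'\subsetneq V$. At a point $p\in Y$ the normal representation $N_p=T_pM/T_pY$ is a nonzero $V$-representation with $N_p^V=0$, because $\dim Y'>\dim Y$ forces $N_p^{V'}\neq 0$. Any nontrivial character $\mu$ of $V$ appearing in $N_p$ has kernel $V_1$ of rank $i-1$ with $N_p^{V_1}\neq 0$, so the component $Y_1$ of $M^{V_1}$ through $p$ strictly contains $Y$, has pointwise stabilizer exactly $V_1$ (any strictly larger stabilizer would equal $V$, forcing $Y_1\subset Y$), and lies in $\Fix(M)_{i-1}$.

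By the compatibility of Duflot complexes under embedding (Proposition~\ref{compatibility}), the $(Y_1,Y)$-component of $d^{i-1}:DM^{i-1}\to DM^i$ agrees, after the shift $\Sigma^{d_{Y_1}}$, with the corresponding differential in $D(Y_1)$. Unwinding the spectral sequence construction of the Duflot complex, this map is the connecting homomorphism of a local cohomology long exact sequence for the topological stratification of $H_S^*Y_1$, and on the top class it can be identified with the Gysin boundary for the codimension-$(\dim Y_1-\dim Y)$ inclusion $Y\hookrightarrow Y_1$: it sends a suitable class built from the fundamental class of $(Y_1)_{i-1}^{i-1}/S$ to the equivariant Euler class of the normal bundle of $Y$ in $Y_1$ cupped with the top class on $Y/S$. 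This normal bundle is a nontrivial $V$-representation with no $V$-fixed vectors, so (by the argument in Proposition~\ref{gysinisinjective}) its equivariant Euler class is a nonzero divisor, and the top class on $Y$ therefore lies in $\im d^{i-1}$, contradicting survival. The main obstacle is this explicit identification of the abstract Duflot differential with a Gysin boundary map, which requires tracing through the exact couple underlying \ref{duflotcomplex} in terms of the geometric stratification of $Y_1$.
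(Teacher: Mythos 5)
Your first paragraph is essentially the paper's own argument, and it already contains everything that is needed. Once some summand of $DM^{i}$ is nonzero in internal degree $\dim M - i$, you get a $Y \in \Fix(M)_i$ with $H^{\dim Y}(Y_i^i/S) \neq 0$; this forces $Y_i^i/S$, and hence $Y_i^i$, to be compact, so $Y_i^i$ is closed as well as open in the connected manifold $Y$ and therefore $Y_i^i = Y$. An element of rank $i$ with $Y_i^i=Y$ (no points of isotropy rank $>i$, equivalently no element of $\Fix(M)$ of higher rank sitting inside $Y$) is precisely what the corollary means by a ``maximal element of rank $i$'': the paper's terminology for the order on $\Fix(M)$ fluctuates (these same elements are called minimal in \ref{minimalprimesofHSM}), but the proof of \ref{strongregularityforHSM} and its use in \ref{strongregularityforgroupcohomology}, where such elements correspond to maximal $p$-tori, make the intended meaning clear. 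Two repairs to your paragraph: you do not need any survival or cokernel analysis, since $\HH^{i,\dim M - i}(H_S^*M)$ is a subquotient of $DM^{i,\dim M -i}$, so the hypothesis already forces the chain-level term to be nonzero; and your justification ``$Y_i^i$ is an open submanifold of the closed connected manifold $Y$'' assumes $Y$ is compact, which is not given ($M$ is an arbitrary $S$-connected manifold) --- the compactness has to be extracted from $H^{\dim Y}(Y_i^i/S)\neq 0$ itself, as above.

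The second half of your proposal is a wrong approach: you are trying to show that such a $Y$ is maximal with respect to inclusion, i.e.\ not properly contained in any lower-rank element of $\Fix(M)$, and that statement is false, so no identification of the Duflot differential with a Gysin map can make the contradiction go through. For example, let $\ZZ/p$ ($p$ odd) act on $S^2$ by rotation with two fixed points: then $a_1(H_S^*S^2)=1=-1+\dim S^2$, and the contributing elements of $\Fix(S^2)$ are the two fixed points, each properly contained in $S^2$ itself, which lies in $\Fix(S^2)$ with rank $0$. The concrete step that fails in your argument is the existence of ``a suitable class built from the fundamental class of $(Y_1)_{i-1}^{i-1}/S$'': since $Y \subsetneq Y_1$, the stratum $(Y_1)_{i-1}^{i-1}$ is a proper open subset of the connected manifold $Y_1$, hence noncompact, so $H^{\dim Y_1}((Y_1)_{i-1}^{i-1}/S)=0$ and no such class exists; the nonzero-divisor property of the equivariant Euler class is then irrelevant, because there is nothing to multiply by it, and the top class of the $Y$-summand need not lie in $\im d^{i-1}$ (indeed it does not in the example above). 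Deleting this second half and making the small repair to the first paragraph leaves you with a correct proof, which coincides with the paper's.
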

\begin{proof}
Consider the Duflot chain complex for $H_S^*M$. The $i^{th}$ degree term of this chain complex is a sum of modules of the form $\HH^i( \Sigma^{d_Y} H^*_V) \otimes H^*(Y_i^i/S)$, where $V$ is a rank $i$ p-torus. The top nonzero degree of this will be $d_Y + -i + H^{\text{top}}H^*(Y_i^i/S)$. We have that $Y_i^i/S$ is a manifold of dimension $\dim M - d_Y$, so $a_i(H_S^*M)=-i+\dim M$ only if there is some $Y$ so that $H^{\dim Y} Y_i^i/S$ is nonzero, which happens if and only if $Y_i^i/S$ is compact, and consequently if and only if $Y_i^i$ is  compact. 

However, $Y_i^i=Y -\cup_{W \in \Fix(M)_{> Y}} W$ (recall that every point of $Y$ has isotropy of rank greater than or equal to $i$), so $Y_i^i$ is compact if and only if $Y_i^i=Y$ and $Y$ is maximal in $\Fix(M)$.
\end{proof}
\begin{remark}
Symonds shows this without explicitly stating it in his proof of some special cases of the strong regularity theorem in \cite{symondsregularity} Propostion 9.1. 
\end{remark}

\begin{cor}\label{assoicatedprimesforHSM}
Take $(Y,A) \in \Fix(M)$, and let $\p$ be the kernel of $H_S^*M \to H_S^*Y \to H_S^* S/A$. The following are equivalent.
\begin{enumerate}
\item $\p \in \ass_{H_S^*M} H_S^*M$.
\item $\p \in \ass_{H_S^*Y} H_S^*Y$.
\item The depth of $H_S^*Y$ is $\rank Y$.
\end{enumerate}
\end{cor}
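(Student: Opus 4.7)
The plan is to recognize this corollary as the geometric incarnation of the abstract Theorems \ref{detectionofassociatedprimes} and \ref{depthofcentralizers} applied to the Duflot-algebra structure on $H_S^*M$ already established in Proposition \ref{duflotfiltrationonFiHSM}. The work is essentially setting up a dictionary; no new ideas beyond those of Section \ref{moreresultsforstratifiedfiltrations} are required.

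First I would fix the correspondence. Under the free rank filtration of $H_S^*M$ by $\Fix(M)$, the element $(Y,A) \in \Fix(M)$ corresponds to the embedded algebra $T = H_S^*Y$ (with pushforward the equivariant Gysin map, injective by \ref{gysinisinjective}, and with restriction the usual restriction map), and the associated structure map $\phi_A: H_S^*M \to P_A$ is precisely the composite $H_S^*M \to H_S^*Y \to H_S^*S/A \to P_A$. Its kernel agrees with the $\p$ of the statement up to nilpotents (the map $H_S^*S/A \to P_A$ is surjective with nilpotent kernel when $p$ is odd, and is an isomorphism when $p=2$), so it defines the same prime. Since by \ref{duflotfiltrationonFiHSM} this filtration is minimal, topological, and Duflot, both abstract theorems are available.

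For (1)$\iff$(2) I would quote Theorem \ref{detectionofassociatedprimes}: $\phi_A$ represents an associated prime of $H_S^*M$ if and only if its factorization through $T = H_S^*Y$ represents an associated prime of $H_S^*Y$ as a module over itself, which is exactly the equivalence asserted. For (1)$\iff$(3) I would quote Theorem \ref{depthofcentralizers}: since $H_S^*M$ is Duflot, $\phi_A$ represents an associated prime if and only if the embedded ring has depth equal to $\dim A = \rank A$; by the definition of $\Fix(M)$, $A$ is the maximal $p$-torus fixing $Y$, so $\rank A = \rank Y$, yielding (3).

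There is no substantive obstacle; the heavy lifting — verifying that $H_S^*M$ carries a minimal topological Duflot filtration stratified by $\Fix(M)$ — was already carried out in Section \ref{propertiesofHSM}. The only care required is in matching statement-level data $(Y,A)$ with the abstract structure maps and in noting that the identification of $\p$ with $\ker \phi_A$ is insensitive to the nilpotent discrepancy between $H_S^*S/A$ and $P_A$.
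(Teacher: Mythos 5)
Your proposal is correct and follows essentially the same route as the paper, which simply derives the corollary from Theorem \ref{depthofcentralizers} (whose proof in turn contains the (1)$\iff$(2) step via \ref{detectionofassociatedprimes}), applied to the Duflot structure on $H_S^*M$ from \ref{duflotfiltrationonFiHSM}. Your explicit dictionary between $(Y,A)$, the embedded $T=H_S^*Y$, the structure map $\phi_A$, and the nilpotent discrepancy between $H_S^*S/A$ and $P_A$ just spells out details the paper leaves implicit.
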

\begin{proof}
This follows from \ref{depthofcentralizers}.
\end{proof}

\begin{cor}[Quillen \cite{quillen} Propostion 11.2]\label{minimalprimesofHSM}
The minimal primes of $H_S^*M$ are in bijection with the minimal elements of $\Fix(M)$, i.e. the pairs $A,Y$ where $A$ is a $p$-torus of $Y$ and $Y$ is a component of $Y^A$, and $S/A$ acts freely on $Y$. The primes are obtained via the kernel of $H_S^*M \to H_S^*Y \to H_S^*S/A$.
\end{cor}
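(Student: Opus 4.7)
The plan is to combine Lemma~\ref{HSMtoral} with a direct comparison between containment of the toral primes $\p_{Y,V}$ and the partial order on $\Fix(M)$. Since $H_S^*M$ is Noetherian, every minimal prime is associated, hence by Lemma~\ref{HSMtoral} is of the form $\p_{Y,V}=\ker(H_S^*M\to H_S^*Y\to H^*_V)$ for some $(Y,V)\in\Fix(M)$; the minimality of the Duflot filtration established in Proposition~\ref{duflotfiltrationonFiHSM} ensures that distinct pairs give distinct primes. So the task reduces to showing $\p_{Y',V'}\subseteq\p_{Y,V}$ if and only if $(Y',V')\le(Y,V)$ in $\Fix(M)$.

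For the ``$\Leftarrow$'' direction, given $Y'\subseteq Y$ and $V\subseteq V'$, I would pick a point $y'\in Y'$ of isotropy exactly $V'$ and consider the $S$-equivariant map $S/V\twoheadrightarrow S/V'\hookrightarrow Y'\hookrightarrow Y$ sending $sV\mapsto sy'$. This map is $S$-equivariantly homotopic to the orbit of any point of $Y$ with exact isotropy $V$, since both correspond to $V$-fixed points of $Y$ and $Y^V=Y$ is connected. Consequently the induced map $H_S^*Y\to H^*_V$ is the augmentation that defines $\p_{Y,V}$, and we obtain a factorisation $H_S^*M\to H_S^*Y'\to H^*_{V'}\to H^*_V$ yielding $\p_{Y',V'}\subseteq\p_{Y,V}$. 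Strictness in the case $(Y',V')<(Y,V)$ follows from the dimension count $\dim H_S^*M/\p_{Y,V}=\rank V<\rank V'=\dim H_S^*M/\p_{Y',V'}$.

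For ``$\Rightarrow$'', I would start by restricting to the subring $H_S^*\subseteq H_S^*M$, where $\p_{Y,V}\cap H_S^*$ is the ideal of classes vanishing on $V\subseteq S$. The hypothesis thus yields containment of these ideals in the opposite direction, forcing $V\subseteq V'$. Then $Y'\subseteq M^{V'}\subseteq M^V$, so $Y'$ lies in some component $Y''$ of $M^V$. Suppose for contradiction that $Y''\ne Y$; then $Y\cap Y''=\emptyset$, and by base change for the Gysin pushforward the Euler class $e_Y=i_*(1)\in H_S^*M$ restricts to zero on $Y''$, hence on $Y'$, so $e_Y\in\p_{Y',V'}$. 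On the other hand, the argument in the proof of Proposition~\ref{gysinisinjective} shows that the image of $e_Y$ under $H_S^*M\to H^*_V$ is the equivariant Euler class of the fibre of the normal bundle of $Y\subset M$ at a point of isotropy $V$, a $V$-representation with no trivial summands, which is nonzero. This contradicts $\p_{Y',V'}\subseteq\p_{Y,V}$, so $Y''=Y$ and $(Y',V')\le(Y,V)$.

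With the comparison in hand the bijection is formal: $\p_{Y,V}$ is minimal in $\spec H_S^*M$ if and only if $(Y,V)$ is minimal in $\Fix(M)$, which unpacking the definition of the poset is equivalent to $S/V$ acting freely on $Y$. The main obstacle will be the disjoint-components step of the ``$\Rightarrow$'' direction, where both the base-change vanishing of $e_Y$ on $Y''$ and the nonvanishing of its image in $H^*_V$ must be justified carefully; everything else is bookkeeping from the established structural results.
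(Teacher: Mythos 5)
Your argument is correct, but it is not the route the paper takes. The paper's proof leans on the machinery already built for Duflot modules: it invokes Corollary \ref{assoicatedprimesforHSM} (i.e.\ Theorem \ref{depthofcentralizers}) to see that for a minimal element $(Y,A)$ the prime $\p_{Y,A}$ is \emph{associated}, because $S/A$ acting freely gives $H_S^*Y\cong H_A^*\otimes H^*(Y/S)$ and hence a sharp Duflot depth bound; minimality among associated primes is then asserted, and distinctness of the kernels comes from the minimality of the filtration as in \ref{duflotfiltrationonFiHSM}. You instead bypass the depth criterion entirely and prove an order comparison: $\p_{Y',V'}\subseteq\p_{Y,V}$ if and only if $(Y',V')\le (Y,V)$ in $\Fix(M)$, using the orbit-homotopy factorization for one implication and contraction to $H_S^*$ plus the disjoint-support vanishing and nonvanishing of the Euler class (as in the proof of \ref{gysinisinjective}) for the other; combined with ``minimal $\Rightarrow$ associated $\Rightarrow$ toral'' from \ref{HSMtoral} and distinctness, the bijection is then formal. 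This is a genuinely different, more Quillen-stratification-flavored argument, and it buys something the paper leaves implicit: the step that primes of minimal elements are minimal among the associated primes (equivalently, that toral primes of incomparable elements cannot be nested) is exactly your ``$\Rightarrow$'' direction, which the paper dismisses as clear. What the paper's route buys in exchange is brevity and the extra information that these minimal primes are associated for the structural reason that the Duflot bound is sharp on $H_S^*Y$. Two small points to tighten in your write-up: for odd $p$ the prime is the kernel into $H_S^*(S/A)/\sqrt{0}=P_V$, so the nonvanishing of the restricted Euler class must be checked modulo nilpotents --- it holds because that class is a product of nonzero elements of the polynomial part of $H_V^*$, hence nonzero in the domain $P_V$; and the existence of the pushforward $i_*$ you use rests on the equivariant orientability of the normal bundle, Lemma \ref{bundleisorientable}.
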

\begin{proof}
It is clear by our description of the associated primes that if the primes arising in this manner are associated, then they are minimal among the associated primes and therefore minimal primes. That there are  distinct primes for each maximal element was noted in the proof of \ref{duflotfiltrationonFiHSM}. 

To see that these primes are associated we use \ref{assoicatedprimesforHSM}. Since $S/A$ acts freely on $H_S^*Y$, $H_S^*Y$ is isomorphic to $H^*_A \otimes H^*Y/S$, so the Duflot bound is sharp for $H_S^*Y$ and the result follows.
\end{proof}
This proof is different than Quillen's proof. In \cite{quillen} he obtains this as a consequence of the F-isomorphism theorem.

The following is the $S$-equivariant cohomology version of a detection result of Carlson \cite{carlsondepthconjecture}. We will use this later to derive Carlson's detection result.
\begin{cor}\label{detectionforHSM}
If $H_S^*M$ has depth $d$, then $H_S^*M$ is detected by restricting to the $H_S^*Y$ for $Y \in Fix(M)_d$.
\end{cor}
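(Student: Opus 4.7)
The plan is to obtain this as a direct application of the abstract detection theorem \ref{detection} to the Duflot algebra structure on $H_S^*M$ that was established in Proposition \ref{duflotfiltrationonFiHSM}. That proposition tells us that $H_S^*M$ is a Duflot algebra with filtering poset $\mathrm{Fix}(M)$, where the embedded subalgebra $T$ corresponding to an element $Y \in \mathrm{Fix}(M)$ is $H_S^* Y$, with pushforward and restriction being the usual Gysin and restriction maps in equivariant cohomology. Since $\mathrm{Fix}(M)$ is weakly coranked by rank (an element $Y$ which is a component of $M^A$ has rank equal to the rank of the largest $p$-torus fixing it), the rank-$d$ part $\mathrm{Fix}(M)_d$ in the abstract sense agrees with $\mathrm{Fix}(M)_d$ in the topological sense.

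Applying Proposition \ref{detection} with $R = H_S^*M$ and this Duflot structure, we conclude that the product of restriction maps
\[
H_S^*M \longrightarrow \prod_{Y \in \mathrm{Fix}(M)_d} H_S^* Y
\]
is injective, which is exactly the detection statement. No additional work is required beyond checking that the hypotheses of \ref{detection} apply, and all of these were verified in \ref{duflotfiltrationonFiHSM}. There is no real obstacle here; the content of the corollary is entirely packaged into the axiomatic framework of section \ref{sectiononabstractstuff} together with the identification of that framework in the $S$-equivariant setting.
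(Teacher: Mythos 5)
Your proposal is correct and is essentially the paper's own proof: the paper deduces Corollary \ref{detectionforHSM} directly from Proposition \ref{detection}, using the Duflot algebra structure on $H_S^*M$ stratified by $\Fix(M)$ established in Proposition \ref{duflotfiltrationonFiHSM} (the case $i=0$, since $F_0 H_S^*M = H_S^*M$). No further comment is needed.
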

\begin{proof}
This follows from \ref{detection}.
\end{proof}

Recall Carlson's conjecture:
\begin{con}[Carlson]
If $H_G^*$ has depth $d$, then $H_G^*$ has a $d$-dimensional associated prime.
\end{con}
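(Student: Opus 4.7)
The plan is to combine the detection theorem \ref{detectionforHSM} with the characterization of associated primes in \ref{assoicatedprimesforHSM} to reduce Carlson's conjecture to a statement about the existence of a point in $\Fix(M)$ at which the Duflot bound is sharp. First I would translate to the equivariant setting: choose a faithful representation $G \hookto U(V)$ so that the center of $G$ maps into the maximal diagonal $p$-torus $S < U(V)$, and set $M = G \backslash U(V)$. Then $H_S^*M$ carries a Duflot algebra structure by \ref{duflotfiltrationonFiHSM}, and a $d$-dimensional associated prime of $H_G^*$ corresponds, via the toral structure maps $\phi_V$, to a $p$-torus $A < G$ of rank $d$ for which $A$ represents an associated prime in $H_S^*M$.

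With this translation in place, apply \ref{detectionforHSM} to the equivariant ring: if $H_S^*M$ has depth $d$ (after matching conventions with $\depth H_G^* = d$), then $H_S^*M$ embeds into $\prod_{Y \in \Fix(M)_d} H_S^*Y$. By \ref{assoicatedprimesforHSM}, an element $Y \in \Fix(M)_d$ whose isotropy is a $p$-torus $A$ of rank $d$ gives rise to an associated prime of $H_S^*M$ exactly when $\depth H_S^*Y = \rank A = d$, i.e. when the Duflot bound is sharp for the embedded ring $H_S^*Y$. So it would suffice to locate a single $Y \in \Fix(M)_d$ (among the detecting components) for which $H_S^*Y$ achieves the Duflot lower bound; this $Y$ would then supply the desired $d$-dimensional associated prime in $H_G^*$.

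The hard part is this last step: the detecting injection $H_S^*M \hookto \prod_{Y \in \Fix(M)_d} H_S^*Y$ does not, on its face, force any individual factor to have matching depth. One natural strategy is induction, replacing $M$ by a component $Y$ of smallest depth and using that $Y$ inherits a Duflot structure from \ref{propertiesofHSM}; but since $\depth H_S^*Y$ may strictly exceed $d$ for every $Y$ in $\Fix(M)_d$, it is unclear that the induction closes. A second strategy is to analyze the Duflot complex $DM$ directly: nonvanishing of $\HH^d(H_S^*M) = H^d(DM)$ should be traced back to a summand $\Sigma^{d_Y}\HH^d(H_S^*Y_d^d)$ which survives all differentials, and one would hope to convert this survival into sharpness of the Duflot bound for the corresponding $H_S^*Y$. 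This is exactly the point at which the general conjecture becomes subtle, which is why \ref{carlsonsconjecturewhenduflotboundissharp} and the results of Green and Kuhn all require that the Duflot bound for $H_G^*$ already be sharp, so that the existence of a component $Y$ with the required sharpness is guaranteed at the outset.
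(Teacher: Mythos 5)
You have not produced a proof, and in fairness you could not have: the statement you were given is stated in the paper as a \emph{conjecture} (Carlson's depth conjecture), and the paper itself proves it only in the special case where the Duflot bound for depth is sharp (Theorems \ref{carlsonconjectureforHSM} and \ref{carlsonsconjecturewhenduflotboundissharp}, following Green and Kuhn). Your reduction is the right one and matches the paper's framework: pass to $M = G\backslash U(V)$ with the center of $G$ landing in $S$, use faithful flatness (\ref{faithfullyflat}) to transfer depth and associated primes, and then observe via \ref{assoicatedprimesforHSM} that a $d$-dimensional associated prime would be supplied by any $Y \in \Fix(M)_d$ with $\depth H_S^*Y = d$. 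But the step you flag as "the hard part" is a genuine gap, and it is exactly where the conjecture is open. The detection injection $H_S^*M \hookto \prod_{Y \in \Fix(M)_d} H_S^*Y$ gives no control of the depth of any single factor, and the Duflot-complex strategy fails for a concrete reason: a nonzero class in $H^d(DM) = \HH^d(H_S^*M)$ is a sum of components supported on the various summands $\Sigma^{d_Y}\HH^d(H_S^*Y_d^d)$, and although \ref{compatibility} gives inclusions of complexes $DY \to DM$ (so that an associated prime for $Y$ produces one for $M$, the easy direction), the differential of $DM$ restricted to the $Y$-summand has components landing in strata of $M$ that are not contained in $Y$. Hence a cycle for $d^d_M$ supported on $Y$ need not be a cycle for $d^d_Y$, and survival in $DM$ cannot be converted into $\HH^d(H_S^*Y)\neq 0$ for any particular $Y$.

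What does survive of your outline is precisely the paper's actual result: when the Duflot bound for $H_G^*$ is sharp, the maximal central $p$-torus $A$ acts trivially on $M$, so $M$ itself is an element of $\Fix(M)$ of rank $d = \rank A$, and \ref{assoicatedprimesforHSM} applied to $Y = M$ immediately produces the associated prime $\ker\bigl(H_S^*M \to H_S^*S/A\bigr)$; this is the content of \ref{carlsonconjectureforHSM}, extended to $H_G^*$ via \ref{faithfullyflat} in \ref{carlsonsconjecturewhenduflotboundissharp} (including the strengthened Duflot bound for finite groups via the Sylow transfer argument in \ref{strengtheningofduflotbound}). If you want to present something provable, restrict your write-up to that special case; the general statement should remain labelled as a conjecture.
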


We will use the following to prove Carlson's conjecture in the special case where $G$ is a compact Lie group with the Duflot bound sharp.

\begin{thm}\label{carlsonconjectureforHSM}
If the Duflot bound for $H_S^*M$ is sharp, then there is associated prime of dimension the depth of $G$ given by $H_S^*M \to H_S^*S/A$, where $A$ is the maximal $p$-torus that acts trivially on $M$.
\end{thm}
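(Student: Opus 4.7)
The plan is to apply Corollary~\ref{assoicatedprimesforHSM} to an element of $\Fix(M)$ corresponding to the maximal trivially-acting torus $A$, and then compute the dimension of the resulting prime directly. This reduces the theorem to essentially an unpacking of definitions once the right element of $\Fix(M)$ is identified.

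First I would identify the relevant element of $\Fix(M)$. Since $A$ acts trivially on all of $M$, we have $M^A = M$, so each connected component $Y$ of $M$ is a component of $M^A$. The rank of such a $Y$ in $\Fix(M)$ is the rank of the largest $p$-torus fixing $Y$. Because $A$ is maximal among $p$-tori acting trivially on $M$, the open set $Y_{\rank A}^{\rank A}$ of points with isotropy exactly $A$ is a nonempty (indeed dense) subset of $Y$, so $(Y,A) \in \Fix(M)$ and its rank is exactly $\rank A$. (If $M$ is disconnected, the ring $H_S^*M$ splits as a product over components; since $\depth$ of a product is the minimum of the depths, the Duflot bound is sharp on at least one component, and the argument applies there.)

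Next, by the Duflot-sharp hypothesis, $\depth H_S^*M = \rank A$, and since $H_S^*Y$ is a summand of $H_S^*M$ (equal to it in the connected case), it has depth $\rank A$ as well. Corollary~\ref{assoicatedprimesforHSM} then tells us that the kernel $\p$ of the composition $H_S^*M \to H_S^*Y \to H_S^*(S/A)$ is an associated prime of $H_S^*M$. By construction this map factors through the restriction $H_S^*M \to H_S^*(S/A)$ arising from an equivariant map $S/A \to M$, so $\p = \ker(H_S^*M \to H_S^*(S/A))$ as required.

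Finally, to compute the dimension, note that the composition $H_S^* \to H_S^*M \to H_S^*(S/A) = H_A^*$ is the standard surjection induced by the inclusion $A \hookrightarrow S$. Consequently $H_S^*M \to H_A^*$ is surjective, so $H_S^*M/\p$ embeds as (in fact equals) a subring with quotient field equal to that of $H_A^*$, giving
\[
\dim H_S^*M/\p \;=\; \dim H_A^* \;=\; \rank A \;=\; \depth H_S^*M.
\]
There is no real obstacle here; the only point requiring care is verifying that the rank of $(Y,A)$ in $\Fix(M)$ is $\rank A$ (rather than being larger), which is exactly the content of the maximality hypothesis on $A$.
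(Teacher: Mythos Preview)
Your proof is correct and follows essentially the same route as the paper: identify $(Y,A)$ with $Y$ a component of $M=M^A$ as the relevant element of $\Fix(M)$, observe that the embedded $T$ for this element is $H_S^*Y$ itself, and invoke Corollary~\ref{assoicatedprimesforHSM} (specifically the equivalence of conditions~1 and~3) using the hypothesis that $\depth H_S^*M=\rank A$. Your additional care about the disconnected case and the explicit dimension computation for $\p$ are more detailed than the paper's one-line argument, but the core idea is identical.
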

\begin{proof}
This follows from the third part of \ref{assoicatedprimesforHSM}: $M^A=M$, so $M$ is a minimal element of $\Fix(M)$ of rank $\dim A$, so if $\depth H_S^*M=\dim A$, then $H_S^*M \to H_S^* S/A$ represents an associated prime.

%
\end{proof}

\subsection{Exchange between $S$ and $G$ equivariant cohomology}\label{exchange}

The $S$-manifolds we are primarily interested in are those of the form $G \backslash U(V)$, where $G$ is a compact Lie group and $G \to U(V)$ is faithful. Recall that by convention $U(V)$ respects direct sum decompositions of $V$. In other words, if we have a direct sum decomposition  $V=V_1 \oplus \dots \oplus V_n$ of a faithful unitary representation $V$ of $G$, then $U(V)=\prod_{i=1}^n U(V_i)$.

The space $U(V)$ has a maximal $p$-torus $S$ of diagonal matrices of order dividing $p$. We take  $G$ to act on the left of $U(V)$ and $S$ to act on the right.
 Our first main goal is to  show is that $G \backslash U(V)$ is an $S$-connected manifold. This will show that $H_S^*G \backslash U(V)$ has a Duflot algebra structure, and then we will see that $H_G^* U(V) /S$ does as well.

We need to explain the relationships between three closely related manifolds. First we have $U(V)$. The left action of $G$ and the right action of $S$ gives an action of $G \times S$ on $U(V)$ (if this is to be a left action, we must modify the right action of $S$ so that $s$ acts by $s^{-1}$, and if we want this to be a right action we similarly modify the $G$ action). We also have the left action of $G$ on $U(V)/S$, and the right action of $S$ on $G \backslash U(V)$.

Note that there are maps $G \backslash U(V) \from U(V) \to U(V)/S$. If $A < S$ and $Gu \in G \backslash U(V) ^A$, then $^uA< G$, and $uS \in (U(V)/S)^{(^uA)}$. Here we mean the action of $^uA$ as a subgroup of $G$, so it is acting on the right.

 Let $D < {}^uA \times A < G \times S$ be the image of $A$ under $a \mapsto (uau^{-1},a)$. We give $D$ the action on $U(V)$ that is restricted from $G \times S$ acting on $U(V)$ on the left, so $(uau^{-1},a) \cdot v= uau^{-1}v a^{-1}$.

Parts of the following lemma are used in \cite{symondsregularity}. These results could also be derived from
\cite{duflotassociatedprimes}, but for completeness we include a proof here.
\begin{lemma}\label{lemmaaboutexchange}
\begin{enumerate}
\item For $A < S$ and $x=Gu \in (G \backslash U(V))^A$, the connected component $Y$ of $x$ in $(G \backslash U(V))^A$ is isomorphic as $S$-spaces to $C_{G^u} A \backslash C_{U(V)} A$. 
\item Moreover, the connected component $X$ of $xS \in U(V)/S^{^uA}$ is $C_G (^uA)$-invariant and isomorphic as $C_G(^uA)$-spaces to $C_{U(V)} 
^uA / ^uS$, and the $G$ orbit of the connected component is $G \times_{C_G (^uA) } C_{U(V)} (^u A )/ ^uS$.
\item Given $Gu \in (G \backslash U(V))^A$, the connected component $Z$ of $u \in U(V)^D$ (where the $D$ is as in the discussion preceding the lemma) is the left $C_{U(V)} (^uA)$ orbit of $u$, and the right $C_{U(V)} A$-orbit of $u$ (in fact, $U(V)^D$ has one connected component). Under the maps $G \backslash U(V) \from U(V) \to U(V)/S$, $Z$ maps onto to $Y$ and onto $X$. We have that $Z$ is invariant under the right $S$ action, but not necessarily under the left $G$ action. However, the $G$-orbit of $Z$  is $G \times_{C_{G} (^uA)} C_{U(V)} A$ and it maps onto the $G$-orbit of $X$, and the $G$ orbit of $Z$ also maps onto $Y$.
\end{enumerate}
\end{lemma}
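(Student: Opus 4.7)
The plan is to build everything on the most symmetric object, namely the fixed set of $D$ acting on $U(V)$, and then push forward separately to $G\backslash U(V)$ and $U(V)/S$. Since $D = \{(uau^{-1},a) : a\in A\}$ acts on $U(V)$ by $(b,a)\cdot v = bva^{-1}$, the condition $uau^{-1}va^{-1} = v$ for all $a\in A$ rearranges to $a(u^{-1}v) = (u^{-1}v)a$, so $U(V)^D = u\cdot C_{U(V)}(A)$. Because $A\subset S$ is a finite abelian subgroup of the connected unitary group $U(V)$, its centralizer decomposes as a product of unitary groups on the isotypical summands of $V|_A$ and is therefore connected; this handles the connectedness assertion in Part~3 and supplies the preimage we push forward.

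For Part~1, push $uC_{U(V)}(A)$ along $U(V)\to G\backslash U(V)$. The image lies in $(G\backslash U(V))^A$ and contains $Gu$, and two elements $uw,uw'$ have the same image iff $u^{-1}gu = w'w^{-1}$ for some $g\in G$, i.e.\ iff $w'w^{-1}\in G^u\cap C_{U(V)}(A) = C_{G^u}(A)$. Thus the image is the connected homogeneous space $C_{G^u}(A)\backslash C_{U(V)}(A)$, which must therefore equal the component $Y$; the right $S$-action transports compatibly since $S\subset C_{U(V)}(A)$.

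For Part~2, parametrize nearby points of $uS$ as $uwS$; such a point is ${}^uA$-fixed iff $w^{-1}Aw \subset S$. By rigidity of finite subgroups under conjugation in a connected Lie group, the subgroups of $S$ conjugate to $A$ form a discrete set, so in the connected component of $w=1$ one has $w^{-1}Aw = A$, and then the resulting automorphism of the finite group $A$ is trivial by continuity, forcing $w\in C_{U(V)}(A)$. Hence $X = uC_{U(V)}(A)/S \cong C_{U(V)}({}^uA)/{}^uS$ as a left $C_{U(V)}({}^uA)$-space. For the stabilizer of $X$ in $G$: if $g\in G$ sends some $uwS\in X$ to $uw'S\in X$ then $guw = uw's$ for some $s\in S$, so $u^{-1}gu = w'sw^{-1}\in C_{U(V)}(A)$ (using $w,w',s\in C_{U(V)}(A)$), giving $g\in C_G({}^uA)$. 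This yields both the $C_G({}^uA)$-invariance of $X$ and the identification of its $G$-orbit as $G\times_{C_G({}^uA)}C_{U(V)}({}^uA)/{}^uS$.

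Part~3 collects the above. Right $S$-invariance of $Z = uC_{U(V)}(A)$ is $S\subset C_{U(V)}(A)$, and $Z = C_{U(V)}({}^uA)\cdot u$ is simultaneously the left $C_{U(V)}({}^uA)$-orbit of $u$. The $G$-orbit is realized by $G\times C_{U(V)}(A)\to U(V)$, $(g,w)\mapsto guw$, whose fibers over $guw$ are the orbits of $C_G({}^uA)$ acting by $h\cdot(g,w) = (gh^{-1}, u^{-1}hu\cdot w)$, where $u^{-1}C_G({}^uA)u = C_{G^u}(A)\subset C_{U(V)}(A)$; this gives $G\cdot Z = G\times_{C_G({}^uA)}C_{U(V)}(A)$. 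That $G\cdot Z$ maps onto $Y$ in $G\backslash U(V)$ and onto the $G$-orbit of $X$ in $U(V)/S$ is immediate from the construction, since the quotient $Z\to Y$ kills the residual $G$-action and $Z/S = X$. The main obstacle is really just the rigidity step in Part~2; once that component has been correctly identified, the stabilizer analysis and the various orbit descriptions all reduce to straightforward algebraic manipulation in $U(V)$.
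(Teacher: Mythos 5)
Your identification of $U(V)^D$ as $uC_{U(V)}(A)=C_{U(V)}({}^uA)u$, the connectedness of $C_{U(V)}(A)$, and the stabilizer/fiber computations (for the map $C_{U(V)}(A)\to G\backslash U(V)$, for the $G$-stabilizer of $X$, and for $G\times_{C_G({}^uA)}C_{U(V)}(A)\to G\cdot Z$) are all correct, and your treatment of Part~2 is a genuinely different route from the paper: since $S$ is finite, $U(V)\to U(V)/S$ is a covering, a lifted path in the fixed set gives a continuous family $w(t)^{-1}aw(t)$ taking values in the finite group $S$, hence constant, which forces $w\in C_{U(V)}(A)$ on the component of $uS$; made precise (the image of $uC_{U(V)}(A)$ is clopen in $(U(V)/S)^{{}^uA}$ because the preimage of the fixed set in $U(V)$ is a finite disjoint union of saturated closed cosets $uw_0C_{U(V)}(\phi(A))$), this works and replaces the paper's orbit-counting there.

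The genuine gap is in Part~1, at the sentence ``which must therefore equal the component $Y$.'' What you have shown is that the image of $uC_{U(V)}(A)$ is a connected subset of $(G\backslash U(V))^A$ containing $Gu$, hence \emph{contained in} $Y$; nothing you have said shows it is open in the fixed-point set, so it could a priori be a proper connected subset of $Y$. This is exactly where the paper does its real work: it observes that $(G\backslash U(V))^A$ is a union of $C_{U(V)}(A)$-orbits, that there are only finitely many $N_{U(V)}(A)$-orbits because $G$ has finitely many conjugacy classes of $p$-tori, hence finitely many $C_{U(V)}(A)$-orbits (finite index), and that finitely many closed orbits of a connected compact group covering the fixed set forces each orbit to be open and closed, i.e.\ a whole component. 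Your Part~2 rigidity argument does not transfer to Part~1: the fixed-point condition there is $vAv^{-1}\subseteq G$ with $G$ a positive-dimensional compact Lie group, so the conjugates of $A$ do not lie in a finite (or discrete) set and ``continuous path in a discrete set is constant'' is unavailable; to close the gap you would need either the paper's finiteness-of-conjugacy-classes count or the nontrivial rigidity fact that nearby embeddings of a finite group into a compact Lie group are conjugate, and the proposal supplies neither.
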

\begin{proof}
\begin{enumerate}
\item
First, observe that if $Gu$ is fixed by $A$, then $^u A < G$, and $A < G^u$. Also, it is immediate that $C_{U(V)}A$ acts on the right on $(G \backslash U(V))^A$, and because $C_{U(V)}A$ is a product of unitary groups it is connected, and consquently the $C_{U(V)}A$-orbit of $x$ is connected.

We will first show that $Y$ is the $C_{U(V)}A$-orbit of $x$. It is enough to show that there are only finitely many orbits of the $C_{U(V)}A$ action on $(G \backslash U)^A$, from this it follows that each orbit is a component of $(G \backslash U)^A$. To see that there are only finitely many $C_{U(V)}A$-orbits is it enough to show that there are only finitely many $N_{U(V)}A$-orbits on $(G \backslash U(V))^A$, because $C_{U(V)}A$ has finite index in $N_{U(V)}A$.

If $Gu, Gv \in (G \backslash U)^A$ have the property that $^uA$ and $^vA$ are conjugate $p$-tori in $G$, so ${}^{gv}A={}^uA$ for some $g \in G$, then $u^{-1}gv \in N_{U(V)}A$, and $Gu$ and $Gv$ are in the same $N_{U(V)}A$-orbit. So, since $G$ has only finitely many conjugacy classes of $p$-tori, there are only finitely many $N_{U(V)} A$-orbits, and we have shown that $Y$ is the $C_{U(V)}A$-orbit of $x$.

To see that $Y$ is $C_{G^u} A \backslash C_{U(V)}A$, observe that the stabilizer of $Gu$ under the $C_{U(V)}A$ action is $C_{G^u}A$.
\item We can follow the same strategy as above to identify $X$. First note that $C_{U(V)} (^uA)$ acts on $U(V)/S^{^u A}$, then observe that the $C_{U(V)} (^uA)$-orbits are each a component of $U(V)/S^{^u A}$, and then that the stabilizer is as claimed. Once we have identified $X$ this also shows that $X$ is $C_G^{^u A}$-invariant, as $C_G (^uA) < C_{U(V)} (^u A)$.

To see that the $G$-orbit of $X$ is isomorphic to $G \times_{C_G (^uA) } X$ as $G$-spaces, note that $G$ acts on the components of the $G$-orbit of $X$, and there is an obvious equivariant homeomorphism from $G \times_{\stab X} X$ to the $G$-orbit of $X$. We have already observed that $C_G  (^uA)$ is contained in $\stab X$. To see the other containment, suppose that $gcuS=c'uS$, where $c,c' \in C_{U(V)} (^u A)$ and $g \in G$. Then $c'^{-1}gc \in ^uS < C_{U(V)} (^uA)$, so $g$ must centralize $^u A$.

\item If we can show that connected component of $u \in U(V)^D$ is as claimed, then the other claims follow. This is just a computation, it is easy to check that the left action of $C_{U(V)} (^u A)$ and the right action of $C_{U(V)}A$ preserve $Z$, and if $y \in U(V)^D$, then we can write $y=cu=uc'$, where $c \in C_{U(V)} (^uA)$ and $c' \in C_{U(V)}A$.
\end{enumerate}

\end{proof}

This is very nice, because if we start with a right $S$ space of the form $G \backslash U(V)$, then each component of $(G \backslash U(V))^A$ has the same form for some conjugate subgroup of $G$: the components are of the form $C_{G^u}A \backslash U(V')$, where $V'$ is a direct sum of representations of $C_{G^u}A$, giving a faithful representation of $C_{G^u}A$.

Similarly, if we start with the right $G$-space $U(V)/S$, then each component of $U(V)/S^B$ has the same form for some subgroup of $G$: the components are of the form $U(V') ^uS$.

We'll explore the implications of this for group cohomology in the next section. For now, we show that $G \backslash U(V)$ is $S$-connected. 

\begin{prop}
Let $M= G \backslash U(V)$. Then $M$ is $S$-connected.
\end{prop}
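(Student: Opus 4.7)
The plan is to verify the two defining properties of $S$-connectedness in turn. For property (1), I would apply Lemma \ref{lemmaaboutexchange}(1): for any $A \subset S$ and any component $Y$ of $M^A$ containing a point $Gu$, the lemma provides an $S$-equivariant isomorphism $Y \cong C_{G^u}A \backslash C_{U(V)}A$, with $S$ acting on the right via the inclusion $S \subseteq C_{U(V)}A$ (valid since $S$ is abelian and contains $A$). So the $S$-action on $M$ preserves $Y$.

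For property (2), fix $A$ and a component $Y$ of $M^A$, and let $Y_A \subseteq Y$ be the open submanifold of points whose $S$-isotropy equals $A$; if $Y_A$ is empty there is nothing to prove, so assume $Y_A \neq \emptyset$. Its complement in $Y$ is the finite union $\bigcup_{A \subsetneq A' \subseteq S} Y^{A'}$, and each $Y^{A'}$ is a disjoint union of closed submanifolds of the form $C_{G^{u'}}A' \backslash C_{U(V)}A'$ by Lemma \ref{lemmaaboutexchange}(1) applied to $A'$. The crux is to show each such $Y^{A'}$ has real codimension at least two in $Y$: identifying $T_yY$ for $y \in Y^{A'}$ with a quotient of centralizer Lie algebras inside $\mathfrak{u}(V) \subseteq \End(V)$, the normal direction is a non-trivial real representation of $A'/A \cong \ZZ/p$, and its non-trivial isotypical pieces are parametrized by skew-Hermitian pairs $(T, -T^*)$ with $T$ in the $\Hom$-space between distinct $A'$-eigenspaces of $V$. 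This parametrization forces even real dimension, and since $Y^{A'} \subsetneq Y$ the normal space is nonzero, so its real dimension is at least $2$. Removing a finite union of codimension $\geq 2$ closed submanifolds from the connected manifold $Y$ then leaves a connected open submanifold, so $Y_A$ is connected.

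The main obstacle is justifying the codimension bound uniformly in $p$, especially for $p = 2$ where each sign isotypical summand of a real $\ZZ/2$-representation has real dimension $1$ and cannot be argued to contribute in pairs on character-theoretic grounds alone. The decisive point is the pairing $(T, -T^*)$ coming from the skew-Hermitian condition, which organizes the off-diagonal blocks of the $A'$-eigenspace decomposition of $V$ into a space naturally parametrized by a complex $\Hom$, ensuring the even real dimension regardless of the parity of $p$.
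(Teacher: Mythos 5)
Your verification of property (1) matches the paper's (it is immediate from Lemma \ref{lemmaaboutexchange}), but your codimension argument for property (2) has a genuine gap. The normal space of $Y^{A'}$ in $Y$ at $y=Gu$ is not the space of off-diagonal skew-Hermitian blocks you describe: identifying $T_yY\cong \mathfrak{c}_{\mathfrak{u}(V)}(A)/\mathfrak{c}_{\mathfrak{g}^u}(A)$ with $A'$ acting by conjugation, its nontrivial $A'$-isotypical part is the nontrivial part of $\mathfrak{c}_{\mathfrak{u}(V)}(A)$ (which is indeed even-dimensional, by your $(T,-T^*)$ pairing) \emph{modulo} the nontrivial part of $\mathfrak{c}_{\mathfrak{g}^u}(A)$, and when $p=2$ and $G$ has positive dimension the piece you quotient out can be odd-dimensional. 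Concretely, take $p=2$, $G=O(2)\subset U(2)$ acting on $V=\CC^2$, $S$ the diagonal $2$-torsion subgroup, $A=\langle -I\rangle$ (so $Y=M=O(2)\backslash U(2)$, of dimension $3$), and $A'=S$. The component of $Ge$ in $M^{S}$ is $C_{O(2)}(S)\backslash C_{U(2)}(S)$, a quotient of the diagonal torus, of dimension $2$: codimension one, not two. Moreover the failure is not cosmetic: upstairs in $U(2)$ the complement of this locus has two components (the map $v\mapsto v\,\mathrm{diag}(1,-1)\,v^{-1}$ fibers $U(2)$ over $\CP^1$ with connected fibers, and the removed locus is the preimage of $\RP^1$), and connectivity of $M_1^1$ holds only because the reflection $\mathrm{diag}(1,-1)\in O(2)$ interchanges the two components. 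So the "remove codimension $\ge 2$ submanifolds" mechanism is simply unavailable here, and no sharper count of the same kind can rescue it. (If $G$ is finite your argument is correct, since then $T_yY=\mathfrak{c}_{\mathfrak{u}(V)}(A)$; and for odd $p$ it is correct for the simpler reason that every nontrivial real representation of a $p$-torus is even-dimensional, so the complex pairing is not even needed.)

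The paper avoids this issue by never computing codimensions in $Y$ itself. Using part (3) of Lemma \ref{lemmaaboutexchange}, it reduces to showing that $Z_{\rank D}^{\rank D}$ is connected, where $Z$ is the relevant component of $U(V)^D$ inside $U(V)$: there the loci of larger isotropy are built from orbits of centralizers $C_{U(V)}(B)$ of $p$-tori in $U(V)$, which are products of unitary groups, so the even-codimension argument is carried out entirely in the unitary group, with no quotient by $\mathfrak{g}$ appearing; connectivity is then pushed forward along the continuous surjection $Z\to Y$, which carries $Z_{\rank D}^{\rank D}$ onto $Y_{\rank A}^{\rank A}$. If you want to salvage your write-up for general compact Lie groups $G$ (in particular for $p=2$), you should follow that route, or otherwise account for the identifications coming from the $C_{G^u}A$-orbit directions, rather than arguing by codimension downstairs.
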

\begin{proof}
That $S$ preserves each component of $M^A$ is clear from our description of each component of $M^A$. So, we need to show that, for $Y$ a component of $M^A$, if $Y_{\rank A}^{\rank A}$ is nonempty  it is connected.

Via the above correspondence between connected components of the fixed point sets of $U(V)/S$ and $G \backslash U(V)$,  it is enough to show that $Z_{\rank D}^{\rank D}$ is connected, where the notation is as in \ref{lemmaaboutexchange}.

For this, note that for each $E$ a sub $p$-torus of $D$, $U(V)^E$ is the total space of a torus bundle over a (product of) flag manifolds, and for $U(V)^E \subset U(V)^D$, we have the following diagram of these $T$-bundles.
$$\begin{tikzcd}
T \dar \rar & T\dar \\
U(V)^E \rar \dar & U(V)^D \dar \\
F'' \rar & F' ,
\end{tikzcd}$$
In the above diagram $F'' \to F'$ is a proper sub (product of) flag manifolds. But $F''$ therefore has even codimension in $F'$, so $U(V)^E$ has even codimension in $U(V)^D$. 

However, $Z_{\rank D}^{\rank D}= Z - (\cup_{E <D} Z^E)$. Each $Z^E$ therefore has even codimension in $Z$, so $Z_{\rank D}^{\rank D}$ is connected and we are done.

Instead of looking at these flag manifolds, we could get this same result by comparing the dimension of $C_{U(V)}A$ and $C_{U(V)}B$ for $A<B$, each is a product of unitary groups corresponding to the isotypical decomposition of the representation for the respective $p$-torus, and if $A<B$ the isotypical decomposition for $B$ refines that for $A$.
\end{proof}

So, this tells us that all the results from \ref{propertiesofHSM} and \ref{algebraicpropertiesofHSM} apply to $H_S^* G \backslash U(V)$.

Moreover, we now show that these results also apply to $H_G^*U(V)/S$.
\begin{prop}\label{correspondence}
Under the maps $G \backslash U(V)\from U(V) \to U(V)/S$, and for $Y,Z,X$ components of the fixed point sets of $A, D$ and $^u A$ respectively, as above, we have this diagram:

$$
\begin{tikzcd}
H_S^* Y \dar \rar & H_{G \times S}^* GZ \dar   & H_G^* GX \dar \lar \\
H_S^* G \backslash U(V)  \dar \rar & H_{G \times S}^* U(V)   \dar & H_G^*U(V)/S \dar \lar  \\
 H_S^* Y  \rar  & H_{G \times S}^* GZ    & H_G^* GX \lar
\end{tikzcd}.$$

The notation $GZ$ and $GX$ denote the $G$-orbits of $Z$ and $X$. In this diagram, all the horizontal arrow are isomorphisms.
\end{prop}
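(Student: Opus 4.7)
The plan is to recognize every horizontal arrow in the diagram as an instance of the principle ``equivariant cohomology of a free action equals ordinary equivariant cohomology of the quotient''. Both the left $G$-action and the right $S$-action on $U(V)$ are free and they commute, so $U(V)$ is a free $G \times S$-space. Quotienting one factor at a time gives homotopy equivalences
\[ E(G \times S) \times_{G \times S} U(V) \simeq ES \times_S (G \backslash U(V)) \simeq EG \times_G (U(V)/S), \]
whose cohomology groups are precisely the three entries of the middle row, yielding the isomorphisms $H_{G \times S}^*(U(V)) \cong H_S^*(G \backslash U(V)) \cong H_G^*(U(V)/S)$.

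For the top and bottom rows the same argument will apply verbatim with $U(V)$ replaced by $GZ$, provided we verify that (i) $GZ$ is invariant under both $G$ and $S$, (ii) $GZ/G = Y$, and (iii) $GZ/S = GX$. The $G$-invariance of $GZ$ is tautological; the $S$-invariance follows from the $S$-invariance of $Z$ already recorded in Lemma \ref{lemmaaboutexchange} together with the commutativity of the $G$- and $S$-actions on $U(V)$. The two quotient identifications are also contained in Lemma \ref{lemmaaboutexchange}, which describes $Y$ as the $G$-quotient of the $G$-orbit of $Z$ and $GX$ as its image in $U(V)/S$. With these in place, the Borel-construction argument gives $H_S^*(Y) \cong H_{G \times S}^*(GZ) \cong H_G^*(GX)$.

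Finally, commutativity of the squares reduces to naturality of the Borel construction, applied to the equivariant inclusions $Y \hookrightarrow G \backslash U(V)$, $GZ \hookrightarrow U(V)$, and $GX \hookrightarrow U(V)/S$ for the upper half of the diagram, and to the corresponding restriction-to-fixed-set maps for the lower half. The main obstacle is purely notational: keeping the left $G$-action and right $S$-action straight while passing through the $G \times S$-equivariant formalism, and in particular being careful that $GZ$ really is a $(G\times S)$-subspace of $U(V)$ with the two stated quotients. Once the actions are set up cleanly, every horizontal map in the diagram is a Borel-construction equivalence for a free group action, and is therefore a cohomology isomorphism.
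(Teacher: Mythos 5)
Your identification of the horizontal arrows in the bottom two rows is correct and coincides with the paper's argument: $U(V)$ (and likewise $GZ$) is a free $G\times S$-space, and $GZ/G=Y$, $GZ/S=GX$ by Lemma \ref{lemmaaboutexchange}, so the three Borel constructions are homotopy equivalent and the middle and bottom rows consist of isomorphisms; the lower squares commute by naturality of the Borel construction applied to the inclusions $Y\hookrightarrow G\backslash U(V)$, $GZ\hookrightarrow U(V)$, $GX\hookrightarrow U(V)/S$.

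There is, however, a genuine gap in your treatment of the upper half of the diagram. The vertical arrows from the first row to the second go from the cohomology of the submanifold to the cohomology of the ambient space ($H_S^*Y \to H_S^*G\backslash U(V)$, etc.); these cannot be the maps induced by the inclusions, since those point the other way and are exactly the restriction maps of the lower half. They are the pushforward (Gysin) maps, which involve a shift by the codimension and whose existence uses the equivariant orientability of the normal bundles (Lemma \ref{bundleisorientable}). Consequently, ``naturality of the Borel construction applied to the equivariant inclusions'' does not give commutativity of the upper squares, and your argument never addresses the top row at all. What is needed, and what the paper does, is to promote the equivalences of Borel constructions to equivalences of the Thom spaces of the relevant normal bundles: because the quotient maps $U(V)\to G\backslash U(V)$ and $U(V)\to U(V)/S$ are principal bundle projections, the normal bundle of $GZ$ in $U(V)$ descends to that of $Y$ in $G\backslash U(V)$ and to that of $GX$ in $U(V)/S$, so the space-level diagram extends to Thom spaces; applying the equivariant Thom isomorphism then identifies the top row and, since the pushforward is by definition the Thom isomorphism followed by the map induced by the Thom collapse, makes the upper squares commute. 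Without this step the proposition does not transport the pushforward data, which is precisely what it is later used for in equipping $H_G^*U(V)/S$ with a Duflot algebra structure.
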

\begin{proof}
The diagram of the theorem comes from the diagram of spaces:

$$\begin{tikzcd}
ES \times_S Th(Y)  & EG \times ES_{G \times S} Th(GZ)   \lar \rar & EG \times_G Th(GX)  \\
ES \times _S G \backslash U(V) \uar  & EG \times ES_{G \times S} U(V) \lar \rar  \uar & EG \times U(V)/S  \uar \\
ES \times_S Y  \uar& EG \times ES_{G \times S} GZ  \rar \lar  \uar  & EG \times_G GX \uar
\end{tikzcd}.$$

The horizontal arrows in the bottom two rows are homotopy equivalences by our description of $Y, GZ,$ and $GX$. For the top row, we first apply the equivariant Thom isomorphism to the cohomology of the top row, and then our description of $Y, GZ$ and $GX$ gives us the result.

\end{proof}

\begin{Def}
Let $\Fix(U(V)/S)$ be the weakly coranked poset whose elements are $G$-orbits of connected components of $U(V)/S^A$, for $A$ a $p$-torus of $G$. 
\end{Def}

\begin{lemma}
Under the correspondence between connected components of $G \backslash U(V) ^A$ and connected components of $U(V)/S^{ ^u A}$, $\Fix(G \backslash U(V)) \iso \Fix( U(V)/S)$.
\end{lemma}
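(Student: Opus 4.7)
The correspondence in question is read off from Lemma \ref{lemmaaboutexchange}. Define $\Phi : \Fix(G \backslash U(V)) \to \Fix(U(V)/S)$ by sending a component $Y$ of $(G \backslash U(V))^A$ to $G \cdot X$, where $uS \in U(V)/S$ is any lift of a point $Gu \in Y$ and $X$ is the component of $(U(V)/S)^{^uA}$ containing $uS$. The plan is to verify that $\Phi$ is well-defined, rank-preserving, order-preserving, and bijective, with the inverse $\Psi$ given by the analogous construction in the other direction.

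The first and main technical step is well-definedness. If $Gu' \in Y$ is another point, part (1) of Lemma \ref{lemmaaboutexchange} gives $Y = Gu \cdot C_{U(V)} A$, so $u' = g u c$ for some $g \in G$ and $c \in C_{U(V)} A$. Then $^{u'}A = g \, ({}^u A) \, g^{-1}$, and $u'S = g\,(ucS)$. Since $c \in C_{U(V)}A$ one checks that $u c a u^{-1} = u a u^{-1} \in {}^u A$, so $ucS$ lies in $(U(V)/S)^{^u A}$; and because $C_{U(V)} A$ is path-connected, $ucS$ lies in the same component $X$ as $uS$. Hence $u'S \in g \cdot X \in G \cdot X$. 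For the inverse, given $G \cdot X$ where $X$ is a component of $(U(V)/S)^B$, choose $uS \in X$ and set $A := u^{-1} B u$, which automatically sits inside $S$ because $BuS = uS$. Define $\Psi(G \cdot X)$ to be the component of $(G \backslash U(V))^A$ containing $Gu$. If $u'S \in X$ is a different lift, part (2) of Lemma \ref{lemmaaboutexchange} gives $u' = cuh$ with $c \in C_{U(V)}B$ and $h \in S$; using that $S$ is abelian and $c$ centralizes $B = {}^u A$, one obtains $(u')^{-1} B u' = h^{-1} A h = A$ and $u' = u \tilde c$ with $\tilde c := u^{-1} c u h \in C_{U(V)} A$, so $Gu' \in Gu \cdot C_{U(V)} A = Y$. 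Replacing $X$ by $g \cdot X$ and $u$ by $gu$ yields the same $A$ and the same component $Y$ since $Gu = Ggu$, so $\Psi$ descends to $G$-orbits.

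That $\Phi$ and $\Psi$ are mutually inverse is immediate from the construction, as is the fact that they preserve rank, since the isomorphism $a \mapsto u a u^{-1}$ identifies $A \le S$ with $^u A \le G$. For the poset structure, if $Y \subseteq Y'$ in $\Fix(G \backslash U(V))$ with $Y$ a component of $(G \backslash U(V))^A$ and $Y'$ a component of $(G \backslash U(V))^B$ for $B \le A$, pick a common basepoint $Gu \in Y$. Then $^u B \le {}^u A$, the component $X$ of $uS$ in $(U(V)/S)^{^u A}$ is contained in the component $X'$ of $uS$ in $(U(V)/S)^{^u B}$, and passing to $G$-orbits yields $\Phi(Y) \subseteq \Phi(Y')$. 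The reverse implication for $\Psi$ is obtained by the same argument after noting that an inclusion of $G$-orbits $G X \subseteq G X'$ can be realized by a single pair $X \subseteq X''$ (with $X'' \in G X'$), so a common basepoint $uS$ exists.

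The main obstacle is simply keeping track of the bookkeeping in the well-definedness step; there is no conceptual difficulty, but it is essential to exploit that $S$ is abelian (so that $h \in S$ commutes with $a \in A$) and that $C_{U(V)} A$ is connected (so that moving $u$ by a centralizing element does not change the component $X$). Everything else is formal unwinding of Lemma \ref{lemmaaboutexchange}.
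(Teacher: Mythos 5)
Your argument is correct and is essentially the paper's: the paper simply declares the isomorphism immediate from the component analysis in Lemma \ref{lemmaaboutexchange}, and your proposal just carries out the bookkeeping (well-definedness on $G$-orbits, rank and order preservation, mutual inverses) that this citation implicitly relies on. The added detail is sound — in particular the uses of connectedness of $C_{U(V)}A$ and commutativity of $S$ are exactly the right ingredients — so no further comment is needed.
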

\begin{proof}
This is immediate from the discussion of these connected components above.
\end{proof}

Note that there is a functor $F:\Fix(U(V)/S) \to H_G^*U(V)/S$-mod where $X$ is mapped to the module $(\Sigma^{d_X}H_G^*( G \times_{\stab X} X )$, which via the pushforward is equipped with an inclusion into $H_G^*(U(V)/S) )$.
\begin{thm}
$H_G^*U(V)/S$ is a Duflot algebra, stratified by $\Fix(U(V)/S)$.
\end{thm}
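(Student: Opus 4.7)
The plan is to transfer the Duflot algebra structure from $H_S^* G\backslash U(V)$, which was established in Proposition \ref{duflotfiltrationonFiHSM}, to $H_G^* U(V)/S$ via the correspondence isomorphisms of Proposition \ref{correspondence} together with the bijection $\Fix(G\backslash U(V)) \iso \Fix(U(V)/S)$. The key point is that Proposition \ref{correspondence} supplies, for each pair of corresponding components $(Y, GX)$ under the exchange, a commuting square of isomorphisms on both the level of the ambient cohomology and the level of the Thom spaces. Consequently every piece of data defining a Duflot structure on the $S$-side (filtered submodules, pushforward maps, restrictions, Euler classes, structure maps to polynomial rings) has a matching piece of data on the $G$-side.

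Concretely, I would first observe that for each $X \in \Fix(U(V)/S)$ the map $F(X) \to H_G^*(U(V)/S)$ is the pushforward along the inclusion of $GX$, and that under the identification from Proposition \ref{correspondence} this map is isomorphic to the pushforward $\Sigma^{d_Y} H_S^*Y \to H_S^*(G\backslash U(V))$. Thus the filtration on $H_G^*(U(V)/S)$ by $\Fix(U(V)/S)$ is, term by term and inclusion by inclusion, isomorphic to the filtration on $H_S^*(G\backslash U(V))$ by $\Fix(G\backslash U(V))$ that was shown to be a Duflot filtration.

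With that identification in hand, I would check the four axioms by transporting them across the correspondence. The free rank and goodness of the filtration, minimality, and the topological conditions in Definition \ref{thedefinitionoftopologicalstratifications} all refer to structure which corresponds under the isomorphisms of Proposition \ref{correspondence}; in particular the nonzerodivisor property of the Euler class transfers via Proposition \ref{gysinisinjective}, and compatibility of iterated pushforwards transfers from functoriality on the $S$-side. For the Duflot property itself, given $X$ a component of $U(V)/S^A$, the description $GX \cong G \times_{C_G A} X$ with $A$ acting trivially on $X$ gives $H_G^*(GX) \cong H_{C_G A}^* X \cong H_A^* \otimes H^*_{C_G A / A} X$ (using that $C_G A / A$ acts with finite isotropy on $X$ up to the finite-index issue absorbed into the bounded factor), which is exactly the $r(X)$-free shape required, with the structure map to $P_A$ induced by restricting along $G/C_G A \to GX$ followed by the usual projection $H_{C_G A}^* \to H_A^* \twoheadrightarrow P_A$.

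The main obstacle is verifying that the structure map to $P_A$ defined intrinsically on the $G$-side actually corresponds, under Proposition \ref{correspondence}, to the $S$-side structure map $H_S^* Y \to H_S^* S/V \cong P_V$. I would handle this by tracing the construction through the intermediate $U(V)$ appearing in the proof of Proposition \ref{correspondence}: the diagonal $p$-torus $D < G \times S$ of Lemma \ref{lemmaaboutexchange} has a common fixed component $Z$ mapping equivariantly to both $Y$ and to $X$, and the orbit maps $S/A \to Y$ and $G/C_G A \to GX$ both lift through $Z$. Chasing the resulting diagram in equivariant cohomology shows that the two candidate structure maps agree under the correspondence, completing the identification of $H_G^* U(V)/S$ as a Duflot algebra stratified by $\Fix(U(V)/S)$.
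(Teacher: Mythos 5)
Your proposal is correct and follows essentially the same route as the paper: the paper's proof simply transports the Duflot algebra structure on $H_S^*(G\backslash U(V))$ (from \ref{duflotfiltrationonFiHSM}) across the isomorphisms of \ref{correspondence} together with the poset identification $\Fix(G\backslash U(V))\iso\Fix(U(V)/S)$. Your additional intrinsic verifications on the $G$-side (the $r(X)$-free shape of $H_G^*(GX)$ and the compatibility of structure maps through $Z$ and the diagonal torus $D$) are consistent with, but not needed beyond, that transport argument.
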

\begin{proof}
This follows immediately from \ref{correspondence}: we know that $H_S^* G \backslash U(V)$ has a Duflot algebra structure stratified by $\Fix( G \backslash U)$, and this puts a Duflot algebra structure on $H_G^*U(V)/S$ via the isomorphisms in \ref{correspondence}.
\end{proof}

\section{Applications to the cohomology of $BG$}\label{sectiononthecohomologyofBG}
In this chapter we apply the Duflot algebra structure on $H_S^*U(n)/G$ to study $H^*BG$. In section \ref{theorems} we give structural results that apply to $H^*_G$ for any compact Lie group $G$, and in sections \ref{itrivialbundles} and \ref{wreathproducts} we show how certain extensions of finite groups lead to the structures studied in \ref{KDuflot}, from which we derive some local cohomology computations for the $p$-Sylow subgroups of $S_{p^n}$ and for $S_{p^n}$ itself.

\subsection{Structural results in group cohomology}\label{theorems}
Here we demonstrate how the results of this paper recover several classical results in group cohomology. All of the results listed in this section were previously known at least for finite groups, but except where indicated otherwise with  different proofs from our methods. There are several new results for compact Lie groups.

Throughout this section, $G$ is a compact Lie group and $V$ a faithful finite dimensional unitary representation of $G$.

Crucial to applying the techniques of this paper to get results in group cohomology is a theorem of Quillen \cite{quillen}  showing to what extent $H_G^*$ can be recovered from $H_G^*U(V)/S$.
\begin{prop}[Quillen \cite{quillen} Lemma 6.5]\label{faithfullyflat}
The map $H_G^* \to H_G^*U(V)/S$ is faithfully flat, and as $H_G^*$-modules $H_G^*U(V)/S$ is non-canonically isomorphic to $H_G^* \otimes H^*U(V)/S$.
\end{prop}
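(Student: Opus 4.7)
The plan is to apply the Eilenberg--Moore spectral sequence to the homotopy pullback square
\begin{equation*}
\begin{tikzcd}
EG\times_G U(V)/S \rar \dar & BS \dar \\
BG \rar & BU(V),
\end{tikzcd}
\end{equation*}
whose total space is the Borel construction computing $H_G^*(U(V)/S)$. The essential input is that $H_S^*$ is finitely generated and free as a module over $H^*BU(V)$. For each factor $U(n)$ in the product decomposition of $U(V)$ this is classical: the restriction $H^*BU(n)\to H_T^*$ identifies $H^*BU(n)$ with the $\Sigma_n$-invariants in $H_T^*$, and $H_T^*\to H_S^*$ sends each degree-two generator $t_i$ to $y_i^2$ when $p=2$ and to $\beta y_i$ when $p$ is odd; in either case a Chevalley--Shephard--Todd argument gives that $H_S^*$ is free over $H^*BU(n)$ of rank $n!\cdot 2^n$, matching $\dim_{\FF_p}H^*(U(n)/S)$. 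The statement for the product $U(V)$ follows by tensoring over factors.

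Because $BU(V)$ is simply connected and $H_S^*$ is $H^*BU(V)$-flat, the EMSS collapses at $E_2$ and produces an isomorphism of $H_G^*$-algebras
\begin{equation*}
H_G^*(U(V)/S)\;\cong\; H_G^*\otimes_{H^*BU(V)} H_S^*.
\end{equation*}
I would then pick a free $H^*BU(V)$-basis $b_1,\dots,b_k$ of $H_S^*$. Its reduction modulo the augmentation ideal of $H^*BU(V)$ is an $\FF_p$-basis of $H_S^*/\m_{BU(V)} H_S^*$, and the Serre spectral sequence for $U(V)/S\to BS\to BU(V)$ (which collapses by the same freeness input) identifies this quotient with $H^*(U(V)/S)$. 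Extending scalars along $H^*BU(V)\to H_G^*$, the same $b_i$ form an $H_G^*$-basis of $H_G^*(U(V)/S)$, so this (non-canonical) choice produces an isomorphism of $H_G^*$-modules
\begin{equation*}
H_G^*(U(V)/S)\;\cong\; H_G^*\otimes_{\FF_p} H^*(U(V)/S).
\end{equation*}
Since $U(V)/S$ is compact, $H^*(U(V)/S)$ is a nonzero finite-dimensional $\FF_p$-vector space, so $H_G^*(U(V)/S)$ is a finite free $H_G^*$-module of positive rank; over the graded-connected (hence graded-local) ring $H_G^*$ such a module is automatically faithfully flat.

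The main obstacle is securing the freeness of $H_S^*$ over $H^*BU(V)$ together with a basis whose image in $H^*(U(V)/S)$ is again a basis; once this is in place, the remainder is formal. As an alternative that avoids the Eilenberg--Moore machinery, I could compare the Serre spectral sequences of the two vertical fibrations in the pullback square above: the right-hand one collapses by a rank count against $H_S^*$, and naturality together with $H_G^*$-linearity of the left-hand differentials forces the left-hand spectral sequence to collapse as well, after which the same basis-lifting argument yields the desired isomorphism.
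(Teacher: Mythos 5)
Your argument is correct, but note that the paper itself gives no proof here: it quotes the statement as Quillen's Lemma 6.5, and Quillen's own argument is essentially the Leray--Hirsch alternative you sketch at the end, not the Eilenberg--Moore route. Quillen works with the fibration $U(V)/S \to EG\times_G U(V)/S \to BG$ and uses that the composite $EG\times_G U(V)/S \to BS$ restricts on the fibre to the classifying map of $U(V)\to U(V)/S$, for which $H^*BS \to H^*(U(V)/S)$ is surjective (by the same freeness of $H_S^*$ over $H^*BU(V)$ that you use); Leray--Hirsch then gives freeness of $H_G^*(U(V)/S)$ over $H_G^*$ on a basis lifting a basis of $H^*(U(V)/S)$, which is exactly the non-canonical isomorphism and faithful flatness. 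Your EMSS version buys more -- the canonical algebra isomorphism $H_G^*(U(V)/S)\cong H_G^*\otimes_{H^*BU(V)}H_S^*$ -- at the cost of invoking convergence of the Eilenberg--Moore spectral sequence over the simply connected base $BU(V)$, while the Leray--Hirsch argument is more elementary and already suffices for the statement as used in the paper. One small repair to your key input: do not appeal to Chevalley--Shephard--Todd in characteristic $p$ (where $p$ divides $|\Sigma_n|$, so that theorem is delicate); the freeness of $H^*BT=\FF_p[t_1,\dots,t_n]$ over $H^*BU(n)=\FF_p[c_1,\dots,c_n]$ is the classical fact, valid over any coefficient ring, that a polynomial ring is free of rank $n!$ over the symmetric polynomials (basis $t_1^{a_1}\cdots t_n^{a_n}$ with $0\le a_i\le i-1$), and composing with the evident freeness of $H_S^*$ over $H^*BT$ (rank $2^n$, with the odd-$p$ restriction sending $t_i$ to the Bockstein of the exterior degree-one generator) gives the rank $n!\cdot 2^n$ you want; the rest of your argument then goes through as written.
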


\begin{cor}
The map $H_G^* \to H_G^*U(V)/S$ induces a surjection from $\ass H_G^*U(V)/S$ to $\ass H_G^*$.
\end{cor}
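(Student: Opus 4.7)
The plan is to lift associated primes using the faithful flatness of $H_G^* \to H_G^* U(V)/S$ established in Proposition \ref{faithfullyflat}. Write $R = H_G^*$ and $T = H_G^* U(V)/S$ for brevity, and fix $\p \in \ass_R R$. The goal is to produce a prime $\mathfrak{P} \in \ass_T T$ with $\mathfrak{P} \cap R = \p$, and this will give the desired surjectivity.

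First I would pick a homogeneous $x \in R$ with $\ann_R(x) = \p$. Flatness of $R \to T$ gives $\ann_T(x) = \p T$, since the annihilator of $x$ in $T$ is the kernel of multiplication by $x$, and flatness preserves this kernel after base change. Hence multiplication by $x$ yields an injection $T/\p T \hookto T$, and every associated prime of $T/\p T$ as a $T$-module is an associated prime of $T$.

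Next, faithful flatness implies $\p T \neq T$, so $T/\p T$ is a nonzero finitely generated graded $T$-module and therefore admits at least one associated prime $\mathfrak{P}$. By construction $\mathfrak{P} \supseteq \p T$, so $\mathfrak{P} \cap R \supseteq \p$.

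The key step, and the one most worth being careful about, is the reverse inclusion $\mathfrak{P} \cap R \subseteq \p$. For this I would observe that $T/\p T \cong T \otimes_R R/\p$ is flat over the graded domain $R/\p$ and therefore torsion-free as an $R/\p$-module, so any homogeneous $r \in R \setminus \p$ acts injectively on $T/\p T$. Choosing $y \in T/\p T$ with $\ann_T(y) = \mathfrak{P}$, this forces $\ann_T(y) \cap R \subseteq \p$, giving $\mathfrak{P} \cap R = \p$. The only subtlety is standard bookkeeping in the graded setting, but the Bourbaki-style associated-prime arguments carry over to graded Noetherian rings without modification, so this is not a serious obstacle.
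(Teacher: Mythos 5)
Your argument is correct and is essentially the paper's proof: the paper simply cites faithful flatness (Proposition \ref{faithfullyflat}), and what you have written out is the standard flat-base-change argument for lifting associated primes, carried out correctly (the identification $\ann_T(x)=\p T$ via flatness, nonemptiness of $\ass_T(T/\p T)$ via faithful flatness, and the contraction $\mathfrak{P}\cap H_G^*=\p$ via torsion-freeness of $T/\p T$ over the domain $H_G^*/\p$). The only point you leave untouched is the well-definedness half of the statement, namely that contraction sends \emph{every} element of $\ass_T T$ into $\ass H_G^*$; this is immediate here, since $\mathfrak{P}\cap H_G^*=\ann_{H_G^*}(t)$ for a suitable $t\in T$, and $\ass_{H_G^*}T=\ass H_G^*$ because $T\cong H_G^*\otimes H^*U(V)/S$ is a finite direct sum of shifts of $H_G^*$ as an $H_G^*$-module (Proposition \ref{faithfullyflat}), so a one-line remark closes it.
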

This is a consequence of the map being faithfully flat.

\begin{thm}[Duflot \cite{duflotdepth} Theorem 1, Quillen \cite{quillen} Theorem 7.7]\label{duflotandquilleningroupcohomology}
The depth of $H_G^*$ is greater than or equal to the $p$-rank of the center of $G$, and the dimension of $G$ is equal to the maximal rank of a $p$-torus of $G$.
\end{thm}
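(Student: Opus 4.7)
The plan is to deduce this from Corollary \ref{depthanddimensionforHSM} applied to the smooth $S$-manifold $M = G \backslash U(V)$, transferring the resulting bounds back to $H_G^*$ along the isomorphism $H_S^*(G \backslash U(V)) \cong H_G^*(U(V)/S)$ of Proposition \ref{correspondence} and the faithfully flat extension $H_G^* \to H_G^*(U(V)/S)$ of Proposition \ref{faithfullyflat}. I would choose $V$ so that $Z(G)$ maps into the centre of $U(V)$, which is possible by the convention in \ref{notation} that $U(V)$ denotes the product of unitary groups on the isotypical summands of $V$; with this choice the maximal central $p$-torus $E$ of $G$ lies inside the diagonal $p$-torus $S$.

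For the dimension statement, I would first identify which subtori of $S$ have fixed points on $G \backslash U(V)$: a coset $Gu$ is fixed by $A < S$ precisely when $uAu^{-1} \subset G$, so such an $A$ must be conjugate in $U(V)$ to a subtorus of $G$; conversely, every $p$-torus of $G$ embeds in $U(V)$ and is there conjugate into the maximal $p$-torus $S$. Hence the largest rank of a subtorus of $S$ having fixed points equals the $p$-rank of $G$, and Corollary \ref{depthanddimensionforHSM} identifies this with $\dim H_S^*(G \backslash U(V))$. Since $H_G^*(U(V)/S) \cong H_G^* \otimes H^*(U(V)/S)$ as $H_G^*$-modules with $H^*(U(V)/S)$ bounded, the extension $H_G^* \to H_G^*(U(V)/S)$ is module-finite and faithfully flat, so Krull dimensions agree and $\dim H_G^*$ also equals the $p$-rank of $G$.

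For the depth statement, $E$ acts trivially on $G \backslash U(V)$ since for $z \in Z(G)$ one has $Guz = Gzu = Gu$, and Corollary \ref{depthanddimensionforHSM} then gives $\depth H_S^*(G \backslash U(V)) \geq \rank E$. Proposition \ref{correspondence} transports this to the bound $\depth H_G^*(U(V)/S) \geq \rank E$. The main subtlety I expect is the last step, namely transferring the depth bound along $H_G^* \to H_G^*(U(V)/S)$: this works because $H^*(U(V)/S)$ is a bounded graded $\mathbb{F}_p$-vector space, so by flatness one has $\HH^i_{\m}(H_G^* \otimes H^*(U(V)/S)) \cong \HH^i_{\m}(H_G^*) \otimes H^*(U(V)/S)$, and hence both sides have the same smallest non-vanishing index. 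This yields $\depth H_G^* \geq \rank E$, completing the argument.
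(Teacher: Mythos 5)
Your proposal is correct and follows essentially the same route as the paper: choose $V$ so that $Z(G)$ maps into $Z(U(V))$, apply Corollary \ref{depthanddimensionforHSM} to $G \backslash U(V)$, and transfer the bounds back to $H_G^*$ via Proposition \ref{faithfullyflat}. The paper's proof is just a terser version of this; your extra details (the fixed-point analysis for the dimension and the K\"unneth argument for transferring depth along the faithfully flat extension) are exactly the steps it leaves implicit.
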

The result on depth is due to Duflot, and the result on dimension is due to Quillen. Duflot's original proof does not use the Duflot filtration in the way that we do here, but it has the advantage that it explicitly constructs a regular sequence of length the lower bound for depth.

The result on dimension follows from the Quillen stratification theorem. Here we give a proof of this theorem using the Duflot filtration.
\begin{proof}
By \ref{faithfullyflat}, it is enough to show the analogous result for $H_G^*U(V)/S=H_S^*G \backslash U(V)$. However, we note that we can choose for $V$ a sum of representations for $G$ so that $Z(G)$ maps to $Z(U(V))$. With this choice of $V$, the result follow from \ref{depthanddimensionforHSM}.
\end{proof}
\begin{remark}\label{strengtheningofduflotbound}
Duflot's bound can be made  stronger in the case that $G$ is a finite group: the depth is in fact no less than the maximal rank of a central $p$-torus in a $p$-Sylow of $G$. This stronger statement follows immediately from the statement we have given, by using the fact that for $P$ a $p$-Sylow of $G$, $H_G^*$ is a summand of $H_P^*$ as $H_G^*$-modules: if $\HH^i H_G^*$ is nonzero, so is $\HH^i H_P^*$.
\end{remark}

Quillen also showed that the minimal primes of $H_G^*$ are given by restricting to a maximal (by inclusion) $p$-torus. We can also get this result using our techniques.
\begin{thm}[Quillen \cite{quillen} Propostion 11.2]
The minimal primes of $H_G^*$ are obtained by restricting to maximal $p$-tori.
\end{thm}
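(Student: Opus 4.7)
The plan is to combine the faithful flatness of $H_G^* \to H_G^* U(V)/S$ from Proposition \ref{faithfullyflat} with the classification of minimal primes of $H_S^* M$ in Corollary \ref{minimalprimesofHSM}, applied to $M = G \backslash U(V)$. First I would fix a faithful representation $G \hookto U(V)$. Since the inclusion $H_G^* \hookto H_G^* U(V)/S = H_S^* G \backslash U(V)$ is faithfully flat, going-down implies that the contractions to $H_G^*$ of minimal primes of $H_S^* G \backslash U(V)$ are exactly the minimal primes of $H_G^*$ (for the surjectivity direction, any prime of $H_S^* G\backslash U(V)$ contracting to a given minimal prime downstairs contains a minimal prime, which still contracts to the same thing; for the other direction, a strictly smaller prime below a contraction would lift back by going-down, contradicting minimality upstairs). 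So it suffices to identify these contractions.

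By Corollary \ref{minimalprimesofHSM}, the minimal primes of $H_S^* G \backslash U(V)$ are the primes $\p_{(A,Y)} = \ker\bigl(H_S^* G \backslash U(V) \to H_S^* Y \to H_S^* S/A\bigr)$ for pairs $(A, Y)$ with $Y$ a component of $(G \backslash U(V))^A$ on which $S/A$ acts freely. Writing $Y$ as the component of $Gu$ for some $u \in U(V)$, Lemma \ref{lemmaaboutexchange} gives $Y \cong C_{G^u}A \backslash C_{U(V)}A$, and the composite $H_G^* \to H_S^* G \backslash U(V) \to H^*_A$ factors as the restriction $H_G^* \to H^*_{{}^uA}$ followed by the conjugation isomorphism $H^*_{{}^uA} \cong H^*_A$. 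Hence $\p_{(A,Y)} \cap H_G^* = \ker(H_G^* \to H^*_{{}^uA})$, the kernel of restriction to the $p$-torus ${}^uA$ of $G$.

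The main step, and the chief obstacle, is to show that $S/A$ acts freely on $Y$ if and only if ${}^uA$ is a maximal $p$-torus of $G$. A direct computation of the stabilizer of the point $C_{G^u}A \cdot c \in Y \cong C_{G^u}A \backslash C_{U(V)}A$ under the right $S$-action shows it equals $G^{uc} \cap S$. If ${}^uA$ is not maximal, pick a $p$-torus ${}^uA \subsetneq B \subset G$; then $u^{-1}Bu$ is a $p$-torus of $C_{U(V)}(A)$ containing $A$. Since $S$ is a maximal $p$-torus of the (product of) unitary group(s) $C_{U(V)}(A)$, a suitable $d \in C_{U(V)}(A)$ conjugates $u^{-1}Bu$ into $S$, and then $c := d^{-1}$ yields a point of $Y$ whose stabilizer strictly contains $A$. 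Conversely, any element of $(G^{uc} \cap S) \setminus A$ produces, after conjugation by $uc$, an order-$p$ element of $G$ that commutes with ${}^uA$ and lies outside it, enlarging ${}^uA$ to a strictly bigger $p$-torus of $G$.

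Finally, every maximal $p$-torus $E$ of $G$ does arise this way: since $S$ is a maximal $p$-torus of $U(V)$ there is $u \in U(V)$ with $A := u^{-1}Eu \subset S$, and the pair $(A, Y)$ with $Y$ the component of $Gu$ then gives a minimal element by the translation above, whose associated minimal prime contracts to $\ker(H_G^* \to H^*_E)$. Combined with the previous paragraph, this identifies the minimal primes of $H_G^*$ precisely as the kernels of restrictions to maximal $p$-tori of $G$.
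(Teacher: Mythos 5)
Your proposal is correct and follows essentially the same route as the paper: faithful flatness of $H_G^* \to H_G^*U(V)/S$ (Proposition \ref{faithfullyflat}) combined with the description of the minimal primes of $H_S^*G\backslash U(V)$ in Corollary \ref{minimalprimesofHSM}, together with the identification, via Lemma \ref{lemmaaboutexchange}, of minimal elements of $\Fix(G\backslash U(V))$ with maximal $p$-tori of $G$. The only difference is that you spell out the going-down argument and the verification that $S/A$ acts freely on $Y$ exactly when ${}^uA$ is maximal, details the paper treats as immediate.
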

\begin{proof}
Because $H_G^* \to H_G^* U(V)/S$ is faithfully flat, all the minimal primes of $H_G^*U(V)/S$ pull back to minimal primes of $H_G^*$. But \ref{minimalprimesofHSM} describes the minimal primes of $H_S^*U/S$, which are given by restricting to minimal elements of $\Fix(U/S)$. These minimal elements correspond to a maximal $p$-torus $A$ of $G$ and a component of $U/S^A$, so it is clear that all minimal primes of $H_G^*$ have the claimed description. 

\end{proof}
\begin{remark}
Quillen in fact shows that there is a bijection between the conjugacy classes of maximal $p$-tori and the minimal primes of $H_G^*$. In order to conclude this stronger result from what we have done so far, it is only necessary to show that if $A$ and $B$ are non-conjugate $p$-tori in $G$ then $H_G^* \to H_G^* G/A/ \sqrt{0}$ and $H_G^* \to H_G^*G/B /\sqrt{0}$ have different kernels. Note that we already have shown that $H_G^* U(V)/S \to H_G^* G/A/ \sqrt{0}$ and $H_G^*U(V)/S \to H_G^*G/B /\sqrt{0}$ have different kernels. 

We haven't yet completed this final step using the methods of this thesis, but there are various ways to conclude the desired result, for example Quillen's original proof or using the Even's norm.

\end{remark}

Duflot also showed that the associated primes of $H_G^*$ are given by restricting to $p$-tori, but there is no known group theoretic description of what $p$-tori give associated primes. Our proof of \ref{HSMtoral} is essentially a reformulation of Duflot's original proof.

\begin{thm}[Duflot \cite{duflotassociatedprimes} Theorem 3.1]
The associated primes of $H_G^*$ all come from restricting to  $p$-tori of $G$.
\end{thm}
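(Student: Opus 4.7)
The plan is to leverage faithful flatness of $H_G^* \to H_G^* U(V)/S$ together with the identification $H_G^* U(V)/S \cong H_S^* G\backslash U(V)$ from Proposition \ref{correspondence}, in order to transport the already-established fact (Lemma \ref{HSMtoral}) that associated primes of $H_S^* G\backslash U(V)$ are toral back down to $H_G^*$.

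First I would note that, by Proposition \ref{faithfullyflat}, the flat base change $H_G^* \hookrightarrow H_G^* U(V)/S$ induces a surjection $\ass_{H_G^* U(V)/S} H_G^* U(V)/S \twoheadrightarrow \ass_{H_G^*} H_G^*$; every associated prime $\mathfrak{p} \in \ass H_G^*$ is therefore the contraction of some associated prime $\mathfrak{q} \in \ass H_G^* U(V)/S$. Transporting $\mathfrak{q}$ across the isomorphism of Proposition \ref{correspondence} yields an associated prime of $H_S^* G\backslash U(V)$, which by Lemma \ref{HSMtoral} (applied to the $S$-connected manifold $M = G\backslash U(V)$) is toral: it is the kernel of the structure map $\phi_V: H_S^* G\backslash U(V) \to P_A$ arising in the Duflot filtration, and by the second assertion of Lemma \ref{HSMtoral} this $\phi_V$ coincides up to nilpotents with the map induced by some $S$-equivariant map $S/A \to G\backslash U(V)$, $sA \mapsto Gus$, for a choice of $u \in U(V)$ with $uAu^{-1} < G$.

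The last step is to pull this description back to $H_G^*$ and identify it with restriction to a $p$-torus of $G$. Here I would use the exchange diagram of Lemma \ref{lemmaaboutexchange}: the component of $(G\backslash U(V))^A$ containing $Gu$ corresponds under the maps $G\backslash U(V) \leftarrow U(V) \to U(V)/S$ to the $G$-orbit of a component $X$ of $U(V)/S^{{}^u A}$, where ${}^u A = uAu^{-1} < G$. Applied to the inclusion $S/A \hookrightarrow G\backslash U(V)$, the corresponding composition $H_G^* \to H_G^* U(V)/S \to H^*_S(S/A) = H^*_A$ factors as $H_G^* \to H^*_{{}^u A} \xrightarrow{\sim} H^*_A$, where the first arrow is restriction to the $p$-torus ${}^u A$ of $G$ and the second is induced by the conjugation isomorphism $A \cong {}^u A$. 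Consequently $\mathfrak{p}$, as the contraction of $\mathfrak{q}$, is exactly the kernel of restriction to the $p$-torus ${}^u A < G$, which is what was to be shown.

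The main obstacle is the third step, namely faithfully identifying the toral prime produced by Lemma \ref{HSMtoral} on the $S$-equivariant side with a prime arising from restriction to a $p$-torus of $G$ on the group-cohomology side. Once the geometric correspondence of Lemma \ref{lemmaaboutexchange} is invoked carefully to identify the composition $H_G^* \to H_S^* G\backslash U(V) \to H^*_A$ with restriction along $G \hookleftarrow {}^u A$, the rest of the argument is formal and entirely commutative-algebraic.
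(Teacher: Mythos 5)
Your proposal is correct and follows essentially the same route as the paper, which deduces the theorem from the faithful flatness of $H_G^* \to H_G^*U(V)/S$ (Proposition \ref{faithfullyflat}) together with Lemma \ref{HSMtoral}; your extra step identifying the toral prime on the $S$-equivariant side with restriction along ${}^uA < G$ via the exchange correspondence is exactly the (implicit) content the paper relies on. No substantive difference in approach, only a more explicit write-up.
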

\begin{proof}
This follows from \ref{faithfullyflat} and \ref{HSMtoral}.
\end{proof}

In the following result, Okuyama in \cite{okuyama2010remark} Theorem 0.1 shows the equivalence of $1$ and $3$ for finite groups, and the equivalence of $2$ and $3$ is a special case of a result due to Kuhn in \cites{kuhnprimitives,kuhnnilpotence} Theorem 2.13, Propostion 2.8. Kuhn's results include the compact Lie group case.
\begin{thm}\label{restrictionsonassociatedprimes}
Let $A$ be a $p$-torus of $G$. The following are equivalent.
\begin{enumerate}
\item $A$ represents an associated prime in  $H^*_G$.
\item $A$ represents an associated prime in $H^*_{C_GA}$.
\item We have that $\depth H^*_{C_GA}=\dim A$.
\end{enumerate}
\end{thm}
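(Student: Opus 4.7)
The plan is to derive this theorem from Corollary \ref{assoicatedprimesforHSM} via the correspondence between $H^*_G$ and the $S$-equivariant cohomology of $G \backslash U(V)$. The strategy is to first prove the equivalence of $(1)$ and $(3)$ directly using the machinery of Section \ref{chapteronequivariantcohomology}, and then obtain $(2) \Leftrightarrow (3)$ by applying the same equivalence to the group $C_G A$ in place of $G$.

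To prove $(1) \Leftrightarrow (3)$, I would first fix a faithful representation $V$ of $G$, chosen large enough that after conjugating $A$ into $S$ (via some $u \in U(V)$ with $A' := u^{-1}Au \subset S$), the component $Y$ of $(G\backslash U(V))^{A'}$ through $Gu$ contains a point of exact isotropy $A'$, so that $(Y,A') \in \Fix(G \backslash U(V))$. By Proposition \ref{faithfullyflat}, the map $H_G^* \to H_G^*U(V)/S \cong H_S^* G\backslash U(V)$ is faithfully flat, and the prime $\ker(H_G^* \to H^*_A)$ is the contraction of the toral prime $\p_{(Y,A')} = \ker(H_S^* G\backslash U(V) \to H_S^* Y \to H^*_{A'})$. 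Since associated primes of the source are exactly contractions of associated primes of the target under a faithfully flat map, $(1)$ is equivalent to $\p_{(Y, A')} \in \ass H_S^* G\backslash U(V)$. By Corollary \ref{assoicatedprimesforHSM}, this holds iff $\depth H_S^* Y = \rank A$. Next, I would use the exchange of Lemma \ref{lemmaaboutexchange} to identify $Y \cong C_G A \backslash U(V')$ as $S$-spaces, where $V'$ is the faithful representation of $C_G A$ arising from the $A$-isotypical decomposition of $V$. Applying Proposition \ref{faithfullyflat} to $C_G A$ with this representation, $H_S^* Y \cong H^*_{C_G A} \otimes H^*(U(V')/S)$ as $H^*_{C_G A}$-modules, a finitely-generated free extension. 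By the K\"unneth formula for local cohomology (as used in the proof of Proposition \ref{duflotcomplex}) together with the independence theorem, $\depth H_S^* Y = \depth H^*_{C_G A}$, establishing $(1) \Leftrightarrow (3)$.

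To obtain $(2) \Leftrightarrow (3)$, I would apply the equivalence $(1) \Leftrightarrow (3)$ just established, but to the compact Lie group $C_G A$ with the same $p$-torus $A$. Since $A$ is central in $C_G A$, we have $C_{C_G A}(A) = C_G A$, so the equivalence for $(C_G A, A)$ reads: $A$ represents an associated prime in $H^*_{C_G A}$ iff $\depth H^*_{C_G A} = \dim A$, which is exactly $(2) \Leftrightarrow (3)$. The main technical obstacle throughout will be the careful bookkeeping in the first step: one must verify that $V$ can be chosen so that the desired component $Y \in \Fix(G \backslash U(V))$ containing a point of exact isotropy exists, and that under the faithfully flat extension the contraction of $\p_{(Y,A')}$ is precisely the prime in $H^*_G$ coming from restriction to $A$ (rather than some other toral prime). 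Once this correspondence is in hand, the rest is routine application of the machinery developed in Sections \ref{sectiononabstractstuff} and \ref{chapteronequivariantcohomology}.
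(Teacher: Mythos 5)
Your proposal is correct and is essentially the paper's own argument: the paper's proof consists precisely of invoking Corollary \ref{assoicatedprimesforHSM}, the faithful flatness of Proposition \ref{faithfullyflat}, and the identification of fixed-point components from Lemma \ref{lemmaaboutexchange} (so that $H_S^*Y$ is a free $H^*_{C_GA}$-module), which is exactly the chain of reductions you describe, and the technical point you flag about choosing $V$ so that the relevant component lies in $\Fix(G\backslash U(V))$ is likewise left implicit in the paper. The only cosmetic difference is that you deduce $(2)\Leftrightarrow(3)$ by re-running $(1)\Leftrightarrow(3)$ for the group $C_GA$ with its central torus $A$, whereas the paper reads item (2) of Corollary \ref{assoicatedprimesforHSM} directly through the same identification; the two are interchangeable.
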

\begin{proof}
This follows from \ref{assoicatedprimesforHSM}, \ref{faithfullyflat}, and our description of the fixed points of the $A$-action on $U(V)/S$.
\end{proof}

\begin{remark}
Note that this gives some restrictions on which $p$-tori can represent associated primes: such a $p$-torus must be the maximal central $p$-torus in its centralizer. This for example rules out all the $p$-tori of $\ZZ/p \wr \ZZ/p$ of rank from $3$ to $p-1$.
\end{remark}

\begin{thm}[Symonds \cite{symondsregularity} Theorem 7.1]\label{regularityforgroupcohomology}
For $G$ a compact Lie group with orientable adjoint representation, $\reg H^*_G= - \dim G$.
\end{thm}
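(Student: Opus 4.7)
The plan is to compare $H_G^*$ with the $S$-equivariant cohomology of the manifold $M = G \backslash U(V)$, where $V$ is a faithful unitary representation of $G$ and $S \subset U(V)$ is the maximal diagonal $p$-torus acting on $M$ on the right. Under the identification $H_G^* U(V)/S \cong H_S^* M$ from Section \ref{exchange}, the regularity bound of Corollary \ref{regularityforHSM} provides the upper bound, while a careful examination of the top row of the Duflot complex produces the matching lower bound.

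First I would invoke Proposition \ref{faithfullyflat}: as $H_G^*$-modules, $H_G^* U(V)/S \cong H_G^* \otimes H^* U(V)/S$, and this ring extension is finite. By the independence theorem for local cohomology and the K\"unneth formula (valid since $H^* U(V)/S$ is a finite-dimensional $\FF_p$-vector space),
\[ \HH^i(H_S^* M) \cong \HH^i(H_G^*) \otimes H^* U(V)/S. \]
Since $U(V)/S$ is a compact orientable manifold of dimension $\dim U(V)$ (orientable because $U(V)$ is connected and right translation by elements of $S$ preserves orientation), its top $\FF_p$-cohomology is nonzero, so $a_i(H_S^* M) = a_i(H_G^*) + \dim U(V)$ and hence $\reg H_S^* M = \reg H_G^* + \dim U(V)$. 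Corollary \ref{regularityforHSM} then yields $\reg H_G^* \le \dim M - \dim U(V) = -\dim G$.

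For the matching lower bound, let $r$ be the $p$-rank of $G$, which by Theorem \ref{duflotandquilleningroupcohomology} equals $\dim H_G^*$. I would pick $A \le G$ a $p$-torus of rank $r$ and, after conjugation, arrange that $A \le S$. By Lemma \ref{lemmaaboutexchange}, the component $Y$ of $M^A$ through the identity coset is $C_G(A) \backslash C_{U(V)}(A)$, compact since $C_{U(V)}(A)$ is a product of unitary groups. Because $r$ is the full $p$-rank of $G$, $Y$ is a maximal element of $\Fix(M)$ with $Y_r^r = Y$. The $Y$-summand of $DM^r$ is $\Sigma^{d_Y}(\HH^r(H_A^*) \otimes H^*(Y/S))$, whose top nonzero degree is $d_Y - r + \dim(Y/S) = \dim M - r$, provided $H^{\dim(Y/S)}(Y/S;\FF_p) \ne 0$. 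For $p=2$ this is automatic; for $p$ odd it requires orientability of $Y/S$, which follows from the hypothesis that the adjoint representation of $G$ is orientable, since $\mathfrak{g}/\mathfrak{c}_\mathfrak{g}(A)$ carries a complex structure as an $A$-module (irreducible real reps of a $p$-torus with $p$ odd are complex), so orientability of $\mathrm{Ad}_G$ transfers to $\mathrm{Ad}_{C_G(A)}$ and hence to the biquotient $Y/S$.

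The hard part will be to show that this top-degree class in $DM^r$ actually survives to $\HH^r(H_S^* M)$, i.e., is not in the image of the coboundary $d^{r-1}: DM^{r-1} \to DM^r$. My approach would be to pass to the Matlis dual complex of Proposition \ref{matlisdualcomplex} and instead show that the corresponding bottom-degree class is a cycle representing a nonzero element of the Matlis dual of $\HH^r(H_S^*M)$; Poincar\'e duality for the compact manifold $M$ (whose orientability is again provided by the hypothesis on $G$) should produce this cycle as an avatar of the fundamental class $[M] \in H^{\dim M}_S(M)$, paired with the equivariant Thom class of the embedding $Y \hookrightarrow M$. Granted this, $a_r(H_G^*) + r = -\dim G$, which combined with the upper bound gives $\reg H_G^* = -\dim G$.
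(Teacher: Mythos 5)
Your upper bound is correct and is essentially the paper's argument: identify $H_S^*(G\backslash U(V))$ with $H_G^*(U(V)/S)$, apply Corollary \ref{regularityforHSM}, and then strip off the factor $H^*(U(V)/S)$ using Proposition \ref{faithfullyflat}. (The paper phrases the last step as additivity of regularity under tensor products together with $\reg H^*(U(V)/S)=\dim U(V)$, rather than via the K\"unneth isomorphism for local cohomology, but this is the same computation.)

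The lower bound is where your proposal has a genuine gap, and it is worth noting that the paper does not obtain it from the Duflot complex at all: it simply cites the Greenlees local cohomology spectral sequence for compact Lie groups, and it is precisely that input which uses orientability of the adjoint representation. Your alternative route hinges on showing that the top internal-degree class of the summand $\Sigma^{d_Y}\bigl(\HH^r(H_A^*)\otimes H^*(Y/S)\bigr)$ of $DM^r$ survives to $\HH^r(H_S^*M)$, i.e.\ is not hit by $d^{r-1}$, and you explicitly defer this (``granted this''). This cannot be settled by a degree count: summands of $DM^{r-1}$ coming from maximal elements of $\Fix(M)_{r-1}$ have top internal degree $\dim M-(r-1)$, so $DM^{r-1}$ does contain classes in internal degree $\dim M-r$ that could a priori map onto yours. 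The suggested remedy --- passing to the Matlis dual complex of Proposition \ref{matlisdualcomplex} and producing a cycle as ``an avatar of the fundamental class paired with the Thom class'' --- is a heuristic, not an argument; making it precise amounts to establishing exactly the equivariant duality statement that the Greenlees spectral sequence packages, so the hard content of the inequality $-\dim G\le\reg H_G^*$ is left unproved. (A secondary point: your transfer of orientability from the adjoint representation of $G$ to $Y/S$ is asserted rather than argued, and would also need to be written out, though it is a much smaller issue than the survival step.) As it stands, the proposal proves only $\reg H_G^*\le-\dim G$.
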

\begin{proof}
Since $H_S^* G \backslash U(V)\cong H_G^*U(V)/S$, by \ref{regularityforHSM}, $\reg H_S^* G \backslash U(V)=\reg H_G^*U(V)/S$ and this quantity is less than or equal to $\dim U(V) -\dim G$. 
Hwoever as $H_G^*$-modules, $H_G^*U(V)/S \cong H_G^* \otimes H^* U(V)/S$. Now, because   $\reg H^*U(V)/S= \dim U(V)$ and regularity is additive under tensor products, we have that $\reg H_G^* \le -\dim G$.
{}
That $- \dim G \le \reg H^*_G$ follows from the local cohomology spectral sequence of \cite{greenleesssforcompactliegroups} Theorem 1.1; it is this direction that requires the adjoint representation to be orientable. 
\end{proof}

\begin{thm}[Symonds \cite{symondsregularity}]\label{strongregularityforgroupcohomology}
For $G$ a compact Lie group, if  $a_i(H^*_G)=-i-\dim G$ for some $i$, then $i$ must be the rank of a maximal $p$-torus in $G$.
\end{thm}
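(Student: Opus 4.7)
The plan is to reduce via the faithfully flat map $H_G^* \to H_G^* U(V)/S \cong H_S^* G\backslash U(V)$ to the $S$-equivariant strong regularity theorem \ref{strongregularityforHSM}, and then translate back through the correspondence of Lemma \ref{lemmaaboutexchange} between components of $(G\backslash U(V))^A$ and conjugacy data of $p$-tori in $G$.

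First I would apply Proposition \ref{faithfullyflat}, which gives $H_G^* U(V)/S \cong H_G^* \otimes H^* U(V)/S$ as $H_G^*$-modules. Since $H^*U(V)/S$ is a finite-dimensional $\FF_p$-vector space, local cohomology over $H_G^*$ passes through this tensor product, so
\[ \HH^{i,j}(H_G^* U(V)/S) = \bigoplus_{j_1+j_2=j} \HH^{i,j_1}(H_G^*) \otimes H^{j_2}(U(V)/S). \]
Because $S$ is finite, $U(V)/S$ is a compact manifold of dimension $\dim U(V)$, so $H^* U(V)/S$ lives in degrees $[0,\dim U(V)]$ with nonzero top class. Therefore $a_i(H_S^*M) = a_i(H_G^*) + \dim U(V)$, where $M := G\backslash U(V)$. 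Using $\dim M = \dim U(V) - \dim G$, the hypothesis translates to $a_i(H_S^*M) = -i + \dim M$, so Corollary \ref{strongregularityforHSM} produces a maximal element $Y \in \Fix(M)$ of rank $i$.

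Let $A \leq S$ be a rank-$i$ subtorus with $Y$ a component of $M^A$ and pick $Gu \in Y$, so by Lemma \ref{lemmaaboutexchange}, $Y = C_{G^u}A \backslash C_{U(V)}A$ and $^u A \leq G$ is a $p$-torus of rank $i$. I claim $^u A$ is a maximal $p$-torus of $G$, which will give the theorem. Suppose for contradiction $^u A < B$ for some $p$-torus $B \leq G$. Then $u^{-1}Bu$ centralizes $A$, so it lies in $C_{U(V)}A$, which is a product of unitary groups having $S$ as its maximal diagonal $p$-torus; consequently some $c \in C_{U(V)}A$ conjugates $u^{-1}Bu$ into $S$, producing $A < B' := cu^{-1}Buc^{-1} \leq S$ of rank exceeding $i$. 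The point $Guc^{-1} \in Y$ (same $C_{U(V)}A$-orbit as $Gu$) has $S$-isotropy containing $B'$---for any $b \in B$, the element $s = cu^{-1}buc^{-1}$ satisfies $(uc^{-1}) s (uc^{-1})^{-1} = b \in G$---so its connected component $W$ in $M^{B'}$ belongs to $\Fix(M)$, sits inside $Y$ (since $B'>A$ and $W,Y$ share the point $Guc^{-1}$), and has rank strictly greater than $i$. This contradicts the maximality of $Y$ in $\Fix(M)$.

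The main obstacle is the exchange argument in the last step: turning an enlargement of $^u A$ inside $G$ into an enlargement $A < B' \leq S$ whose fixed-point component still sits in $Y$. This rests essentially on $C_{U(V)}A$ being a product of unitary groups (so $S$ is maximal among its $p$-tori) and on the fact that conjugating $u$ by any element of $C_{U(V)}A$ stays in the orbit $Y$. Once those two structural facts are in hand, the rest is a direct translation via Lemma \ref{lemmaaboutexchange}.
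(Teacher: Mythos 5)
Your proof is correct and takes essentially the same route as the paper: reduce via the faithfully flat map of \ref{faithfullyflat} to the $S$-equivariant statement \ref{strongregularityforHSM}, then identify a maximal rank-$i$ element of $\Fix(G\backslash U(V))$ with a maximal $p$-torus of rank $i$ in $G$. The paper merely cites this last correspondence, whereas you prove the needed direction explicitly via \ref{lemmaaboutexchange} and diagonalization inside $C_{U(V)}A$, so the difference is only in the level of detail (your only loose ends are cosmetic: orientability of $U(V)/S$ for the top-degree bookkeeping, and taking the full isotropy group of $Guc^{-1}$ rather than $B'$ so the component genuinely lies in $\Fix(M)$).
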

\begin{proof}
This follows from \ref{strongregularityforHSM} and the fact that the minimal elements of $\Fix(G \backslash U(V))$ are in bijection with the maximal $p$-tori of $G$.
\end{proof}
\begin{remark}
Symonds uses this without explicitly stating it in his proof of a special case of the Strong Regularity conjecture.
\end{remark}

\begin{thm}\label{detectiononcentralizers}
If $\depth H^*_G=d$, then $H^*_G \to \prod_{\rank E=i} H^*_{C_G E}$ is injective.
\end{thm}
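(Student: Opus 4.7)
The plan is to transfer the detection statement from $H_G^*$ to $H_S^*(G \backslash U(V))$ via faithful flatness, then invoke the detection result \ref{detectionforHSM} for equivariant cohomology, and finally identify each factor in the resulting product with the cohomology of a centralizer. Throughout, fix a faithful unitary representation $G \hookto U(V)$ so that the exchange machinery of Section \ref{exchange} is available; the statement is to be read with $i = d$.

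First I will compute $\depth H_S^*(G\backslash U(V)) = d$. By Proposition \ref{faithfullyflat} we have $H_G^* U(V)/S \cong H_G^* \otimes H^*(U(V)/S)$ as $H_G^*$-modules, and since $H^*(U(V)/S)$ is a finite-dimensional $\FF_p$-algebra the K\"unneth formula for local cohomology gives $\depth H_S^*(G\backslash U(V)) = \depth H_G^* = d$. Faithful flatness also yields an injection $H_G^* \hookto H_S^*(G\backslash U(V))$, which will be the starting point for pushing non-vanishing of elements across.

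Next, applying Corollary \ref{detectionforHSM} to $H_S^*(G\backslash U(V))$ produces an injection
\[ H_S^*(G\backslash U(V)) \hookto \prod_{Y \in \Fix(G\backslash U(V))_d} H_S^* Y. \]
By Lemma \ref{lemmaaboutexchange}, every such $Y$ is $S$-equivariantly of the form $C_{G^u}A \backslash C_{U(V)}A$ for some $u \in U(V)$ and some rank-$d$ $p$-torus $A \subset S$; conversely every rank-$d$ $p$-torus $E < G$ arises as ${}^u A$ for suitable $u$ by diagonalizing $V|_E$, so the ranging set of centralizers is exhausted. Proposition \ref{correspondence} identifies $H_S^* Y \cong H^*_{C_{G^u}A}(C_{U(V)}A / S)$, and a second application of Proposition \ref{faithfullyflat} to the compact Lie group $C_{G^u}A$ with its faithful representation $V|_{C_{G^u}A}$ shows that the ring map $H^*_{C_{G^u}A} \cong H^*_{C_G E} \hookto H_S^* Y$ is injective.

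Finally, by naturality of the constructions in Section \ref{exchange}, the composition $H_G^* \to H_S^*(G\backslash U(V)) \to H_S^* Y$ factors through the restriction $H_G^* \to H^*_{C_G E}$ induced by $C_G E < G$. Combining the injections above, if $x \in H_G^*$ restricts to zero on every $H^*_{C_G E}$ with $\rank E = d$, then $x$ has zero image in each $H_S^* Y$, hence zero image in $H_S^*(G\backslash U(V))$ by detection, hence $x = 0$ by the initial faithful-flat injection. The main obstacle I anticipate is this last naturality check: verifying rigorously that the map to $H_S^* Y$ factors through $H^*_{C_G E}$ under the exchange isomorphism, which amounts to tracing carefully through the zig-zag $G\backslash U(V) \from U(V) \to U(V)/S$ from Proposition \ref{correspondence}. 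Everything else is a direct application of the abstract machinery developed earlier in the paper.
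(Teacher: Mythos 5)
Your proposal is correct and takes essentially the same route as the paper: apply the detection result \ref{detectionforHSM} to $H_S^*(G\backslash U(V))$, identify the rank-$d$ elements of $\Fix(G\backslash U(V))$ with centralizers of rank-$d$ $p$-tori via \ref{lemmaaboutexchange} and \ref{correspondence}, and transfer injectivity back to $H_G^*$ using the faithfully flat maps of \ref{faithfullyflat}. The paper packages the last step as a commutative square rather than an element chase, but the content is the same (your explicit depth computation just fills in a step the paper leaves implicit, and note you do not actually need every rank-$d$ torus to appear in $\Fix(G\backslash U(V))_d$, since injectivity into a sub-product suffices).
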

\begin{proof}
By \ref{detectionforHSM} and our analysis of the elements of $\Fix(G \backslash U(V))$ and $\Fix(U(V)/S$, we have an injective map $H_G^*U(V)/S \to \prod_{\rank E=i} H^*_{C_GE} C_{U(V)} E/^uS$. This fits into a diagram:

\begin{tikzcd}
H_G^*U(V)/S \rar & \prod_{\rank E=i} H^*_{C_GE} C_{U(V)} E/^uS \\
H_G^* \rar \uar &  \prod_{\rank E=i} H^*_{C_GE} \uar
\end{tikzcd}.

Both vertical arrows are injections by \ref{faithfullyflat}, so because the top arrow is an injection the bottom arrows is too.

\end{proof}
\begin{remark}
Carlson in \cite{carlsondepthconjecture} Theorem 2.3 proves this theorem when $G$ is a finite group. The unstable algebra techniques of Henn, Lannes, and Schwartz \cite{hlsunstablemodulesandmodpcohomology} could also prove this theorem, presumably including the case of compact Lie groups, see \cite{poulsenmastersthessis} Theorem 3.47, which gives a proof of Carlson's detection theorem for finite groups using unstable algebra technology.
\end{remark}

Theorems \ref{detectiononcentralizers} and \ref{restrictionsonassociatedprimes} can be combined to show that $H_G^*$ is detected on the family of centralizers of $p$-tori representing associated primes, and this family is minimal.

First recall the definition of a family of subgroups:
\begin{Def}
For $G$ a compact Lie group a collection  $\mathcal{F}$ of subgroups of $G$ is called a family if $\mathcal{F}$ is closed under  subconjugacy, in other words if $H \in \mathcal{F}$ and $K^g<H$ for some $g \in G$ implies that $K \in \mathcal{F}$ as well.
\end{Def}

\begin{Def}
Let $\mathcal{C}_{\ass}$ denote the family generated by the set of subgroups $C_GE$, where $E$ represents an associated prime in $H_G^*$.
\end{Def}

\begin{cor}
We have that $H_G^*$ is detected on $\mathcal{C}_{\ass}$, and if $\mathcal{F}$ is any other family generated by centralizers of $p$-tori, then $\mathcal{C}_{\ass} \subset \mathcal{F}$.
\end{cor}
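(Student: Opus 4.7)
The plan is to combine the detection theorem \ref{detectiononcentralizers} with Theorem \ref{restrictionsonassociatedprimes} iteratively, and in the minimality step to exploit the sharpness of Duflot's bound at associated primes to force a containment of $p$-tori. For detection on $\mathcal{C}_{\ass}$, I would argue by iterated application of \ref{detectiononcentralizers}. The base case provides $H_G^* \hookrightarrow \prod_{\rank E_0 = d} H^*_{C_G E_0}$ with $d = \depth H_G^*$. For each $E_0$, Duflot's bound gives $\depth H^*_{C_G E_0} \geq \rank E_0 = d$; if equality holds, $E_0$ represents an associated prime by \ref{restrictionsonassociatedprimes} and $C_G E_0 \in \mathcal{C}_{\ass}$. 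Otherwise, applying \ref{detectiononcentralizers} to $C_G E_0$ produces $p$-tori $E_1 \subset C_G E_0$ of rank $\depth H^*_{C_G E_0} > d$, and each $E_1$ may be enlarged to $\tilde{E}_1 := \langle E_0, E_1\rangle \subset C_G E_0$ (still a $p$-torus, since $E_0$ is central in $C_G E_0$) with unchanged centralizer $C_{C_G E_0}E_1 = C_G \tilde{E}_1$ but strictly larger rank. Iterating increases the rank at each step; boundedness by the $p$-rank of $G$ forces termination at a $p$-torus representing an associated prime, yielding the desired injection $H_G^* \hookrightarrow \prod_{E \text{ associated}} H^*_{C_G E}$.

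For minimality, suppose $\mathcal{F}$ is a family generated by centralizers of $p$-tori on which $H_G^*$ is detected, and fix a $p$-torus $E$ representing an associated prime. I would choose $x \in H_G^*$ with $\ann x = \mathfrak{p}_E$ and use detection to obtain a generator $C_G F$ of $\mathcal{F}$ with $x|_{C_G F} \neq 0$. The annihilator of $x|_{C_G F}$ in $H^*_{C_G F}$ is then a proper ideal containing the image of $\mathfrak{p}_E$, so it is contained in an associated prime of $H^*_{C_G F}$; by Duflot's theorem on associated primes applied to $C_G F$ this must be a toral prime $\mathfrak{p}_{E'}$ for some $p$-torus $E' \subset C_G F$. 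Contracting back gives $\mathfrak{p}_E \subset \mathfrak{p}_{E'}$ in $H_G^*$, and Quillen's stratification then yields $E'^g \subset E$ for some $g \in G$.

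The hard part will be to force $F \subset E'$: once that is established, $F^g \subset E$, hence $C_G E \subset C_G(F^g) = (C_G F)^g$, so $C_G E$ is subconjugate to $C_G F$ and therefore lies in $\mathcal{F}$. Since $E'$ represents an associated prime in $H^*_{C_G F}$, Theorem \ref{restrictionsonassociatedprimes} applied to $C_G F$ gives $\depth H^*_{C_{C_G F}E'} = \rank E'$, and $C_{C_G F}E' = C_G\langle F, E'\rangle$. Because $F \subset Z(C_G F)$ and $E'$ is central in its own centralizer, $\langle F, E'\rangle \subset Z(C_{C_G F}E')$, so Duflot's bound (\ref{duflotandquilleningroupcohomology}) applied to $C_G\langle F, E'\rangle$ yields $\rank E' = \depth H^*_{C_G\langle F, E'\rangle} \geq \rank\langle F, E'\rangle \geq \rank E'$, and the forced equality gives $F \subset E'$. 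The main obstacle is exactly this tight chain of inequalities: it is essential that $F$ lies in $Z(C_{C_G F}E')$ and that \ref{restrictionsonassociatedprimes} provides an \emph{equality} at $E'$ rather than merely an inequality. Once this is secured, the subconjugacy follows formally and the corollary is complete.
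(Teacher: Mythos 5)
Your proposal is correct, and its overall strategy coincides with the paper's: iterate \ref{detectiononcentralizers} together with \ref{restrictionsonassociatedprimes} until every member of the detecting family is the centralizer of a $p$-torus representing an associated prime, and for minimality combine detection over $\mathcal{F}$ with Duflot's toral-associated-primes theorem inside a generator $C_GF$ and Quillen stratification. The differences are in the details, and they work in your favor. In the first half, your replacement of $E_1$ by $\langle E_0,E_1\rangle$ with $C_{C_GE_0}E_1=C_G\langle E_0,E_1\rangle$ is the precise form of the paper's assertion that ``$C_{C_GE}A=C_GA$'', which as literally written is only correct after enlarging $A$ as you do. In the minimality step the paper passes through $\ass_{H_G^*}H_G^*\subset\bigcup_H\ass_{H_G^*}H^*_H$ and surjectivity of $\ass_{H^*_H}H^*_H\to\ass_{H_G^*}H^*_H$ to get an equality of toral primes, hence $E$ conjugate into $H=C_GA$, and then simply asserts ``so $C_GE\subset H$''; that implication is not formal, since $E\subset C_GA$ only gives $A\subset C_GE$, and $C_GE\subset C_GA$ needs the stronger fact $A\subset E$. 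Your argument supplies exactly this point: because $E'$ is associated in $H^*_{C_GF}$, Theorem \ref{restrictionsonassociatedprimes} applied inside $C_GF$ gives $\depth H^*_{C_{C_GF}E'}=\rank E'$, while $\langle F,E'\rangle$ is a central $p$-torus of $C_{C_GF}E'=C_G\langle F,E'\rangle$, so the Duflot bound of \ref{duflotandquilleningroupcohomology} pinches and forces $F\subset E'$. Your route via annihilators ($\ann x=\mathfrak{p}_E$, nonzero restriction to a generator, annihilator contained in a toral associated prime of $H^*_{C_GF}$) yields only the inclusion $\mathfrak{p}_E\subset\mathfrak{p}_{E'}$ and hence subconjugacy $E'^g\subset E$ rather than the paper's conjugacy, but that weaker conclusion is all the final subconjugacy argument $C_GE\subset(C_GF)^g$ requires, so nothing is lost; both arguments rely on Quillen's stratification at this point, as the paper itself does.
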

\begin{proof}
First, we show that $G$ is detected on  $\mathcal{C}_{\ass}$. Suppose that $\depth H_G^*=d$, so $H^*_G$ is detected on the centralizers of rank $d$ $p$-tori. If $E$ is a rank $d$ $p$-torus, if $E$ doesn't represent an associated prime in $H_G^*$, then $\depth H^*_{C_G E}  >d$, so $H^*_{C_{G}E}$ is itself detected on centralizers of higher rank $p$-tori. So, because $C_{C_G E} A= C_GA$ for a $p$-torus $A$ in $C_GE$ in any detecting family of centralizers of $p$-tori we can always replace $C_GE$ by a collection of centralizers of higher rank $p$-tori, unless $E$ represents an associated prime.

To show that this family is the minimal family of centralizers of $p$-tori that detects cohomology, suppose that $\mathcal{F}$ is another such family.

Then $\ass_{H_G^*} H_G^* \subset \ass_{H_G^*} \prod_{H \in \mathcal{F} } H^*H= \cup_{H \in \mathcal{F} } \ass_{H_G^*} H^*_H$, and $\ass_{H^*_G}H^*H$ is mapped onto by $\ass_{H^*_H} H^*_H$. So, if $E$ represents an associated prime in $H^*_G$, then this associated prime must be in some $\ass_{H^*_G}H^*_H$, so there is some $H \in \mathcal{F}$ containing $E$. But by assumption $H=C_G A$ for some $p$-torus $A$, so $C_G E \subset H$, which shows that $\mathcal{C}_{\ass} \subset \mathcal{F}$.
\end{proof}
\begin{remark}
Note that $\mathcal{C}_{\ass}$ is not the minimal detecting family, but merely the minimal detecting family generated by centralizers of $p$-tori. Indeed, if the Duflot bound for depth is sharp (and $G$ is a $p$-group), then as a central $p$-torus will represent an associated prime in $H_G^*$, $\mathcal{C}_{\ass}$ is the family of all subgroups. The family of all subgroups is the minimal detecting family if and only if there is  essential cohomology, i.e.  cohomology classes that restrict to zero on all subgroups. 

However, there are examples of groups where the Duflot bound is sharp and that do not have essential cohomology. The semidihedral group of order $16$ is one such example, it is detected on the family of all proper subgroups (i.e. it has no essential cohomology), but the depth is one and $\mathcal{C}_{\ass}$ is the family of all subgroups. If there is essential cohomology, then by \ref{detectiononcentralizers} the Duflot bound must be sharp and therefore $\mathcal{C}_{\ass}$ is the minimal detecting family. 

It would be interesting to have an understanding of those groups that have no essential cohomology and where the Duflot bound for depth is sharp. In the special case where $H_G^*$ is Cohen-Macaulay, Adem and Karagueuzian in \cite{ademessentialcohomology} Theorem 2.1
give a group theoretic criterion for when there is essential cohomology and prove that under the assumption that $H_G^*$ is Cohen-Macaulay having essential cohomology is equivalent to the Duflot bound being sharp.
\end{remark}

We now give a proof of the result due to Kuhn that Carlson's conjecture is true in a special case.
\begin{thm}\label{carlsonsconjecturewhenduflotboundissharp}
If the Duflot bound is sharp for the depth of $H^*_G$, then the maximal central $p$-torus of $G$ represents an associated prime in $H_G^*$. 

In the case when $G$ is finite and the strengthened Duflot bound is sharp (see \ref{strengtheningofduflotbound}), then the maximal central $p$-torus of a $p$-Sylow of $G$ represents an associated prime.
\end{thm}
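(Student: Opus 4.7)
My plan is to deduce both statements from Theorem \ref{restrictionsonassociatedprimes}, which gives the equivalence: for a $p$-torus $A < G$, the subgroup $A$ represents an associated prime in $H^*_G$ if and only if $\depth H^*_{C_G A} = \rank A$. In both parts the hypothesis will let us verify this depth condition for the appropriate central $p$-torus $C$.

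For the first statement, let $C$ be the maximal central $p$-torus of $G$. Since $C$ is central in $G$ we have $C_G C = G$, so condition (3) of \ref{restrictionsonassociatedprimes} reduces to $\depth H^*_G = \rank C$. Combined with Duflot's bound $\depth H^*_G \ge p\text{-}\rank Z(G) = \rank C$ (Theorem \ref{duflotandquilleningroupcohomology}), the hypothesis that the Duflot bound is sharp is \emph{exactly} condition (3), so $C$ represents an associated prime in $H^*_G$.

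For the second statement, let $P$ be a $p$-Sylow of $G$ and let $C$ be the maximal central $p$-torus of $P$; I again want to verify $\depth H^*_{C_G C} = \rank C$. The lower bound $\depth H^*_{C_G C} \ge \rank C$ is immediate from Duflot's bound applied to $C_G C$, since $C \subset Z(C_G C)$. For the upper bound, note that because $C$ is central in $P$ we have $P \subset C_G C$, so the index $[G : C_G C]$ is coprime to $p$; the transfer along this inclusion is a section (after inverting the index, which is already a unit in $\FF_p$) of the restriction map $H^*_G \to H^*_{C_G C}$, exhibiting $H^*_G$ as a direct summand of $H^*_{C_G C}$ as $H^*_G$-modules. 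Since $\depth_R(A \oplus B) = \min(\depth_R A, \depth_R B)$ and the extension $H^*_G \to H^*_{C_G C}$ is finite (so local cohomology is independent of whether we compute over $H^*_G$ or $H^*_{C_G C}$), this forces $\depth H^*_{C_G C} \le \depth H^*_G$. The strengthened Duflot bound sharpness hypothesis gives $\depth H^*_G = \rank C$, and combining inequalities yields $\depth H^*_{C_G C} = \rank C$, whence \ref{restrictionsonassociatedprimes} produces the desired associated prime.

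The only nontrivial step is the upper bound in the second statement, and the key point is the slightly subtle direction of the summand argument: one must notice that even though $C_G C$ is a subgroup of $G$ (suggesting restriction goes the ``wrong way''), the fact that $P \subset C_G C$ makes the index coprime to $p$, so the transfer provides a splitting. Once this is in hand, the depth comparison is formal, and the two halves of the theorem fit together uniformly under the umbrella of Theorem \ref{restrictionsonassociatedprimes}, with the first part being simply the special case $C_G C = G$.
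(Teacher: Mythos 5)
Your proposal is correct and follows essentially the same route as the paper: both parts are reduced to Theorem \ref{restrictionsonassociatedprimes}, with the first part being the case $C_GC=G$, and the second part using that $P\subset C_GC$ makes the index prime to $p$, so that the transfer exhibits $H_G^*$ as a summand of $H^*_{C_GC}$ and forces $\depth H^*_{C_GC}=\rank C$ (the paper phrases this last step via nonvanishing of $\HH^{d}$ rather than the depth-of-a-summand formula, but the content is identical).
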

\begin{proof}
The first sentence of the theorem follows immediately from \ref{restrictionsonassociatedprimes}, where we let $A$ be the maximal central $p$-torus.

For the case when the strengthened Duflot bound is sharp, let $A$ be the maximal central $p$-torus of a $p$-Sylow, let $\dim A=d$, and suppose that the depth of $H_G^*$ is $d$. Then $C_GA$ has index prime to $p$ in $G$, so $H_G^*$ is a direct summand of $H_{C_GA}^*$. Therefore, if $\HH^{d}H_G^*$ is nonzero, so is $\HH^d H^*_{C_GA}$. Therefore the Duflot bound is sharp for $H_{C_GA}^*$, so $A$ represents an associated prime.
\end{proof}
\begin{remark}
For finite groups, Green proved this for $p$-groups in \cite{greencarlsonsconjecture} Theorem 0.1 and Kuhn extended the result to all finite groups in \cite{kuhnprimitives} Theorem 2.13, and to compact Lie groups in \cite{kuhnnilpotence} Theorem 2.30 .
\end{remark}

\subsection{i-trivial bundles}\label{itrivialbundles}
Here we explore the relationship on cohomology and local cohomology when we have an $S$-equivariant principal $K$-bundle $M \to N$, where $M$, $N$ are smooth $S$-connected manifolds. Our goal is to describe a manifestation of the structures studied in Section \ref{KDuflot} in equivariant topolgy.

Recall that $S$ denotes a $p$-torus.

\begin{Def}
For $K$ a finite group an $S$-equivariant principal $K$-bundle $\pi:M \to N$ is \emph{$i$-trivial} if for all $Y \in \Fix(N)_{\ge i}$ $\pi^{-1}Y \to Y$ is a trivial $K$-bundle.
\end{Def}

Let $K \to M \to N$ be an $i$-trivial $S$-equivariant bundle.

\begin{lemma}\label{property}
If an equivariant submanifold $W \subset N$ is fixed by $A < S$ with $\rank A \ge i$, so is $\pi^{-1}W$, and   if  $Y$ is a component of $M^A$, with $\rank A \ge i$, then $\pi$ restricted to $Y$ is a diffeomorphism onto a component of $N^A$.
\end{lemma}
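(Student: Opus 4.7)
The plan is to extract from the $i$-triviality hypothesis an $S$-equivariant trivialization $\pi^{-1}Y \cong Y \times K$ for each $Y \in \Fix(N)_{\ge i}$, and then deduce both claims from this. Throughout I read ``trivial $K$-bundle'' in the definition of $i$-triviality in the equivariant sense: under the identification $\pi^{-1}Y \cong Y \times K$, the $S$-action is the product of the $S$-action on $Y$ with the trivial action on $K$. This reading is forced on us in any case, since without it the conclusion of (1) can already fail for a trivial $K$-bundle (the $A$-action on a trivial bundle over an $A$-fixed base is classified by $\Hom(A,K)/\text{conj}$, which need not be trivial).

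For (1), I would decompose $W$ into its connected components. Each component $W'$ sits inside a unique component $Y$ of $N^A$, and since $\rank A \ge i$ the rank of $Y$ in $\Fix(N)$ is at least $i$, so $Y \in \Fix(N)_{\ge i}$. By $i$-triviality we have $\pi^{-1}Y \cong Y \times K$ as $S$-spaces; since $A$ fixes $Y$ pointwise and acts trivially on the $K$-factor, $A$ fixes $\pi^{-1}Y$ pointwise. In particular $\pi^{-1}W' \subset \pi^{-1}Y$ is $A$-fixed, and taking the union over components of $W$ gives the conclusion.

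For (2), let $Y$ be a connected component of $M^A$ with $\rank A \ge i$. Because $\pi$ is equivariant and $Y$ is connected, $\pi(Y)$ is connected and contained in $N^A$, hence lies in a unique component $Z$ of $N^A$. Applying (1) to $Z$, $\pi^{-1}Z$ is $A$-fixed, so $\pi^{-1}Z \subset M^A$. Then $Y$ is a connected component of $\pi^{-1}Z$: it is a connected subset of $\pi^{-1}Z$ which is itself a component of the larger set $M^A$. Using the $i$-trivial identification $\pi^{-1}Z \cong Z \times K$, the components of $\pi^{-1}Z$ are exactly the slices $Z \times \{k\}$, each mapping diffeomorphically to $Z$ under $\pi$. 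Hence $Y = Z \times \{k_0\}$ for some $k_0 \in K$, and $\pi|_Y$ is a diffeomorphism onto the component $Z$ of $N^A$.

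The only real content is the equivariant reading of $i$-triviality discussed above; granted that, both statements reduce to bookkeeping with connected components, and (2) in fact follows formally from (1) together with the description of $\pi^{-1}Z$ as a disjoint union of copies of $Z$ indexed by $K$.
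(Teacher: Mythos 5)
Your proof is correct and follows essentially the same route as the paper: part (1) via the $S$-equivariant trivialization $\pi^{-1}Y\cong Y\times K$ over elements of $\Fix(N)_{\ge i}$, and part (2) by noting $\pi(Y)$ lands in such an element, so $Y$ sits inside a slice of the trivialized preimage and must equal it by maximality of $Y$ as a component of $M^A$. Your equivariant reading of ``trivial $K$-bundle'' is exactly what the paper's proof uses as well.
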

\begin{proof}
For the first part, if $x\in N$ is fixed by $A$, then $x \in W$ where $W$ is a component of $N^A$. Then $\pi^{-1} W \to W$ is an $S$-equivariant trivial principal $K$-bundle, so the fiber over $x$ is also fixed by $A$.

For the second part, if $Y$ is a component of $M^A$, then $\pi(Y)$ is connected and fixed by $A$, so $\pi(Y) \subset W$, for some $W \in \Fix(N)_{\ge i}$. Then there is a pullback square:
\begin{tikzcd}
\pi^{-1}\pi(Y) \dar \rar & \pi^{-1} W \dar \\
\pi(Y) \rar & W
\end{tikzcd} which is isomorphic to \begin{tikzcd}
\pi(Y) \times K \dar \rar & W \times K \dar \\
\pi(Y) \rar & W
\end{tikzcd}

We see that $Y \subset \pi^{-1} \pi(Y)$, and as $Y$ is connected $Y$ is contained in a unique component $W \times k \subset W \times K $. But then as $Y$ is a component of $M^A$ and $Y$ is fixed by $A$, we must have that that $Y$ is all of $W \times k$
\end{proof}

\begin{cor}
An $i$-trivial map $\pi:M \to N$ induces a map $\pi_*: \Fix(M)_{\ge i} \to \Fix(N)_{\ge i}$ by $Y \mapsto \pi(Y)$, and this map gives $\Fix(M)_{\ge i} \to \Fix(N)_{\ge i}$ the structure of a principal $K$-bundle, as defined in section \ref{KDuflot}.
\end{cor}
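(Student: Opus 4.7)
The plan is to verify the two defining properties of a principal $K$-bundle of posets: the covering map structure over chains, and the identification of the base as a quotient by a free, transitive $K$-action on fibers.

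First, I would check that $\pi_*$ is a well-defined rank-preserving poset map. Given $Y \in \Fix(M)_{\ge i}$, a component of $M^A$ with $\rank A \ge i$, Lemma \ref{property} says that $\pi|_Y$ is an $S$-equivariant diffeomorphism onto a component of $N^A$. By $S$-equivariance the isotropy of each point is preserved, so $\rank \pi(Y) = \rank Y$ and $\pi(Y)_{\rank A}^{\rank A}$ is nonempty, placing $\pi(Y)$ in $\Fix(N)_{\ge i}$. Order preservation follows from $Y \subset Y' \Rightarrow \pi(Y) \subset \pi(Y')$.

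Next, since $K$ acts on $M$ commuting with $S$, it permutes components of each $M^A$ while preserving rank and inclusion, giving a rank-preserving $K$-action on $\Fix(M)_{\ge i}$ with $\pi_* \circ k = \pi_*$. For each $Y \in \Fix(N)_{\ge i}$, the trivialization $\pi^{-1}(Y) \cong Y \times K$ coming from $i$-triviality shows that $\pi^{-1}(Y) \subset M^A$ and that the components $Y \times \{k\}$ are precisely the elements of $\pi_*^{-1}(Y)$. Each such $Y \times \{k\}$ is actually a component of $M^A$ because any strictly larger connected subset of $M^A$ would map into the component $Y$ of $N^A$, hence into $\pi^{-1}(Y) = Y \times K$, hence into a single component. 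Thus $K$ acts freely and transitively on $\pi_*^{-1}(Y)$.

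The main step is the covering property. For a chain $C = (Y_1 < \cdots < Y_m)$ in $\Fix(N)_{\ge i}$, I would construct an isomorphism $\pi_*^{-1}(C) \cong C \times K$ by lifting. Choose $\tilde{Y}_1 \in \pi_*^{-1}(Y_1)$; since $\tilde{Y}_1$ is connected and $\tilde{Y}_1 \subset \pi^{-1}(Y_1) \subset \pi^{-1}(Y_{j+1})$, it lies in a unique component of $\pi^{-1}(Y_{j+1})$, which by the previous paragraph is an element of $\pi_*^{-1}(Y_{j+1})$ and serves as the lift $\tilde{Y}_{j+1}$. Iterating produces a lifted chain $\tilde{Y}_1 < \cdots < \tilde{Y}_m$; translating by $K$ yields the remaining $|K| - 1$ disjoint lifted chains. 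This together with free transitivity on fibers gives $\pi_*^{-1}(C) \cong C \times K$, and surjectivity of $\pi_*$ together with $K$-transitivity on fibers gives $\Fix(M)_{\ge i}/K \cong \Fix(N)_{\ge i}$. The main potential obstacle is coherence of the lifts along the chain, but this is automatic: connectedness of each $\tilde{Y}_j$ forces a unique lift at the next step, so no further compatibility conditions need to be arranged by hand.
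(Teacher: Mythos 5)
Your proposal is correct and follows essentially the same route as the paper: use the preceding lemma to see that $\pi_*$ is a well-defined, rank-compatible poset map, trivialize over a chain by starting at its smallest element and using connectedness to pick out the unique containing component at each larger element, and then invoke the $K$-action on $\Fix(M)_{\ge i}$ with quotient $\Fix(N)_{\ge i}$ to get the principal bundle structure. The extra details you supply (free transitivity of $K$ on fibers, and that each $Y\times\{k\}$ is genuinely a component of $M^A$) are correct elaborations of what the paper states more briefly.
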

\begin{proof}
The previous lemma shows that $\pi_*$ defines a map $\Fix(M)_{\ge i} \to \Fix(N)_{\ge i}$ compatible with the ranking. To see that it is a map of posets we must show that if $Y \subset Y'$ then $\pi(Y) \subset \pi(Y')$, which is completely obvious. 

The previous lemma also shows that over each $W \in \Fix(N)_{\ge i}$, $\pi^{-1}W \cong  W \times K$, but we also need to see that this is true over each chain in $\Fix(N)_{\ge i}$. So, given a chain  $W_1 \to \dots \to W_k$ is a chain in $\Fix(N)_{\ge i}$, consider $\pi^{-1}W_1$. Choosing an isomorphism $\pi^{-1}W_1 \cong W_1 \times K$ determines an isomorphism $\pi^{-1}W_i \times K$ for all $i$ because $W_1 \times e$ lies in a unique component of $\pi^{-1}W_i$, so we have shown that $\pi_*: \Fix(M)_{\ge i} \to \Fix(N)_{\ge i}$ is a covering map.

To see that it is a principal $K$-bundle, we note that the $K$-action on $M$ induces a $K$ action on $\Fix(M)_{\ge i}$, and that $\Fix(M)_{\ge i}/K=\Fix(N)_{\ge i}$.
\end{proof}

Denote the functors filtering  $H_S^*M$ and  $H_S^*N$ by $F$ and $G$ respectively.

\begin{lemma}
The $K$-action on $M$ makes $(F_i H_S^*M, \Fix(M)_{\ge i}, F )$ a $K$-Duflot module.
\end{lemma}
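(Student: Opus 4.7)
The plan is to check each piece of Definition \ref{defofKduflot} in turn, getting the data from the $S$-equivariance of the $K$-action on $M$.

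First I would set up the $K$-action on the poset $\Fix(M)_{\ge i}$. Because $\pi:M\to N$ is a principal $K$-bundle of $S$-manifolds, the $K$-action commutes with the $S$-action, so for any subtorus $A<S$ the diffeomorphism $k:M\to M$ takes $M^A$ to itself, and hence permutes the connected components of $M^A$. It also preserves the isotropy rank of each point, so it restricts to an action on $\Fix(M)_{\ge i}$ that preserves the ranking function. If $Y\subset Y'$ in $\Fix(M)$ then $kY\subset kY'$, so $Y\mapsto kY$ is a map of weakly coranked posets. This defines the left $K$-action on $\Fix(M)_{\ge i}$.

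Next I would construct the natural isomorphism $Fk\Rightarrow F$. For each $Y\in\Fix(M)_{\ge i}$ the map $k:Y\to kY$ is an $S$-equivariant diffeomorphism, so $d_{kY}=d_Y$ and the induced map on Borel cohomology gives an isomorphism of $H_S^*M$-modules
\[
\Sigma^{d_{kY}}H_S^*(kY)\;\iso\;\Sigma^{d_Y}H_S^*(Y),
\]
i.e. an isomorphism $F(kY)\to F(Y)$. Naturality in $Y$ with respect to inclusions $Y\hookrightarrow Y'$ is automatic from functoriality of $H_S^*$: the square of inclusions $Y\subset Y'$, $kY\subset kY'$ and the diffeomorphisms $k:Y\to kY$, $k:Y'\to kY'$ commutes on the level of $S$-spaces. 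One should also check that this square is compatible with the pushforward structure used in the functor $F$, which follows from naturality of Gysin maps under equivariant diffeomorphisms (here the codimensions agree exactly because $k$ is a diffeomorphism, so no suspension correction is needed).

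Finally I would verify the two axioms. The axiom $F\circ\mathrm{id}\Rightarrow F$ is the identity because the identity diffeomorphism of $M$ induces the identity on $H_S^*Y$. Associativity of the composed natural isomorphisms reduces to the identity $H_S^*(hk)=H_S^*(h)\circ H_S^*(k)$, i.e.\ functoriality of $H_S^*$ applied to the composition of diffeomorphisms $hk=h\circ k:M\to M$; since $(hk)Y=h(kY)$ as subspaces of $M$ the diagram of maps in Definition \ref{defofKduflot} commutes on the nose. This establishes that $(F_iH_S^*M,\Fix(M)_{\ge i},F)$ is a $K$-Duflot module. The only point requiring real care, and the place I would slow down, is the compatibility of the pushforward-into-$F_iH_S^*M$ with the $k$-induced isomorphisms; but this follows from the fact that for the equivariant diffeomorphism $k:Y\to kY$ the normal bundle of $Y$ is pulled back from that of $kY$, so Gysin maps intertwine with $k^*$.
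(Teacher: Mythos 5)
Your proposal is correct and follows essentially the same route as the paper: the natural isomorphisms $Fk \Rightarrow F$ are the restriction maps $H_S^*(kY) \to H_S^*Y$ induced by the $K$-action, with the identity and associativity axioms (and naturality, including compatibility with the Gysin maps) coming for free from functoriality of equivariant cohomology applied to a genuine group action on $M$ commuting with $S$; the paper's proof is just a terser version of this, and your extra care about the pushforwards is the right point to check. One tiny quibble: the isomorphism $F(kY)\to F(Y)$ is only semilinear over $H_S^*M$ (it twists the module structure by $k^*$), but since the functor is only required to land in $P_W$-modules and $k$ commutes with the $S$-action, $P_W$-linearity is all that is needed and holds.
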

\begin{proof}
We only need to define the natural transformations $Fk \mapsto F$, and these comes from the restriction maps $H_S^*Y \from H_S^*kY$ induced by the $K$ action, that they satisfy the axioms from \ref{thedefinitionofanequivariantmorphism} is immediate from the fact that the maps are coming from a group action.
\end{proof}

\begin{thm}
There is an equivariant morphism $$\pi: (F_i H_S^*M, \Fix(M)_{\ge i}, F) \to (F_i H_S^*N, \Fix(N)_{\ge i}, G)$$ making $$\pi: (F_i H_S^*M, \Fix(M)_{\ge i}, F) \to (F_i H_S^*N, \Fix(N)_{\ge i}, G)$$ into a $K$-bundle, and the induced map $F_i H_S^*N \to F_i H_S^*M$ is the map coming from restriction $H_S^*N \to H_S^*M$.
\end{thm}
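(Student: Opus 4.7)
The plan is to construct the natural transformation component of the morphism, verify the equivariance and $K$-bundle axioms in turn, and finally identify the resulting abstract induced map with the honest restriction $H_S^*N \to H_S^*M$ truncated to $F_i$.

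\textbf{Constructing the natural transformation $G\pi_* \Rightarrow F$.} For each $Y \in \Fix(M)_{\ge i}$, Lemma~\ref{property} gives that $\pi|_Y : Y \to \pi(Y)$ is an $S$-equivariant diffeomorphism onto a component of $N^A$. Because $\pi : M \to N$ is a covering map of smooth manifolds, it is a local diffeomorphism, so $Y$ and $\pi(Y)$ have the same codimension in their respective ambient manifolds, i.e.\ $d_Y = d_{\pi(Y)}$. Pullback along $\pi|_Y$ then yields an $S$-equivariant isomorphism
\[
\eta_Y : G(\pi(Y)) = \Sigma^{d_{\pi(Y)}} H_S^* \pi(Y) \iso \Sigma^{d_Y} H_S^* Y = F(Y).
\]
Naturality of $\eta$ with respect to inclusions $Y \subset Y'$ follows from the naturality of restriction in equivariant cohomology, so $\eta : G\pi_* \Rightarrow F$ is a natural transformation whose components are all isomorphisms.

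\textbf{Checking the $K$-bundle axioms.} The source is a $K$-Duflot module by the lemma preceding the theorem, and the target has trivial $K$-action (since $K$ acts trivially on $N$, hence on $\Fix(N)_{\ge i}$ and on the functor $G$). The covering $\pi_*$ was already shown to be a principal $K$-bundle of posets, and $\eta$ is componentwise an isomorphism. For the equivariance condition of Definition~\ref{thedefinitionofanequivariantmorphism}, after using that $K$ acts trivially on the target, the square reduces to
\[
\begin{tikzcd}
G(\pi(Y)) \rar{\eta_Y} \dar & F(Y) \dar{k^*} \\
G(\pi(kY)) \rar{\eta_{kY}} & F(kY)
\end{tikzcd}
\]
in which the left vertical arrow is the identity since $\pi(Y) = \pi(kY)$. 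This commutes because $\pi|_Y$, $\pi|_{kY}$, and $k : Y \to kY$ form a commutative triangle of $S$-equivariant diffeomorphisms (the map $\pi$ being $K$-invariant), and $\eta_Y$, $\eta_{kY}$, and $k^*$ are the corresponding pullbacks on $S$-equivariant cohomology.

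\textbf{Identifying the induced map with restriction.} First we observe that ordinary restriction $\pi^{*}_{\mathrm{top}} : H_S^* N \to H_S^* M$ carries $F_i H_S^* N$ into $F_i H_S^* M$: $i$-triviality forces $\pi(M_0^{i-1}) \subset N_0^{i-1}$, because any $x \in M$ lying over a point of $N$ of isotropy rank $\ge i$ must itself, by the trivializations over the strata in $\Fix(N)_{\ge i}$, have isotropy rank $\ge i$. Let $\rho$ denote the restricted map $F_i H_S^* N \to F_i H_S^* M$. By Lemma~\ref{lemmaboutamorphisminducingmorphims} it suffices to show that for every $W \in \Fix(N)_{\ge i}$ the square
\[
\begin{tikzcd}
G(W) \rar \dar & F_i H_S^* N \dar{\rho} \\
\bigoplus_{Y \in \pi_*^{-1} W} F(Y) \rar & F_i H_S^* M
\end{tikzcd}
\]
commutes, with bottom and top horizontal arrows the sums of the Gysin pushforwards. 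By $i$-triviality, $\pi^{-1}(W) = \bigsqcup_{Y \in \pi_*^{-1} W} Y$ as an equivariant embedded submanifold of $M$ sitting over $W \hookrightarrow N$ via a Cartesian square of embeddings. Base-change compatibility of the Gysin pushforward along this square yields $\rho \circ i_* = \sum_{Y} i_* \circ \eta_Y$, which is precisely the required identity, so $\rho$ agrees with the induced map $\pi^*$.

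\textbf{Main obstacle.} The substantive point is the third step: one must verify that the abstractly defined induced map of Lemma~\ref{lemmaboutamorphisminducingmorphims} really is the topological restriction map on the truncations. This is where $i$-triviality does its essential work, by splitting the preimages of strata into disjoint unions of diffeomorphic components so that base-change for the Gysin pushforwards is automatic; without this hypothesis, the two natural maps would not be directly comparable.
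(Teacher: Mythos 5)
Your proof is correct and follows essentially the same route as the paper: the natural transformation is pullback along the diffeomorphisms $\pi|_Y$ supplied by Lemma \ref{property}, the $K$-bundle and equivariance axioms are routine checks, and the identification of the induced map with restriction is exactly the appeal to the uniqueness clause of Lemma \ref{lemmaboutamorphisminducingmorphims}, for which you supply the truncation-preservation and Gysin base-change details the paper leaves implicit. One small imprecision: naturality of $\eta$ is with respect to the structure maps $F(Y) \to F(Y')$, which are pushforwards rather than restrictions, so its verification is the same push--pull compatibility you invoke in the final step, not mere naturality of restriction.
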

\begin{proof}
The only piece of data we haven't defined yet is the natural transformation $G\pi_* \Rightarrow F$, and this also comes from the restriction maps $H_S^*(\pi(Y) ) \to H_S^*Y$. That all the given data satisfies the requirements of \ref{thedefinitionofanequivariantmorphism} is immediate.

To see that the map $F_i H_S^*N \to F_i H_S^*M$ induced by the equivariant morphism agrees with the map induced by the map of spaces $M \to N$, we appeal to the uniqueness result of \ref{lemmaboutamorphisminducingmorphims}
\end{proof}

\begin{thm}\label{spectralsequenceforitrivialbundle}
There is a spectral sequence  with $E_2 =H^p(K, \HH^q(F_i H_S^*N))$ converging to $ \HH^{p+q}F_i H_S^*M$.
\end{thm}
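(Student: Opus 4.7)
The plan is to deduce this as an immediate corollary of Theorem \ref{spectralsequenceforKduflotmodules} applied to the $K$-bundle of Duflot modules $(F_iH_S^*M,\Fix(M)_{\ge i},F) \to (F_iH_S^*N,\Fix(N)_{\ge i},G)$ produced by the preceding theorem. All the required hypotheses have already been assembled there: $i$-triviality gives the principal $K$-bundle structure on the underlying posets of fixed-point components, each natural transformation component $G\pi_*(Y) \to F(Y)$ is an isomorphism because $\pi^{-1}Y \to Y$ is a trivial $K$-cover over every $Y \in \Fix(N)_{\ge i}$, the $K$-action on the source is induced from the topological $K$-action on $M$, and the target carries the trivial $K$-action since $N$ is the topological quotient of $M$.

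Concretely, one invokes Theorem \ref{spectralsequenceforKduflotmodules} in the form of running the two hypercohomology spectral sequences of the Duflot complex of the $K$-module truncation, viewed as a bounded complex of $K$-modules. The proposition on the associated graded appearing just before Theorem \ref{spectralsequenceforKduflotmodules} ensures that each term of this Duflot complex is a sum of free $K$-modules (since the $K$-action on the indexing poset $\Fix(M)_{\ge i}$ is free) and that its $K$-invariants reproduce the Duflot complex of the quotient truncation. Freeness of the individual terms collapses one of the two hypercohomology spectral sequences, identifying the hypercohomology with local cohomology of the appropriate truncation, while the other hypercohomology spectral sequence has the desired $E_2$-page of the form $H^p(K,\HH^q(\cdot))$. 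Matching these identifications with the abutment and $E_2$-page asserted in the statement completes the proof.

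The main obstacle is bookkeeping: one must carefully trace through the identifications to confirm that the direction of the spectral sequence produced by the abstract machine lines up with the direction written in the statement, i.e.\ that the $K$-invariants passage corresponds to the topological quotient $M \to N$ and that the identification of $\HH^*$ with the cohomology of the Duflot complex is compatible with the induced $K$-module structure at every stage. All ingredients needed for this matching are furnished by Lemma \ref{lemmaboutamorphisminducingmorphims}, Lemma \ref{propertiesofequivariantmorphism}, and the proposition immediately preceding Theorem \ref{spectralsequenceforKduflotmodules}, so no further topological input is required.
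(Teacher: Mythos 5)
Your proposal is exactly the paper's proof: the paper's entire argument is the single line ``this follows immediately from \ref{spectralsequenceforKduflotmodules}'', applied to the $K$-bundle $(F_iH_S^*M,\Fix(M)_{\ge i},F) \to (F_iH_S^*N,\Fix(N)_{\ge i},G)$ furnished by the preceding theorem, and your elaboration via the two hypercohomology spectral sequences of the Duflot complex is just an unwinding of the proof of that abstract theorem. One caveat about the ``bookkeeping'' you raise and then declare to work out: done carefully, it does \emph{not} reproduce the statement as printed. In Theorem \ref{spectralsequenceforKduflotmodules} the module whose poset carries the free $K$-action is the source $L$, which in this application is $F_iH_S^*M$ (with $M$ the total space), while the trivial-action target is $F_iH_S^*N$; so the machine yields $H^p(K,\HH^q(F_iH_S^*M)) \Rightarrow \HH^{p+q}F_iH_S^*N$, i.e.\ coefficients come from the total space and the abutment is the base, with $M$ and $N$ interchanged relative to the displayed $E_2$-page and abutment. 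This is evidently a typo in the statement rather than a flaw in the method --- the subsequent Theorem \ref{spectralsequenceforitrivialextension}, with total space $H\backslash U(V)$ and base $G\backslash U(V)$, uses exactly the corrected direction --- but you should say you are proving the corrected statement instead of asserting that the identifications match the printed one.
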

\begin{proof}
This follows immediately from \ref{spectralsequenceforKduflotmodules}.
\end{proof}

Now, we can connect this to the group theory.
\begin{Def}
For $i$  less than  or equal to  the $p$-rank of $G$, we say that an extension $1 \to H \to G \to K \to 1$ with $K$ finite is \emph{$i$-trivial} if for all $j \ge i$, if $E$ is a rank $j$ $p$-torus of $G$, then $C_GE < H$.
\end{Def}
\begin{remark}
The main example we will be concerned with are iterated wreath products. For example, the extension $1 \to (\ZZ/p)^p \to \ZZ/p \wr \ZZ/p \to \ZZ/p \to 1$ for $p \ge 3$ is $3$-trivial. 
\end{remark}

\begin{prop}\label{propthatitrivialthingsexist}
If $H \to G \to K$ is $i$-trivial, and $V$ is a faithful representation of $G$, then $K \to H\backslash U(V) \to G \backslash U(V)$ is $i$-trivial.
\end{prop}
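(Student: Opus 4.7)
The plan is to unwind the definition of $i$-trivial in both senses and show that the group-theoretic condition translates directly into the topological one via the explicit description of fixed point components in Lemma \ref{lemmaaboutexchange}.

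First I would extract the following consequence of the definition: if $H \to G \to K$ is $i$-trivial, then every rank $j \ge i$ $p$-torus $E$ of $G$ satisfies $E \subset C_G E \subset H$, so \emph{every such $E$ lies in $H$}. This is the workhorse observation. Given $Y \in \Fix(G\backslash U(V))_{\ge i}$, say $Y \subset (G\backslash U(V))^A$ with $\rank A \ge i$, I would use this to prove that $\pi^{-1}Y \subset (H\backslash U(V))^A$: for any $Hu' \in \pi^{-1}Y$ we have $Gu' \in Y$, so $^{u'}A < G$, and this $p$-torus has rank $\ge i$, so $^{u'}A < H$, which says exactly that $Hu' \cdot a = H(u' a u'^{-1})u' = Hu'$ for all $a \in A$.

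Next I would pin down the components. Fix $u$ with $Gu \in Y$, so that $Y = C_{G^u}A \backslash C_{U(V)}A$ by Lemma \ref{lemmaaboutexchange}. Since $^uA < H$, the component $Z$ of $Hu \in (H\backslash U(V))^A$ is $C_{H^u}A \backslash C_{U(V)}A$, and $\pi|_Z : Z \to Y$ is the canonical surjection. I claim it is a diffeomorphism: writing $C_{G^u}A = u^{-1} C_G(^uA) u$ and similarly for $H$, the question reduces to whether $C_G(^uA) = C_H(^uA)$, which is immediate from $i$-triviality applied to the rank $\ge i$ torus $^uA$. So each component of $(H\backslash U(V))^A$ meeting $\pi^{-1}Y$ is entirely contained in it (by connectedness of the image) and maps diffeomorphically onto $Y$.

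Finally I would count and conclude. The fiber $\pi^{-1}(Gu) = \{Hgu : g \in G\}$ has exactly $|K|$ points, each of which lies in some such component (by step two applied pointwise), and each component contributes exactly one point to this fiber (since $\pi|_Z$ is a diffeomorphism). Hence $\pi^{-1}Y$ is a disjoint union of $|K|$ copies of $Y$, permuted simply transitively by the free $K$-action; this is exactly the statement that $\pi^{-1}Y \to Y$ is trivial as a principal $K$-bundle. The only subtle point — and the one place I would be careful — is verifying the equality $C_G(^uA) = C_H(^uA)$ cleanly, since this is where the seemingly stronger centralizer hypothesis in the definition of $i$-triviality (rather than just "$E < H$") is really needed: it ensures that the covering $Z \to Y$ has trivial fiber, not merely that the fixed points upstairs cover those downstairs.
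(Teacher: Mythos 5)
Your proof is correct and follows essentially the same route as the paper: both rely on the description of fixed-point components from Lemma \ref{lemmaaboutexchange} together with the centralizer condition $C_G({}^uA)\le H$ to identify each component of $(H\backslash U(V))^A$ lying over $Y$ with a copy of $Y$. The only cosmetic difference is the final step, where the paper checks disjointness of the components through the fiber points $\{Hku\}$ by an explicit coset computation ($Hgu=Hg'uc$ forces $ucu^{-1}\in C_G({}^uA)\subset H$), while you deduce the same count from injectivity of $\pi$ on each component; both hinge on exactly the same $i$-triviality input.
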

\begin{proof}
To show this, we use our description of the fixed points of $G \backslash U(V) ^A$, where $A <S$. Recall that the connected component of $Gu \in G \backslash U(V)^A$ is isomorphic to $C_{G}^uA \backslash C_U(V) ^u A$. By the $i$-triviality assumption, this is isomorphic to $C_{H}^uA \backslash C_{U(V)} ^u A$. 

Now the points of $H \backslash U(V)$ lying over $Gu$ are $\{Hku: k \in K \}$. Under the $i$-triviality assumption each $Hku$ is also fixed by $A$, and the connected component of each $Hku$ is $C_H ^{ku}A \backslash C_{U(V)}^{ku}$. Recall how $S$ acts on one of the $C_{H}^uA \backslash C_{U(V)} ^u A$: it is via the natural action of $S^u$ on $C_{H}^uA \backslash C_{U(V)} ^u A$, and the twist map $S \to S^u$. Therefore these are all isomorphic as $S$-manifolds. 

In order to complete the proof, we just need to show that these components are all disjoint. So, suppose that $Hgu=Hg'uc$, where $c \in C_{U(V)}A$. We wish to show that $g$ and $g'$ have the same image in $K$. We have that $Hg=Hg' ucu^{-1}$. So, $ucu^{-1} \in G$. We also have that $ucu^{-1} \in C_{U(V)}^{^u A}$, so $ucu^{-1} \in C_G ^u A$. Therefore by $i$-triviality, $ucu^{-1} \in H$ as well, so $Hg=Hg'h$, so $g$ and $g'$ have the same image in $H$ and we are done.
\end{proof}

Putting it all together, \ref{spectralsequenceforitrivialbundle} gives us a spectral sequence computing the local cohomology of $H_S^* G \backslash U(V)$ when we have an $i$-trivial extension.

\begin{thm}\label{spectralsequenceforitrivialextension}
If $1 \to H \to G \to K \to 1$ is $i$-trivial, and $V$ a faithful representation of $G$, then  there is a spectral sequence starting at the $E_2$ page: $H^p(K, \HH^q F_i H_S^* H \backslash U(V)) \Rightarrow \HH^{p+q} F_i H_S^* G \backslash U(V)$.
\end{thm}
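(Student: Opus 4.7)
The plan is straightforward: this theorem is a direct group-theoretic repackaging of Theorem \ref{spectralsequenceforitrivialbundle}, and both ingredients needed are already in place. First I would invoke Proposition \ref{propthatitrivialthingsexist} to pass from the algebraic hypothesis of $i$-triviality of the extension $1 \to H \to G \to K \to 1$ to the geometric statement that $K \to H \backslash U(V) \to G \backslash U(V)$ is an $i$-trivial $S$-equivariant principal $K$-bundle in the sense of the previous definition. The substance of that step is already packaged in \ref{propthatitrivialthingsexist}: using the description of components of $(G \backslash U(V))^A$ as $C_{G^u}A \backslash C_{U(V)}A$ from Lemma \ref{lemmaaboutexchange} together with the hypothesis $C_G E < H$ for every $p$-torus $E$ of rank $\ge i$, that proposition verifies both that fixed-point components upstairs map $S$-diffeomorphically onto fixed-point components downstairs and that the $K$-preimages are disjoint.

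Second, with the geometric $i$-triviality in hand, I would apply Theorem \ref{spectralsequenceforitrivialbundle} to this bundle, taking $M = H \backslash U(V)$ (the total space, on which $K = G/H$ acts) and $N = G \backslash U(V)$ (the base, obtained as the further quotient by $K$). The conclusion of that theorem is literally the spectral sequence claimed here.

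There is no real obstacle, because all of the actual work has already been carried out in Section \ref{KDuflot} and at the start of \ref{itrivialbundles}: the $K$-action on the poset $\Fix(H \backslash U(V))_{\ge i}$ and the resulting $K$-Duflot module structure on $F_i H_S^*(H \backslash U(V))$ come from the bundle structure, the restriction map $F_i H_S^*(G \backslash U(V)) \to F_i H_S^*(H \backslash U(V))$ gives the required equivariant morphism of Duflot modules, and the spectral sequence itself arises as one of the two hypercohomology spectral sequences for the Duflot complex of $F_i H_S^*(H \backslash U(V))$ regarded as a cochain complex of $\mathbb{F}_p[K]$-modules (with the other spectral sequence collapsing by freeness of the Duflot complex and identification of the $K$-invariants with the Duflot complex of $F_i H_S^*(G\backslash U(V))$). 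Thus the proof reduces to citing \ref{propthatitrivialthingsexist} and then \ref{spectralsequenceforitrivialbundle}.
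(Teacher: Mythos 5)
Your proposal is correct and is essentially the paper's own argument: the theorem is obtained by combining Proposition \ref{propthatitrivialthingsexist} (the $i$-trivial extension yields an $i$-trivial $S$-equivariant principal $K$-bundle $H\backslash U(V) \to G\backslash U(V)$) with Theorem \ref{spectralsequenceforitrivialbundle}, and your description of the underlying mechanism — the two hypercohomology spectral sequences for the $K$-action on the Duflot complex of $F_i H_S^*(H\backslash U(V))$, with collapse coming from freeness and the identification of the $K$-invariants with the Duflot complex of $F_i H_S^*(G\backslash U(V))$ — is exactly what drives \ref{spectralsequenceforKduflotmodules}. Note only that you are (rightly) reading \ref{spectralsequenceforitrivialbundle} with the total space in the coefficients and the base in the abutment, which is the reading forced by \ref{spectralsequenceforKduflotmodules} and by the statement being proved, even though the letters $M$ and $N$ appear transposed in that theorem's printed statement.
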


\subsection{The top $p-2$ local cohomology modules of $H_S^*W(n) \backslash U(V)$}\label{wreathproducts}

Here, we give an application of the theory we have developed so far to do some computations in local cohomology.
Our goal will be to exploit $i$-triviality to get at the top local cohomology modules of $H_S^*W(n) \backslash U(V)$, where $W(1)$ is $\ZZ/p$ and for $n>1$ $W(n)$ is $W(n-1) \wr \ZZ/p$, and $V$ is any faithful representation of $W(n)$.

Recall that $W(n)$ is the $p$-Sylow of $S_{p^{n}}$. The depth of $H^*_{W(n)}$ is known by \cite{carlsonandhennwreathproducts} Theorem 2.1, it is $n$. The Krull dimension is $p^{n-1}$, so these groups have a large difference between their depth and dimension and consequently a lot of room for nontrivial local cohomology. However, other than the regularity theorem, which was proved for these groups by Benson in \cite{bensonontheregularityconjecture}, nothing is known about the structure of their local cohomology modules.

For a  $p$-group $H$, let $r(H)$ (the rank of $H$) be the maximal rank of a $p$-torus of $H$, i.e. the dimension of $H^*_H$. Let $G= H \wr \ZZ/p$ be the split extension $H^p \to G \to \ZZ/p$, and denote the generator of $\ZZ/p$ by $\sigma$.

\begin{thm}\label{trivialityforwreathproducts}
We have that $H^p \to G \to \ZZ/p$ is $r(H)+2$ trivial.
\end{thm}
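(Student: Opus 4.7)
The plan is to take a $p$-torus $E \subset G = H \wr \ZZ/p$ with $\rank E \ge r(H) + 2$ and show that any element centralizing $E$ must lie in $H^p$. Write $\pi:G \to \ZZ/p$ for the quotient. The one calculation I will use repeatedly is the following wreath-product identity: if $z = (k_1, \ldots, k_p)\sigma \in G$ commutes with an element $(e_1, \ldots, e_p) \in H^p$, then expanding the relation $z(e_1,\ldots,e_p)z^{-1} = (e_1,\ldots,e_p)$ and using $\sigma(e_1,\ldots,e_p)\sigma^{-1} = (e_p, e_1, \ldots, e_{p-1})$ yields the recursion $e_i = k_i e_{i-1} k_i^{-1}$ (indices mod $p$), so that $e_1, \ldots, e_{p-1}$ are all determined by $e_p$. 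In particular, the $p$th-coordinate projection restricted to any abelian subgroup of $H^p$ centralized by $z$ is an injective homomorphism into $H$.

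First I would handle the case $\pi(E) = \ZZ/p$. Pick $e_0 = (h_1, \ldots, h_p)\sigma \in E \setminus H^p$ and set $E_0 = E \cap H^p$, so that $\rank E = \rank E_0 + 1$. Since $E$ is abelian, $e_0$ centralizes $E_0$, and applying the recursion above with $z = e_0$ forces $\rank E_0 \le r(H)$, whence $\rank E \le r(H) + 1$, contradicting the hypothesis. Thus I may assume $E \subset H^p$.

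Now, assuming $E \subset H^p$, I argue by contradiction: suppose $g \in C_G E$ has $\pi(g) \ne 0$. Replacing $g$ by a suitable power I may take $\pi(g) = 1$ and write $g = (h_1, \ldots, h_p)\sigma$. Applying the recursion with $z = g$ this time to all of $E$ (rather than a hyperplane) gives $\rank E \le r(H)$, again contradicting the hypothesis. Thus no such $g$ exists, i.e.\ $C_G E \subset H^p$, as required.

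I do not anticipate any serious obstacle; the entire argument rests on the single wreath-product identity $e_i = k_i e_{i-1} k_i^{-1}$ which is immediate from the definitions, together with the bookkeeping $\rank E = \rank(E\cap H^p) + \rank\pi(E)$ from the short exact sequence $1 \to E\cap H^p \to E \to \pi(E) \to 1$ of elementary abelian groups. The bound $r(H) + 2$ is tight, as witnessed by the rank-$2$ $p$-torus $\Delta H \cdot \langle \sigma \rangle \subset \ZZ/p \wr \ZZ/p$ (where $r(H) = 1$), which is centralized by $\sigma \notin H^p$.
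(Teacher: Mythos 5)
Your proof is correct and follows essentially the same route as the paper: your recursion $e_i = k_i e_{i-1} k_i^{-1}$ is exactly the paper's computation showing that an element of $H\wr\ZZ/p - H^p$ centralizes $p$-tori of $H^p$ of rank at most $r(H)$, and your two cases mirror the paper's two applications of that fact. The only (harmless) difference is that you bypass the paper's preliminary lemma on arbitrary subgroups not contained in $H^p$ by using rank additivity in $1 \to E\cap H^p \to E \to \pi(E) \to 1$ directly for the elementary abelian $E$.
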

We'll prove this in a series of lemmas.
\begin{lemma}
Any subgroup of $H \wr \ZZ/p$ not contained in $H^p$ is generated by a subgroup $K$ of $H^p$ and an element of $H \wr \ZZ/p - H^p$.
\end{lemma}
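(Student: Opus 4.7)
The plan is to apply the second isomorphism theorem to the normal subgroup $H^p \trianglelefteq G = H \wr \ZZ/p$. Given a subgroup $L \le G$ with $L \not\le H^p$, let $K = L \cap H^p$, which is certainly a subgroup of $H^p$. Since $H^p$ is normal in $G$, it is also normal in $L$, so $K \trianglelefteq L$.

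Next I would identify the quotient $L/K$. By the second isomorphism theorem, $L/K = L/(L \cap H^p) \cong LH^p/H^p$, which is a subgroup of $G/H^p \cong \ZZ/p$. The hypothesis $L \not\le H^p$ forces this subgroup to be nontrivial, and since $\ZZ/p$ has no proper nontrivial subgroups we conclude $L/K \cong \ZZ/p$.

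Finally, pick any $x \in L \setminus H^p$, which exists precisely because $L \not\le H^p$. Then $x \in L \setminus K$, and its image in $L/K \cong \ZZ/p$ is a generator. Therefore every element of $L$ can be written as $k x^i$ for some $k \in K$ and $0 \le i < p$, so $L = \langle K, x \rangle$. Since $x \in L \setminus H^p \subset G \setminus H^p = H \wr \ZZ/p - H^p$, this is the required generating set.

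There is essentially no obstacle here — the statement is a formal group-theoretic consequence of $H^p$ being a normal subgroup of prime index in $\langle L, H^p \rangle$, and the wreath product structure plays no role beyond providing the extension $H^p \trianglelefteq G \twoheadrightarrow \ZZ/p$. The only thing to be slightly careful about is that one genuinely needs $L \not\le H^p$ to guarantee that a suitable $x$ exists; this is exactly the hypothesis given.
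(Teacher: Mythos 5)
Your proof is correct, and it takes a genuinely different route from the paper's. The paper argues by manipulating generators: given two elements $g',h'$ outside $H^p$, it uses the $p$-group structure to raise each to a power coprime to $p$ so that both have the form $g\sigma$, $h\sigma$, and then replaces $h\sigma$ by $h\sigma(g\sigma)^{-1}\in H^p$; iterating this replacement reduces any generating set to one with a single element outside $H^p$. Your argument instead applies the second isomorphism theorem to $K=L\cap H^p$: since $H^p$ is normal of prime index, $L/K$ embeds in $\ZZ/p$ and is nontrivial, so any $x\in L\setminus H^p$ generates it, giving $L=\langle K,x\rangle$. Your route is shorter, needs no $p$-group hypothesis (only that $H^p$ is normal of prime index, as you observe), and has the added benefit of exhibiting $K$ explicitly as $L\cap H^p$, hence of index $p$ in $L$ --- which is exactly the quantitative fact used later when the lemma is applied to a $p$-torus $E$ to conclude that $E'=E\cap H^p$ has rank at least $\rank E-1$. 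The paper's proof, by contrast, is written only for pairs of generators and leaves the reduction for a general subgroup implicit, though its explicit normalization to elements of the form $g\sigma$ is reused in the subsequent lemma about centralizers, which is presumably why the author set it up that way.
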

\begin{proof}
Suppose that $g'$, $h'$ are elements of $H \wr \ZZ/p - H^p$. We will show that the subgroup generated by $g'$ and $h'$ is equal to the subgroup generated by $g'$ and $k$, where $k \in H^p$.

First, write $g'=g'' \sigma^j$, and $h'=h'' \sigma^l$, where $g'',h'' \in H^p$ and $j,l \not\equiv 0 \pmod{p}$. Then there are $m,n \not\equiv 0 \pmod {p}$ so that $(g')^m=g \sigma$, $(h')^n=h\sigma$. 
But since $H$ and therefore $H \wr \ZZ/p$ are $p$-groups, $(g')^m$ and $(h')^n$ generate the same subgroup as $g'$ and $h'$, so we can reduce to the case where we have two elements of the form $g\sigma$ and $h\sigma$. 
But then $h\sigma(g \sigma)^{-1} \in H^p$, and we are done.
\end{proof}

\begin{lemma}
Given an element $g \sigma^j$ of $H \wr \ZZ/p - H^p$, the maximal rank of a $p$-torus of $H^p$ centralized by $g \sigma^j$ is $r(H)$.
\end{lemma}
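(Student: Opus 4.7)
The plan is to make the centralizer condition explicit by computing with coordinates of the wreath product, solve the resulting relations, and identify $C_{H^p}(g\sigma^j)$ with the centralizer in $H$ of a single element built out of the coordinates of $g$, at which point the rank bound is immediate.

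First I would fix conventions. Write elements of $H\wr \ZZ/p$ as $(h,\sigma^k)$ with $h=(h_1,\dots,h_p)\in H^p$, where $\sigma$ acts on $H^p$ by the cyclic permutation $(h_1,\dots,h_p)\mapsto(h_{1+1},\dots,h_{p+1})$ with indices mod $p$. With $g=(g_1,\dots,g_p)$ and $h=(h_1,\dots,h_p)$, the commutation relation $h\cdot g\sigma^j=g\sigma^j\cdot h$ rewrites (after cancelling the $\sigma^j$ factor) to the coordinate-wise system
\[
h_i\, g_i \;=\; g_i\, h_{i+j}, \qquad \text{i.e.}\qquad h_{i+j}=g_i^{-1}h_i g_i,
\]
for every $i\in\ZZ/p$.

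Next I would exploit that $j\not\equiv 0\pmod p$: the map $i\mapsto i+j$ is a single $p$-cycle on $\ZZ/p$, so the relations above determine $h_{1+j},h_{1+2j},\dots$ recursively from $h_1$, namely
\[
h_{1+kj}=(g_1g_{1+j}\cdots g_{1+(k-1)j})^{-1}h_1(g_1g_{1+j}\cdots g_{1+(k-1)j}).
\]
When $k=p$ we return to $h_1$, which forces the single consistency condition that $h_1$ commute with
\[
\gamma \;:=\; g_1g_{1+j}g_{1+2j}\cdots g_{1+(p-1)j} \;\in\; H.
\]
Conversely, any $h_1\in C_H(\gamma)$ defines a valid $h\in C_{H^p}(g\sigma^j)$ by the formulas above. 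Projection to the first coordinate is therefore a group isomorphism
\[
C_{H^p}(g\sigma^j) \;\iso\; C_H(\gamma).
\]

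From this identification the rank bound follows immediately: the maximal rank of a $p$-torus inside $C_{H^p}(g\sigma^j)$ equals the maximal rank of a $p$-torus inside $C_H(\gamma)$, which is at most $r(H)$ by definition. (The diagonal embedding $h\mapsto(h,h,\dots,h)$ applied to a maximal $p$-torus of $C_H(\gamma)$ shows that this bound is realized.) No step is genuinely difficult; the only subtlety is bookkeeping the twisted recursion around a single cycle to recognise that the obstruction to extending $h_1$ to a centralizer element is precisely commuting with the product $\gamma$, which is what reduces the whole problem to the rank of a single centralizer in $H$.
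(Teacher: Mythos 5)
Your main computation is correct and is essentially the paper's argument: you write out the commutation relation coordinatewise, use that $i\mapsto i+j$ is a single $p$-cycle to see that $h$ is determined by $h_1$ via the twisted recursion, and you go a step further than the paper by identifying the consistency condition and hence an isomorphism $C_{H^p}(g\sigma^j)\cong C_H(\gamma)$ with $\gamma=g_1g_{1+j}\cdots g_{1+(p-1)j}$. That refinement is fine and immediately gives the bound that actually matters, namely that any $p$-torus of $H^p$ centralized by $g\sigma^j$ has rank at most $r(H)$; this upper bound is all that the paper's proof establishes and all that is used in proving $(r(H)+2)$-triviality.

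The parenthetical claim that the bound is \emph{realized} is the gap. First, the diagonal embedding is not a section of your isomorphism: by your own relations $h_{i+j}=g_i^{-1}h_ig_i$, a constant tuple $(a,\dots,a)$ centralizes $g\sigma^j$ only if $a$ commutes with every $g_i$ separately, which is strictly stronger than $a\in C_H(\gamma)$; the correct lifts are the twisted tuples $\bigl(h_1,\;(g_1\cdots)^{-1}h_1(g_1\cdots),\dots\bigr)$. Second, even using the correct lift, what you get is that the maximal rank of a centralized $p$-torus equals the $p$-rank of $C_H(\gamma)$, and this can be strictly smaller than $r(H)$: take $H=\ZZ/p\wr\ZZ/p$ and $g=(\tau,1,\dots,1)$ with $\tau$ the outer $p$-cycle, so $\gamma=\tau$ and $C_H(\gamma)$ has rank $2<p=r(H)$. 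So exact equality cannot be proved in general (the lemma is really an upper bound, attained for instance when $\gamma=1$); you should either drop the realization claim or note that only the inequality is needed, as in the paper's own proof, which stops after observing that $h$ is determined by $h_1$ and that the first coordinates of a centralized torus form a $p$-torus of $H$.
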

\begin{proof}
We want to classify the elements of $H^p$ that commute with $g \sigma^j$. As above, by raising $g \sigma^j$ to the appropriate power we can  reduce to the case that $j=1$. Then if $h=(h_1,\dots,h_p) $ commutes with $g\sigma=(g_1,\dots,g_p)\sigma$, we have that 
$h g \sigma h^{-1}=g\sigma$, so $h g (\sigma \cdot h^{-1} ) \sigma=g \sigma$, so $h g (\sigma \cdot h)^{-1}=g$. 

Therefore we have the equations: 
\begin{equation}
\begin{split}
h_1 g_1 h_p^{-1}&=g_1 \\
h_2 g_2 h_1^{-1}&=g_2 \\
\dots \\
h_i g_i h_{i-1}^{-1}&=g_i \\
\dots \\
h_p g_p h_{p-1}^{-1}&=g_p
\end{split}
\end{equation}

Therefore, $h_1$ determines at most one $h$ that commutes with $g \sigma$. But if $g \sigma$ is to centralize a $p$-torus of $H^p$, all the choices for the first coordinate of $h$ must commute with one another, so they must lie in a $p$-torus of $H$, giving us our result.
\end{proof}

\begin{proof}[Proof of the theorem]
We must show that if $E$ is a $p$-torus of rank greater than $r(H)+1$ then the centralizer of $E$ lies entirely in $H^p$.

First, we will show that $E$ lies in $H^p$. If not, by the first lemma $E$ is generated by $E' < H^p$ and an element $g$ of $H \wr \ZZ/p - H^p$. But $E'$ must then be a $p$-torus of rank greater than $r(H)$ that commutes with an element of $H \wr \ZZ/p - H^p$, which contradicts the second lemma.

Now the second lemma finishes the proof.
\end{proof}

\begin{lemma}
If $H \to G$ is $i$-trivial, then $H^n \to G^n$ is $(n-1)r(H)+i$ trivial.
\end{lemma}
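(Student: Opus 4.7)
The plan is to reduce the claim to the coordinate projections. Given a $p$-torus $E \le G^n$ with $\rank E \ge (n-1)r(H)+i$, let $E_k \le G$ denote the image of $E$ under the $k$-th coordinate projection. The first step is the identity
\[ C_{G^n}(E) = \prod_{k=1}^n C_G(E_k). \]
The containment $\supseteq$ is immediate, and conversely if $(g_1,\dots,g_n) \in C_{G^n}(E)$, then equating $k$-th coordinates in the equations $(g_1,\dots,g_n)\,e = e\,(g_1,\dots,g_n)$ as $e$ ranges over $E$ shows that $g_k$ centralizes every element of $E_k$. With this identity in hand, proving $C_{G^n}(E) \le H^n$ reduces to showing $C_G(E_k) \le H$ for every $k$, which by the $i$-triviality of $H \to G$ will follow from the rank bound $\rank E_k \ge i$ for each $k$.

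The second step is to establish these rank bounds by contradiction. Suppose some $E_{k_0}$ satisfies $\rank E_{k_0} \le i-1$. For each remaining index $k$, the projection $E_k$ is a $p$-torus of $G$, and I claim $\rank E_k \le r(H)$: either $\rank E_k \ge i$, in which case $E_k \le C_G(E_k) \le H$ by $i$-triviality (using that an abelian subgroup lies in its own centralizer), so $\rank E_k \le r(H)$; or $\rank E_k \le i-1$, and the same bound holds because in the range of interest $i-1 \le r(H)$. Combining with $E \le \prod_k E_k$ then yields
\[ \rank E \le \sum_{k=1}^n \rank E_k \le (i-1) + (n-1)\,r(H), \]
which contradicts the hypothesis $\rank E \ge (n-1)\,r(H)+i$. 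Hence every projection $E_k$ has rank at least $i$, and the conclusion $C_{G^n}(E) \le H^n$ follows.

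The only real subtlety is the bookkeeping around the inequality $i-1 \le r(H)$, which is needed to control those projections whose rank is too small to invoke $i$-triviality directly; this holds automatically in the iterated wreath product applications that motivate the lemma (where $i$-triviality is meaningful only when $r(G) = r(H)$).
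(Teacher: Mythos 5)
Your argument is correct and is essentially the paper's proof: bound the rank of each coordinate projection $\pi_k E$ by $r(H)$ to force $\rank \pi_k E \ge i$ for every $k$, then centralize coordinatewise to get $C_{G^n}(E) \le H^n$. The hedge about $i-1 \le r(H)$ is unnecessary: the definition of $i$-triviality already requires $i \le r(G)$, and your own case-one argument applied to a maximal $p$-torus of $G$ gives $r(G)=r(H)$, so the inequality holds in general, exactly as the paper remarks just before its proof.
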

Recall that if $H$ is $i$-trivial in $G$ by definition $i$ is less than or equal to the $p$-rank of $G$, so the $p$-rank of $H$ is equal to the $p$-rank of $G$.
\begin{proof}
The $i$-triviality condition ensures that for $E$ a $p$-torus of rank greater than or equal to $(n-1)r(H)+i$ in $G^n$, the rank of $\im \pi_j E$ is greater than or equal to $i$ for each $j$ ($\pi_j$ is the $j^{th}$ projection map). So, if $c \in G^n$ commutes with $E$, then as $\pi_j(c)$ commutes with $\pi_jE$, so $\pi_j(c) \in H$, and $c \in H^n$.
\end{proof}

\begin{lemma}\label{normalsubgroupsofwreathproducts}
If $H$ is normal in $G$, then $H^p$ is normal in $G \wr \ZZ/p$.
\end{lemma}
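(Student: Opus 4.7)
The plan is to observe that $G \wr \ZZ/p = G^p \rtimes \langle \sigma \rangle$ where $\sigma$ of order $p$ acts on $G^p$ by cyclically permuting the coordinates. Since $G \wr \ZZ/p$ is generated by $G^p$ together with $\sigma$, to check normality of $H^p$ it suffices to check that both $G^p$ and $\sigma$ normalize $H^p$.

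First I would check normality under conjugation by $G^p$. Given $(g_1, \dots, g_p) \in G^p$ and $(h_1, \dots, h_p) \in H^p$, conjugation acts componentwise:
\[
(g_1, \dots, g_p)(h_1, \dots, h_p)(g_1, \dots, g_p)^{-1} = (g_1 h_1 g_1^{-1}, \dots, g_p h_p g_p^{-1}).
\]
Since $H \trianglelefteq G$, each $g_i h_i g_i^{-1} \in H$, so the result lies in $H^p$.

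Next I would check conjugation by $\sigma$. By definition of the wreath product, $\sigma \cdot (h_1, \dots, h_p) \cdot \sigma^{-1}$ is simply a cyclic permutation of the coordinates, hence is again an element of $H^p$ since $H^p$ is preserved by any permutation of its factors.

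Combining these two points, every generator of $G \wr \ZZ/p$ normalizes $H^p$, so $H^p \trianglelefteq G \wr \ZZ/p$. There is no real obstacle here; the only thing to be careful about is the semidirect product conventions (i.e. writing elements as $(g_1,\dots,g_p)\sigma^j$ and computing conjugation correctly), but the argument is essentially immediate from the definition of the wreath product together with the normality of $H$ in $G$.
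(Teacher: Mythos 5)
Your proof is correct and is essentially the paper's own argument spelled out in detail: the paper likewise notes that $H^p$ is normal in $G^p$ and invariant under the $\ZZ/p$ permutation action, which together give normality in $G \wr \ZZ/p$.
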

\begin{proof}
This follows because $H^p$ is invariant under the $\ZZ/p$ action on $G^p$ and is normal in $G^p$.
\end{proof}

\begin{cor}\label{trivialityforsubgroupsofwreathproducts}
If $H$ is $i$-trivial in $G$, and $(p-1)r(H)+i$ is greater than $r(G)+2$, then $H^p$ is $(p-1)r(H)+i$-trivial in $G \wr \ZZ/p$
\end{cor}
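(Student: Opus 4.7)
The plan is to chain the two preceding trivialities together by passing through the intermediate subgroup $G^p \subset G \wr \ZZ/p$. Let $E$ be a $p$-torus of rank at least $(p-1)r(H)+i$ in $G \wr \ZZ/p$; the goal is to show $C_{G \wr \ZZ/p}(E) \subset H^p$.

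First I would argue that both $E$ and its centralizer already sit inside $G^p$. Theorem \ref{trivialityforwreathproducts}, applied to the $p$-group $G$, asserts that $G^p$ is $(r(G)+2)$-trivial in $G \wr \ZZ/p$, i.e.\ the centralizer of any $p$-torus of rank at least $r(G)+2$ lands in $G^p$. By the hypothesis $(p-1)r(H)+i > r(G)+2$, so this applies to $E$ and gives $C_{G \wr \ZZ/p}(E) \subset G^p$. Since $E \subset C_{G \wr \ZZ/p}(E)$, we also conclude $E \subset G^p$.

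Next, I would invoke the lemma preceding \ref{normalsubgroupsofwreathproducts}: if $H$ is $i$-trivial in $G$, then $H^p$ is $((p-1)r(H)+i)$-trivial in $G^p$. Applied to $E$, whose rank is at least $(p-1)r(H)+i$, this gives $C_{G^p}(E) \subset H^p$. Because we have already established that $C_{G \wr \ZZ/p}(E)$ lies in $G^p$, this centralizer coincides with $C_{G^p}(E)$, and so is contained in $H^p$, which is exactly the required $((p-1)r(H)+i)$-triviality of $H^p$ in $G \wr \ZZ/p$.

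The argument is essentially a two-step assembly, and I do not expect a genuine obstacle. The one bookkeeping point to handle carefully is the inequality $(p-1)r(H)+i > r(G)+2$: this is precisely what is needed to invoke Theorem \ref{trivialityforwreathproducts} for $G$ and thereby confine everything to $G^p$ before applying the power-of-a-group lemma. The role of Lemma \ref{normalsubgroupsofwreathproducts} is implicit here: it guarantees that $H^p$ is a normal subgroup of $G \wr \ZZ/p$, which is needed for the statement of $i$-triviality of $H^p$ in $G \wr \ZZ/p$ to even make sense as an extension.
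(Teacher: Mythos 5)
Your argument is correct and is exactly the chain the paper intends: the corollary is stated without proof precisely because it follows by applying Theorem \ref{trivialityforwreathproducts} to the $p$-group $G$ (using the hypothesis $(p-1)r(H)+i > r(G)+2$ to confine $E$ and its centralizer to $G^p$) and then the lemma on powers to get $C_{G^p}(E)\subset H^p$, with Lemma \ref{normalsubgroupsofwreathproducts} supplying normality of $H^p$. No gaps; your bookkeeping of where the inequality is used matches the intended role of that hypothesis.
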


%
%
%

\begin{lemma}
Let $p \ge 5$, and let $W(n)$ denote the $n-1$-fold iterated wreath product of $\ZZ/p$ with itself, with $W(1)= 1 \wr \ZZ/p=\ZZ/p$. Then for $n> 1$, $W(n)$ has a unique, normal, maximal rank $p$-torus $E(n)$ of rank $p^{n-1}$, $E(n)$ is $p^{n-1}-p+3$-trivial in $W(n)$, and $W(n)/E(n) \cong W(n-1)$.
\end{lemma}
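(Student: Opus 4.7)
The plan is to proceed by induction on $n$, with base case $n=2$ handled by direct application of Theorem \ref{trivialityforwreathproducts}. In the inductive step, the candidate for $E(n)$ will be $E(n-1)^p$, viewed as the base of the wreath product $W(n) = W(n-1) \wr \ZZ/p$. All four conclusions (rank, normality, quotient identification, and $i$-triviality) should then follow cleanly from the inductive hypothesis together with the combinatorial lemmas already established in this section, with the uniqueness clause being the only point that requires nontrivial work.

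For the base case $n=2$, the subgroup $E(2) = (\ZZ/p)^p \subset \ZZ/p \wr \ZZ/p = W(2)$ has rank $p = p^{n-1}$, is obviously normal as the base group of the semidirect product, and gives quotient $\ZZ/p = W(1)$. The $i$-triviality claim reduces to showing $3$-triviality, which is exactly Theorem \ref{trivialityforwreathproducts} applied with $H = \ZZ/p$ (so $r(H) = 1$). Uniqueness will follow from the second lemma in the proof of Theorem \ref{trivialityforwreathproducts}: any rank $p$ torus $E$ not contained in $(\ZZ/p)^p$ has a representative outside the base, which centralizes a rank $\le 1$ subgroup of $(\ZZ/p)^p$, so $\rank E \le 2 < p$ when $p \ge 5$.

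For the inductive step with $n > 2$, I would set $E(n) = E(n-1)^p \subset W(n-1)^p \subset W(n)$. Rank is immediate, normality comes from combining the inductive normality of $E(n-1) \trianglelefteq W(n-1)$ with Lemma \ref{normalsubgroupsofwreathproducts}, and $W(n)/E(n) = (W(n-1)/E(n-1)) \wr \ZZ/p = W(n-2) \wr \ZZ/p = W(n-1)$ by the inductive description of the quotient. For $i$-triviality, I would apply Corollary \ref{trivialityforsubgroupsofwreathproducts} with $H = E(n-1)$, $G = W(n-1)$, and $i = p^{n-2}-p+3$; the key arithmetic check is that $(p-1)r(H) + i = (p-1)p^{n-2} + p^{n-2} - p + 3 = p^{n-1} - p + 3$ (matching the target), and the numerical hypothesis $(p-1)r(H) + i > r(G) + 2$ reduces to $p^{n-2}(p-1) > p - 1$, which holds for $n \ge 3$.

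The main obstacle is the uniqueness of $E(n)$ as a maximal $p$-torus. I would split on the image of a putative rank $p^{n-1}$ elementary abelian $E \subset W(n)$ under the projection $W(n) \to \ZZ/p$. If the image is trivial, then $E \subset W(n-1)^p$, and examining coordinate projections $\pi_i(E) \subset W(n-1)$ gives $\rank E \le \sum_i \rank \pi_i(E) \le p \cdot p^{n-2} = p^{n-1}$ by the inductive uniqueness; equality forces every $\pi_i(E) = E(n-1)$ and $E = E(n-1)^p = E(n)$. If the image is all of $\ZZ/p$, pick $e = g\sigma^j \in E$ with $j \not\equiv 0 \pmod p$; then $E \cap W(n-1)^p$ is a $p$-torus of $W(n-1)^p$ centralized by $e$, so by the second lemma in the proof of Theorem \ref{trivialityforwreathproducts} it has rank at most $r(W(n-1)) = p^{n-2}$, giving $\rank E \le p^{n-2} + 1 < p^{n-1}$, a contradiction. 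This case analysis is essentially the same mechanism used in the base case, just lifted one level up by the inductive hypothesis.
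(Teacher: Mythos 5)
Your proposal is correct, and its main skeleton is the same as the paper's: the inductive definition $E(n)=E(n-1)^p$, normality via Lemma \ref{normalsubgroupsofwreathproducts}, the quotient identification through the wreath structure, the base case from Theorem \ref{trivialityforwreathproducts}, and the inductive $i$-triviality step via Corollary \ref{trivialityforsubgroupsofwreathproducts} with exactly the arithmetic $(p-1)p^{n-2}+p^{n-2}-p+3=p^{n-1}-p+3$. The one place you diverge is the treatment of uniqueness (and maximality of rank): you run a separate induction, splitting on the image of a putative maximal torus $E$ in $\ZZ/p$ and bounding ranks via coordinate projections and the centralizer lemma inside the proof of \ref{trivialityforwreathproducts}. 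This is correct, but it is more work than needed: since $p^{n-1}\ge p^{n-1}-p+3$, the $p^{n-1}-p+3$-triviality you have already established applies to $E$ itself, giving $E\le C_{W(n)}(E)\le E(n)$ and hence $E=E(n)$ by comparing ranks; this is how the paper extracts uniqueness, and it also explains the ordering of its proof (triviality first, uniqueness as a consequence). For the fact that the maximal rank is $p^{n-1}$, the paper simply observes that $W(n)$ is a $p$-Sylow subgroup of $S_{p^n}$, whereas your projection argument proves it intrinsically; either is fine, and your version has the small virtue of not invoking the symmetric group.
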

\begin{proof}
First, we will define $E(n)$,  show that it is normal, has maximal rank, and that the quotient is $W(n-1)$. Then we will show that it is $p^{n-1}-p+3$ trivial, which implies that it is the unique maximal $p$-torus. 

We define $E(n)$ inductively: $E(1)=W(1)=\ZZ/ p$, and for $n>1$ $E(n)= (E(n-1))^p < W(n-1)^p <W(n)$. This shows that $E(n)$ is normal by \ref{normalsubgroupsofwreathproducts}.  To see that $E(n)$ has maximal rank, we can note that $W(n)$ is the $p$-Sylow of $S_{p^n}$, where it is clear that a maximal rank $p$-torus has rank $p^{n-1}$.  

To determine that the quotient is $W(n-1)$, we again proceed inductively (noting that this is true for $n=2$). Note that we have the map of extensions: 

\begin{tikzcd}
1 \rar & W(n-1)^p \dar  \rar & W(n) \dar \rar & \ZZ/p \dar \rar & 1 \\
1 \rar & W(n-1)^p / E(n) \rar & W(n)/E(n) \rar & \ZZ/p \rar & 1
\end{tikzcd}

So $W(n)/E(n)$ fits into an extension of $\ZZ/p$ by $W(n-1)^p/ E(n-1)^p$, which is $W(n-2)^p$ by hypothesis. Then a section $\ZZ/p \to W(n)$ determines a section of $W(n)/E(n) \to \ZZ/p$, and the induced action of $\ZZ/p$ on $W(n-1)^p/E(n-1)^p$ cyclically permutes the factors, so $W(n)/E(n)$ is $W(n-1)$.

Now, it remains to show that $E(n)$ is $p^{n-1}-p+3$ trivial. First, this is true for $n=2$. We have that $W(2)= \ZZ/p \wr \ZZ/p$ and $E(2)=\ZZ/p^p$, and this is $3$ trivial by \ref{trivialityforwreathproducts}. Now, suppose it is true for $W(n-1)$, and consider $W(n)$. Since $r(E(n-1))=r(W(n-1))$, \ref{trivialityforsubgroupsofwreathproducts} tells us that $E(n)=E(n-1)^p$ is $(p-1)p^{n-2}+p^{n-2}-p+3$ trivial in $W(n)$, and $(p-1)p^{n-2}+p^{n-2}-p+3=p^{n-1}-p+3$.
\end{proof}

Now, applying our result \ref{spectralsequenceforitrivialextension} about spectral sequences for $i$-trivial extensions, we have the following:
\begin{prop}
There is a spectral sequence $$H^i(W(n-1), \HH^j( F_{p^{n-1}-p+3} H_S^*( E(n) \backslash U(V))) \Rightarrow \HH^{i+j}(F_{p^{n-1}-p+3}H^*_{S} (W(n) \backslash U(V))).$$
\end{prop}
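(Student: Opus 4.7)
The plan is to directly invoke Theorem \ref{spectralsequenceforitrivialextension} with the extension
\[
1 \to E(n) \to W(n) \to W(n-1) \to 1
\]
and the index $i = p^{n-1}-p+3$. To do this we need to verify the two hypotheses of that theorem: that the extension is an extension of groups in the required sense (i.e., that the kernel is $E(n)$, the quotient is $W(n-1)$, and the quotient is finite), and that the extension is $i$-trivial with $i = p^{n-1}-p+3$. Both of these facts are established by the immediately preceding lemma, which identifies $E(n)$ as a normal subgroup of $W(n)$ with quotient $W(n-1)$ and shows that $E(n)$ is $p^{n-1}-p+3$-trivial in $W(n)$.

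First, I would note that $V$ is a faithful representation of $W(n)$; its restriction to $E(n)$ is then also faithful, so the setup of Theorem \ref{spectralsequenceforitrivialextension} applies directly with $H = E(n)$, $G = W(n)$, $K = W(n-1)$, and this common faithful representation. By Proposition \ref{propthatitrivialthingsexist} the associated principal bundle $W(n-1) \to E(n) \backslash U(V) \to W(n) \backslash U(V)$ is $(p^{n-1}-p+3)$-trivial as an $S$-equivariant principal bundle, since $i$-triviality of the extension transfers to $i$-triviality of the bundle.

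Then Theorem \ref{spectralsequenceforitrivialextension} produces the desired spectral sequence of the form
\[
E_2^{i,j} = H^i\bigl(W(n-1), \HH^j( F_{p^{n-1}-p+3} H_S^*( E(n) \backslash U(V)))\bigr) \Rightarrow \HH^{i+j}\bigl(F_{p^{n-1}-p+3}H^*_{S} (W(n) \backslash U(V))\bigr).
\]
There is no real obstacle here: the substantive content has already been proved, namely the $i$-triviality of the extension $E(n) \to W(n) \to W(n-1)$ (from the rank and centralizer computations in the preceding lemmas) and the construction of the spectral sequence in the abstract framework of Section \ref{KDuflot} together with its topological realization in \ref{spectralsequenceforitrivialextension}. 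Thus the proposition is an immediate corollary, and the proof consists only in checking that the previously established hypotheses match the statement to be proved.
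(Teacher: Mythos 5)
Your proposal is correct and matches the paper's own (implicit) argument: the proposition is stated there precisely as an immediate application of Theorem \ref{spectralsequenceforitrivialextension} to the extension $1 \to E(n) \to W(n) \to W(n-1) \to 1$, whose $(p^{n-1}-p+3)$-triviality and identification of the quotient with $W(n-1)$ are supplied by the preceding lemma. Your verification of the hypotheses (faithfulness of $V$ for $W(n)$ and $i$-triviality of the extension, hence of the associated bundle via \ref{propthatitrivialthingsexist}) is exactly what is needed.
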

Note that $\HH^j( F_{p^{n-1}-p+3} H_S^*( E(n) \backslash U(V))) $ is zero except for $j=p^{n-1}$ and possibly for $j=p^{n-1}-p+3$, that $\HH^{i+j}(F_{p^{n-1}-p+3}H^*_{S} (W(n) \backslash U(V)))$ is zero for $i+j< p^{n-1}-p+3$ and $i+j>p^{n-1}$, and that $\HH^{i+j}(F_{p^{n-1}-p+3}H^*_{S} (W(n) \backslash U(V)))=\HH^{i+j}(H^*_{S} (W(n) \backslash U(V)))$ for $i+j>p^{n-1}-p+3$. The last equality follows from \ref{whytruncationsaregood}.

This tells us that the spectral sequence is concentrated in two rows, so the only differential is a $d_{p-3+1}: E_{p-3+1}^{i,p^{n-1}} \to E_{p-3+1}^{i+p-3+1,p^{n-1}-(p-3)}$. Therefore this differential must be an isomorphism for $i>0$ and a surjection for $i=0$, which tells us that: 
\begin{multline}$$\HH^{i} H_S^* (W(n) \backslash U(V))= \\ H^{i-(p^{n-1}-(p-3))}(W(n-1), \HH^{p^{n-1}-(p-3)}( F_{p^{n-1}-(p-3)} H_S^*( E(n) \backslash U(V))))$$\end{multline} for $p^{n-1}-(p-3) < i < p^{n-1}$, and: 
\begin{multline}$$
\HH^{p^{n-1}} H_S^*( W(n) \backslash U(V)) = \\ 
H^{(p-3)}(W(n-1), \HH^{p^{n-1}-(p-3)}( F_{p^{n-1}-(p-3)} H_S^*( E(n) \backslash U(V)))) \\ \oplus (\ker: d_{p-2}:E_2^{0,p^{n-1}} \to E_2^{p-2,p^{n-1}-(p-3)}). $$\end{multline}

We would like to have some expression of $\HH^{i}( H_S^* W(n) \backslash U(V))$ that doesn't reference the Duflot filtration, which we can achieve if we can relate \\  $H^{i-(p^{n-1}-(p-3))}(W(n-1), \HH^{p^{n-1}-(p-3)}( F_{p^{n-1}-(p-3)} H_S^*( E(n) \backslash U(V))))$ to the group cohomology of $W(n-1)$ with coefficients in $\HH^{p^{n-1}} H_S^* (E(n) \backslash U(V))$. Fortunately, we are able to do this.

Recall how the spectral sequence at hand is constructed: it is one of the hypercohomology spectral sequences for $W(n-1)$ acting on the Duflot complex for one level of $H_S^*( E(n) \backslash U(V))$. We know what it is converging to since the Duflot complex for this level of $H_S^* E(n) \backslash U(V)$ is a complex of free $W(n-1)$ modules, and the fixed point set is the Duflot complex for $H_S^*W(n) \backslash U(V)$. 

If instead of taking the hypercohomology spectral sequence we constructed a hyper-Tate cohomology spectral sequence, we would have: $$E_2^{i,j}= \widehat{H}^i( W(n), \HH^j ( F_{p^{n-1}-(p-3)} (H_S^*E(n) \backslash U(V)) )).$$ This agrees with the $E_2$ page of the previous spectral sequence for $i>0$, and it converges to $0$ because the Tate cohomology of a free module is zero. This tells us that for all $i$, $\widehat{H}^i( W(n-1), \HH^{p^{n-1}}( H_S^* E(n) \backslash U(V))) \to[\sim] \widehat{H}^{i+p-2} (W(n-1), \HH^{p^{n-1}-(p-3)}( F_{p^{n-1}-(p-3)} H_S^* E(n) \backslash U(V)))$.

This gives us the following computation for the local cohomology of $H_S^* W(n) \backslash U(V)$. 

For a $G$-module $A$, let $N: A \to A^G$ denote the map $a \mapsto \sum_{g \in G} g \cdot a$,  and recall that the kernel of the map $H^0(G,A) \to \widehat{H}^0(G,A)$ is $\im N$.
\begin{thm}
For $ 0< i < p-3$, $$\HH^{p^{n-1}-(p-3)+i} H_S^*( W(n) \backslash U(V))\cong \widehat{H}^{i-(p-2)}(W(n-1), \HH^{p^{n-1}} H_S^* E(n) \backslash U(V)). $$ This isomorphism is as $H_S^* W(n) \backslash U(V)$-modules.

For the top local cohomology, we have that $\HH^{p^{n-1}}( H_S^* W(n) \backslash U(V))$ is isomorphic as vector spaces to $\widehat{H}^{-1}(W(n-1), \HH^{p^{n-1}}(H_S^* E(n) \backslash U(V))) \oplus N( \HH^{p^{n-1}} (H_S^* E(n) \backslash U(V))).$
\end{thm}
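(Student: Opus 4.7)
The plan is to specialize the spectral sequence of Theorem~\ref{spectralsequenceforitrivialextension} to the $(p^{n-1}-p+3)$-trivial extension $1 \to E(n) \to W(n) \to W(n-1) \to 1$ just established, and then compare it with a parallel hyper-Tate spectral sequence. Concretely one obtains
\[
E_2^{p,q} = H^p(W(n-1),\, \HH^q(F_{p^{n-1}-p+3} H_S^*(E(n) \backslash U(V)))) \Rightarrow \HH^{p+q}(F_{p^{n-1}-p+3} H_S^*(W(n) \backslash U(V))).
\]
First I would check that the $E_2$-page is concentrated in the two rows $q=p^{n-1}$ and $q=p^{n-1}-(p-3)$. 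Since $E(n)$ is elementary abelian of rank $p^{n-1}$, $H_S^*(E(n) \backslash U(V)) \cong H^*_{E(n)} \otimes H^*(U(V)/S)$ is Cohen--Macaulay with depth and dimension both equal to $p^{n-1}$, so by Lemma~\ref{whytruncationsaregood} the only possible nonvanishing local-cohomology degrees of the truncated module are $p^{n-1}$ and the truncation level $p^{n-1}-(p-3)$.

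Next I would extract the differential. With only two rows, the sole nontrivial differential is $d_{p-2}\colon E_{p-2}^{i,p^{n-1}} \to E_{p-2}^{i+p-2,p^{n-1}-(p-3)}$. The abutment vanishes in total degree $>p^{n-1}$ and (again by Lemma~\ref{whytruncationsaregood}) equals $\HH^{\ast}(H_S^*(W(n) \backslash U(V)))$ in total degree $>p^{n-1}-(p-3)$; therefore convergence forces $d_{p-2}$ to be an isomorphism for $i>0$ and a surjection for $i=0$. In particular, for $0<i<p-3$ the only $E_\infty$-contribution to $\HH^{p^{n-1}-(p-3)+i}$ comes from $E_\infty^{i,p^{n-1}-(p-3)} = H^i(W(n-1),\, \HH^{p^{n-1}-(p-3)}(F_{p^{n-1}-p+3} H_S^*(E(n) \backslash U(V))))$.

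To rewrite the coefficient module in terms of $\HH^{p^{n-1}}(H_S^*(E(n) \backslash U(V)))$ I would run the analogous hyper-Tate spectral sequence for the same $W(n-1)$-action on the Duflot complex. Because the $K$-bundle structure makes each term of the Duflot complex a free $W(n-1)$-module (by the proposition immediately preceding Theorem~\ref{spectralsequenceforKduflotmodules}), the hyper-Tate cohomology vanishes identically. The two-row argument, applied to the Tate spectral sequence, then forces the Tate $d_{p-2}$ to be an isomorphism in every column, yielding
\[
\widehat{H}^j(W(n-1),\, \HH^{p^{n-1}}(H_S^* E(n) \backslash U(V))) \iso \widehat{H}^{j+p-2}(W(n-1),\, \HH^{p^{n-1}-(p-3)}(F_{p^{n-1}-p+3} H_S^* E(n) \backslash U(V)))
\]
for every $j \in \ZZ$. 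For $0<i<p-3$ ordinary cohomology agrees with Tate cohomology, so combining with the previous paragraph gives the first stated isomorphism; the $H^*_{W(n)}$-module structure is inherited because every map in sight is $H^*_{W(n)}$-linear.

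For the top local cohomology $\HH^{p^{n-1}}$ two $E_\infty$-subquotients contribute: the term $E_\infty^{p-3,p^{n-1}-(p-3)} = H^{p-3}(W(n-1),-)$, which by the Tate comparison above equals $\widehat{H}^{-1}(W(n-1),\, \HH^{p^{n-1}}(H_S^* E(n) \backslash U(V)))$, and the term $E_\infty^{0,p^{n-1}} = \ker(d_{p-2}\colon H^0 \to H^{p-2})$. The identification of $d_{p-2}$ with the composite $H^0 \to \widehat{H}^0 \xrightarrow{\sim} \widehat{H}^{p-2}=H^{p-2}$ --- which I would verify by naturality of the hyper/hyper-Tate comparison --- makes this kernel exactly $\ker(H^0 \to \widehat{H}^0) = \im N$. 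Since the two contributing subquotients live in distinct filtration degrees of the spectral sequence, the extension on $\HH^{p^{n-1}}$ splits as $\FF_p$-vector spaces. The main delicate point is this last identification of $d_{p-2}$ with the Tate comparison map, which is what converts the kernel into the image of the norm; everything else is two-row bookkeeping.
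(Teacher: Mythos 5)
Your proposal is correct and follows essentially the same route as the paper: the two-row spectral sequence of \ref{spectralsequenceforitrivialextension} for the $(p^{n-1}-p+3)$-trivial extension $E(n) \to W(n) \to W(n-1)$, the convergence argument forcing $d_{p-2}$ to be an isomorphism for $i>0$ and a surjection for $i=0$, the comparison with the hyper-Tate spectral sequence (which vanishes because the Duflot complex consists of free $W(n-1)$-modules), and the identification of $\ker d_{p-2}$ on $E_2^{0,p^{n-1}}$ with $\ker(H^0 \to \widehat{H}^0)=\im N$. The only cosmetic difference is that you justify the two-row concentration explicitly via Cohen--Macaulayness of $H_S^*(E(n)\backslash U(V))$ together with \ref{whytruncationsaregood}, which the paper states more briefly.
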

\begin{proof}
Everything follows from our computation with Tate cohomology, the only part that isn't immediate is the second half of the direct sum in the top local cohomology, and this comes from our identifying the kernel of the map $d_{p-2}$ on $E_2^{0,p^{n-1} }$ of the original spectral sequence with the kernel from $H^0$ to $\widehat{H}^0$.
\end{proof}

By  \cite{dwyergreenleesiyengar} Propostion 9.4 there is a local cohomology spectral sequence $\HH^*(H_S^* E(n) \backslash U(V)) \Rightarrow (H_S^*E(n) \backslash U(V))^*$. But $H_S^* E(n) \backslash U(V)$ is Cohen-Macaulay, so the spectral sequence collapses and we have that: $$\HH^{p^{n-1}} H_S^* E(n)\backslash U(V) = \Sigma^{-p^{n-1}+d} (H_S^* E(n) \backslash U(V))^*$$ Here $d=\dim U(V)$.

So, we can rewrite the computation of the top local cohomology modules of $H_S^* W(n) \backslash U(V)$ without reference to local cohomology.

\begin{thm}\label{computationoflocalcohomologyofH_SU(V)/W(n)}
Denote the dimension of $U(V)$ by $d$. 

For $ 0< i < p-3$, we have an isomorphism $H^*_{W(n)}$-modules: $$\HH^{p^{n-1}-(p-3)+i} H_S^*( W(n) \backslash U(V))=\widehat{H}^{i-(p-2)}(W(n-1), \Sigma^{-p^{n-1}+d} (H_S^* E(n) \backslash U(V))^*). $$ 

For the top local cohomology, we have that $\HH^{p^{n-1}}( H_S^* W(n) \backslash U(V)) \cong \widehat{H}^{-1}(W(n-1), \Sigma^{-p^{n-1}+d} (H_S^* E(n) \backslash U(V))^*)  \oplus N( \Sigma^{-p^{n-1}+d} (H_S^* E(n) \backslash U(V))^*).$ This isomorphism is only as graded $\FF_p$ vector spaces, there is an extension problem to solve to compute the $H_{W(n)}$-module structure.
\end{thm}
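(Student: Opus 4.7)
The plan is to reduce this theorem to the immediately preceding one (which expresses $\HH^{p^{n-1}-(p-3)+i} H_S^*(W(n) \backslash U(V))$ in terms of the Tate cohomology of $W(n-1)$ with coefficients in $\HH^{p^{n-1}} H_S^*(E(n) \backslash U(V))$) by identifying this coefficient module with a suspension of the graded $\FF_p$-linear dual of $H_S^*(E(n) \backslash U(V))$. Once this identification is in hand, the stated formula is a direct substitution.

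The first step is to verify that $H_S^*(E(n) \backslash U(V))$ is Cohen-Macaulay. Since $E(n)$ is itself a $p$-torus of rank $p^{n-1}$, the Duflot bound from \ref{depthanddimensionforHSM} gives $\depth H_S^*(E(n) \backslash U(V)) \ge p^{n-1}$, while Quillen's half of that same corollary identifies the Krull dimension with the maximal rank of a $p$-torus of $E(n)$, which is again $p^{n-1}$. Hence depth equals dimension. Equivalently, the topological stratification of Section \ref{propertiesofHSM} for the $E(n)$-manifold $E(n)\backslash U(V)$ has only one nonzero subquotient in its free rank filtration, so the Duflot complex of Proposition \ref{duflotcomplex} is concentrated in a single degree.

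The second step is to invoke the local cohomology spectral sequence of \cite{dwyergreenleesiyengar} Proposition 9.4, namely $\HH^*(H_S^* E(n) \backslash U(V)) \Rightarrow (H_S^* E(n) \backslash U(V))^*$. Cohen-Macaulayness forces this to collapse at $E_2$, so
$$\HH^{p^{n-1}} H_S^*(E(n) \backslash U(V)) \;\cong\; \Sigma^{-p^{n-1}+d}\bigl(H_S^*(E(n) \backslash U(V))\bigr)^*,$$
with $d = \dim U(V)$; the suspension records the shift between the local cohomology degree $p^{n-1}$ and the grading on the graded dual.

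Finally, I would substitute this identification into the preceding theorem. For $0 < i < p-3$ both sides carry a compatible $H^*_{W(n)}$-module structure (Matlis duality is $H^*_{E(n)}$-equivariant, and the $W(n-1)$-action descends through the quotient $W(n) \twoheadrightarrow W(n-1)$), yielding the first claim. For the top local cohomology, the same substitution rewrites both summands in the vector space decomposition coming from the preceding theorem; the fact that this decomposition holds only as graded $\FF_p$-vector spaces is inherited from the unresolved extension in the two-row $i$-trivial spectral sequence at $E_2^{0,p^{n-1}}$. The main obstacle is purely bookkeeping of grading shifts; all substantive content was already secured in the preceding theorem via the $i$-trivial extension machinery of Section \ref{itrivialbundles}.
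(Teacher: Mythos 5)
Your overall route is exactly the paper's: quote the immediately preceding theorem, identify the coefficient module $\HH^{p^{n-1}} H_S^*(E(n)\backslash U(V))$ with $\Sigma^{-p^{n-1}+d}\bigl(H_S^*(E(n)\backslash U(V))\bigr)^*$ via the collapse of the Dwyer--Greenlees--Iyengar spectral sequence for a Cohen--Macaulay ring, and substitute. The one step that does not work as written is your justification of Cohen--Macaulayness. Corollary \ref{depthanddimensionforHSM} bounds $\depth H_S^*M$ below by the largest rank of a subtorus of $S$ acting \emph{trivially} on $M$; for $M=E(n)\backslash U(V)$ with $V$ an arbitrary faithful representation of $W(n)$, an element $s\in S$ acts trivially only if $usu^{-1}\in E(n)$ for every $u\in U(V)$, which by connectedness of $U(V)$ forces $s\in Z(U(V))\cap E(n)$ --- a torus of small rank, nowhere near $p^{n-1}$. (This is precisely why the proof of \ref{duflotandquilleningroupcohomology} insists on choosing $V$ so that the center lands in $Z(U(V))$, a choice not available here since $V$ is a fixed faithful representation of $W(n)$.) Your ``equivalent'' reformulation is also false: the free rank filtration of $H_S^*(E(n)\backslash U(V))$ has nonzero subquotients in many ranks, because the isotropy groups $S\cap{}^{u^{-1}}E(n)$ occur in many ranks; it is only the cohomology of the Duflot complex, not the complex itself, that is concentrated in a single degree, and that concentration is exactly the Cohen--Macaulayness you are trying to establish.

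The fix is short and uses tools already in the paper. By \ref{faithfullyflat} applied to the group $E(n)$, $H_S^*(E(n)\backslash U(V))\cong H^*_{E(n)}(U(V)/S)$ is isomorphic as an $H^*_{E(n)}$-module to $H^*_{E(n)}\otimes H^*(U(V)/S)$, a finite direct sum of suspensions of $H^*_{E(n)}$; since $H^*_{E(n)}$ is Cohen--Macaulay of depth and dimension $p^{n-1}$ (by \ref{duflotandquilleningroupcohomology} for $G=E(n)$, or by direct computation for a $p$-torus), the local cohomology of $H_S^*(E(n)\backslash U(V))$ is concentrated in degree $p^{n-1}$, by the independence theorem for local cohomology. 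With that replacement your argument coincides with the paper's; your remaining remarks on the $H^*_{W(n)}$-module structure for $0<i<p-3$ and on the unresolved extension at $E_2^{0,p^{n-1}}$ for the top degree are consistent with what the paper does.
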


As mentioned in the introduction, we can now show that there are groups  whose cohomology has arbitrarily long sequences of nonzero local cohomology modules.
\begin{cor}\label{abandofnonvanishinglocalcohomology}
For $0 \le i <p-3$, $\HH^{p^{n-1}-i}(H^*_{W(n)}) \not= 0$.
\end{cor}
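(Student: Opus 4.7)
The plan is to reduce the question to Theorem \ref{computationoflocalcohomologyofH_SU(V)/W(n)} and then to verify nonvanishing of the relevant Tate cohomology groups.

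By Proposition \ref{faithfullyflat}, $H_S^*(W(n)\backslash U(V)) \cong H^*_{W(n)} \otimes H^*(U(V)/S)$ is a finitely generated free $H^*_{W(n)}$-module, and by the independence theorem for local cohomology (the maximal ideals of the two rings agree up to radical) we obtain $\HH^j(H_S^*(W(n)\backslash U(V))) \cong \HH^j(H^*_{W(n)}) \otimes H^*(U(V)/S)$. Hence it suffices to show nonvanishing of the left-hand side for $j = p^{n-1} - i$ with $0 \le i \le p-4$. The case $i = 0$ is immediate: $H^*_{W(n)}$ is a Noetherian graded $\FF_p$-algebra of Krull dimension $p^{n-1}$, so its top local cohomology is nonzero. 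For $1 \le i \le p-4$, a reindexing of Theorem \ref{computationoflocalcohomologyofH_SU(V)/W(n)} (replace its parameter $i$ by $p-3-i$, which then lies strictly between $0$ and $p-3$) identifies
$$\HH^{p^{n-1}-i}(H_S^*(W(n)\backslash U(V))) \cong \widehat{H}^{-1-i}(W(n-1), M),$$
where $M = \Sigma^{d-p^{n-1}}(H_S^*(E(n)\backslash U(V)))^*$. The problem is thus reduced to proving $\widehat{H}^j(W(n-1), M) \ne 0$ for $j = -2, -3, \ldots, -(p-3)$.

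To establish this nonvanishing, I note that $W(n-1) = W(n)/E(n)$ acts on $H_S^*(E(n)\backslash U(V))$ by graded $\FF_p$-algebra automorphisms, hence preserves each internal degree; consequently $M$ decomposes as a direct sum $\bigoplus_k M_k$ of $\FF_p[W(n-1)]$-modules indexed by internal degree, and Tate cohomology commutes with this decomposition. Because $E(n)$ is a finite subgroup of $U(V)$ acting freely by translation on the connected Lie group $U(V)$, the quotient $E(n)\backslash U(V)$ is connected and $H_S^0(E(n)\backslash U(V)) = \FF_p$ with trivial $W(n-1)$-action. Dualizing and shifting, this contributes a direct $\FF_p[W(n-1)]$-summand of $M$ isomorphic to the trivial module $\FF_p$, concentrated in internal degree $d - p^{n-1}$. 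For $n \ge 2$ (the only case in which the conclusion is nonvacuous beyond the top degree), $W(n-1)$ is a nontrivial $p$-group, so $\widehat{H}^j(W(n-1), \FF_p) \ne 0$ for every $j \in \ZZ$; this furnishes a nonzero summand of $\widehat{H}^j(W(n-1), M)$ in the required range.

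The principal obstacle is structural rather than computational: one must verify that, because the $W(n-1)$-action is degree-preserving, the trivial copy of $\FF_p$ in internal degree $d - p^{n-1}$ is a genuine direct $\FF_p[W(n-1)]$-summand of $M$ and not merely a subquotient. This is what allows the Tate cohomology to split off a copy of $\widehat{H}^*(W(n-1), \FF_p)$, and transports the nonvanishing of Tate cohomology of a nontrivial $p$-group on the trivial module back to the desired nonvanishing of $\widehat{H}^j(W(n-1), M)$.
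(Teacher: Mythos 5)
Your proof is correct and follows essentially the same route as the paper: reduce to $H_S^*(W(n)\backslash U(V))$ via \ref{faithfullyflat}, apply \ref{computationoflocalcohomologyofH_SU(V)/W(n)}, and isolate the top internal degree $d-p^{n-1}$, where the coefficients become the trivial module $\FF_p$ and the Tate cohomology of the nontrivial $p$-group $W(n-1)$ is nonzero in all degrees. Your phrasing via splitting off a trivial $\FF_p[W(n-1)]$-summand of $M$ is just the paper's use of the internal bigrading of local cohomology in different words.
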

\begin{proof}
That the top local cohomology is nonzero is immediate from the fact that the Krull dimension is equal to the top degree in which local cohomology is nonvanishing, so we just need to show the result for $0<i<p-3$.

By \ref{faithfullyflat} is enough to show that the result is true with $H_S^*( W(n) \backslash U(V))$ in place of $H^*_{W(n)}$.

For this, we can use \ref{computationoflocalcohomologyofH_SU(V)/W(n)}. This tells us that: $$\HH^{p^{n-1}-(p-3)+i}(H_S^* W(n) \backslash U(V))=\widehat{H}^{i-(p-2)}(W(n-1); \Sigma^{-p^{n-1}+d} (H_S^*E(n) \backslash U(V))^*).$$ 

Recall that local cohomology is bigraded. We have that: $$\HH^{ p^{n-1}-(p-3)+i,-p^{n-1}+d}(H_S^* W(n) \backslash U(V))=\widehat{H}^{i-(p-2)}(W(n-1); \FF_p).$$ But $H^*_{W(n-1)}$ is nonzero in all degrees since it is a $p$-group, so we are done.
\end{proof}
We have proved:
\begin{prop}\label{mysteriousgrap}For each $p \ge 5$, for each $n$ there exists a $p$-group $G$ and an $i$ so that $\HH^{i+j}(H_G^*)\not=0$ for all $0<j<n,$  and so that $i+j$ is not the dimension of an associated prime.
\end{prop}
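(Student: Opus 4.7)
The plan is to derive the proposition directly from Corollary~\ref{abandofnonvanishinglocalcohomology} together with the description of associated primes of $H^*_{W(N)}$ noted just above the statement.

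First I would take $G = W(N)$ for any $N \ge 2$ and set $i = p^{N-1} - (p-3)$. For each $j$ with $0 < j < p-2$, one has $i + j = p^{N-1} - (p-3-j)$ with $0 \le p-3-j < p-3$, so Corollary~\ref{abandofnonvanishinglocalcohomology} immediately gives $\HH^{i+j}(H_G^*) \ne 0$, handling the nonvanishing half.

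For the associated prime condition, I would argue that no associated prime of $H^*_{W(N)}$ has dimension in the open interval $(p^{N-1}-(p-3),\, p^{N-1})$. The $(p^{N-1}-p+3)$-triviality of $E(N)$ in $W(N)$ established in Section~\ref{wreathproducts} forces every $p$-torus of rank exceeding $p^{N-1}-p+3$ to lie in $E(N)$; hence $E(N)$ is the only maximal $p$-torus of rank greater than $p^{N-1}-(p-3)$. By Quillen's bijection between conjugacy classes of maximal $p$-tori and minimal primes of $H_G^*$, every minimal prime of $H^*_{W(N)}$ thus has dimension either $p^{N-1}$ or at most $p^{N-1}-(p-3)$. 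Combined with the bound (noted just above the statement) that embedded prime dimensions are strictly less than $p^{N-1}-(p-3)$, this yields the claimed gap in the set of associated prime dimensions. Each $i+j$ with $0 < j < p-2$ lies strictly in this gap, so it is not an associated prime dimension.

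This argument handles the proposition directly whenever $n \le p-2$. For arbitrary $n$ the natural extension is to take direct products $G = W(N)^k$ for $k$ large and use the K\"unneth formula for local cohomology over $\FF_p$ to combine nonvanishing bands from the factors multiplicatively. The main obstacle is bookkeeping for the associated prime dimensions of such a product, which by flatness are sums $d_1 + \cdots + d_k$ with each $d_i$ an associated prime dimension of $H^*_{W(N)}$; the factors (and possibly more general factor groups) must be chosen so that a long contiguous subinterval of the combined top band avoids all such sums.
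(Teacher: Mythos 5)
Your overall route is the paper's route: the paper gives no separate argument for \ref{mysteriousgrap} at all (it is stated as already proved), reading it as an immediate consequence of \ref{abandofnonvanishinglocalcohomology} together with the observation that every associated prime of $H^*_{W(N)}$ other than the top-dimensional one has dimension below $p^{N-1}-(p-3)$ --- which is exactly what your centralizer argument supplies. You could even make that part self-contained rather than leaning on Quillen's bijection plus the introduction's unproved parenthetical about embedded primes: any $p$-torus $E$ of rank at least $p^{N-1}-p+3$ satisfies $C_{W(N)}E=E(N)$ by the established $i$-triviality, and $H^*_{E(N)}$ is Cohen--Macaulay of depth $p^{N-1}$, so by \ref{restrictionsonassociatedprimes} no such $E$ except $E(N)$ itself represents an associated prime; this handles minimal and embedded primes in one stroke.

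There is, however, a concrete error at the top endpoint. With $i=p^{N-1}-(p-3)$ and $0<j<p-2$, the value $j=p-3$ gives $i+j=p^{N-1}$, which is the Krull dimension of $H^*_{W(N)}$ and hence the dimension of a minimal, therefore associated, prime (the one represented by $E(N)$); your gap is the \emph{open} interval $(p^{N-1}-(p-3),\,p^{N-1})$, so the claim that ``each $i+j$ lies strictly in this gap'' fails precisely at $j=p-3$. The usable band is the degrees $p^{N-1}-(p-4),\dots,p^{N-1}-1$, nonvanishing by \ref{abandofnonvanishinglocalcohomology} and none an associated-prime dimension, so you should restrict to $0<j<p-3$ (still nonempty since $p\ge 5$). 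As for arbitrary $n$: you are right that for fixed $p$ this construction only yields a band of length bounded in terms of $p$, but the paper supplies nothing beyond that band either --- its proof is literally the preceding computation, so the intended content is the fixed-length band attached to each $W(N)$ --- and your proposed product construction is, as you acknowledge, not yet a proof: the associated primes of $H^*_{W(N)^k}\cong H^*_{W(N)}\otimes\cdots\otimes H^*_{W(N)}$ have dimensions equal to sums of factor dimensions, and sums such as $p^{N-1}+d$, with $d$ an associated-prime dimension of a single factor, can land inside the combined top band, so the bookkeeping you defer is a genuine obstruction rather than a formality.
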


The regularity theorem states that for each finite group $G$, $\HH^{i,j}H^*_G=0$ for $j>-i$ and  for some $i$ $\HH^{i,-i} H_G^* \not=0$. In fact, it is known that if $r$ is the dimension $H_G^*$, then $\HH^{r,-r}H_G^* \not=0$, and it is conjectured that for all other $i$ we have that $\HH^{i,-i} H_G^*=0$. 

In fact, for the groups $W(n)$ in the range we have been studying more is true.

\begin{cor}\label{themysterioustriangleofzeroes}
For $0 \le  i < p-3$, $\HH^{p^{n-1}-i,j}H^*_{W(n)}=0$ for $j>-p^{n-1}$. 
\end{cor}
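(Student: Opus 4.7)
The plan is to combine Theorem \ref{computationoflocalcohomologyofH_SU(V)/W(n)} with the faithfully flat base change of Proposition \ref{faithfullyflat}, tracking internal degrees carefully.

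First I would use \ref{faithfullyflat} to write $H_S^*(W(n)\backslash U(V)) \cong H^*_{W(n)} \otimes_{\FF_p} H^*(U(V)/S)$ as $H^*_{W(n)}$-modules. Since $U(V)$ is a closed connected Lie group of real dimension $d = \dim U(V)$ and the $p$-torus $S$ acts freely on the right by orientation-preserving diffeomorphisms (because $S \subset U(V)$ and $U(V)$ is connected, so each $s \in S$ is isotopic to the identity via right multiplication), $U(V)/S$ is a closed orientable $d$-manifold. Poincar\'e duality then gives that $H^*(U(V)/S)$ is a finite-dimensional graded $\FF_p$-vector space with $H^d(U(V)/S) = \FF_p$ and vanishing above degree $d$. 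Because local cohomology commutes with tensoring against a finite-dimensional graded vector space,
$$\HH^{p^{n-1}-i,\,j}(H_S^*(W(n)\backslash U(V))) \;\cong\; \bigoplus_{k=0}^{d} \HH^{p^{n-1}-i,\,j-k}(H^*_{W(n)}) \otimes H^k(U(V)/S),$$
so the top internal degree of $\HH^{p^{n-1}-i,*}(H_S^*(W(n)\backslash U(V)))$ is exactly $d$ more than the top internal degree of $\HH^{p^{n-1}-i,*}(H^*_{W(n)})$.

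Next I would bound the top internal degree of the left-hand side using Theorem \ref{computationoflocalcohomologyofH_SU(V)/W(n)}, which for $0 < i < p-3$ identifies $\HH^{p^{n-1}-i}(H_S^*(W(n)\backslash U(V)))$ with $\widehat{H}^{-i-1}\bigl(W(n-1);\, \Sigma^{-p^{n-1}+d}(H_S^*(E(n)\backslash U(V)))^*\bigr)$. Since $H_S^*(E(n)\backslash U(V))$ is concentrated in non-negative internal degrees with $\FF_p$ in degree $0$, its graded dual has top internal degree $0$, and the suspension shifts this to $d-p^{n-1}$. For any graded $W(n-1)$-module $M$, the internal-degree-$j$ part of $\widehat{H}^*(W(n-1);M)$ is $\widehat{H}^*(W(n-1); M^j)$, which vanishes whenever $M^j$ does; so the top internal degree of the Tate cohomology is also at most $d-p^{n-1}$, giving $\HH^{p^{n-1}-i,j}(H_S^*(W(n)\backslash U(V))) = 0$ for all $j > d - p^{n-1}$.

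Combining the two steps, the top internal degree of $\HH^{p^{n-1}-i,*}(H^*_{W(n)})$ is at most $(d-p^{n-1}) - d = -p^{n-1}$, which is the corollary. The only real subtlety is bookkeeping with the $d$-shift separating $H^*_{W(n)}$ from its $S$-equivariant version, which is what is responsible for $H^*_{W(n)}$ having a sharper vanishing bound than its equivariant counterpart; once the K\"unneth decomposition and the internal-degree bound on the coefficient module of the Tate cohomology are in place, the result is a shift-counting consequence of the previously established computation.
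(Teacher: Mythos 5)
Your argument is essentially the paper's own proof: both use \ref{faithfullyflat} to split off a finite-dimensional manifold cohomology factor (you use $H^*(U(V)/S)$, the paper $H^*(W(n)\backslash U(V))$, which makes no difference), observe that local cohomology over $H^*_{W(n)}$ then differs from $\HH^*(H^*_{W(n)})$ only by a degree shift of $d$, and bound the top internal degree of $\HH^{p^{n-1}-i}(H_S^*(W(n)\backslash U(V)))$ by $d-p^{n-1}$ via \ref{computationoflocalcohomologyofH_SU(V)/W(n)}, since the graded dual of the connected algebra $H_S^*(E(n)\backslash U(V))$ lives in degrees $\le 0$; your reindexing of the theorem and the degree bookkeeping are correct. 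The one gap is the endpoint $i=0$, which the statement includes but which the isomorphism you cite does not cover (it holds only for $0<i<p-3$); this case is handled in the paper by the regularity theorem \ref{regularityforgroupcohomology}, and alternatively your degree bound also applies to the top local cohomology as described in the second half of \ref{computationoflocalcohomologyofH_SU(V)/W(n)}, since both summands there are concentrated in degrees $\le d-p^{n-1}$.
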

\begin{proof}
For $i=0$ the result is true by the regularity theorem, so we just need to show the result for $i>0$.

Recall that by \ref{faithfullyflat}, $H_S^* W(n)\backslash U(V) \cong H^*_{W(n)} \otimes H^* W(n) \backslash U(V)$ as $H^*_{W(n)}$ modules. So, $\HH^*( H_S^*(W(n) \backslash U(V)) \cong \HH^*(H^*_{W(n)}) \otimes H^* W(n)\backslash U(V)$. So, as $W(n) \backslash U(V)$ is an oriented $N$ dimensional manifold (where $N=\dim U(V)$) the top nonvanishing degree of its cohomology is $N$. Therefore the top nonzero degree of $\HH^i H_S^* W(n) \backslash U(V)$ is $N$ plus the top nonzero degree of $\HH^i H^*_{W(n)}$. 

So, we need to show that for $0<i<p-3$, $\HH^{p^{n-1}-i,j+N}H^*_{S} W(n) \backslash U(V)=0$ for $j>-p^{n-1}$. 
For this, by \ref{computationoflocalcohomologyofH_SU(V)/W(n)} we have that  $\HH^{p^{n-1}-(p-3)+i} H_S^*( W(n) \backslash U(V))=\widehat{H}^{i-(p-2)}(W(n-1), \Sigma^{-p^{n-1}+N} (H_S^* E(n) \backslash U(V))^*) $. But since $H_S^* E(n) \backslash U(V)$ is concentrated in positive degrees, its dual is concentrated in negative degrees, so $ \Sigma^{-p^{n-1}+N} (H_S^* E(n) \backslash U(V))^*)$ is zero above degree $-p^{n+1}+N$, and the result follows.
\end{proof}

\bibliography{references}

\end{document}